%% LyX 2.3.3 created this file.  For more info, see http://www.lyx.org/.
%% Do not edit unless you really know what you are doing.
\documentclass[american,english]{article}
\usepackage[T1]{fontenc}
\usepackage[latin9]{inputenc}
\usepackage{amsmath}
\usepackage{amsthm}
\usepackage{amssymb}
\PassOptionsToPackage{normalem}{ulem}
\usepackage{ulem}

\makeatletter
%%%%%%%%%%%%%%%%%%%%%%%%%%%%%% Textclass specific LaTeX commands.
\numberwithin{equation}{section}
\numberwithin{figure}{section}
\theoremstyle{plain}
\newtheorem{thm}{\protect\theoremname}[section]
\theoremstyle{definition}
\newtheorem{problem}[thm]{\protect\problemname}
\theoremstyle{plain}
\newtheorem{prop}[thm]{\protect\propositionname}
\theoremstyle{plain}
\newtheorem{cor}[thm]{\protect\corollaryname}
\theoremstyle{remark}
\newtheorem{rem}[thm]{\protect\remarkname}
\theoremstyle{plain}
\newtheorem{lem}[thm]{\protect\lemmaname}
\theoremstyle{definition}
\newtheorem{defn}[thm]{\protect\definitionname}

\@ifundefined{date}{}{\date{}}
\makeatother

\usepackage{babel}
\addto\captionsamerican{\renewcommand{\corollaryname}{Corollary}}
\addto\captionsamerican{\renewcommand{\definitionname}{Definition}}
\addto\captionsamerican{\renewcommand{\lemmaname}{Lemma}}
\addto\captionsamerican{\renewcommand{\problemname}{Problem}}
\addto\captionsamerican{\renewcommand{\propositionname}{Proposition}}
\addto\captionsamerican{\renewcommand{\remarkname}{Remark}}
\addto\captionsamerican{\renewcommand{\theoremname}{Theorem}}
\addto\captionsenglish{\renewcommand{\corollaryname}{Corollary}}
\addto\captionsenglish{\renewcommand{\definitionname}{Definition}}
\addto\captionsenglish{\renewcommand{\lemmaname}{Lemma}}
\addto\captionsenglish{\renewcommand{\problemname}{Problem}}
\addto\captionsenglish{\renewcommand{\propositionname}{Proposition}}
\addto\captionsenglish{\renewcommand{\remarkname}{Remark}}
\addto\captionsenglish{\renewcommand{\theoremname}{Theorem}}
\providecommand{\corollaryname}{Corollary}
\providecommand{\definitionname}{Definition}
\providecommand{\lemmaname}{Lemma}
\providecommand{\problemname}{Problem}
\providecommand{\propositionname}{Proposition}
\providecommand{\remarkname}{Remark}
\providecommand{\theoremname}{Theorem}

\begin{document}
\title{On Pro-$2$ Identities of $2\times2$ Linear Groups}
\author{David El-Chai Ben-Ezra, Efim Zelmanov}
\maketitle
\begin{abstract}
Let $\hat{F}$ be a free pro-$p$ non-abelian group, and let $\Delta$
be a commutative Noetherian complete local ring with a maximal ideal
$I$ such that $\textrm{char}(\Delta/I)=p>0$. In \cite{key-3}, Zubkov
showed that when $p\neq2$, the pro-$p$ congruence subgroup
\[
GL_{2}^{1}(\Delta)=\ker(GL_{2}(\Delta)\overset{\Delta\to\Delta/I}{\longrightarrow}GL_{2}(\Delta/I))
\]
admits a pro-$p$ identity, i.e., there exists an element $1\neq w\in\hat{F}$
that vanishes under any continuous homomorphism $\hat{F}\to GL_{2}^{1}(\Delta)$.

In this paper we investigate the case $p=2$. The main result is that
when $\textrm{char}(\Delta)=2$, the pro-$2$ group $GL_{2}^{1}(\Delta)$
admits a pro-$2$ identity. This result was obtained by the use of
trace identities that originate in PI-theory.
\end{abstract}
\textbf{Mathematics Subject Classification (2010):} primary: 20E18,
16R30, secondary: 20E05, 20H25.\textbf{}\\
\textbf{}\\
\textbf{Key words and phrases:} pro-$p$ identities, linear pro-$p$
group, trace identities, PI-theory.

\tableofcontents{}

\section{Introduction}

It is well known that discrete free non-abelian groups are linear,
and can easily be embedded in the group $GL_{2}(\mathbb{Z})$. Surprisingly,
in the category of pro-$p$ groups, the problem of linearity is still
open. 

We say that $\Delta$ is a \uline{pro-\mbox{$p$} ring} if $\Delta$
is a commutative Noetherian complete local ring with a maximal ideal
$I$ such that $\Delta/I$ is a finite field of characteristic $p$.
In this case 
\[
\Delta=\underset{n}{\underleftarrow{\lim}}(\Delta/I^{n})
\]
is a profinite ring, and for any $d$, the congruence subgroup
\[
GL_{d}^{1}(\Delta)=\ker(GL_{d}(\Delta)\overset{\Delta\to\Delta/I}{\longrightarrow}GL_{d}(\Delta/I))
\]
is a pro-$p$ group. 
\begin{problem}
Let $\hat{F}$ be a non-abelian free pro-$p$ group. Can $\hat{F}$
be continuously embedded in $GL_{d}^{1}(\Delta)$ for some $d$ and
a pro-$p$ ring $\Delta$?
\end{problem}

In fact, the known partial results suggest that the answer should
be negative. For example, it is known that a free pro-$p$ non-abelian
group cannot be embedded as a closed subgroup in the pro-$p$ groups
(see \cite{key-4}, \cite{key-6})
\begin{align*}
GL_{d}^{1}(\mathbb{Z}_{p}) & =\ker(GL_{d}(\mathbb{Z}_{p})\overset{\mathbb{Z}_{p}\to\mathbb{F}_{p}}{\longrightarrow}GL_{d}(\mathbb{F}_{p}))\\
GL_{d}^{1}(\mathbb{F}_{p}\left\langle \left\langle t\right\rangle \right\rangle ) & =\ker(GL_{d}(\mathbb{F}_{p}\left\langle \left\langle t\right\rangle \right\rangle )\overset{t\mapsto0}{\longrightarrow}GL_{d}(\mathbb{F}_{p})).
\end{align*}

Let $\hat{F}$ be a free pro-$p$ group, and $\hat{H}$ a pro-$p$
group. We say that an element $1\neq w\in\hat{F}$ is a \uline{pro-\mbox{$p$}
identity} of $\hat{H}$ if $w$ vanishes under every (continuous)
homomorphism $\hat{F}\to\hat{H}$. In \cite{key-3}, using the idea
of generic matrices, Zubkov showed that given a fixed $d$, the following
conditions are equivalent \cite{key-3}:
\begin{itemize}
\item $\hat{F}$ cannot be embedded in $GL_{d}^{1}(\Delta)$ for some pro-$p$
ring $\Delta$.
\item There exists an element $1\neq w\in\hat{F}$ that serves as a pro-$p$
identity of every pro-$p$ group of the form $GL_{d}^{1}(\Delta)$,
where $\Delta$ is a pro-$p$ ring.
\end{itemize}
Then, Zubkov showed that these conditions are satisfied for $d=2$
whenever $p\neq2$ \cite{key-3}. In particular, for every $p\neq2$,
a free pro-$p$ group cannot be embedded, as a closed subgroup, in
$GL_{2}^{1}(\Delta)$, where $\Delta$ is a pro-$p$ ring. Later,
using ideas from the solution of the Specht problem, the second author
announced that given a fixed $d$, the aforementioned conditions are
satisfied for every large enough prime $d\ll p$ (see \cite{key-5},
\cite{key-13}). 

Given these results, it is natural to ask what happens when $p$ is
not large enough. More specifically, what happens in the case where
$d=p=2$? Investigating this case is the main purpose of this paper.
Here is the main result (see $\mathsection$\ref{sec:char=00003D2}):
\begin{thm}
\label{thm:main-main-1} Let $\Delta$ be a pro-$2$ ring of $\textrm{char}(\Delta)=2$.
Then, $GL_{2}^{1}(\Delta)$ admits a pro-$2$ identity that is independent
in $\Delta$.
\end{thm}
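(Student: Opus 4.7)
The strategy is to exploit a classical characteristic-$2$ trace identity for $2\times 2$ matrices and convert it into a pro-$2$ group identity through the associated graded structure of the natural filtration on $GL_2^1(\Delta)$.

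First, I would record the following Cayley-Hamilton consequence: for any commutative ring $R$ and any $a,b\in M_{2}(R)$, the additive commutator $[a,b]=ab-ba$ is traceless, and hence $[a,b]^{2}+\det([a,b])\cdot I=0$. When $\operatorname{char}(R)=2$, this says $[a,b]^{2}=\det([a,b])\cdot I$ is always a scalar matrix; equivalently, $[[a,b]^{2},c]=0$ for every $c\in M_{2}(R)$. This is the PI-theoretic input.

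Second, I would translate the identity into the congruence-subgroup setting. Writing $X=1+A$, $Y=1+B\in GL_{2}^{1}(\Delta)$ with $A,B\in M_{2}(I)$, a direct computation in characteristic $2$ gives
\[
XY+YX=AB+BA=[A,B],
\]
so $(XY+YX)^{2}=[A,B]^{2}=\det([A,B])\cdot I$ is central in $M_{2}(\Delta)$. Equivalently, the matrix $(XY)^{2}+XY^{2}X+YX^{2}Y+(YX)^{2}$ is scalar.

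Third, I would upgrade this ring-theoretic identity to a pro-$2$ identity using the filtration $GL_{2}^{n}(\Delta):=1+M_{2}(I^{n})$. The associated graded object $L=\bigoplus_{n\geq1}GL_{2}^{n}/GL_{2}^{n+1}$ is a graded restricted Lie algebra over $\mathbb{F}_{2}$, with Lie bracket induced by the group commutator and restricted $2$-power induced by group squaring; it embeds into $M_{2}(\mathrm{gr}(\Delta))$. The centrality of $[a,b]^{2}$ in $M_{2}(\mathrm{gr}(\Delta))$ becomes the restricted-Lie identity $[[a,b]^{[2]},c]=0$ on $L$. From this graded identity, I would manufacture an explicit element $w\in\hat{F}$ built from iterated commutators, squares, and conjugations, whose evaluation on any $X_{1},\ldots,X_{k}\in GL_{2}^{1}(\Delta)$ is trivial. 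The point is that each application of the restricted-Lie identity strictly increases the depth of the filtration in which the evaluation lies, and convergence in the pro-$2$ topology of $\hat{F}$ produces, in the limit, an element $w$ whose value is exactly $1$ rather than merely in a deeper filtration piece.

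Finally, since $w$ is constructed only from the graded-Lie identity (which holds for every pro-$2$ ring of characteristic $2$), the same $w$ works for every such $\Delta$, giving independence. Non-triviality $w\neq 1$ in $\hat{F}$ would be verified by applying the Magnus embedding of $\hat{F}$ into the completed free associative $\mathbb{F}_{2}$-algebra $\mathbb{F}_{2}\langle\!\langle x_{1},x_{2},\ldots\rangle\!\rangle$ and exhibiting a nonzero leading Magnus term of $w$.

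The main obstacle is the third step: passing from the ring identity to a true group identity. For $p\neq 2$ Zubkov could use exponential/logarithm-type maps to convert Lie identities into group identities on congruence subgroups directly; in characteristic $2$ no such correspondence exists, and one must simultaneously control the group commutator, the squaring operation (the restricted $2$-power), and the $I$-adic filtration in order to produce a word whose evaluation is genuinely trivial, not merely driven into deeper filtration layers. Handling this tension, and ensuring the pro-$2$ limit of the iterative construction really lies in $\hat{F}$ and remains nonzero after the Magnus embedding, is the technical heart of the argument.
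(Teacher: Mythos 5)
Your sketch correctly identifies the PI-theoretic starting point: in characteristic $2$ the square of a Lie commutator of $2\times2$ matrices is a scalar, which is indeed one of the trace identities the paper exploits (Part~3 of Lemma~\ref{lem:x,y}). You also correctly diagnose that the ``technical heart'' is converting a ring/Lie identity into an actual group identity. But the proposal never closes that gap, and the route you indicate would not close it without essentially all the machinery the paper builds.

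The step you summarize as ``each application of the restricted-Lie identity strictly increases the depth of the filtration\ldots and convergence in the pro-$2$ topology of $\hat{F}$ produces, in the limit, an element $w$ whose value is exactly $1$'' is precisely the point where the argument must do real work, and it is not automatic. When you iteratively correct an evaluation so that it sinks deeper in the $I$-adic filtration, there is a priori no reason the group element that records these corrections should converge to something \emph{nonzero} in $\hat{F}$ (or have a detectable Magnus leading term): the corrections could just as well accumulate to the inverse of the original element, giving $w=1$. The paper resolves this with a rather different structure: it passes to a universal representation, then to ``pseudo-generic'' matrices satisfying $\det(x)=\det(y)=0$ (Subsection~\ref{subsec:reduction}), which makes the trace ring $T=\langle t(x),t(y),t(xy)\rangle$ a \emph{free} polynomial ring (Proposition~\ref{prop:free}), so that the two-sided ideal $J$ decomposes explicitly as a free $T$-module with basis $[x,y]x,\,[x,y]y,\,[x,y]^2,\,[x,y]xy$ (Proposition~\ref{prop:J - f.g.}). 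This precise decomposition is what permits the bookkeeping invariants $\overline{n}(\cdot),\overline{i}(\cdot),\min_x(\cdot)$ and the notion of ``good'' elements, together with the operators $\varphi_x,\varphi_y,\psi$ and the Artin--Rees argument in the proof of Proposition~\ref{prop:step1}. Moreover, the paper does not construct $w$ directly; it argues by contradiction, \emph{assuming} the restriction of $\pi$ to $\hat{F}'$ is injective (which yields Corollary~\ref{cor:commute}), and then shows that the iterative correction process produces an element with $\overline{n}((gw)_x)=\infty$, contradicting Lemma~\ref{prop:pre-good}. That contradiction structure is what allows one to sidestep the problem of directly verifying $w\neq1$, which your sketch leaves as an open obstacle.

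In short: the identity you start from is the right seed, and your framing of the difficulty is accurate, but ``manufacture an explicit element $w$ built from iterated commutators, squares, and conjugations'' is a promissory note, not an argument. Without the reduction to $\det=0$, the free trace ring, the $J_n/\overline{J}_n/\overline{C}_n$ decomposition, the Noetherian and Artin--Rees input, and the contradiction scaffold in Subsection~\ref{subsec:Proof-of-Theorem}, there is no mechanism in the proposal to guarantee the limit of the iterative construction is a genuine nontrivial element of $\hat{F}$.
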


From Theorem \ref{thm:main-main-1} we get that a free pro-$2$ group
cannot be embedded in $GL_{2}^{1}(\Delta)$ when $\Delta$ is a pro-$2$
ring of $\textrm{char}(\Delta)=2$. Actually, one can derive from
here that a free pro-$2$ group cannot be embedded in $GL_{2}^{1}(\Delta)$
whenever $\textrm{char}(\Delta)=2^{m}$ for some $m$. The problem
of whether a free pro-$2$ group can be embedded in $GL_{2}^{1}(\Delta)$
when $\Delta$ is a pro-$2$ ring of $\textrm{char}(\Delta)=0$ is
still open.

The main idea of the proof of Theorem \ref{thm:main-main-1} is the
use of trace identities that originate in PI-theory (see \cite{key-7,key-8,key-9,key-10,key-11}),
combined with some basic ideas and tools from \cite{key-3} and the
theory of Noetherian rings, such as Hilbert's basis theorem and the
Artin-Rees lemma. 

In order to help the reader understand the proof of Theorem \ref{thm:main-main-1},
we begin the paper with a review of Zubkov's approach (see $\mathsection$\ref{sec:Zubkov}).
Toward the end of the paper we describe exactly where Zubkov's argument
fails when $d=p=2$ (see $\mathsection$\ref{sec:char=00003D0}).
This description allows us to show that when $d=2$, there is a dichotomy
between $p=2$ and $p\neq2$, and in some sense, $2\times2$ linear
pro-$2$ groups have less pro-$2$ identities (if any).

Throughout the paper we use the notation $[a,b]=ab-ba$ for the Lie-commutator
of $a$ and $b$, and $[a,b,c]=[[a,b],c]$. For the group commutator
in a group $H$ we will use the notation $[g,h]_{H}=ghg^{-1}h^{-1}$
and $[g,h,k]_{H}=[[g,h]_{H},k]_{H}$. Brackets of the form $\left\langle \,\,\right\rangle $
will denote generation in the discrete sense, and double brackets
$\left\langle \left\langle \,\,\right\rangle \right\rangle $ will
denote generation in the topological sense. Throughout the paper,
whenever $\hat{H}$ is a pro-$p$ group, and we use the notion ``derived
subgroup'' (resp. ``lower central series'') we mean ``the closure
of the derived subgroup'' (resp. ``the closure of the lower central
series'').

\textbf{Acknowledgments: }During the period of the research, the first
author was supported by NSF research training grant (RTG) \# 1502651.
We would also like to thank the referee for his/her wise remarks that
in particular have led to some substantial simplifications of the
proof of Theorem \ref{thm:main-main-1}. 

\section{\label{sec:Zubkov}Review of Zubkov's approach}

The main goal of this section is to help the reader understand some
basic concepts lying behind the proof of Theorem \ref{thm:main-main-1}.
The discussion here will also be needed as a background for the dicussion
in $\mathsection$\ref{sec:char=00003D0}.

\subsection{\label{subsec:universal 1-1}The Universal Representation}

\selectlanguage{american}%
Let $x_{i,j}$ and $y_{i,j}$ for $1\leq i,j\leq d$ be free commuting
variables, and let
\[
\Pi_{*}=\mathbb{Z}_{p}\left\langle \left\langle x_{i,j},y_{i,j}\,|\,1\leq i,j\leq d\right\rangle \right\rangle 
\]
be the associative ring (with identity) of formal power series in
$x_{i,j}$ and $y_{i,j}$ over the $p$-adic numbers $\mathbb{Z}_{p}$.
Every element in $\Pi_{*}$ can be written as $f=\sum_{i=0}^{\infty}f_{i}$
where $f_{i}$ is homogeneous of degree $i$ \footnote{By degree we mean that $\deg(\prod_{i,j=1}^{d}x_{i,j}^{\alpha_{i,j}}y_{i,j}^{\beta_{i,j}})=\sum_{i,j=1}^{d}(\alpha_{ij}+\beta_{ij})$.}.
Denote 
\[
\Pi_{*}\vartriangleright Q_{*n}=\left\{ f=\sum_{i=0}^{\infty}f_{i}\in\Pi_{*}\,|\,f_{0},...,f_{n-1}=0\,,\deg(f_{i})=i\right\} .
\]
The finite index ideals $B_{n,m}=Q_{*n}+p^{m}\Pi_{*}$ serve as a
basis of neighborhoods of zero for the profinite topology of $\Pi_{*}$,
making $\Pi_{*}$ a pro-$p$ ring, with a maximal ideal $B_{1,1}=Q_{*1}+p\Pi_{*}$.

Endowed with the topology that comes from the congruence ideals 
\[
M_{d}(\Pi_{*},B_{n,m})=\ker(M_{d}(\Pi_{*})\to M_{d}(\Pi_{*}/B_{n,m}))
\]
as a basis of neighborhoods of zero, $M_{d}(\Pi_{*})$ is a profinite
ring. One can see that this topology makes the group $1+M_{d}(\Pi_{*},Q_{*1})$
a pro-$p$ group.

Denote the \uline{generic matrices} $x_{*},y_{*}\in M_{d}(\Pi_{*},Q_{*1})$
by 
\[
x_{*}=(x_{ij})_{i,j=1}^{d},\,\,\,\,\,\,y_{*}=(y_{ij})_{i,j=1}^{d}.
\]
Let $\hat{F}=\left\langle \left\langle X,Y\right\rangle \right\rangle $
be the free pro-$p$ group generated by $X,Y$. By the above, there
is a natural (continuous) homomorphism $\pi_{*}:\hat{F}\to1+M_{d}(\Pi_{*},Q_{*n})$
defined by
\[
X\mapsto1+x_{*},\,\,\,\,\,\,Y\mapsto1+y_{*}.
\]
We denote by $G_{*}=\left\langle 1+x_{*},1+y_{*}\right\rangle \subseteq1+M_{d}(\Pi_{*},Q_{*1})$
the (discrete) subgroup generated by $1+x_{*}$ and $1+y_{*}$, and
$\hat{G}_{*}\subseteq1+M_{d}(\Pi_{*},Q_{*1})$ its closure in $1+M_{d}(\Pi_{*},Q_{*1})$.
Then, $\hat{G}_{*}$ is a pro-$p$ group. The map \foreignlanguage{english}{
\[
\pi_{*}:\hat{F}\twoheadrightarrow\hat{G}_{*}\subseteq1+M_{d}(\Lambda_{*},Q_{*1})
\]
is called \uline{the universal representation}. The following theorem
justifies the name of $\pi_{*}$ (see the proof of Theorem 2.1 in
\cite{key-3}):}
\selectlanguage{english}%
\begin{thm}
\label{thm:universal-1}Let $\Delta$ be any pro-$p$ ring. Then,
every 
\[
1\neq w(X,Y)\in\ker\pi_{*}
\]
 is a pro-$p$ identity of $GL_{d}^{1}(\Delta)$. 
\end{thm}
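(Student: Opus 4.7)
The plan is to exploit the universal property of $\Pi_*$ among pro-$p$ rings in order to factor any continuous representation $\hat F\to GL_d^1(\Delta)$ through the universal one $\pi_*$; the theorem then follows immediately, since any factorization must kill $w$ whenever $\pi_*(w)=1$.

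Concretely, I would fix a continuous homomorphism $\varphi:\hat F\to GL_d^1(\Delta)=1+M_d(\Delta,I)$ and write $\varphi(X)=1+A$, $\varphi(Y)=1+B$, so that $A,B\in M_d(\Delta)$ have all their entries in the maximal ideal $I$ of $\Delta$. Because $\Delta$ is a complete local ring with residue characteristic $p$, the unique unital homomorphism $\mathbb{Z}\to\Delta$ sends $p$ into $I$ and therefore extends uniquely by $I$-adic continuity to a continuous ring map $\mathbb{Z}_p\to\Delta$. Using this as the scalar map, I would next build a continuous ring homomorphism
\[
\Psi:\Pi_*\longrightarrow\Delta,\qquad x_{ij}\mapsto A_{ij},\ y_{ij}\mapsto B_{ij},
\]
by assigning the generators and extending to all formal power series. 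Convergence is automatic, since every monomial of total degree $\geq n$ in the $x_{ij},y_{ij}$ lands in $I^n$ and $\Delta$ is $I$-adically complete; one then verifies $\Psi(B_{n,m})\subseteq I^n+p^m\Delta$, so $\Psi$ is continuous.

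Applying $\Psi$ entry-wise yields a continuous ring homomorphism $M_d(\Psi):M_d(\Pi_*)\to M_d(\Delta)$ with $1+x_*\mapsto\varphi(X)$ and $1+y_*\mapsto\varphi(Y)$, and its restriction to the group of units $1+M_d(\Pi_*,Q_{*1})$ is a continuous group homomorphism into $GL_d^1(\Delta)$. Hence $M_d(\Psi)\circ\pi_*:\hat F\to GL_d^1(\Delta)$ is a continuous homomorphism agreeing with $\varphi$ on the topological generators $X,Y$, and so $\varphi=M_d(\Psi)\circ\pi_*$ on all of $\hat F$. Consequently, whenever $w\in\ker\pi_*$ one gets $\varphi(w)=M_d(\Psi)(\pi_*(w))=M_d(\Psi)(1)=1$, so $w$ is a pro-$p$ identity of $GL_d^1(\Delta)$.

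The only nontrivial step is the construction of $\Psi$: one must check that the universal property of the free commutative power-series ring $\Pi_*$ persists in the topological setting, i.e., that any assignment of the variables to elements of the maximal ideal of an arbitrary pro-$p$ ring $\Delta$ extends uniquely to a continuous ring homomorphism. This is where the hypotheses that $\Delta$ is Noetherian, complete, and has residue characteristic $p$ are all used (in particular so that $\bigcap_n I^n=0$ and sums of elements drawn from deeper and deeper powers of $I$ converge); the remainder is a formal chase on topological generators.
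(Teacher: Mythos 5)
Your proof is correct and is essentially the same argument the paper uses (the paper cites Zubkov's Theorem 2.1 for this statement but spells out the identical idea in the proof of its Proposition 3.4, the $p=2$ analogue): given a continuous $\varphi:\hat{F}\to GL_d^1(\Delta)$ with $\varphi(X)=1+A$, $\varphi(Y)=1+B$, specialize the generic matrix entries to the entries of $A,B$ to obtain a continuous ring homomorphism from the power-series ring to $\Delta$, and observe that the induced group homomorphism factors $\varphi$ through $\pi_*$. The only cosmetic difference is that you establish well-definedness of this ring map by a direct $I$-adic convergence argument (using completeness and Krull's intersection theorem), whereas the paper sidesteps convergence by first reducing to the case where $\Delta$ is a finite local ring (so $I^n=0$ for some $n$ and the map factors through the finite quotient $\Lambda_*/P_{*n}$); both mechanisms are standard and interchangeable here.
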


\subsection{\label{subsec:structure-1}The structure of a minimal component}

\selectlanguage{american}%
Define the following notation:
\selectlanguage{english}%
\begin{itemize}
\item $\mathring{\Pi}_{*}=\mathbb{Q}_{p}\left\langle \left\langle x_{i,j},y_{i,j}\,|\,1\leq i,j\leq d\right\rangle \right\rangle $
= the \foreignlanguage{american}{ring of power series in $x_{i,j}$
and $y_{i,j}$ for $1\leq i,j\leq d$ over $\mathbb{Q}_{p}$. }
\selectlanguage{american}%
\item $\mathring{Q}_{*n}=\left\{ f=\sum_{i=0}^{\infty}f_{i}\in\mathring{\Pi}_{*}\,|\,f_{0},...,f_{n-1}=0\,,\deg(f_{i})=i\right\} $.
\item $A_{*}=\mathbb{Z}_{p}\left\langle \left\langle x_{*},y_{*}\right\rangle \right\rangle \subseteq M_{2}(\Pi_{*})$
= the ring of power series in $x_{*},y_{*}$ over $\mathbb{Z}_{p}$.
\item $\mathring{L}_{*}$ = the Lie algebra generated by $x_{*},y_{*}$
over $\mathbb{Q}_{p}$. 
\item $\mathring{L}_{*}^{(n)}$ = the subspace of $\mathring{L}_{*}$ of
homogeneous elements of degree $n$.
\item $L_{*}\subseteq\mathring{L}_{*}$ = the Lie algebra generated by $x_{*},y_{*}$
over $\mathbb{Z}_{p}$.
\item $L_{*}^{(n)}\subseteq\mathring{L}_{*}^{(n)}$ = the additive subgroup
of $L_{*}$ of homogeneous elements of degree $n$.
\item For an element of the form $g=1+a_{n}+a_{n+1}+...\in1+M_{2}(\Pi_{*},Q_{*1})$
\foreignlanguage{english}{where $a_{i}$ is the term of $g$ of degree
$i$, and $a_{n}\neq0$, we denote
\[
\min(g)=a_{n}.
\]
}
\end{itemize}
The following proposition was proved in \cite{key-3}:
\begin{prop}
\label{prop:min-1}Let $1\neq g\in\hat{G}_{*}\subseteq1+M_{2}(\Pi_{*},Q_{*1})$.
Then, for some $n$ we have
\[
\min(g)\in\mathring{L}_{*}^{(n)}\cap A_{*}.
\]
\end{prop}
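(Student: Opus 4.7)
My plan is to prove the claim first for the discrete group $G_{*}$ via a Magnus--Lazard argument, and then extend to $\hat{G}_{*}$ by approximating each $g\in\hat{G}_{*,n}:=\hat{G}_{*}\cap(1+Q_{*n})$ by elements of $G_{*,n}:=G_{*}\cap(1+Q_{*n})$. The conclusion comes from the fact that $L_{*}^{(n)}$, being a finitely generated $\mathbb{Z}_{p}$-submodule of the free $\mathbb{Z}_{p}$-module $M_{2}(\Pi_{*})^{(n)}$, is closed in the $p$-adic topology.

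For the discrete case, I would use the filtration $\{G_{*,n}\}_{n\geq1}$. The identities $[1+Q_{*n},\,1+Q_{*m}]_{H}\subseteq 1+Q_{*n+m}$ and $[g,h]_{H}-1\equiv[g-1,h-1]\pmod{Q_{*n+m+1}}$ show that $g\mapsto g-1\bmod Q_{*n+1}$ induces an injective homomorphism of graded Lie rings $\bigoplus_{n}G_{*,n}/G_{*,n+1}\hookrightarrow\bigoplus_{n}M_{2}(\Pi_{*})^{(n)}$, intertwining the group commutator with the associative commutator. The left-hand side is generated in degree $1$ by the images of $1+x_{*}$ and $1+y_{*}$, so it sits inside the $\mathbb{Z}$-Lie subring of $L_{*}$ generated by $x_{*},y_{*}$. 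Hence for $g\in G_{*,n}\setminus G_{*,n+1}$ we obtain $\min(g)\in L_{*}^{(n)}\subseteq\mathring{L}_{*}^{(n)}\cap A_{*}$.

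For the pro-$p$ extension I would show that $G_{*,n}$ is dense in $\hat{G}_{*,n}$. Given $g\in\hat{G}_{*,n}$ and a sequence $g_{k}\in G_{*}$ with $g_{k}\to g$, for every $N$ the degree-$i$ homogeneous components of $g_{k}-1$ with $i<n$ become divisible by $p^{N}$ for large $k$. I would correct $g_{k}$ degree by degree: at stage $i<n$, the residual degree-$i$ component lies in $L_{*}^{(i)}$ by the discrete case applied to the previous approximation (which already lies in $G_{*,i}$), so it can be written as a $\mathbb{Z}_{p}$-linear combination of Lie monomials $\ell_{j}$, each realized as the leading term of an explicit iterated group commutator $w_{j}\in G_{*,i}$; multiplying on the right by a product $\prod_{j}w_{j}^{c_{j}}$ with $c_{j}\in p^{N}\mathbb{Z}$ kills the degree-$i$ component while perturbing the approximation by an element close to $1$. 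After $n-1$ such corrections one obtains $\tilde{g}_{k}\in G_{*,n}$ with $\tilde{g}_{k}\to g$. Since $\min(\tilde{g}_{k})\in L_{*}^{(n)}$ converges $p$-adically to $\min(g)$ and $L_{*}^{(n)}$ is $p$-adically closed, $\min(g)\in L_{*}^{(n)}\subseteq\mathring{L}_{*}^{(n)}$; the containment $\min(g)\in A_{*}$ is automatic, as $A_{*}$ is closed in $M_{2}(\Pi_{*})$ and $G_{*}\subseteq 1+A_{*}$, whence $\hat{G}_{*}\subseteq 1+A_{*}$.

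The hardest part is the inductive density construction. Generically the degree-$i$ component of an element of $G_{*}$ is not Lie (for example $(1+x_{*})(1+y_{*})$ has degree-$2$ part $x_{*}y_{*}\notin L_{*}^{(2)}$), so the scheme works only because successive corrections at lower degrees force the remaining degree-$i$ term to lie in $L_{*}^{(i)}$ and simultaneously keep it $p$-adically small enough to be realized by a $p$-power-exponent commutator without disturbing the cancellations already achieved.
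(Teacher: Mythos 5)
There is a genuine gap, and it lies in the discrete case: the assertion that the graded Lie ring $\bigoplus_{n}G_{*,n}/G_{*,n+1}$ attached to the \emph{congruence} filtration $G_{*,n}=G_{*}\cap(1+Q_{*n})$ is generated in degree $1$ by the images of $1+x_{*}$, $1+y_{*}$ is false. That property holds for the associated graded of the \emph{lower central series} of $G_{*}$, but the congruence filtration is slower inside $M_{2}(\Pi_{*})$: a weight-$m$ iterated group commutator $g$ can satisfy $g-1\in Q_{*,m+1}$ because its weight-$m$ matrix term cancels, and then the image of $g$ in $G_{*,m+1}/G_{*,m+2}$ is an element of degree $m+1$ that need not lie in the span of degree-$(m+1)$ brackets of $x_{*},y_{*}$. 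If your argument were sound it would yield $\min(g)\in L_{*}^{(n)}$ for every $p$, hence $\Omega_{n}(\hat{G}_{*})=L_{*}^{(n)}$, which the paper explicitly refutes at $p=2$: Lemma~\ref{prop:technical} exhibits $g_{1}=[1+x,1+y,1+x,1+x,[1+x,1+y]_{\hat{G}}]_{\hat{G}}$, a genuine commutator word in the discrete group, with $\min(g_{1})=\tfrac{\beta^{2}-\alpha\gamma}{2}[x,y,x]\bmod L^{(7)}$, and the coefficients over the free $\mathbb{Z}_{2}$-basis $\alpha^{r}\beta^{s}\gamma^{t}[x,y,x]$, $\alpha^{r}\beta^{s}\gamma^{t}[x,y,y]$ of $L^{(7)}$ involve $1/2\notin\mathbb{Z}_{2}$. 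So already $\min(g_{1})\notin L_{*}^{(7)}$, and your claimed conclusion $\min(g)\in L_{*}^{(n)}$ is too strong. (The pro-$p$ density step compounds this with a separate purity issue --- $p^{N}M_{2}(\Pi_{*})^{(i)}\cap L_{*}^{(i)}\neq p^{N}L_{*}^{(i)}$ in general --- but the argument is already dead at the discrete stage.)

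The paper's proof deliberately avoids any integrality claim. It uses the ring endomorphism $\phi_{*}$ of $\mathring{\Pi}_{*}$ sending $x_{ij}\mapsto x_{ij}+x'_{ij}$, $y_{ij}\mapsto y_{ij}+y'_{ij}$, which fixes leading terms, so that the induced group homomorphism $\Psi_{*}$ with $1+x_{*}\mapsto e^{x_{*}}$, $1+y_{*}\mapsto e^{y_{*}}$ satisfies $\min(g)=\min(\Psi_{*}(g))$; then Baker--Campbell--Hausdorff expresses $\Psi_{*}(g)$ as $e^{\ell}$ with $\ell$ in the closure of $\mathring{L}_{*}$, giving $\min(g)\in\mathring{L}_{*}^{(n)}$ over $\mathbb{Q}_{p}$ only. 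The whole point of the dichotomy in $\mathsection$\ref{sec:char=00003D0} is that the passage back from $\mathring{L}_{*}^{(n)}\cap A_{*}$ to $L_{*}^{(n)}$ works when $p\neq2$ (Proposition~\ref{prop:equality}) and fails when $p=2$; your argument tries to get the integral conclusion directly and therefore cannot be repaired without reproducing exactly that distinction.
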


Here is an outline of the proof. It is obvious that $\min(g)\in A_{*}$.
To show $\min(g)\in\mathring{L}_{*}^{(n)}$ as well, write
\begin{align*}
e^{x_{*}} & =1+x_{*}+x'_{*}\\
e^{y_{*}} & =1+y_{*}+y'_{*}
\end{align*}
where $x'_{*}$ and $y'_{*}$ are built up from terms of degree $>1$.
Then, the ring homomrphism $\phi_{*}:\mathring{\Pi}_{*}\to\mathring{\Pi}_{*}$
defined by sending $x_{ij}\to x_{ij}+x_{ij}'$ and $y_{ij}\to y_{ij}+y_{ij}'$
gives rise to a ring homomorphism $M_{2}(\mathring{\Pi}_{*})\to M_{2}(\mathring{\Pi}_{*})$
that gives rise to a group homomorphism $\Psi_{*}:\hat{G}_{*}\to1+M_{2}(\mathring{\Pi}_{*},\mathring{Q}_{*1})$
defined by
\begin{align*}
 & 1+x_{*}\mapsto e^{x_{*}}\\
 & 1+y_{*}\mapsto e^{y_{*}}.
\end{align*}
By the Baker\textendash Campbell\textendash Hausdorff formula, one
has
\[
\min(\Psi_{*}(g))\in\mathring{L}_{*}^{(n)}
\]
for some $n$. But as $\Psi_{*}$ is originally induced by the ring
homomorphism $\phi_{*}$, one obtains that
\[
\min(g)=\min(\Psi_{*}(g))\in\mathring{L}_{*}^{(n)}
\]
as claimed in Proposition \ref{prop:min-1}.

\subsection{\label{subsec:Zubkov's proof}An algorithm for constructing a pro-$p$
identity}

Fix $d=2$, so that\foreignlanguage{american}{
\[
x_{*}=\left(\begin{array}{cc}
x_{11} & x_{12}\\
x_{21} & x_{22}
\end{array}\right),\,\,\,\,\,\,y_{*}=\left(\begin{array}{cc}
y_{11} & y_{12}\\
y_{21} & y_{22}
\end{array}\right)\in M_{2}(\Pi_{*}).
\]
}

In his paper, Zubkov gave the following formulas regarding the $2$-dimensional
generic matrices:\foreignlanguage{american}{
\begin{align*}
[x_{*},y_{*},x_{*},x_{*}] & =\alpha_{*}[x_{*},y_{*}]\,\,\,\,\textrm{for}\,\,\,\,\alpha_{*}=t(x_{*})^{2}-4\cdot\det(x_{*})\\{}
[x_{*},y_{*},x_{*},y_{*}]=[x_{*},y_{*},y_{*},x_{*}] & =\beta_{*}[x_{*},y_{*}]\,\,\,\,\textrm{for}\,\,\,\,\beta_{*}=2t(x_{*}y_{*})-t(x_{*})t(y_{*})\\{}
[x_{*},y_{*},y_{*},y_{*}] & =\gamma_{*}[x_{*},y_{*}]\,\,\,\,\textrm{for}\,\,\,\,\gamma_{*}=t(y_{*})^{2}-4\cdot\det(y_{*})
\end{align*}
where $t(x_{*}),t(y_{*}),t(x_{*}y_{*})$ are the traces of $x_{*},y_{*},x_{*}y_{*}$
respectively. It follows that 
\[
\mathring{L}_{*}^{(n)}=\begin{cases}
\sum_{r+s+t=(n-2)/2}(\mathbb{Q}_{p}\alpha_{*}^{r}\beta_{*}^{s}\gamma_{*}^{t}[x_{*},y_{*}]) & n=\textrm{even}\\
\sum_{r+s+t=(n-3)/2}(\mathbb{Q}_{p}\alpha_{*}^{r}\beta_{*}^{s}\gamma_{*}^{t}[x_{*},y_{*},x_{*}]+\mathbb{Q}_{p}\alpha_{*}^{r}\beta_{*}^{s}\gamma_{*}^{t}[x_{*},y_{*},y_{*}]) & n=\textrm{odd}
\end{cases}
\]
and we have a similar description for $L_{*}^{(n)}$. By showing that
the above sums are actually direct, Zubkov deduces 
\[
\textrm{rank}_{\mathbb{Z}_{p}}(L_{*}^{(n)})=\dim_{\mathbb{Q}_{p}}(\mathring{L}_{*}^{(n)})=\begin{cases}
n(n+2)/8 & n\,\,\,is\,\,\,even\\
(n-1)(n+1)/4 & n\,\,\,is\,\,\,odd.
\end{cases}
\]
}

Now, l\foreignlanguage{american}{et $\hat{F}=\hat{F}_{1},\hat{F}_{2},\hat{F}_{3},...$
be the lower central series of $\hat{F}$, and let $\Upsilon_{n}(\hat{F})=\hat{F}_{n}/\hat{F}_{n+1}$.
Then, the abelain groups $\Upsilon_{n}(\hat{F})\cong\mathbb{Z}_{p}^{l_{2}(n)}$
are free $\mathbb{Z}_{p}$-modules of the rank given by the Witt formula
(See \cite{key-12}, Proposition 2.7)
\begin{equation}
l_{2}(n)=\frac{1}{n}\sum_{m|n}\mu(m)\cdot2^{\frac{n}{m}}\label{eq:l2n}
\end{equation}
where $\mu$ is the Mobius function. }

Denote
\begin{align*}
\omega_{n}(\hat{G}_{*}) & =\ker(\hat{G}_{*}\to GL_{2}(\Pi_{*}/Q_{*n}))\\
\Omega_{n}(\hat{G}_{*}) & =\omega_{n}(\hat{G})/\omega_{n+1}(\hat{G}_{*}).
\end{align*}
\foreignlanguage{american}{It is easy to verify that in general, $\pi_{*}(\hat{F}_{n})\subseteq\omega_{n}(\hat{G}_{*})$.
Hence, for every $n$ we have a natural map
\[
\Upsilon_{n}(\hat{F})\to\Omega_{n}(\hat{G}_{*}).
\]
Now, notice that in general, by Proposition \ref{prop:min-1}, one
can view $\Omega_{n}(\hat{G}_{*})$ as an abelian subgroup of $\mathring{L}_{*}^{(n)}\cap A$
such that
\begin{equation}
\Upsilon_{n}(\hat{F})\twoheadrightarrow L_{*}^{(n)}\leq\Omega_{n}(\hat{G}_{*})\leq\mathring{L}_{*}^{(n)}\cap A_{*}.\label{eq:inequality-1}
\end{equation}
}

\selectlanguage{american}%
Zubkov proves the following proposition:
\selectlanguage{english}%
\begin{prop}
\label{prop:equality}When $p\neq2$, we have $L_{*}^{(n)}=\mathring{L}_{*}^{(n)}\cap A_{*}$. 
\end{prop}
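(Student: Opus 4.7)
The plan is to leverage the explicit bases that Zubkov's identities furnish: $\mathring{L}_*^{(n)}$ is the $\mathbb{Q}_p$-span and $L_*^{(n)}$ is the $\mathbb{Z}_p$-span of $\{\alpha_*^r\beta_*^s\gamma_*^t[x_*,y_*]\}_{r+s+t=(n-2)/2}$ for $n$ even, and of $\{\alpha_*^r\beta_*^s\gamma_*^t[x_*,y_*,x_*],\ \alpha_*^r\beta_*^s\gamma_*^t[x_*,y_*,y_*]\}_{r+s+t=(n-3)/2}$ for $n$ odd. Both are free modules of the same rank, and the inclusion $L_*^{(n)}\subseteq\mathring{L}_*^{(n)}\cap A_*$ is automatic. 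The content is the reverse: a $\mathbb{Q}_p$-linear combination of basis vectors lying inside the subring $A_*\subseteq M_2(\mathbb{Z}_p[[x_{ij},y_{ij}]])$ must already have $\mathbb{Z}_p$-coefficients.

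First I would handle the even case. An element of $\mathring{L}_*^{(n)}\cap A_*$ has the form $c\cdot[x_*,y_*]$ with $c=\sum c_{r,s,t}\alpha_*^r\beta_*^s\gamma_*^t\in\mathbb{Q}_p[\alpha_*,\beta_*,\gamma_*]$. A direct calculation gives $[x_*,y_*]_{12}=(x_{11}-x_{22})y_{12}-(y_{11}-y_{22})x_{12}$, whose coefficients are $\pm 1$, so its reduction is nonzero in $\mathbb{F}_p[x_{ij},y_{ij}]$. Since $\mathbb{Z}_p[[x_{ij},y_{ij}]]$ has the domain $\mathbb{F}_p[[x_{ij},y_{ij}]]$ as residue ring, a standard denominator-clearing argument forces $c\in\mathbb{Z}_p[[x_{ij},y_{ij}]]$. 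The remaining step is to upgrade this to $c\in\mathbb{Z}_p[\alpha_*,\beta_*,\gamma_*]$: assuming the contrary, multiply $c$ by a suitable power of $p$ and reduce mod $p$ to produce a nontrivial $\mathbb{F}_p$-polynomial relation among $\bar\alpha_*,\bar\beta_*,\bar\gamma_*$ in $\mathbb{F}_p[x_{ij},y_{ij}]$, contradicting algebraic independence.

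The algebraic independence of $\bar\alpha_*,\bar\beta_*,\bar\gamma_*$ mod $p$ is the heart of the argument and exactly where the hypothesis $p\neq 2$ enters. The five classical trace/determinant invariants $t(x_*),t(y_*),\det(x_*),\det(y_*),t(x_*y_*)$ are algebraically independent in $\mathbb{F}_p[x_{ij},y_{ij}]$ for every $p$, as the invariant ring of $PGL_2$ acting by simultaneous conjugation on $M_2\times M_2$ has transcendence degree $8-3=5$. Writing $\alpha_*=t(x_*)^2-4\det(x_*)$, $\beta_*=2t(x_*y_*)-t(x_*)t(y_*)$, $\gamma_*=t(y_*)^2-4\det(y_*)$, the Jacobian of $(\alpha_*,\beta_*,\gamma_*)$ with respect to $(\det(x_*),t(x_*y_*),\det(y_*))$ is $\textrm{diag}(-4,2,-4)$ with determinant $32$, a unit in $\mathbb{F}_p$ exactly when $p\neq 2$. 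Hence for odd $p$ we may replace $(\det(x_*),t(x_*y_*),\det(y_*))$ by $(\alpha_*,\beta_*,\gamma_*)$ inside an algebraically independent set, yielding the required independence. I expect this to be the main technical obstacle, and one sees at once why it collapses at $p=2$: the relation $\bar\alpha_*\bar\gamma_*=\bar\beta_*^2$ holds identically in characteristic $2$.

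For $n$ odd, write an element of $\mathring{L}_*^{(n)}\cap A_*$ as $A\cdot[x_*,y_*,x_*]+B\cdot[x_*,y_*,y_*]$ with $A,B\in\mathbb{Q}_p[\alpha_*,\beta_*,\gamma_*]$. Since $A_*$ is closed under the Lie bracket, both $[\,\cdot\,,x_*]$ and $[\,\cdot\,,y_*]$ of this element land in $\mathring{L}_*^{(n+1)}\cap A_*$, and the central pull-out combined with Zubkov's identities produces the even-case elements $(A\alpha_*+B\beta_*)[x_*,y_*]$ and $(A\beta_*+B\gamma_*)[x_*,y_*]$. The already-established even case gives $A\alpha_*+B\beta_*,\ A\beta_*+B\gamma_*\in\mathbb{Z}_p[\alpha_*,\beta_*,\gamma_*]$, whence $(\alpha_*\gamma_*-\beta_*^2)A$ and $(\alpha_*\gamma_*-\beta_*^2)B$ lie in $\mathbb{Z}_p[\alpha_*,\beta_*,\gamma_*]$. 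Identifying this ring with $\mathbb{Z}_p[T_1,T_2,T_3]$ via the algebraic independence proved above, one observes that $T_1T_3-T_2^2$ is a primitive polynomial over $\mathbb{Z}_p[T_1,T_2]$; Gauss's lemma therefore yields $A,B\in\mathbb{Z}_p[\alpha_*,\beta_*,\gamma_*]$, completing the proof.
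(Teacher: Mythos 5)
The paper does not actually supply a proof of this proposition; it simply attributes it to Zubkov \cite{key-3}, so there is no "paper proof" to compare against. Evaluated on its own terms, your argument is correct and self-contained. The key mechanism is sound: the nontrivial inclusion $\mathring{L}_*^{(n)}\cap A_* \subseteq L_*^{(n)}$ reduces, in the even case, to showing that $c\in\mathbb{Q}_p[\alpha_*,\beta_*,\gamma_*]$ with $c\cdot[x_*,y_*]\in M_2(\mathbb{Z}_p[[x_{ij},y_{ij}]])$ forces integral coefficients; the denominator-clearing argument (using that $[x_*,y_*]_{12}$ has a unit coefficient and $\mathbb{F}_p[[x_{ij},y_{ij}]]$ is a domain) plus algebraic independence of $\bar\alpha_*,\bar\beta_*,\bar\gamma_*$ in $\mathbb{F}_p[x_{ij},y_{ij}]$ does exactly this, and your identification of $p\neq 2$ as the condition under which the determinant $32$ of the Jacobian $\partial(\alpha_*,\beta_*,\gamma_*)/\partial(\det x_*,t(x_*y_*),\det y_*)$ is a unit is precisely the right place where the hypothesis enters. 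The reduction of the odd case to the even case via $[\,\cdot\,,x_*]$ and $[\,\cdot\,,y_*]$ together with Zubkov's bracket identities, followed by Cramer's rule and a content/Gauss argument for $\alpha_*\gamma_*-\beta_*^2$, is also correct. This is in the same spirit as the trace-extraction computation the paper carries out for the $p=2$ analogue (Proposition \ref{prop:minimal}), but your route through the Jacobian criterion is more structural and makes the role of the prime $2$ transparent.

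One small point worth tightening: you justify algebraic independence of the five classical invariants $t(x_*),t(y_*),\det(x_*),\det(y_*),t(x_*y_*)$ over $\mathbb{F}_p$ by appealing to the transcendence degree of the $PGL_2$-invariant ring in characteristic $p$; while true, that appeal is heavier than needed and requires some care in small characteristic. A direct check suffices: the $5\times 5$ Jacobian of these five invariants with respect to a suitable choice of five of the eight matrix entries evaluates to $\pm 1$ at an appropriate $\mathbb{F}_p$-point, giving independence uniformly in $p$. With that substitution the proof is fully elementary. Also note that in positive characteristic the direction of the Jacobian criterion you need (nonzero Jacobian implies algebraic independence) holds over $\mathbb{F}_p$ because the base field is perfect; you may wish to say this explicitly.
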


We remark that this is not true when $p=2$, and we will elaborate
about it more in $\mathsection$\ref{sec:char=00003D0}. As a corollary
of Proposition \ref{prop:equality} and equation (\ref{eq:inequality-1})
we have:
\begin{cor}
\label{cor:surjection}When $p\neq2$, we have
\[
\Upsilon_{n}(\hat{F})\cong\mathbb{Z}_{p}^{l_{2}(n)}\twoheadrightarrow\Omega_{n}(\hat{G})\cong L_{*}^{(n)}\cong\mathbb{Z}_{p}^{m(n)}
\]
where $m(n)=n(n+2)/8$ if $n$ is even, and $m(n)=(n-1)(n+1)/4$ if
$n$ is odd.
\end{cor}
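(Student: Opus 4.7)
The plan is to obtain this as an essentially formal consequence of Proposition~\ref{prop:equality} combined with the sandwich relation (\ref{eq:inequality-1}). Recall that (\ref{eq:inequality-1}) reads $\Upsilon_{n}(\hat{F})\twoheadrightarrow L_{*}^{(n)}\leq\Omega_{n}(\hat{G}_{*})\leq\mathring{L}_{*}^{(n)}\cap A_{*}$. When $p\neq2$, Proposition~\ref{prop:equality} collapses the two outer terms by forcing $L_{*}^{(n)}=\mathring{L}_{*}^{(n)}\cap A_{*}$, and therefore the middle term $\Omega_{n}(\hat{G}_{*})$ is squeezed into coinciding with $L_{*}^{(n)}$ as well. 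This already yields the identification $\Omega_{n}(\hat{G}_{*})\cong L_{*}^{(n)}$ together with the surjection $\Upsilon_{n}(\hat{F})\twoheadrightarrow\Omega_{n}(\hat{G}_{*})$ asserted in the corollary.

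It then remains only to compute the two ranks. On the source side, $\Upsilon_{n}(\hat{F})\cong\mathbb{Z}_{p}^{l_{2}(n)}$ is precisely the Witt formula (\ref{eq:l2n}) recalled just above. On the target side, I would use the explicit spanning set for $\mathring{L}_{*}^{(n)}$ in terms of the monomials $\alpha_{*}^{r}\beta_{*}^{s}\gamma_{*}^{t}[x_{*},y_{*}]$ (for $n$ even) and $\alpha_{*}^{r}\beta_{*}^{s}\gamma_{*}^{t}[x_{*},y_{*},x_{*}],\ \alpha_{*}^{r}\beta_{*}^{s}\gamma_{*}^{t}[x_{*},y_{*},y_{*}]$ (for $n$ odd) that Zubkov extracts from his three Lie-commutator identities. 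Granted that these sums are direct, one simply counts the number of non-negative integer triples $(r,s,t)$ with fixed sum $(n-2)/2$ or $(n-3)/2$: this gives $\binom{(n-2)/2+2}{2}=n(n+2)/8$ when $n$ is even, and $2\binom{(n-3)/2+2}{2}=(n-1)(n+1)/4$ when $n$ is odd. Since the stated spanning monomials have $\mathbb{Z}_{p}$-integral coefficients, the same count yields that $L_{*}^{(n)}$ is free of rank $m(n)$ over $\mathbb{Z}_{p}$, as required.

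The main content of the corollary really lies in Proposition~\ref{prop:equality}; once that is granted, there is no genuine obstacle, only bookkeeping. The one step in the bookkeeping that deserves care is the directness of the decomposition of $\mathring{L}_{*}^{(n)}$ into the displayed monomials, since without it one would only obtain an upper bound for the rank. This step (together with Proposition~\ref{prop:equality} itself) is exactly where the hypothesis $p\neq2$ is used, and the failure of the corresponding statement for $p=2$ is precisely the phenomenon that the paper's main theorem is designed to exploit in $\mathsection$\ref{sec:char=00003D2}.
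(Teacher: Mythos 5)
Your argument is correct and is exactly the intended one: the paper states Corollary~\ref{cor:surjection} with no separate proof precisely because it falls out of the sandwich (\ref{eq:inequality-1}) once Proposition~\ref{prop:equality} collapses its outer terms, together with the rank count $\binom{(n-2)/2+2}{2}=n(n+2)/8$ (even $n$) and $2\binom{(n-3)/2+2}{2}=(n-1)(n+1)/4$ (odd $n$) supplied by the directness of the displayed sums. One small correction to your closing remark: the directness of those sums is \emph{not} where $p\neq2$ enters; as the analogous argument in $\mathsection$\ref{subsec:structure} shows, directness is proved over $\mathbb{Q}_{p}$ and holds for all $p$ (indeed Corollary~\ref{cor:formula} gives the same rank formula in the $p=2$ setting), so the only input that genuinely uses $p\neq2$ is Proposition~\ref{prop:equality}, i.e.\ the equality $L_{*}^{(n)}=\mathring{L}_{*}^{(n)}\cap A_{*}$ — and its failure at $p=2$ is what Proposition~\ref{prop:minimal} quantifies.
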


\selectlanguage{american}%
Now, let $p\neq2$. For $n=6$ we have $l_{2}(6)=9$ and $m(6)=6$\foreignlanguage{english}{.
Hence, Corollary \ref{cor:surjection} implies that there exists an
element $g\in\hat{F}_{6}-\hat{F}_{7}$ such that $\deg(\min(\pi_{*}(g)))\geq7$.
By Corollary \ref{cor:surjection}, there exists an element $h_{7}\in\hat{F}_{7}$
such that 
\[
\min(\pi_{*}(g))=-\min(\pi_{*}(h_{7})).
\]
Hence, $gh_{7}\in\hat{F}_{6}-\hat{F}_{7}$ and $\deg(\min(\pi_{*}(gh_{7})))\geq8$.
Again, by Corollary \ref{cor:surjection}, there exists an element
$h_{8}\in\hat{F}_{8}$ such that 
\[
\min(\pi_{*}(gh_{7}))=-\min(\pi_{*}(h_{8})).
\]
Hence, $gh_{7}h_{8}\in\hat{F}_{6}-\hat{F}_{7}$ and $\deg(\min(\pi_{*}(gh_{7}h_{8})))\geq9$.
Continuing with this algorithm one obtains a sequence of elements
$h_{i}\in\hat{F}_{i}$ such that 
\[
w\in\lim_{n\to\infty}gh_{7}h_{8}\cdot...\cdot h_{n}\in\hat{F}_{6}-\hat{F}_{7}
\]
coverges, but $\pi_{*}(w)=1\in\hat{G}_{*}$. In other words, $1\neq w\in\hat{F}$
serves as a pro-$p$ identity of $2\times2$ linear groups over pro-$p$
rings by Theorem \ref{thm:universal-1}.}
\selectlanguage{english}%

\section{\label{sec:char=00003D2}The case $\textrm{char}(\Delta)=2$}

\subsection{\label{subsec:universal 2}The Universal Representation}

\selectlanguage{american}%
In this section we use similar notation as in Section \ref{sec:Zubkov}
for some objects that are slightly different, but play a similar role
in this section. 

Let $x_{i,j}$ and $y_{i,j}$ for $1\leq i,j\leq2$ be free commuting
variables, and let
\[
\Lambda_{*}=(\mathbb{Z}/2\mathbb{Z})\left\langle \left\langle x_{i,j},y_{i,j}\,|\,1\leq i,j\leq2\right\rangle \right\rangle 
\]
be the associative ring (with identity) of formal power series in
$x_{i,j}$ and $y_{i,j}$ over $\mathbb{Z}/2\mathbb{Z}$. Every element
in $\Lambda_{*}$ can be written as $f=\sum_{i=0}^{\infty}f_{i}$
where $f_{i}$ is homogeneous of degree $i$. The finite index ideals
\begin{equation}
\Lambda_{*}\vartriangleright P_{*n}=\left\{ f=\sum_{i=0}^{\infty}f_{i}\in\Lambda_{*}\,|\,f_{0},...,f_{n-1}=0\,,\deg(f_{i})=i\right\} \label{eq:ideals}
\end{equation}
serve as a basis of neighborhoods of zero for the profinite topology
of $\Lambda_{*}$, making $\Lambda_{*}$ a pro-$2$ ring, with $P_{*1}$
as its maximal ideal. 

Endowed with the topology that comes from the congruence ideals 
\[
M_{2}(\Lambda_{*},P_{*n})=\ker(M_{2}(\Lambda_{*})\to M_{2}(\Lambda_{*}/P_{*n}))
\]
as a basis of neighborhoods of zero, $M_{2}(\Lambda_{*})$ is a profinite
ring. It is easy to check that this topology makes the group $1+M_{2}(\Lambda_{*},P_{*1})$
a pro-$2$ group. 

Denoting the \uline{generic matrices }
\[
x_{*}=\left(\begin{array}{cc}
x_{11} & x_{12}\\
x_{21} & x_{22}
\end{array}\right),\,\,\,\,y_{*}=\left(\begin{array}{cc}
y_{11} & y_{12}\\
y_{21} & y_{22}
\end{array}\right)\in M_{2}(\Lambda_{*},P_{*1}).
\]
we get a natural (continuous) map $\pi_{*}:\hat{F}\to1+M_{2}(\Lambda_{*},P_{*1})$
defined by
\[
X\mapsto1+x_{*},\,\,\,\,\,\,Y\mapsto1+y_{*}
\]
where as before $\hat{F}=\left\langle \left\langle X,Y\right\rangle \right\rangle $
is the free pro-$2$ group generated by $X,Y$. We denote by $G_{*}=\left\langle 1+x_{*},1+y_{*}\right\rangle \subseteq1+M_{2}(\Lambda_{*},P_{*1})$
the (discrete) subgroup generated by $1+x_{*}$ and $1+y_{*}$, and
$\hat{G}_{*}\subseteq1+M_{2}(\Lambda_{*},P_{*1})$ its closure in
$1+M_{2}(\Lambda_{*},P_{*1})$. Adopting Zubkov's terminology, $\pi_{*}:\hat{F}\twoheadrightarrow\hat{G}_{*}$
is called \uline{the universal representation}.
\selectlanguage{english}%
\begin{prop}
\label{prop:universal}Let $\Delta$ be any pro-$2$ ring with $char(\Delta)=2$.
Then, every $1\neq w(X,Y)\in\ker\pi_{*}$ is a pro-$2$ identity of
$GL_{2}^{1}(\Delta)$. 
\end{prop}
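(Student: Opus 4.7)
The plan is to imitate Zubkov's proof of Theorem~\ref{thm:universal-1} and show that $\pi_*$ is \emph{literally} universal: every continuous homomorphism $\phi:\hat{F}\to GL_2^1(\Delta)$ factors as $\phi=\tilde{\Phi}\circ\pi_*$ for some continuous group homomorphism $\tilde{\Phi}:\hat{G}_*\to GL_2^1(\Delta)$. Once this is established the proposition is immediate, since for any $w\in\ker\pi_*$ we obtain $\phi(w)=\tilde{\Phi}(\pi_*(w))=\tilde{\Phi}(1)=1$.

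To construct $\tilde{\Phi}$, I would first unpack $\phi$ by writing $\phi(X)=1+a$, $\phi(Y)=1+b$ with $a,b\in M_{2}(I)$, where $I$ is the maximal ideal of $\Delta$. Next I would define a ring homomorphism $\Phi:\Lambda_*\to\Delta$ by the substitution $x_{ij}\mapsto a_{ij}$, $y_{ij}\mapsto b_{ij}$. The hypothesis $\textrm{char}(\Delta)=2$ enters precisely here: it ensures that the prime subring $\mathbb{F}_2$ of $\Lambda_*$ admits the unique ring map into $\Delta$, so that $\Phi$ is defined on coefficients. Because $a_{ij},b_{ij}\in I$, a degree-$n$ monomial in these entries lies in $I^{n}$, and since $\Delta=\varprojlim_n\Delta/I^{n}$ is $I$-adically complete, an arbitrary power series in $\Lambda_*$ evaluates to a convergent sum in $\Delta$. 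The resulting $\Phi$ satisfies $\Phi(P_{*n})\subseteq I^{n}$ and is therefore continuous with respect to the topologies on $\Lambda_*$ and $\Delta$.

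Applying $M_2(-)$ and restricting to principal congruence subgroups, $\Phi$ induces a continuous group homomorphism
\[
\tilde{\Phi}:1+M_2(\Lambda_*,P_{*1})\longrightarrow 1+\ker\bigl(M_2(\Delta)\to M_2(\Delta/I)\bigr)=GL_2^1(\Delta),
\]
with $\tilde{\Phi}(1+x_*)=\phi(X)$ and $\tilde{\Phi}(1+y_*)=\phi(Y)$. Its restriction to the closed subgroup $\hat{G}_*$ produces the desired $\tilde{\Phi}$, and since the continuous homomorphisms $\tilde{\Phi}\circ\pi_*$ and $\phi$ agree on the topological generators $X,Y$ of $\hat{F}$, they agree on all of $\hat{F}$, finishing the argument.

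The only real technical point is verifying that the substitution $x_{ij}\mapsto a_{ij}$, $y_{ij}\mapsto b_{ij}$ genuinely extends to a continuous ring homomorphism $\Lambda_*\to\Delta$. This is the standard universal property of formal power series over a characteristic-$2$ complete local Noetherian ring once the variables are sent into the maximal ideal; convergence is forced by the fact that the degree-$n$ part of $\Lambda_*$ lands in $I^n$. The characteristic-$2$ assumption is used exclusively at this step, and without it $\mathbb{F}_2$ need not map to $\Delta$ at all, so the substitution itself would fail to exist.
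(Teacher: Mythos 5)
Your proof is correct and follows essentially the same strategy as the paper: construct a ring homomorphism $\Lambda_*\to\Delta$ by the generic substitution $x_{ij}\mapsto a_{ij}$, $y_{ij}\mapsto b_{ij}$, observe that this is where $\operatorname{char}(\Delta)=2$ is needed (so that $\mathbb{F}_2$ maps to $\Delta$), and deduce the factorization $\phi=\tilde\Phi\circ\pi_*$ by comparing on topological generators. The only cosmetic difference is that the paper first reduces to finite $\Delta$ (so that $I^{n}=0$ and the substitution factors through the polynomial quotient $\Lambda_*/P_{*n}$), whereas you invoke $I$-adic completeness of $\Delta$ directly to get convergence of the power series; both are standard and equivalent ways of handling the same point.
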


\begin{rem}
For the purpose of the present paper, we stated Proposition \ref{prop:universal}
for the special case where $d=p=2$. However, as will be seen from
the proof below, under appropriate generalization of the universal
representation, one can state a similar proposition for any dimension
$d$ and prime $p$. 
\end{rem}

\begin{rem}
Notice that if we omit the assumption $char(\Delta)=2$, the proposition
is not necessarily true. Only elements that lie in the kernel of the
universal representation over $\mathbb{Z}_{2}$ (as defined by Zubkov),
and not over $\mathbb{Z}/2\mathbb{Z}$ (as we defined), will be identities
of $GL_{2}^{1}(\Delta)$ for any pro-$2$ ring $\Delta$.
\end{rem}

\begin{proof}
(of Proposition \ref{prop:universal}) Let $\sigma:\hat{F}\to GL_{2}^{1}(\Delta)$
be a (continuous) map. By assumption, $\Delta=\underleftarrow{\lim}\Delta_{n}$,
where $\Delta_{n}$ are commutative finite local rings with $char(\Delta_{n})=2$.
Hence, without loss of generality, we can assume that $\Delta$ is
finite. Let $I$ be the maximal ideal of $\Delta,$ and let $n$ be
a natural number satisfying $I^{n}=0$. By definition, we can write
$\sigma(X)=1+\alpha$ and $\sigma(Y)=1+\beta$ for some $\alpha,\beta\in\ker(M{}_{2}(\Delta)\to M_{2}(\Delta/I))$.
Hence, as $x_{*},y_{*}$ are generic matrices over $\mathbb{Z}/2\mathbb{Z}$,
there is a homomorphism (the map in the right is not necessarily surjective)
\[
\tau:\Lambda_{*}\twoheadrightarrow\Lambda_{*}/P_{*n}\to\Delta
\]
defined by sending the entries of $x_{*}$ and $y_{*}$ to the corresponding
entries of $\alpha$ and $\beta$. Hence, $\tau$ induces a continuous
homorphism $\tau_{*}:\hat{G}_{*}\to GL_{2}^{1}(\Delta)$ such that
$\tau_{*}\circ\pi_{*}=\sigma$, implying that $\ker\sigma\subseteq\ker\pi_{*}$,
as required.
\end{proof}
Proposition \ref{prop:universal} shows that \foreignlanguage{american}{Theorem
\ref{thm:main-main-1}} boils down to the following theorem:
\selectlanguage{american}%
\begin{thm}
\label{thm:main}The universal representation $\pi_{*}:\hat{F}\twoheadrightarrow\hat{G}_{*}$
is not injective.
\end{thm}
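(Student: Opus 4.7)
The plan is to follow Zubkov's algorithmic framework from Section~\ref{sec:Zubkov} with essential modifications dictated by characteristic~$2$. The key observation I would establish first is that for $g \in \hat{F}_k$, the min-degree part $\min(\pi_*(g))$ remains in the ordinary Lie algebra $L_*$ generated by $x_*, y_*$ over $\mathbb{F}_2$ inside $M_2(\Lambda_*)$: a $k$-fold group commutator in $\hat{F}$ maps, at the min-degree level, to a $k$-fold Lie bracket of matrices in $M_2(\Lambda_*)$, and Cayley--Hamilton $x^2 = t(x) x + \det(x) I$ together with the char-$2$ trace identities reduces this to an element of $L_*$. This serves as the char-$2$ substitute for Proposition~\ref{prop:min-1} needed for the iteration and bypasses the Baker--Campbell--Hausdorff calculation (unavailable over $\mathbb{F}_2$).

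Next, I would bound $\dim_{\mathbb{F}_2} L_*^{(n)}$ using the char-$2$ specialization of Zubkov's commutator identities, which read $[x_*, y_*, x_*, x_*] = t(x_*)^2 [x_*, y_*]$, $[x_*, y_*, x_*, y_*] = [x_*, y_*, y_*, x_*] = t(x_*) t(y_*) [x_*, y_*]$, and $[x_*, y_*, y_*, y_*] = t(y_*)^2 [x_*, y_*]$. Zubkov's decomposition then expresses $L_*^{(n)}$ as an $\mathbb{F}_2$-span of monomials $\alpha_*^r \beta_*^s \gamma_*^t$ multiplied by a short Lie bracket, where $\alpha_* = t(x_*)^2$, $\beta_* = t(x_*) t(y_*)$, $\gamma_* = t(y_*)^2$. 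Since all coefficients lie in the polynomial subring $\mathbb{F}_2[t(x_*), t(y_*)]$ and satisfy the further relation $\alpha_* \gamma_* = \beta_*^2$, one obtains $\dim_{\mathbb{F}_2} L_*^{(n)} = O(n)$. By contrast, the Witt formula gives $\dim_{\mathbb{F}_2}(\Upsilon_n(\hat{F})/2\Upsilon_n(\hat{F})) = l_2(n) \sim 2^n/n$, exponentially larger.

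This dimension gap drives Zubkov's iteration: for $n$ large the surjection $\Upsilon_n(\hat{F})/2\Upsilon_n(\hat{F}) \twoheadrightarrow L_*^{(n)}$ has a nontrivial kernel, furnishing some $g \in \hat{F}_6 \setminus \hat{F}_7$ with $\deg \min(\pi_*(g))$ pushed arbitrarily high; at each subsequent stage the surjection provides $h_n \in \hat{F}_n$ whose min-part cancels that of the running product, and the limit $w = \lim_n g h_7 h_8 \cdots h_n$ is a nontrivial element of $\ker \pi_*$. As a concrete instance of the cancellation mechanism, the identity $[x_*^2, y_*^2] = t(x_*) t(y_*) [x_*, y_*] = [x_*, y_*, x_*, y_*]$ (immediate from Cayley--Hamilton in char~$2$) witnesses that $[X^2, Y^2]_H$ and $[[[X, Y]_H, X]_H, Y]_H$ share a degree-$4$ min-part, so their quotient already belongs to $\hat{F}_2$ with $\pi_*$-image in $\omega_5(\hat{G}_*)$. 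The hardest step will be the Lie-ness of the min-part in Step~$1$---requiring careful use of Cayley--Hamilton to show that iterated matrix commutators collapse into $L_*$---together with the verification of the surjectivity $\Upsilon_n(\hat{F})/2\Upsilon_n(\hat{F}) \twoheadrightarrow L_*^{(n)}$; convergence of the iteration on the pro-$2$ ring $\Lambda_*$ is handled via Hilbert's basis theorem and the Artin--Rees lemma.
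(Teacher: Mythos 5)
The proposal rests on a claim in Step~1 that is false, and the falsity is precisely the reason the paper cannot run Zubkov's argument for $p=2$ and is forced into a completely different route.

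You assert that for $g\in\hat F_k$ one has $\min(\pi_*(g))\in L_*$, the Lie algebra generated by $x_*,y_*$ over $\mathbb{F}_2$, and that this can be extracted via Cayley--Hamilton as a substitute for the BCH computation of Proposition~\ref{prop:min-1}. This fails already at $k=2$. Take $g=([X,Y]_{\hat F})^2\in\hat F_2$. Since $\pi_*([X,Y]_{\hat F})=1+[x_*,y_*]+(\text{degree}\ge 3)$, squaring over $\mathbb{F}_2$ kills the linear term and gives
\[
\pi_*(g)=1+[x_*,y_*]^2+(\text{degree}\ge 5),\qquad\text{so}\quad \min(\pi_*(g))=[x_*,y_*]^2.
\]
But $t([x_*,y_*])=0$, so by Cayley--Hamilton $[x_*,y_*]^2=\det([x_*,y_*])\cdot 1$, a nonzero \emph{scalar} matrix. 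On the other hand $L_*^{(4)}$ is spanned over $\mathbb{F}_2$ by $f\cdot[x_*,y_*]$ with $f$ quadratic in $t(x_*),t(y_*),t(x_*y_*)$, and no nonzero element of that form is scalar. Hence $\min(\pi_*(g))\notin L_*^{(4)}$, and more generally the $2$-power map ruins the ``minimal part is Lie'' principle over $\mathbb{F}_2$: for $g=1+a+\cdots$, $g^2=1+a^2+\cdots$ pushes the minimal component into a square, not a bracket. Proposition~\ref{prop:min-1} genuinely requires the exponential/BCH change of variables, hence division by $p$, and has no naive characteristic-$2$ analogue.

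Your Steps~2 and~3 are internally sound (the identity $\alpha_*\gamma_*=\beta_*^2$ in characteristic $2$ does collapse $\dim_{\mathbb{F}_2}L_*^{(n)}$ to $O(n)$, against the exponential Witt bound), but they are built on Step~1: the whole point of comparing $\Upsilon_n(\hat F)$ with $L_*^{(n)}$ and cancelling minimal parts is that one needs $\Omega_n(\hat G_*)$ to sit inside (a lattice whose rank is controlled by) $L_*^{(n)}$. Once $\min(\pi_*(g))$ can escape $L_*$, the surjection $\Upsilon_n\twoheadrightarrow\Omega_n(\hat G_*)$ gives you no dimension estimate on the target, and the cancellation step in the iteration has no reason to succeed. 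This is exactly the dichotomy the paper spells out in \S\ref{sec:char=00003D0}: the inclusion $L_*^{(n)}\le\Omega_n(\hat G_*)$ is an equality for $p\neq 2$ (Proposition~\ref{prop:equality}) but is strict for $p=2$ (Proposition~\ref{prop:minimal} and Lemma~\ref{prop:technical}). To get around it, the paper does not try to rescue the Lie-theoretic picture at all; instead it passes to pseudo-generic matrices with $\det=0$ (\S\ref{subsec:reduction}), restricts to $\hat G''$ where all terms live in the ideal $J$ generated by $[x,y]x,[x,y]y,[x,y]^2,[x,y]xy$ over the free polynomial ring $T$, tracks the refined invariants $n(g),\overline n(g),\overline i(g)$ coming from the $t(x)$-adic filtration of $J$ rather than mere degree, and produces the cancelling elements via Noetherianity (Artin--Rees applied to the $\breve S$-module $N$ of minimal parts). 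None of this is present in, or replaceable by, the mechanism you propose.
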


\subsection{\label{subsec:reduction}Reduction of Theorem \ref{thm:main}}

\selectlanguage{english}%
We want to replace the generic matrices $x_{*},y_{*}$ by matrices
$x,y$ that satisfy the condition $\det(x)=\det(y)=0$. 

Let $h$ be a rational function in $x_{i,j},y_{i,j}$ over $\mathbb{Z}/2\mathbb{Z}$,
and denote the discrete ring
\[
\Lambda_{\#}=(\mathbb{Z}/2\mathbb{Z})\left\langle x_{i,j},y_{i,j}\,|\,1\leq i,j\leq2\right\rangle \leq\Lambda_{*}.
\]
We say that $h$ is \uline{homogeneous} if there exist homogeneous
polynomials $f,g\in\Lambda_{\#}$, $g\neq0$, such that $h=\frac{f}{g}$.
In this case we define 
\[
\deg\left(h\right)=\deg(f)-\deg(g).
\]
Clearly, $\deg\left(h\right)$ is well defined, i.e. if $\frac{f_{1}}{g_{1}}=\frac{f_{2}}{g_{2}}$
then $\deg\left(\frac{f_{1}}{g_{1}}\right)=\deg\left(\frac{f_{2}}{g_{2}}\right)$.
Now, consider the set of all power series of the from
\[
\mathbb{F}=\left\{ \sum_{i=m}^{\infty}h_{i}\,|\,\,h_{i}\,\,\textrm{is\,\,rational\,\,homogeneous\,\,of}\,\,\deg\left(h_{i}\right)=i,\,\,m\in\mathbb{Z}\right\} .
\]
It is easy to see that $\mathbb{F}$ has a natural ring structure.
Moreover, let $0\neq h\in\mathbb{F}$, and write $h=\sum_{i=m}^{\infty}h_{i}$
where $h_{m}\neq0$. Then
\[
h=h_{m}(1+a)\,\,\,\,\textrm{where}\,\,\,\,a=h_{m}^{-1}\sum_{i=m+1}^{\infty}h_{i}.
\]
As $h_{m}$ and $1+a$ are invertible in $\mathbb{F}$, we obtain
that $h$ is invertible as well. It follows that $\mathbb{F}$ is
actually a field that contains $\Lambda_{*}$. \foreignlanguage{american}{Consider
now the following quadratic polynomials in the variables $\mu$ and
$\nu$ over the field $\mathbb{F}$:
\begin{align*}
p(\mu) & =\det((1+x_{*})(1+\mu\cdot1)-1)\\
 & =(1+t(x_{*})+\det(x_{*}))\cdot\mu^{2}+t(x_{*})\cdot\mu+\det(x_{*})\\
\\
q(\nu) & =\det((1+y_{*})(1+\nu\cdot1)-1)\\
 & =(1+t(y_{*})+\det(y_{*}))\cdot\nu^{2}+t(y_{*})\cdot\nu+\det(y_{*})
\end{align*}
where $t(x_{*})$ and $t(y_{*})$ are the traces of $x_{*}$ and $y_{*}$
respectively. }
\begin{lem}
\label{lem:pro-2 ring}Let $\mu_{1},\mu_{2}$ be quadratic elements
over $\mathbb{F}$ such that 
\begin{equation}
\mu_{i}^{2}\in\mu_{i}\cdot(P_{*1}-P_{*2})+P_{*2}\,\,,\,\,\,\,i=1,2.\label{eq:condition}
\end{equation}
where $P_{*1},P_{*2}\vartriangleleft\Lambda_{*}$ are the ideals defined
in (\ref{eq:ideals}). Then, the set
\begin{equation}
\Lambda=\Lambda_{*}+\mu_{1}\cdot\Lambda_{*}+\mu_{2}\cdot\Lambda_{*}+\mu_{1}\mu_{2}\cdot\Lambda_{*}\label{eq:ring}
\end{equation}
is a pro-$2$ ring with a maximal ideal $P=P_{*1}+\mu_{1}\cdot\Lambda_{*}+\mu_{2}\cdot\Lambda_{*}+\mu_{1}\mu_{2}\cdot\Lambda_{*}$.
Moreover, for every $n\geq0$ we have $P^{n}\cap\Lambda_{*}=P_{*n}$.
In particular, the topology that $\Lambda$ induces on $\Lambda_{*}$
coincides with the topology of $\Lambda_{*}$ defined by $P_{*n}$. 
\end{lem}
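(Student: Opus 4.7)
The plan is to verify the four pieces of the claim in order: closure of $\Lambda$ under multiplication, the ideal-and-residue-field statement for $P$, the crucial intersection identity $P^{n}\cap\Lambda_{*}=P_{*n}$, and finally the remaining pro-$2$ properties (Noetherian, complete, local, finite residue field).

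For closure, the only non-trivial products among the spanning set $\{1,\mu_{1},\mu_{2},\mu_{1}\mu_{2}\}$ are $\mu_{i}^{2}$ and $\mu_{i}\cdot(\mu_{1}\mu_{2})$. Writing $\mu_{i}^{2}=a_{i}\mu_{i}+b_{i}$ with $a_{i}\in P_{*1}\setminus P_{*2}$ and $b_{i}\in P_{*2}$ reduces both back into the spanning set, so $\Lambda$ is a commutative subring of $\mathbb{F}$ and $P$ is an ideal. The quotient $\Lambda/P$ is generated over $\Lambda_{*}/P_{*1}=\mathbb{F}_{2}$ by the classes of $\mu_{1},\mu_{2},\mu_{1}\mu_{2}$, each of which lies in $P$; so $\Lambda/P\cong\mathbb{F}_{2}$, provided $1\notin P$, which is the $n=1$ instance of the intersection identity.

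The main obstacle is the identity $P^{n}\cap\Lambda_{*}=P_{*n}$. The direction $\supseteq$ is immediate from $P_{*n}=P_{*1}^{n}$ (which holds in the formal power series ring $\Lambda_{*}$ because $P_{*1}$ is generated by the finitely many variables $x_{i,j},y_{i,j}$) together with $P_{*1}\subseteq P$. For $\subseteq$, I would use valuation theory. The degree function is a well-defined discrete valuation $v$ on the field $\mathbb{F}$ whose restriction to $\Lambda_{*}$ reads off the smallest degree of a non-zero homogeneous term. Since $\mu_{1},\mu_{2}$ are algebraic over $\mathbb{F}$, $v$ extends to a valuation $\tilde{v}$ on $\mathbb{F}(\mu_{1},\mu_{2})$. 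Applied to $\mu_{i}$, which satisfies $T^{2}+a_{i}T+b_{i}=0$, the Newton polygon with vertices $(0,v(b_{i})),(1,1),(2,0)$ and $v(b_{i})\geq2$ has all slopes of absolute value at least $1$, so $\tilde{v}(\mu_{i})\geq1$ regardless of which extension of $v$ is chosen. Hence every element of $P$ satisfies $\tilde{v}\geq1$, so $P^{n}\subseteq\{\tilde{v}\geq n\}$, and intersecting with $\Lambda_{*}$ produces exactly $\{\lambda\in\Lambda_{*}:v(\lambda)\geq n\}=P_{*n}$.

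Once the intersection identity is in hand, the remaining pro-$2$ properties follow cleanly. $\Lambda$ is finitely generated as a $\Lambda_{*}$-module, so Noetherianity of $\Lambda_{*}$ carries over: every ideal of $\Lambda$ is a $\Lambda_{*}$-submodule and satisfies the ACC. For completeness, one checks that $P^{n}$ coincides with the multiplicative filtration $F_{n}:=P_{*n}+\mu_{1}P_{*n-1}+\mu_{2}P_{*n-1}+\mu_{1}\mu_{2}P_{*n-2}$ (one inclusion from the quadratic relations and $\mu_i\in P$, the other from $F_{n}F_{m}\subseteq F_{n+m}$), yielding the sandwich $P_{*n}\Lambda\subseteq P^{n}\subseteq P_{*n-2}\Lambda$ for $n\geq2$; hence the $P$-adic topology coincides with the $P_{*1}\Lambda$-adic one, in which $\Lambda$ is complete by the standard fact that finitely generated modules over a complete Noetherian ring are complete. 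Any $\lambda\notin P$ lies in $1+P$ and is therefore invertible via the geometric series $\sum(1-\lambda)^{k}$, so $P$ is the unique maximal ideal. Since $\Lambda/P\cong\mathbb{F}_{2}$ is a finite field of characteristic $2$, $\Lambda$ is a pro-$2$ ring; the concluding ``in particular'' is a direct restatement of $P^{n}\cap\Lambda_{*}=P_{*n}$.
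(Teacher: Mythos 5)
Your proof is correct and takes a genuinely different route from the paper. The paper proves the intersection identity $P^{n}\cap\Lambda_{*}=P_{*n}$ by a three-way case analysis on $[\mathbb{F}(\mu_{1},\mu_{2}):\mathbb{F}]$: (i) $\mu_{1},\mu_{2}$ independent over $\mathbb{F}$, so the sum defining $\Lambda$ is direct; (ii) $\mu_{1},\mu_{2}\in\mathbb{F}$, handled by the degree on $\mathbb{F}$; (iii) $\mu_{1}\notin\mathbb{F}$, $\mu_{2}=\alpha+\beta\mu_{1}$, which needs an explicit computation to show $\alpha,\beta$ have the right degree and produce a direct-sum decomposition. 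You instead observe that the degree function on $\mathbb{F}$ is a discrete valuation, extend it to $\mathbb{F}(\mu_{1},\mu_{2})$, and read off $\tilde v(\mu_{i})\geq1$ from the Newton polygon of $T^{2}+a_{i}T+b_{i}$ with $v(a_{i})=1$, $v(b_{i})\geq2$. This handles all three cases at once (if $\mu_{i}\in\mathbb{F}$ the Newton polygon argument still controls $v(\mu_{i})$), which is cleaner and more conceptual than the paper's case split, at the cost of importing the standard facts that valuations extend to algebraic extensions and that Newton polygons bound root valuations. Your treatment of the remaining pro-$2$ properties (Noetherian via finite generation over $\Lambda_{*}$, completeness via the filtration sandwich $P_{*n}\Lambda\subseteq P^{n}\subseteq P_{*n-2}\Lambda$ and the fact that finitely generated modules over a complete Noetherian local ring are complete, locality via geometric series) is also more explicit than the paper's terse ``the lemma follows easily,'' and the logical ordering is sound: you flag that $1\notin P$ is needed for the residue-field computation and correctly defer it to the intersection identity, which you establish independently by valuation theory.
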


\begin{proof}
By (\ref{eq:condition}) it is clear that $\Lambda$ is indeed a ring,
and that $P\vartriangleleft\Lambda$ is an ideal. For an element $h=\sum_{i=m}^{\infty}h_{i}\in\mathbb{F}$,
$m\in\mathbb{Z}$, such that $h_{i}$ are homogeneous of degree $i$
and $h_{m}\neq0$, we denote $\min(h)=h_{m}$. We have three cases
to consider:\\

\uline{Case 1:}\textbf{ }$\mu_{1}\notin\mathbb{F}$ and $\mu_{2}\notin\mathbb{F}(\mu_{1})$.
In this case, the sum in (\ref{eq:ring}) is direct, and all the properties
in the lemma follow easily.\\

\uline{Case 2:} $\mu_{1},\mu_{2}\in\mathbb{F}$. From (\ref{eq:condition})
we get that for each $i=1,2$ we either have $\mu_{i}=0$ or $\deg(\min(\mu_{i}))\geq1$.
Hence, the profinite topology on $\Lambda$ is induced by the degree
on $\mathbb{F}$ and the lemma follows easily.\\

\uline{Case 3:} $\mu_{1}\notin\mathbb{F}$ and $\mu_{2}\in\mathbb{F}(\mu_{1})$.
In this case we can write
\[
\mu_{2}=\alpha+\beta\cdot\mu_{1}\,\,,\,\,\,\,\mu_{1}^{2}=\gamma\mu_{1}+\delta
\]
for some $\alpha,\beta\in\mathbb{F}$, $\gamma\in P_{*1}-P_{*2}$
and $\delta\in P_{*2}$. Hence, (\ref{eq:condition}) implies
\[
\alpha^{2}+\beta^{2}\cdot(\gamma\mu_{1}+\delta)=\alpha^{2}+\beta^{2}\cdot\mu_{1}^{2}=\mu_{2}^{2}\in(\alpha+\beta\cdot\mu_{1})\cdot P_{*1}+P_{*2}.
\]
Hence ({*}) $\alpha^{2}\in\beta^{2}\delta+\alpha P_{*1}+P_{*2}$ and
$\beta^{2}\gamma\in\beta\cdot P_{*1}$. As $\gamma\in P_{*1}-P_{*2}$,
the latter implies that either $\beta=0$ or $\deg(\min(\beta))\geq0$.
If $\beta\neq0$ with $\deg(\min(\beta))\geq0$ we get that ({*})
implies $\deg(\min(\alpha))\geq1$ or $\alpha=0$. Writing
\begin{align*}
\Lambda & =\Lambda_{*}+\mu_{1}\cdot\Lambda_{*}+(\alpha+\beta\cdot\mu_{1})\cdot\Lambda_{*}+\mu_{1}(\alpha+\beta\cdot\mu_{1})\cdot\Lambda_{*}\\
 & =(\Lambda_{*}+\alpha\cdot\Lambda_{*}+\beta\delta\cdot\Lambda_{*})\oplus\mu_{1}(\Lambda_{*}+\beta\cdot\Lambda_{*}+\alpha\cdot\Lambda_{*}+\beta\gamma\cdot\Lambda_{*})
\end{align*}
the lemma follows again easily. The case $\beta=0$ is similar.
\end{proof}
Now, let $\overline{\mu}$ and $\overline{\nu}$ be roots of $p(\mu)$
and $q(\nu)$ respectively. Notice that the elements $1+t(x_{*})+\det(x_{*})$
and $1+t(y_{*})+\det(y_{*})$ are invertible over $\Lambda_{*}$,
and we have
\begin{align*}
\overline{\mu}^{2} & =(1+t(x_{*})+\det(x_{*}))^{-1}\cdot(\overline{\mu}\cdot t(x_{*})+\det(x_{*}))\in\overline{\mu}\cdot(P_{*1}-P_{*2})+P_{*2}\\
\overline{\nu}^{2} & =(1+t(y_{*})+\det(y_{*}))^{-1}\cdot(\overline{\nu}\cdot t(y_{*})+\det(y_{*}))\in\overline{\nu}\cdot(P_{*1}-P_{*2})+P_{*2}.
\end{align*}
Hence, by Lemma \ref{lem:pro-2 ring}, the ring $\Lambda=\Lambda_{*}+\overline{\mu}\cdot\Lambda_{*}+\overline{\nu}\cdot\Lambda_{*}+\overline{\mu}\cdot\overline{\nu}\cdot\Lambda_{*}$
is a pro-$2$ ring with a maximal ideal $P=P_{*1}+\overline{\mu}\cdot\Lambda_{*}+\overline{\nu}\cdot\Lambda_{*}+\overline{\mu}\cdot\overline{\nu}\cdot\Lambda_{*}$.
In addition, $P^{n}\cap\Lambda_{*}=P_{*n}$ for any $n\geq1$ so\foreignlanguage{american}{
the topology of $\Lambda_{*}\subseteq\Lambda$ that is induced by
the topology of $\Lambda$ coincides with the topology of $\Lambda_{*}$
defined by the basis $P_{*n}$. }
\begin{rem}
An explicit computation shows that $\overline{\mu}$ and $\overline{\nu}$
are not in $\Gamma$ and are independent, so our specific case corresponds
to the first case in Lemma \ref{lem:pro-2 ring}. However, Lemma \ref{lem:pro-2 ring}
allows us to avoid this computation.
\end{rem}

\selectlanguage{american}%
Endowed with the topology that comes from the congruence ideals 
\[
M_{2}(\Lambda,P^{n})=\ker(M_{2}(\Lambda)\to M_{2}(\Lambda/P^{n}))
\]
as a basis of neighborhoods of zero, $M_{2}(\Lambda)$ is also a profinite
ring, that contains the profinite ring $M_{2}(\Lambda_{*})$. Also
here, this topology makes the group $1+M_{2}(\Lambda,P)$ a pro-$2$
group that contains the pro-$2$ subgroup $1+M_{2}(\Lambda_{*},P_{*1})$.
Notice that as $P^{n}\cap\Lambda_{*}=P_{*n}$, the profinite topology
that $1+M_{2}(\Lambda,P)$ induces on $1+M_{2}(\Lambda_{*},P_{*1})$
coincides with the profinite topology of $1+M_{2}(\Lambda_{*},P_{*1})$
defined by the basis $1+M_{2}(\Lambda_{*},P_{*n})$.

We are now ready to define the \uline{pseudo-generic} matrices
\begin{align*}
x & =(1+x_{*})(1+\overline{\mu}\cdot1)-1\in M_{2}(\Lambda,P)\\
y & =(1+y_{*})(1+\overline{\nu}\cdot1)-1\in M_{2}(\Lambda,P).
\end{align*}
By the construction of $x$ and $y$ we have $\det(x)=\det(y)=0$.

\selectlanguage{english}%
Let now\foreignlanguage{american}{ $G=\left\langle 1+x,1+y\right\rangle \subseteq1+M_{2}(\Lambda,P)$
be the (discrete) group generated by $1+x$ and $1+y$, and let $\hat{G}\subseteq1+M_{2}(\Lambda,P)$
be its closure. Notice that by the discussion above, both $\hat{G}_{*}$
and $\hat{G}$ are embedded in $1+M_{2}(\Lambda,P)$, and their topology
is induced by the one of $1+M_{2}(\Lambda,P)$.}

\selectlanguage{american}%
Now, by the definition of the pseudo-generic matrices
\begin{align*}
1+x & =(1+x_{*})(1+\overline{\mu}\cdot1)\\
1+y & =(1+y_{*})(1+\overline{\nu}\cdot1)
\end{align*}
where $1+\overline{\mu}\cdot1$ and $1+\overline{\nu}\cdot1$ are
central in $M_{2}(\Lambda)$. Hence, for every commutator element
in the discrete free group $w(X,Y)\in F'$ one has
\[
w(1+x,1+y)=w(1+x_{*},1+y_{*})\in G'_{*}\subseteq1+M_{2}(\Lambda_{*},P_{*1})\subseteq1+M_{2}(\Lambda,P).
\]
As we saw that both topologies of $\hat{G}_{*}$ and $\hat{G}$ are
induced by the topology of $1+M_{2}(\Lambda,P)$, it follows that
actually
\[
\hat{G}'=\hat{G}'_{*}\subseteq1+M_{2}(\Lambda_{*},P_{*1})\subseteq1+M_{2}(\Lambda,P)
\]
for the commutator subgroups of $\hat{G}_{*}$ and $\hat{G}$. Similarly,
the lower central series of $\hat{G}$ and $\hat{G}{}_{*}$ are equal
(apart from the first term). Hence, in order to prove Theorem \ref{thm:main},
it is enough to prove that:
\begin{thm}
\label{thm:main-1}Let \foreignlanguage{english}{$\pi:\hat{F}\to\hat{G}$
defined by $X\mapsto1+x$, $Y\mapsto1+y$. Then, the restriction}
$\pi|_{\hat{F}'}:\hat{F}'\to\hat{G}'$ is not injective.
\end{thm}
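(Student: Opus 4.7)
The plan is to adapt Zubkov's strategy from $\mathsection\ref{sec:Zubkov}$ to the characteristic-$2$ setting, exploiting the fact that our pseudo-generic matrices $x,y$ satisfy $\det(x)=\det(y)=0$, which forces much stronger algebraic identities and hence much smaller graded dimensions than in the generic case.

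First I would extract the key Cayley--Hamilton identities. In characteristic $2$ together with $\det(x)=\det(y)=0$, Cayley--Hamilton gives $x^{2}=t(x)x$ and $y^{2}=t(y)y$; applying it to a generic linear combination $\alpha x+\beta y$ and separating coefficients yields the single identity
\[
xy+yx\;=\;t(y)\,x+t(x)\,y+\bigl(t(x)t(y)+t(xy)\bigr)\cdot 1.
\]
A routine reduction then collapses every word in $x,y$ to a $T$-linear combination of $\{1,x,y,yx\}$, where $T=\mathbb{F}_{2}[t(x),t(y),t(xy)]$. Consequently the associative $\mathbb{F}_{2}$-subalgebra $B\subseteq M_{2}(\Lambda)$ generated by $x,y$ is a free rank-$4$ module over $T$, and its $P$-adic graded pieces $B^{(n)}$ (where $t(x),t(y),x,y$ have degree $1$ and $t(xy),yx$ have degree $2$) satisfy $\dim_{\mathbb{F}_{2}}B^{(n)}=O(n^{2})$.

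Next I would establish the analogue of Proposition~\ref{prop:min-1}: writing $\omega_{n}(\hat{G})=\hat{G}\cap(1+M_{2}(\Lambda,P^{n}))$ and $\Omega_{n}(\hat{G})=\omega_{n}(\hat{G})/\omega_{n+1}(\hat{G})$, one has
\[
\Omega_{n}(\hat{G})\subseteq B^{(n)}.
\]
Since $1+x,1+y\in 1+B$, this reduces to showing that the $P$-adic closure of $B$ in $M_{2}(\Lambda)$ has graded pieces equal to $B^{(n)}$, i.e., each $B^{(n)}$ is closed in $M_{2}(P^{n}/P^{n+1})$. This closedness follows from Hilbert's basis theorem and the Artin--Rees lemma applied to the Noetherian ring $\Lambda$: each $B^{(n)}$ is already finite-dimensional over $\mathbb{F}_{2}$ (hence automatically closed) and the induced $P$-adic topologies agree. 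Note also that $\Omega_{n}(\hat{G})$ is an $\mathbb{F}_{2}$-vector space, since for $g=1+a\in\omega_{n}(\hat{G})$ one has $g^{2}=1+a^{2}\in\omega_{2n}(\hat{G})\subseteq\omega_{n+1}(\hat{G})$ when $n\ge 1$.

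Finally, the Witt formula (\ref{eq:l2n}) gives $\mathrm{rank}_{\mathbb{Z}_{2}}\Upsilon_{n}(\hat{F})=l_{2}(n)\sim 2^{n}/n$, growing exponentially, while the previous step gives $\dim_{\mathbb{F}_{2}}\Omega_{n}(\hat{G})=O(n^{2})$. Since the natural map $\Upsilon_{n}(\hat{F})\to\Omega_{n}(\hat{G})$ factors through $\Upsilon_{n}(\hat{F})\otimes\mathbb{F}_{2}\cong\mathbb{F}_{2}^{l_{2}(n)}$, its kernel is non-trivial for all sufficiently large $n$, producing $g\in\hat{F}_{n}\setminus\hat{F}_{n+1}\subseteq\hat{F}'$ with $\pi(g)\in\omega_{n+1}(\hat{G})$. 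Following Zubkov's algorithm from $\mathsection\ref{subsec:Zubkov's proof}$, at each stage $k>n$ I would choose $h_{k}\in\hat{F}_{n+1}\cap\pi^{-1}(\omega_{k}(\hat{G}))$ whose image cancels $\min\pi(g\,h_{n+1}\cdots h_{k-1})$; this is possible because $\pi$ surjects $\hat{F}$ onto $\hat{G}$ and the target min lies in $\Omega_{k}(\hat{G})$. The convergent product $w=g\prod_{k>n}h_{k}\in g\cdot\hat{F}_{n+1}\subseteq\hat{F}'$ is non-trivial (as $g\notin\hat{F}_{n+1}$) and satisfies $\pi(w)=1$.

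The main obstacle is the second step: in characteristic $2$, Zubkov's Baker--Campbell--Hausdorff argument over $\mathbb{Q}_{p}$ is unavailable, so establishing $\Omega_{n}(\hat{G})\subseteq B^{(n)}$ must be performed algebraically, using the associative structure of $B$ together with the Noetherian properties of $\Lambda$. A secondary technical point in the iterative step is ensuring that the cancelling elements $h_{k}$ can always be found in $\hat{F}_{n+1}\cap\pi^{-1}(\omega_{k}(\hat{G}))$, which requires a careful verification that $\pi(\hat{F}_{n+1})$ topologically exhausts $\omega_{n+1}(\hat{G})$ modulo the deeper congruence subgroups.
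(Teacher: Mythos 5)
Your strategy is essentially a transposition of Zubkov's dimension-count-plus-cancellation algorithm into characteristic $2$, and the dimension count itself (polynomial growth of $\dim_{\mathbb{F}_2}R^{(n)}$ versus exponential growth of $l_2(n)$) is sound. The genuine gap is in the iterative cancellation step. You claim you can always choose $h_k\in\hat{F}_{n+1}\cap\pi^{-1}(\omega_k(\hat{G}))$ whose image cancels $\min\pi(g\,h_{n+1}\cdots h_{k-1})$, justified by ``$\pi$ surjects $\hat{F}$ onto $\hat{G}$.'' But surjectivity of $\pi$ on $\hat{F}$ does not give surjectivity of $\hat{F}_{n+1}$ (a closed subgroup of \emph{infinite} index) onto $\omega_k(\hat{G})/\omega_{k+1}(\hat{G})$. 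What you actually need is that the element $\pi(g_{k-1})\in\hat{G}_n\cap\omega_k(\hat{G})$ lies in $\hat{G}_{n+1}\cdot\omega_{k+1}(\hat{G})$, i.e., that the lower-central-series filtration of $\hat{G}$ and the congruence filtration $\omega_k(\hat{G})$ interlace. In Zubkov's argument for $p\neq 2$ this is supplied by the equality $L^{(n)}_*=\mathring{L}^{(n)}_*\cap A_*$ (Proposition~\ref{prop:equality}), which yields $\hat{G}_n=\omega_n(\hat{G})$ (Proposition~\ref{prop:deg=00003Dlower-central}) and hence the required surjection $\Upsilon_n(\hat{F})\twoheadrightarrow\Omega_n(\hat{G})$. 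The paper's Section~\ref{sec:char=00003D0} (Propositions~\ref{prop:dichotomy} and~\ref{prop:minimal}, Lemma~\ref{prop:technical}) shows precisely that this equality breaks down when $p=2$, which is the whole reason the $2\times 2$ case is hard. Moreover, over $\mathbb{F}_2$ you have no Baker--Campbell--Hausdorff/Lie-theoretic control of $\Omega_n(\hat{G})$ at all (Proposition~\ref{prop:min-1} has no analogue), so there is no a priori description of the image of $\Upsilon_n(\hat{F})$ inside $\Omega_n(\hat{G})$.

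The paper's actual proof (Propositions~\ref{prop:less than} and~\ref{prop:step1}, carried out in \S\ref{subsec:Proof-of-Theorem}--\S\ref{subsec:Conclusions}) circumvents this by never trying to pull cancelling elements back from $\hat{F}$: it manufactures them \emph{inside} $\hat{G}$ using the operators $\varphi_x,\varphi_y,\psi$, which act on $\textrm{min}_x(g)$ by multiplication by $t(x)^4,\,t(y),\,[x,y]^4$ (Lemmas~\ref{prop:t(x)}, \ref{prop:t(y)}, \ref{prop:=00005Bx,y=00005D^4}). That is what requires the machinery of the ideal $J\subset R$, its decomposition $J_n=\overline{J}_n\oplus\overline{C}_n\oplus t(x)^nT[x,y]xy$, the subgroup $\breve{G}$, the bookkeeping $n(g),\overline{n}(g),\overline{i}(g)$, and the Noetherian/Artin--Rees step in Proposition~\ref{prop:step1}. (Your passing remark that ``closedness follows from Hilbert's basis theorem and Artin--Rees'' misattributes those tools: they enter for the far more delicate generation argument in Proposition~\ref{prop:step1}, not to show $B^{(n)}$ is closed.) The final contradiction in the paper is not $\pi(w)=1$ directly but that $w$ would commute with $(1+x)^4$, contradicting Corollary~\ref{cor:commute}. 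A minor additional inaccuracy: the associative $\mathbb{F}_2$-subalgebra $B=\langle x,y\rangle$ is \emph{not} a $T$-module, since $t(x)\cdot 1\notin B$; the correct object is $R=\langle x,y,T\cdot 1\rangle$ as in the paper, and then Proposition~\ref{prop:finitely generated} gives the free rank-$4$ statement.
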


From now on, we assume that Theorem \ref{thm:main-1} is false, and
that $\pi$ is injective. In particular, we assume that $\hat{G}'$
is isomorphic to $\hat{F}'$ through $\pi$.

\subsection{\label{subsec:Some-useful-lemmas}Some useful lemmas}

\selectlanguage{english}%
We state some general properties of $2\times2$ matrices. By the Cayley\textendash Hamilton
theorem, for every $a\in M_{2}(\Lambda)$, we have $a^{2}+t(a)a+\det(a)\cdot1=0$
where $t(a)$ is the trace of $a$. As a corollary of that, we have
the following lemmas:
\begin{lem}
\label{lem:a,b}Let $a,b\in M_{2}(\Lambda)$. Then:
\begin{enumerate}
\item $[a,b]=t(a)b+t(b)a+\left(t(ab)+t(a)t(b)\right)\cdot1.$
\item $[a,b,a]=t(a)[a,b]$. 
\item If $t(a)=0$, then $a^{2}=\det(a)\cdot1\in M_{2}(\Lambda)$ is central.
\item The trace $t([a,b]b^{n})=0$ for every $n\geq0$.
\end{enumerate}
\end{lem}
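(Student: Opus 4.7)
The plan is to derive all four parts from two ingredients: the Cayley-Hamilton identity $a^2 = t(a)a + \det(a)\cdot 1$ for any $a \in M_2(\Lambda)$, together with the cyclic invariance $t(uv) = t(vu)$ of the trace. Part (4), in fact, uses only the latter and works in any characteristic; parts (1)--(3) will exploit that we are in characteristic $2$, where all sign distinctions between $[a,b]$ and $ab+ba$ disappear.

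For (1), I would polarize Cayley-Hamilton. Writing the identity for each of $a$, $b$, and $a+b$, then expanding $(a+b)^2 = a^2 + ab + ba + b^2$ and subtracting the Cayley-Hamilton expressions for $a^2$ and $b^2$, one obtains a formula for $[a,b] = ab + ba$ in terms of $t(a)b$, $t(b)a$, and the scalar $\det(a+b) + \det(a) + \det(b)$. The last step is the standard $2\times 2$ polarization of the determinant,
\[
\det(a+b) = \det(a) + \det(b) + t(a)t(b) - t(ab),
\]
which one verifies by a direct entry-wise expansion over any commutative ring; in characteristic $2$, combining it with the above yields exactly the claimed formula. Part (2) then follows in one line: since (1) expresses $[a,b]$ as $t(a)b + t(b)a + c\cdot 1$ with $c = t(ab)+t(a)t(b)$ a scalar, and since $[-,a]$ is a derivation annihilating both $a$ and $1$, we get $[a,b,a] = t(a)[b,a] = t(a)[a,b]$, using $[b,a] = [a,b]$ in characteristic $2$.

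Part (3) is immediate from Cayley-Hamilton: if $t(a) = 0$, the identity collapses to $a^2 = \det(a)\cdot 1$, which is a scalar matrix and therefore central. For part (4), no Cayley-Hamilton input is needed at all; expanding
\[
t([a,b]b^n) = t(ab^{n+1}) - t(bab^n),
\]
cyclicity gives $t(bab^n) = t(ab^n \cdot b) = t(ab^{n+1})$, so the two terms cancel. I do not foresee any real obstacle: each item is a short computation, and the only step warranting a moment of care is the polarization identity for $\det(a+b)$ that underlies (1).
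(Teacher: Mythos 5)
Your treatment of parts (1)--(3) coincides with the paper's: polarizing the Cayley--Hamilton relation at $a+b$ and subtracting the identities for $a$ and $b$ yields
\[
ab+ba=t(a)b+t(b)a+\bigl(\det(a)+\det(b)+\det(a+b)\bigr)\cdot 1,
\]
and the determinant polarization $\det(a+b)=\det(a)+\det(b)+t(a)t(b)-t(ab)$ (valid over any commutative ring) gives the stated scalar; (2) then follows from $\Lambda$-linearity of the bracket and $[a,a]=[1,a]=0$, and (3) is a direct specialization of Cayley--Hamilton. Where you diverge is part (4): the paper proves it by induction on $n$, handling $n=0,1$ via cyclicity and then using the Cayley--Hamilton relation $b^2=t(b)b+\det(b)\cdot 1$ to close the inductive step. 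You instead note $t([a,b]b^n)=t(ab^{n+1})-t(bab^n)$ and invoke cyclicity once to cancel the two terms. This is correct and strictly simpler; it also dispenses with Cayley--Hamilton, so it holds for $n\times n$ matrices over any commutative ring, not just for $M_2(\Lambda)$. Both routes are sound, but yours is the cleaner one for part (4).
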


\begin{proof}
Applying the Cayley\textendash Hamilton theorem for $a+b$ one has
\begin{align*}
0 & =(a+b)^{2}+t(a+b)(a+b)+\det(a+b)\cdot1\\
 & =a^{2}+t(a)a+b^{2}+t(b)b+ab+ba+t(a)b+t(b)a+\det(a+b)\cdot1.
\end{align*}
Subtracting the equations $a^{2}+t(a)a+\det(a)\cdot1=0$ and $b^{2}+t(b)b+\det(b)\cdot1=0$
it follows that
\begin{align*}
[a,b]=ab+ba & =t(a)b+t(b)a+\left(\det(a)+\det(b)+\det(a+b)\right)\cdot1\\
 & =t(a)b+t(b)a+\left(t(ab)+t(a)t(b)\right)\cdot1
\end{align*}
so we get Part 1. Part 2 is an immediate consequence of Part 1. Part
3 is an immediate consequence of the Cayley\textendash Hamilton theorem.

For Part 4 we use induction on $n$. For $n=0$ the claim is easy,
and for $n=1$ it follows from the observation $[a,b]b=[ab,b]$. For
$n\geq2$: by the Cayley\textendash Hamilton theorem we have $b^{2}=t(b)b+\det(b)\cdot1$.
Hence, by the induction hypothesis
\[
t([a,b]b^{n})=t(b)\cdot t([a,b]b^{n-1})+\det(b)\cdot t([a,b]b^{n-2})=0
\]
as required. 
\end{proof}
\begin{lem}
\label{lem:x,y}For the \foreignlanguage{american}{pseudo-generic
matrices} $x,\,y$ we have:
\begin{enumerate}
\item $x^{2}=t(x)x,\,\,\,\,y^{2}=t(y)y,\,\,\,\,(xy)^{2}=t(xy)xy,\,\,\,\,(yx)^{2}=t(xy)yx$.
\item $xyx=t(xy)x,\,\,\,\,yxy=t(xy)y$.
\item $[x,y]^{2}=(t(xy)^{2}+t(x)t(y)t(xy))\cdot1$.
\end{enumerate}
\end{lem}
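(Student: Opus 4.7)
The plan is to derive all three parts directly from the Cayley--Hamilton theorem combined with the identity $\det(x)=\det(y)=0$, using Lemma~\ref{lem:a,b} as the main lever. The only arithmetic input we need is that we are working in characteristic $2$, which allows us to replace $-$ with $+$ and kill factors of $2$ throughout.

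\textbf{Part 1.} By construction $\det(x)=\det(y)=0$, so Cayley--Hamilton gives $x^{2}=t(x)x$ and $y^{2}=t(y)y$ directly. Multiplicativity of the determinant yields $\det(xy)=\det(yx)=0$, so Cayley--Hamilton applied to $xy$ and $yx$ produces the remaining two identities, using also that $t(xy)=t(yx)$.

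\textbf{Part 2.} I would start from Lemma~\ref{lem:a,b}(1), which in characteristic $2$ reads
\[
xy+yx \;=\; t(x)y+t(y)x+\bigl(t(xy)+t(x)t(y)\bigr)\cdot 1.
\]
Right-multiplying by $x$ and applying Part~1 ($x^{2}=t(x)x$), the two $t(x)yx$ terms cancel in characteristic $2$, and $t(y)x^{2}=t(x)t(y)x$ cancels one of the $t(x)t(y)x$ contributions on the right, leaving exactly $xyx=t(xy)x$. The identity $yxy=t(xy)y$ follows by the symmetric argument (left-multiplying by $y$, or swapping the roles of $x$ and $y$).

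\textbf{Part 3.} Since $\mathrm{char}=2$, $[x,y]=xy+yx$, so
\[
[x,y]^{2} \;=\; (xy)^{2}+xy^{2}x+yx^{2}y+(yx)^{2}.
\]
Using Part~1 to rewrite $(xy)^{2}+(yx)^{2}=t(xy)(xy+yx)=t(xy)[x,y]$ and $y^{2}=t(y)y$, $x^{2}=t(x)x$, together with Part~2 ($xyx=t(xy)x$ and $yxy=t(xy)y$), the middle two terms collapse to $t(xy)\bigl(t(y)x+t(x)y\bigr)$. Substituting the Lemma~\ref{lem:a,b}(1) expression $t(x)y+t(y)x=[x,y]+(t(xy)+t(x)t(y))\cdot 1$, the two copies of $t(xy)[x,y]$ cancel in characteristic $2$, leaving exactly $(t(xy)^{2}+t(x)t(y)t(xy))\cdot 1$.

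There is no real obstacle here; each part is a short direct computation, and the only thing to keep track of is the bookkeeping of $2$-torsion so that the right terms vanish. The cleanest sequencing is $(1)\Rightarrow(2)\Rightarrow(3)$, with Lemma~\ref{lem:a,b}(1) invoked once in Part~2 to get the trace formula for $[x,y]$ and a second time in Part~3 to eliminate the surviving linear combination $t(x)y+t(y)x$.
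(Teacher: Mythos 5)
Your proposal is correct and follows essentially the same route as the paper: Part 1 from Cayley--Hamilton plus $\det(x)=\det(y)=0$ (and $\det(xy)=0$), Part 2 by right-multiplying the identity of Lemma~\ref{lem:a,b}(1) by $x$ and reducing with $x^{2}=t(x)x$ in characteristic $2$, and Part 3 by expanding $(xy+yx)^{2}$ and using Parts 1--2 and Lemma~\ref{lem:a,b}(1) again. The bookkeeping of $2$-torsion is exactly as the paper does it, so there is nothing to add.
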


\begin{proof}
Part 1 follows from the Cayley\textendash Hamilton theorem and the
property $\det(x)=\det(y)=0$. The identity $xyx=t(xy)x$ follows
from the previous part and Part 1 of Lemma \ref{lem:a,b} by the following
computation

\begin{align*}
xyx & =(yx+t(x)y+t(y)x+\left(t(xy)+t(x)t(y)\right)\cdot1)x\\
 & =yx^{2}+t(x)yx+t(y)x^{2}+t(xy)x+t(x)t(y)x=t(xy)x.
\end{align*}
The identity $yxy=t(xy)y$ follows similarly. Part 3 follows from
the previous properties by the following computation
\begin{align*}
[x,y]^{2}= & (xy+yx)(xy+yx)\\
= & t(xy)(xy+yx)+t(xy)t(x)y+t(xy)t(y)x=t(xy)(t(xy)+t(x)t(y))\cdot1.
\end{align*}
\end{proof}
\begin{rem}
As the expression $[x,y]^{2}$ is central in $M_{2}(\Lambda)$, sometimes
we will consider it as an element of $\Lambda$ and just write $[x,y]^{2}=(t(xy)^{2}+t(x)t(y)t(xy))$.
\end{rem}

\selectlanguage{american}%

\subsection{\label{subsec:ring of}The ring of the Pseudo-Generic Matrices}
\selectlanguage{english}%
\begin{defn}
We define the (discrete) subrings (with identity) of $\Lambda$ and
$M_{2}\left(\Lambda\right)$ 
\begin{align*}
S & =\left\langle t(x),\,t(y),\,[x,y]^{2}\right\rangle \subseteq\Lambda\\
T & =\left\langle t(x),\,t(y),\,t(xy)\right\rangle \subseteq\Lambda\\
R & =\left\langle x,\,y,\,T\cdot1\right\rangle \subseteq M_{2}\left(\Lambda\right).
\end{align*}
The ring $R$ will be called \uline{the ring of the pseudo generic
matrices}.
\end{defn}

The main purpose of this subsection is to investigate the ring $R$
and study some of its propeties which will be needed throughout the
proof of \foreignlanguage{american}{Theorem \ref{thm:main-1}.} 
\begin{prop}
\label{prop:free}The ring $T$ is freely generated by $t(x),\,t(y),\,t(xy)$
as a commutative algebra over $\mathbb{Z}/2\mathbb{Z}$. 
\end{prop}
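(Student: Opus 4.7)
My plan is to prove Proposition \ref{prop:free} by exhibiting a ring homomorphism $\phi$ out of $\Lambda$ under which $t(x), t(y), t(xy)$ map to three algebraically independent polynomials; any algebraic relation $P(t(x), t(y), t(xy)) = 0$ would then descend under $\phi$ to a relation on the images and force $P = 0$.

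The specialization I would use is the rank-$1$ one. Send
\[
x_* \mapsto \begin{pmatrix} ac & ad \\ bc & bd \end{pmatrix}, \qquad y_* \mapsto \begin{pmatrix} eg & eh \\ fg & fh \end{pmatrix},
\]
which defines a continuous ring homomorphism $\phi : \Lambda_* \to \mathbb{F}_2[[a,b,c,d,e,f,g,h]]$. Under it, $\phi(\det x_*) = 0 = \phi(\det y_*)$, $\phi(t(x_*)) = ac+bd$, $\phi(t(y_*)) = eg+fh$, and a quick rank-$1$ product computation yields $\phi(t(x_*y_*)) = (ce+df)(ag+bh)$. By the Remark following Lemma \ref{lem:pro-2 ring} we are in Case 1, so $\Lambda$ is isomorphic to $\Lambda_*[\mu,\nu]/(p(\mu), q(\nu))$. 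Since $\phi$ kills $\det x_*$ and $\det y_*$, the image of each of the defining quadratics has $0$ as a root, so I may extend $\phi$ consistently to $\Lambda$ by $\phi(\bar{\mu}) = \phi(\bar{\nu}) = 0$. Combining this with $t(x) = (1+\bar{\mu})t(x_*)$, $t(y) = (1+\bar{\nu})t(y_*)$, and the analogous expansion of $t(xy)$, one finds $\phi(t(x)) = ac+bd$, $\phi(t(y)) = eg+fh$, $\phi(t(xy)) = (ce+df)(ag+bh)$.

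It remains to show that these three polynomials are algebraically independent over $\mathbb{F}_2$. I would use the Jacobian criterion (which, irrespective of characteristic, gives algebraic independence once full rank is established). Computing the $3 \times 3$ minor with respect to $a, e, g$ gives
\[
\det \begin{pmatrix} c & 0 & 0 \\ 0 & g & e \\ (ce+df)g & c(ag+bh) & (ce+df)a \end{pmatrix} = c(adfg + bceh),
\]
a nonzero polynomial in $\mathbb{F}_2[a,\ldots,h]$. Hence the Jacobian has rank $3$ and the three polynomials are algebraically independent.

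The main obstacle I anticipate is the extension of $\phi$ from $\Lambda_*$ to $\Lambda$ as a bona fide ring homomorphism. The compatibility of $\phi(\bar{\mu}) = 0$ with the defining quadratic $p(\bar{\mu}) = 0$ is exactly what forces the specialization to be rank-$1$: only then do $\det x_*$ and $\det y_*$ vanish in the target so that $0$ becomes an honest root of the specialized quadratic, and the extension can be performed without running into an inconsistency.
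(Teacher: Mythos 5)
Your proof is correct, but it takes a genuinely different route from the paper's. The paper specializes to the asymmetric pair
\[
\overline{x}=\begin{pmatrix}\lambda & \vartheta-\lambda\theta\\0 & 0\end{pmatrix},\qquad
\overline{y}=\begin{pmatrix}\theta & 0\\1 & 0\end{pmatrix},
\]
with $\lambda,\theta,\vartheta$ free commuting variables over $\mathbb{Z}/2\mathbb{Z}$, chosen precisely so that $t(\overline{x})=\lambda$, $t(\overline{y})=\theta$, $t(\overline{x}\overline{y})=\vartheta$ \emph{on the nose}; algebraic independence of the traces is then immediate with no further argument. You instead choose the symmetric rank-$1$ outer-product specialization (which is perhaps more natural to discover), obtain $ac+bd$, $eg+fh$, $(ce+df)(ag+bh)$, and then must do extra work to verify their independence. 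Your Jacobian computation is correct, and the direction of the Jacobian criterion you invoke — full rank implies algebraic independence — does hold over perfect base fields in positive characteristic (one reduces a minimal-degree relation to a $p$-th power via perfectness), so this step is sound. Both approaches also handle the extension to $\Lambda$ by sending $\overline{\mu},\overline{\nu}\mapsto 0$, which is legitimate because the target matrices have vanishing determinant; the paper just folds this into the explicit coordinate assignments while you justify it via the Case 1 structure of Lemma \ref{lem:pro-2 ring}. The net effect: the paper's specialization is sharper (traces become free variables outright), while yours is more systematic and would generalize without needing to engineer a clever pair of matrices.
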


\begin{proof}
Consider the free commutative variables $\lambda,\,\theta,\,\vartheta$
over $\mathbb{Z}/2\mathbb{Z}$, and define 
\[
\overline{x}=\left(\begin{array}{cc}
\lambda & \vartheta-\lambda\theta\\
0 & 0
\end{array}\right),\,\,\,\,\overline{y}=\left(\begin{array}{cc}
\theta & 0\\
1 & 0
\end{array}\right).
\]
Denote $\overline{T}=\left\langle t(\overline{x}),\,t(\overline{y}),\,t(\overline{x}\cdot\overline{y})\right\rangle $.
As $\det(\overline{x})=\det(\overline{y})=0$, it is easy to verify
that we have a natural homomorphism of discrete rings 
\[
\Lambda\geq(\mathbb{Z}/2\mathbb{Z})\left\langle x_{ij},y_{ij},\overline{\mu},\overline{\nu}\,|\,1\leq i,j\leq2\right\rangle \to(\mathbb{Z}/2\mathbb{Z})\left\langle \lambda,\theta,\vartheta\right\rangle 
\]
 defined by
\[
\begin{cases}
x_{11}\mapsto\lambda & y_{11}\mapsto\theta\\
x_{12}\mapsto\vartheta-\lambda\theta\,\,\,\,\,\,\,\, & y_{12}\mapsto0\\
x_{21}\mapsto0 & y_{21}\mapsto1\\
x_{22}\mapsto0 & y_{22}\mapsto0\\
\overline{\mu}\mapsto0 & \overline{\nu}\mapsto0
\end{cases}
\]
that induces a natural ring homomorphism from the discrete ring generated
by the pseudo-generic matrices $x,\,y$ to the discrete ring generated
by $\overline{x},\,\overline{y}$ sending $x\mapsto\overline{x}$
and $y\mapsto\overline{y}$. Hence, we have a natural ring homomorphism
$T\to\overline{T}$ by 
\begin{align*}
t(x) & \mapsto t(\overline{x})=\lambda\\
t(y) & \mapsto t(\overline{y})=\theta\\
t(xy) & \mapsto t(\overline{x}\cdot\overline{y})=\vartheta.
\end{align*}
So as the ring generated by $\lambda,\,\theta,\,\vartheta$ is freely
generated by them as a commutative algebra over $\mathbb{Z}/2\mathbb{Z}$,
the same is valid for $T$. 
\end{proof}
As a corollary of Part 3 in Lemma \ref{lem:x,y}, and Proposition
\ref{prop:free} we have:
\begin{prop}
\label{prop:ST}The ring $S$ is contained in $T$ and it is freely
geberated by $t(x),\,t(y),\,[x,y]^{2}$ as a commutative algebra over
$\mathbb{Z}/2\mathbb{Z}$. In addition, $T$ is freely generated by
$1$ and $t(xy)$ as an $S$-module. 
\end{prop}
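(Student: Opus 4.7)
The containment $S\subseteq T$ is immediate from Part~3 of Lemma~\ref{lem:x,y}, which gives
\[
[x,y]^{2} = t(xy)^{2} + t(x)\,t(y)\,t(xy),
\]
an element of $T$. The rest of the proposition is an assertion about the commutative polynomial ring $T$, so the plan is to forget the matrix origin and work purely inside the polynomial ring provided by Proposition~\ref{prop:free}. Write $a=t(x)$, $b=t(y)$, $c=t(xy)$, and $s=c^{2}+abc$, so that $T=\mathbb{F}_{2}[a,b,c]$ is a polynomial ring in three independent variables and $S=\mathbb{F}_{2}[a,b,s]\subseteq T$.

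The key technical observation is to view elements of $T$ as polynomials in $c$ over $\mathbb{F}_{2}[a,b]$ and to track the $c$-degree. Since $s=c^{2}+abc$ has $c$-degree exactly $2$ with leading coefficient $1$, the power $s^{i}$ has $c$-degree exactly $2i$ with leading coefficient $1$, regardless of $a,b$. From this I would deduce the algebraic independence of $a,b,s$: any polynomial relation $\sum_{i}p_{i}(a,b)s^{i}=0$ has highest $c$-degree term $p_{N}(a,b)c^{2N}$, where $N$ is the largest index with $p_{N}\neq 0$; since distinct $s^{i}$ contribute distinct top $c$-powers, this forces $p_{N}=0$, and iterating gives all $p_{i}=0$. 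This proves that $S=\mathbb{F}_{2}[a,b,s]$ is freely generated by $t(x),t(y),[x,y]^{2}$.

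For the $S$-module structure on $T$, the defining relation $c^{2}=s+abc$ (valid in characteristic $2$) immediately shows $c$ is integral of degree $2$ over $S$, so $T=S\cdot 1+S\cdot c$. The freeness is then the main point: suppose $\alpha+\beta c=0$ with $\alpha,\beta\in S$. I would again use the $c$-degree parity argument. By the observation above, every nonzero element of $S$ has even top $c$-degree (namely $2N$ for the leading power $s^{N}$), so $\beta c$, when $\beta\neq 0$, has odd top $c$-degree. Hence the top $c$-degrees of $\alpha$ and $\beta c$ are of different parities and cannot cancel each other in the polynomial ring $\mathbb{F}_{2}[a,b,c]$. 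The only way $\alpha+\beta c=0$ is therefore $\alpha=\beta=0$, which is precisely freeness of $\{1,c\}$ as an $S$-basis of $T$.

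I do not anticipate a serious obstacle; the entire argument rests on the single bookkeeping fact that $s^{i}$ has leading term $c^{2i}$ in the $c$-grading, which separates even from odd $c$-degrees and lets parity do all the work. The only small care is that $s=c^{2}+abc$ is \emph{not} $c$-homogeneous, so I will consistently use ``top $c$-degree'' rather than ``pure even $c$-degree'' when arguing about $S$.
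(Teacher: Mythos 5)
Your proof is correct, and it uses exactly the two ingredients the paper invokes when stating this proposition as an immediate corollary: Proposition~\ref{prop:free} (identifying $T$ with the polynomial ring $\mathbb{F}_2[t(x),t(y),t(xy)]$) and Part~3 of Lemma~\ref{lem:x,y} (giving $[x,y]^2=t(xy)^2+t(x)t(y)t(xy)$). The $t(xy)$-degree-parity bookkeeping you carry out is the natural elaboration of the detail the paper leaves implicit, so your argument and the paper's are essentially the same.
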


We also have:
\begin{prop}
\label{prop:finitely generated}The ring $R$ is freely generated
by $1,\,x,\,y,\,xy$ as a $T$-module.
\end{prop}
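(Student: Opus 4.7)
The plan is to split the statement into two claims, proved independently: that $\{1,x,y,xy\}$ spans $R$ as a $T$-module (by a normal-form reduction using the $2\times 2$ identities from Lemma \ref{lem:x,y}), and that these four elements are $T$-linearly independent (by reusing the explicit specialization from the proof of Proposition \ref{prop:free}).

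For the spanning claim, I would reduce an arbitrary monomial in $x$ and $y$ to a $T$-linear combination of $\{1,x,y,xy\}$. The identities $x^{2}=t(x)x$ and $y^{2}=t(y)y$ let me assume the monomial is alternating in $x$ and $y$. The identities $xyx=t(xy)x$ and $yxy=t(xy)y$ from Lemma \ref{lem:x,y} then show that every alternating word of length $\geq 3$ equals a $T$-multiple of a strictly shorter alternating word, so after iterating I am left with a $T$-linear combination of $\{1,x,y,xy,yx\}$. Finally, applying Lemma \ref{lem:a,b}(1) to the pair $(x,y)$ (using that $[a,b]=ab+ba$ in characteristic $2$) yields
\[
yx = xy + t(x)y + t(y)x + (t(xy) + t(x)t(y))\cdot 1,
\]
which removes $yx$ from the spanning set.

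For independence, I would use the substitution $x\mapsto\overline{x}$, $y\mapsto\overline{y}$ from the proof of Proposition \ref{prop:free}, which restricts to the isomorphism $T\to(\mathbb{Z}/2\mathbb{Z})[\lambda,\theta,\vartheta]$ sending $t(x), t(y), t(xy)$ to $\lambda,\theta,\vartheta$. A direct computation gives
\[
\overline{x}\,\overline{y} = \begin{pmatrix}\vartheta & 0 \\ 0 & 0\end{pmatrix},
\]
so a hypothetical relation $a_{0}\cdot 1 + a_{1}x + a_{2}y + a_{3}xy = 0$ with $a_{i}\in T$ specializes (entrywise, identifying each $a_{i}$ with its image in the polynomial ring) to a matrix identity whose $(2,2)$ and $(2,1)$ entries force $a_{0}=a_{2}=0$, whose $(1,2)$ entry forces $a_{1}(\vartheta - \lambda\theta)=0$ and hence $a_{1}=0$ (the target is an integral domain and $\vartheta-\lambda\theta\neq 0$), and whose $(1,1)$ entry then forces $a_{3}\vartheta=0$ and hence $a_{3}=0$. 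Injectivity of the specialization on $T$ lifts these vanishings back to $T$. I do not anticipate a substantive obstacle; the argument simply packages the singular Cayley--Hamilton identities of Lemma \ref{lem:x,y} together with the specialization already set up for Proposition \ref{prop:free}.
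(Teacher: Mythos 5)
Your proof is correct, but for the linear-independence half you take a genuinely different route from the paper. The paper's proof stays internal to $R$: it observes that $t([x,y])=t([x,y]x)=t([x,y]y)=0$ and $x[x,y]x=y[x,y]y=0$, and then, for $a=\alpha+\beta x+\gamma y+\delta xy$, computes
\[
t(x[x,y]a[x,y]y)=\alpha\cdot t(xy)[x,y]^{2},\quad
t([x,y]ya)=\beta\cdot[x,y]^{2},\quad
t([x,y]ax)=\gamma\cdot[x,y]^{2},\quad
t([x,y]a)=\delta\cdot[x,y]^{2},
\]
so that since $T$ is a domain, the coefficients are recovered from $a$ and hence are unique. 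You instead push the relation through the specialization $x\mapsto\overline{x}$, $y\mapsto\overline{y}$ already built in the proof of Proposition \ref{prop:free} and read off the vanishing of the coefficients entrywise in $M_{2}\bigl((\mathbb{Z}/2\mathbb{Z})[\lambda,\theta,\vartheta]\bigr)$, using injectivity of the specialization on $T$. Both arguments are sound; yours is arguably more concrete and reuses an existing construction, while the paper's trace-based recovery scheme is the one that is templated again in the proof of Proposition \ref{prop:J - f.g.} (and is more in the spirit of the PI-theoretic trace identities the paper advertises). Your spanning argument is the same as the paper's, just spelled out: the paper compresses it to ``follows from Lemma \ref{lem:x,y}'' whereas you explicitly note that one also needs Lemma \ref{lem:a,b}(1) to eliminate $yx$ from the generating set.
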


\begin{proof}
It follows from Lemma \ref{lem:x,y} that indeed $R$ is generated
by $1,\,x,\,y,\,xy$ as a $T$-module. We have to show that the way
to write $a=\alpha+\beta\cdot x+\gamma\cdot y+\delta\cdot xy$ for
$\alpha,\beta,\gamma,\delta\in T$ is unique. Observe that as $t([x,y])=t([x,y]x)=t([x,y]y)=0$
and $x[x,y]x=y[x,y]y=0$, given such $a$, we have
\begin{align*}
t(x[x,y]a[x,y]y) & =\alpha\cdot t(xy)[x,y]^{2}\\
t([x,y]ya) & =\beta\cdot[x,y]^{2}\\
t([x,y]ax) & =\gamma\cdot[x,y]^{2}\\
t([x,y]a) & =\delta\cdot[x,y]^{2}.
\end{align*}
Hence, as $T$ is a domain (as a free commutative ring by Proposition
\ref{prop:free}), given such $a$, we can uniquely restore its coefficients
in $T$, as required.
\end{proof}
Denote now the (discrete) ring 
\[
R_{*}=\left\langle x_{*},\,y_{*},\,t(x_{*})\cdot1,\,t(y_{*})\cdot1,\,t(x_{*}y_{*})\cdot1\right\rangle \subseteq M_{2}\left(\Lambda_{*}\right).
\]
Then $R_{*}$, inheriting the degree of $\Lambda_{*}$, can be seen
as $R_{*}=\oplus_{n=0}^{\infty}R_{*}^{(n)}$ where $R_{*}^{(n)}$
is the additive subgroup of $R_{*}$ of homogeneous elements of degree
$n$. This direct sum makes $R_{*}$ a graded ring. As $x_{*},y_{*}$
are generic matrices, we can define\foreignlanguage{american}{ a map
$\tau:R_{*}\to R$ such that $x_{*}\mapsto x$ and $y_{*}\mapsto y$.
Denote $R^{(n)}=\tau(R_{*}^{(n)})$. }
\begin{prop}
\label{cor:graded}One has $R=\oplus_{n=0}^{\infty}R^{(n)}$, which
makes $R$ a graded ring.
\end{prop}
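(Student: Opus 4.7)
The plan is to reduce everything to the uniqueness of the monomial basis for $R$ provided by Propositions \ref{prop:ST} and \ref{prop:finitely generated}. Note first that $\tau$ is a surjective ring homomorphism (it sends the generators $x_{*}, y_{*}, t(x_{*})\cdot 1, t(y_{*})\cdot 1, t(x_{*}y_{*})\cdot 1$ of $R_{*}$ into generators of $R$), so from $R_{*} = \bigoplus_{n} R_{*}^{(n)}$ I immediately get $R = \sum_{n} R^{(n)}$, and the homomorphism property yields $R^{(m)} \cdot R^{(n)} \subseteq R^{(m+n)}$. Only the directness of the sum requires work.

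My first step would be to locate the algebra generators of $R$ inside the claimed graded pieces. Viewed in $M_{2}(\Lambda_{*})$, the generic matrices $x_{*}, y_{*}$ and the scalars $t(x_{*})\cdot 1, t(y_{*})\cdot 1$ are homogeneous of degree $1$, while $x_{*}y_{*}$ and $t(x_{*}y_{*})\cdot 1$ are homogeneous of degree $2$. I would identify $\tau$ as the restriction of the matrix map induced by the entry-wise substitution $x_{ij} \mapsto (x)_{ij}$, $y_{ij} \mapsto (y)_{ij}$ from $\Lambda_{*}$ to $\Lambda$ (well-defined since the images lie in the maximal ideal $P$). Under this identification, traces transform correctly: $\tau(t(x_{*})\cdot 1) = t(x)\cdot 1$, etc., so $x, y, t(x)\cdot 1, t(y)\cdot 1 \in R^{(1)}$ and $xy, t(xy)\cdot 1 \in R^{(2)}$.

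Next, I would invoke Propositions \ref{prop:ST} and \ref{prop:finitely generated} to conclude that every element of $R$ has a unique expansion as a $\mathbb{Z}/2\mathbb{Z}$-linear combination of the monomials
\[
m_{a,b,c,v} = t(x)^{a}\, t(y)^{b}\, t(xy)^{c}\cdot v, \qquad a,b,c \geq 0,\quad v \in \{1, x, y, xy\}.
\]
By the previous step, each $m_{a,b,c,v}$ lies in $R^{(n)}$ for $n = a + b + 2c + \deg(v)$, where $\deg(1)=0,\ \deg(x)=\deg(y)=1,\ \deg(xy)=2$. Partitioning these basis elements by total degree exhibits $R^{(n)}$ as the span of $\{m_{a,b,c,v} : a + b + 2c + \deg(v) = n\}$, and since the full collection is a free basis of $R$, monomials of distinct total degrees are linearly independent. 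This forces $R = \bigoplus_{n} R^{(n)}$, and combined with $R^{(m)}R^{(n)} \subseteq R^{(m+n)}$ shows that $R$ is a graded ring.

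The main technical point is the identification of $\tau$ as the entry-wise evaluation map, which is what guarantees that traces in $R_{*}$ are sent to traces in $R$ and that the stated generators actually sit in the claimed graded pieces. Once this identification is in hand, the grading is simply inherited from $R_{*}$ via the uniqueness of the monomial basis, and no further computation is required.
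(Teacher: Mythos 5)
Your overall strategy mirrors the paper's proof: use the free monomial basis of $R$ over $\mathbb{Z}/2\mathbb{Z}$ coming from Propositions \ref{prop:free} and \ref{prop:finitely generated}, observe that each monomial sits in a single graded piece, and conclude the sum is direct. However, there is a genuine gap in the middle: you establish only the inclusion
\[
S_{n}:=\operatorname{span}_{\mathbb{Z}/2\mathbb{Z}}\{m_{a,b,c,v}\,:\,a+b+2c+\deg(v)=n\}\;\subseteq\;R^{(n)},
\]
and then assert that ``partitioning these basis elements by total degree exhibits $R^{(n)}$ as the span'' of degree-$n$ monomials. That sentence is precisely the reverse inclusion $R^{(n)}\subseteq S_{n}$, and it does not follow from what you have proved. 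Knowing $S_{n}\subseteq R^{(n)}$ together with $R=\oplus_{n}S_{n}$ does not force $S_{n}=R^{(n)}$: one could in principle have some $R^{(n)}$ properly containing $S_{n}$ while still satisfying $R=\sum_{n}R^{(n)}$.

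The reason the reverse inclusion is not automatic is that $R^{(n)}=\tau(R_{*}^{(n)})$ is the image of \emph{all} homogeneous degree-$n$ elements of $R_{*}$, and these include products such as $y_{*}x_{*}$, $x_{*}^{2}$, $x_{*}y_{*}x_{*}$, $x_{*}y_{*}^{2}$, etc.\ that are not among the chosen monomials. One must check that applying $\tau$ and then rewriting via Lemma \ref{lem:x,y} (using $x^{2}=t(x)x$, $yx=xy+[x,y]$ with $[x,y]$ expanded by Lemma \ref{lem:a,b}, $xyx=t(xy)x$, $(xy)^{2}=t(xy)xy$, and so on) produces $T$-linear combinations of $1,x,y,xy$ of the \emph{same} total degree, so that $\tau(R_{*}^{(n)})\subseteq S_{n}$. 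This is exactly the step the paper isolates: it shows $R^{(n)}=\tau(R_{*}^{(n)})=T^{(n)}+T^{(n-1)}x+T^{(n-1)}y+T^{(n-2)}xy$ ``using Lemma \ref{lem:x,y},'' which is equivalent to what you need. You should insert this verification --- the key observation being that every relation used in the rewriting is degree-homogeneous, so the kernel of the presentation of $R$ is a homogeneous ideal. With that in place, your argument becomes complete and is essentially identical to the paper's, just phrased via a $\mathbb{Z}/2\mathbb{Z}$-basis rather than a $T$-module decomposition. (One minor slip: you should cite Proposition \ref{prop:free}, not \ref{prop:ST}, for the freeness of $T$ over $\mathbb{Z}/2\mathbb{Z}$, though \ref{prop:ST} would also furnish a basis.)
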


\begin{proof}
Denote $T^{(n)}=R^{(n)}\cap T\cdot1$ for $n\geq0$, and $T^{(-1)}=T^{(-2)}=0$.
Using Lemma \ref{lem:x,y} it is easy to check that 
\[
R^{(n)}=\tau(R_{*}^{(n)})=T^{(n)}+T^{(n-1)}\cdot x+T^{(n-1)}\cdot y+T^{(n-2)}\cdot xy.
\]
Hence, by Proposition \ref{prop:finitely generated} we have
\begin{align*}
R^{(n)} & =T^{(n)}\oplus T^{(n-1)}\cdot x\oplus T^{(n-1)}\cdot y\oplus T^{(n-2)}\cdot xy.
\end{align*}
By Proposition \ref{prop:free} it is clear that $T\cdot1=\oplus_{n=0}^{\infty}T^{(n)}$.
Hence 
\begin{align*}
R & =T\oplus T\cdot x\oplus T\cdot y\oplus T\cdot xy\\
 & =\oplus_{n=0}^{\infty}T^{(n)}\oplus_{n=0}^{\infty}T^{(n)}\cdot x\oplus_{n=0}^{\infty}T^{(n)}\cdot y\oplus_{n=0}^{\infty}T^{(n)}\cdot xy=\oplus_{n=0}^{\infty}R^{(n)}
\end{align*}
 as required.
\end{proof}
The rest of this subsection deals with objects which are related to
the ring of the pseudo generic matrices. The motivation for dealing
with these objects will be clear later.
\begin{defn}
Let $J$ be the $T$-module generated by
\[
[x,y]x,\,[x,y]y,\,[x,y]^{2},\,[x,y]xy.
\]
For any $n\geq0$ we define the following $T$-submodules of $J$:
\begin{align*}
J_{n} & =t(x)^{n}J=t(x)^{n}(T[x,y]x+T[x,y]y+T[x,y]^{2}+T[x,y]xy)\subseteq J\\
\overline{J}_{n} & =t(x)^{n}(T[x,y]x+T[x,y]y)\subseteq J_{n}\subseteq J\\
\overline{C}_{n} & =t(x)^{n}T[x,y]^{2}\subseteq J_{n}\subseteq J.
\end{align*}
\end{defn}

\begin{prop}
\label{prop:J - f.g.}The $T$-module $J$ is a two sided ideal of
$R$. Moreover, $J$ is freely generated as a $T$-module by 
\[
[x,y]x,\,[x,y]y,\,[x,y]^{2},\,[x,y]xy.
\]
\end{prop}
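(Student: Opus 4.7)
The plan is to embed $J$ into the free $T$-module $[x,y]R$ and reduce both claims to a change-of-basis argument inside it. Write $\alpha=t(x)$, $\beta=t(y)$, $\gamma=t(xy)$, $\delta=\gamma+\alpha\beta$. First I would show that left multiplication by $[x,y]$ is injective on $R$: by Lemma \ref{lem:x,y}(3), $[x,y]^{2}=\gamma\delta\in T$, and this is nonzero since $T$ is a polynomial ring (Propositions \ref{prop:free} and \ref{prop:ST}); since $R$ is free over the domain $T$ by Proposition \ref{prop:finitely generated}, any $r\in R$ with $[x,y]r=0$ satisfies $\gamma\delta\cdot r=[x,y]^{2}r=0$, hence $r=0$. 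Consequently $[x,y]R$ is $T$-free of rank $4$ with basis $\{[x,y],[x,y]x,[x,y]y,[x,y]xy\}$.

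For the freeness of $J$, Lemma \ref{lem:a,b}(1) gives $[x,y]=\delta+\beta x+\alpha y$ (with vanishing $xy$-coefficient), so multiplying by $[x,y]$ yields the relation
\[
[x,y]^{2}=\delta[x,y]+\beta[x,y]x+\alpha[x,y]y
\]
inside $[x,y]R$. An invertible change of basis (in characteristic $2$) then identifies the $T$-span of the four generators of $J$ with $T\delta[x,y]\oplus T[x,y]x\oplus T[x,y]y\oplus T[x,y]xy$. Since $\delta\ne 0$ in the integral domain $T$ and the four basis elements of $[x,y]R$ are $T$-independent, this sum is direct, so $J$ is free on $[x,y]x$, $[x,y]y$, $[x,y]^{2}$, $[x,y]xy$.

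For the two-sided ideal property, since $R$ is generated as a $T$-algebra by $x$ and $y$, it suffices to verify closure of $J$ under left and right multiplication by $x$ and $y$. The key identities are Lemma \ref{lem:a,b}(2), which in characteristic $2$ becomes $x[x,y]=[x,y]x+\alpha[x,y]$ and $y[x,y]=[x,y]y+\beta[x,y]$; the characteristic-$2$ equality $yx=xy+[x,y]$; and the reduction rules of Lemma \ref{lem:x,y} (so that $x^{2}$, $y^{2}$, $xyx$, $yxy$ contract to scalar multiples of $x$ or $y$). Using these, most of the sixteen products either vanish (e.g.\ $x\cdot[x,y]x=0$) or reduce immediately to explicit $T$-combinations of the four generators. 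The only moderately delicate cases are $[x,y]^{2}\cdot x$ and $[x,y]^{2}\cdot y$; for these one writes, for example, $[x,y]^{2}x=(\delta+\beta x+\alpha y)\,[x,y]x$ and applies the cases already settled, which puts the result back in $J$.

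The main obstacle is therefore the bookkeeping required to run through these sixteen products cleanly; no further conceptual ingredient beyond Lemmas \ref{lem:a,b} and \ref{lem:x,y} and the free-module structure of $R$ and $[x,y]R$ over $T$ is needed.
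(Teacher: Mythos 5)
Your proposal is correct. The ideal-closure half proceeds exactly as the paper does — run through the products of the four generators with $x$ and $y$ using the contraction identities of Lemma \ref{lem:x,y} and the rewriting rule $x[x,y]=[x,y]x+t(x)[x,y]$ from Lemma \ref{lem:a,b}(2) — so there is nothing to compare there beyond presentation. The freeness half, however, takes a genuinely different route. The paper proves $T$-independence of $[x,y]x,[x,y]y,[x,y]^{2},[x,y]xy$ directly, by exhibiting trace functionals ($t([x,y]ya)$, $t([x,y]xa)$, $t(xa[x,y]y)$, $t(a)$) that pick out each coefficient times a fixed nonzero element of the domain $T$; this is very much in the spirit of the trace-identity machinery the rest of the paper relies on. You instead embed $J$ into $[x,y]R$, observe that left multiplication by $[x,y]$ is injective on $R$ because $[x,y]^{2}=t(xy)\bigl(t(xy)+t(x)t(y)\bigr)$ is a nonzero scalar in the domain $T$ and $R$ is $T$-free (Proposition \ref{prop:finitely generated}), so $[x,y]R$ is $T$-free on $[x,y],[x,y]x,[x,y]y,[x,y]xy$; then you use the relation $[x,y]^{2}=\delta[x,y]+t(y)[x,y]x+t(x)[x,y]y$ (where $\delta=t(xy)+t(x)t(y)$), a unitriangular substitution, to replace $[x,y]^{2}$ by $\delta[x,y]$ and conclude directness since $\delta\neq 0$. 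Both arguments are sound; the paper's is more self-contained and computational, while yours is more structural and arguably cleaner once the free-module structure of $R$ is in hand — it also explains conceptually \emph{why} the given four elements are a basis (they differ by a unitriangular transformation from $\{\delta[x,y],[x,y]x,[x,y]y,[x,y]xy\}$ inside the free module $[x,y]R$). The only place where you should tighten the wording is the phrase ``invertible change of basis'': the matrix taking the $T$-basis $\{[x,y],\dots\}$ of $[x,y]R$ to your generating set has determinant $\delta$, which is nonzero but not a unit; what is actually unitriangular (hence invertible) is the transformation between $\{\delta[x,y],[x,y]x,[x,y]y,[x,y]xy\}$ and $\{[x,y]^{2},[x,y]x,[x,y]y,[x,y]xy\}$. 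The underlying mathematics is correct either way, since $T$-independence only needs a nonzero determinant over a domain, but the sentence as written could mislead a reader.
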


\begin{proof}
For showing that $J$ is a two sided ideal of $R$, it is enough to
show that $J$ is closed under multiplication by $x,\,y$. Indeed,
for the generators of $J$ as a $T$-module we have
\begin{align*}
[x,y]x\cdot x & =t(x)[x,y]x\in J\\{}
[x,y]y\cdot x & =[x,y]^{2}+[x,y]xy\in J\\{}
[x,y]^{2}\cdot x & =t(xy)[x,y]x+t(x)[x,y]^{2}+t(x)[x,y]xy\in J\\{}
[x,y]xy\cdot x & =t(xy)[x,y]x\in J\\
\\
x\cdot[x,y]x & =0\in J\\
x\cdot[x,y]y & =(t(x)[x,y]+[x,y]x)\cdot y\in J\\
x\cdot[x,y]^{2} & =[x,y]^{2}\cdot x\in J\\
x\cdot[x,y]xy & =0\in J.
\end{align*}
The computation for $y$ is similar, so $T$ is a two-sided ideal
of $R$. 

Now, let $a\in J$. We want to show that the way to write 
\[
a=\alpha\cdot[x,y]x+\beta\cdot[x,y]y+\gamma\cdot[x,y]^{2}+\delta\cdot[x,y]xy
\]
for $\alpha,\beta,\gamma,\delta\in T$ is unique. Using the properties
in Lemma \ref{lem:x,y} one can verify that
\begin{align*}
t([x,y]ya) & =(t(xy)+t(x)t(y))[x,y]^{2}\alpha\\
t([x,y]xa) & =(t(xy)+t(x)t(y))[x,y]^{2}\beta\\
t(xa[x,y]y) & =[x,y]^{4}\gamma\\
t(a) & =[x,y]^{2}\delta
\end{align*}
Hence, as in the proof of Proposition \ref{prop:finitely generated},
it follows that $J$ is freely generated by $[x,y]x$, $[x,y]y$,
$[x,y]^{2}$ and $[x,y]xy$ over $T$, as required. 
\end{proof}
\begin{cor}
\label{cor:two sided}For every $n\geq0$, the $T$-module $J_{n}$
is a two sided ideal of $R$, and 
\[
J_{n}=\overline{J}_{n}\oplus\overline{C}_{n}\oplus t(x)^{n}T\cdot[x,y]xy.
\]
\end{cor}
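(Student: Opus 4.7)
The plan is to derive both assertions directly from Proposition \ref{prop:J - f.g.} together with the centrality of $T \cdot 1$ in $R$; there is essentially no new content here beyond bookkeeping. First I would observe that because $T \cdot 1 \subseteq M_2(\Lambda)$ consists of scalar matrices, every element of $T$ commutes with every element of $R$. In particular $t(x)^n$ is central in $R$, so for any $r \in R$ and $j \in J$ we have $r \cdot (t(x)^n j) = t(x)^n (rj)$ and $(t(x)^n j) \cdot r = t(x)^n(jr)$. Since $J$ is a two-sided ideal of $R$ by Proposition \ref{prop:J - f.g.}, both of these products lie in $t(x)^n J = J_n$, which yields the two-sided ideal claim.

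For the decomposition, I would expand $J_n = t(x)^n J$ using the free generation of $J$ over $T$:
\[
J_n = t(x)^n T[x,y]x + t(x)^n T[x,y]y + t(x)^n T[x,y]^2 + t(x)^n T[x,y]xy,
\]
so that the first two summands combine to $\overline{J}_n$, the third is $\overline{C}_n$, and the fourth is $t(x)^n T \cdot [x,y]xy$. To upgrade this to a direct sum, I would suppose a relation $a + b + c = 0$ with $a \in \overline{J}_n$, $b \in \overline{C}_n$, $c \in t(x)^n T \cdot [x,y]xy$, expand each summand as $t(x)^n$ times an element of the corresponding $T$-submodule of $J$, and then invoke Proposition \ref{prop:J - f.g.} (which says $J$ is \emph{freely} generated over $T$ by $[x,y]x, [x,y]y, [x,y]^2, [x,y]xy$) to conclude that each $t(x)^n$-coefficient must vanish. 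Finally, using that $T$ is a domain by Proposition \ref{prop:free} and $t(x) \neq 0$, multiplication by $t(x)^n$ is injective on $T$, so each term itself is zero.

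The only potential obstacle is checking the injectivity step cleanly, but since $T$ is a free polynomial ring over $\mathbb{Z}/2\mathbb{Z}$ in the algebraically independent generators $t(x), t(y), t(xy)$, it is in particular an integral domain, and $t(x)$ is one of its free generators. Thus multiplication by $t(x)^n$ on the free $T$-module $J$ is injective, which is exactly what is needed to transport the direct sum decomposition of $J$ to $J_n$. No further computation should be required.
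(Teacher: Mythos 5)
Your proof is correct and is exactly the kind of bookkeeping the paper leaves implicit when it states this as an immediate corollary of Proposition \ref{prop:J - f.g.}: centrality of $T\cdot 1$ gives the ideal claim, and the free $T$-basis of $J$ gives the decomposition. One small redundancy: once you invoke free generation to conclude that each coefficient $t(x)^n\alpha_i \in T$ vanishes, the summands $a,b,c$ are already zero, so the final appeal to $T$ being a domain and $t(x)^n$ acting injectively is not needed for this corollary (it \emph{is} the extra ingredient behind Corollary \ref{prop:iff}, where one needs $\alpha_i=0$ rather than just $t(x)^n\alpha_i=0$).
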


\begin{cor}
\label{cor:trace=00003D0}Let $a\in J_{n}$. If $t(a)=0$, then $a\in\overline{J}_{n}+\overline{C}_{n}$.
I.e.
\[
a=\alpha\cdot[x,y]x+\beta\cdot[x,y]y+\gamma\cdot[x,y]^{2}
\]
for some $\alpha,\beta,\gamma\in t(x)^{n}T$. 
\end{cor}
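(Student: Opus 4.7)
The plan is to reduce the statement to the unique decomposition of $J_n$ supplied by Corollary \ref{cor:two sided}, and then kill the ``extra'' coefficient by applying the trace.

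First I would write $a \in J_n$ uniquely as
\[
a = \alpha \cdot [x,y]x + \beta \cdot [x,y]y + \gamma \cdot [x,y]^{2} + \delta \cdot [x,y]xy,
\]
with $\alpha, \beta, \gamma, \delta \in t(x)^{n} T$, which is exactly the content of Corollary \ref{cor:two sided}. The corollary to be proved says precisely that $\delta = 0$; so it suffices to extract $\delta$ from $a$ by means of the trace.

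The next step is to evaluate $t(\cdot)$ on each of the four canonical $T$-generators of $J$. Three of them vanish: $t([x,y]x) = t([x,y]y) = 0$ follows from Lemma \ref{lem:a,b}(4) (combined with cyclicity of the trace, or equivalently with $[x,y]=[y,x]$ in characteristic $2$, for the first of the two identities), and $t([x,y]^{2}) = 0$ because by Lemma \ref{lem:x,y}(3) the element $[x,y]^{2}$ is a scalar matrix and we are in characteristic $2$. The remaining generator is the one that matters, and a short computation using Lemma \ref{lem:x,y}(1)\textendash(2) together with trace cyclicity gives
\[
t([x,y]xy) = t((xy)^{2}) + t(x^{2}y^{2}) = t(xy)^{2} + t(x)t(y)t(xy) = [x,y]^{2}
\]
as an element of $T$; this is precisely the identity already recorded in the proof of Proposition \ref{prop:J - f.g.}, so I would just invoke it.

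Taking the trace of the displayed expression for $a$ then yields $t(a) = \delta \cdot [x,y]^{2}$ in $T$. By Proposition \ref{prop:free}, $T$ is a polynomial algebra in three commuting variables over $\mathbb{Z}/2\mathbb{Z}$, hence an integral domain; and $[x,y]^{2} = t(xy)^{2} + t(x)t(y)t(xy)$ is manifestly nonzero in $T$. Consequently the hypothesis $t(a) = 0$ forces $\delta = 0$, so $a \in \overline{J}_n + \overline{C}_n$, as claimed. I do not foresee a real obstacle: the whole argument rests on the single observation that among the four canonical $T$-generators of $J$, only $[x,y]xy$ has nonzero trace, and the trace pairs it with a non-zero-divisor of $T$.
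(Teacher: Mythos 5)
Your proof is correct and is essentially the argument the paper intends: the corollary is stated without a separate proof precisely because the identity $t(a) = [x,y]^{2}\delta$ is already recorded in the proof of Proposition \ref{prop:J - f.g.}, and $T$ being a domain (Proposition \ref{prop:free}) with $[x,y]^{2}\neq 0$ then forces $\delta=0$. Your verification of the trace values on the four canonical generators is accurate and fills in the implicit details.
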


As $T$ is a free commutative algebra over $\mathbb{Z}/2\mathbb{Z}$
(Proposition \ref{prop:free}), $T$ is a unique factorization domain.
Hence, by Proposition \ref{prop:J - f.g.} and Part 3 of Lemma \ref{lem:x,y},
we have:
\begin{cor}
\label{prop:iff}The $T$-modules $J_{n},\,\overline{J}_{n},\,\overline{C}_{n}$
are free, and for every $a\in J$ we have
\begin{align*}
a\in J_{n} & \Leftrightarrow t(x)a\in J_{n+1}\Leftrightarrow t(y)a,\,[x,y]^{2}a\in J_{n}\\
a\in\overline{J}_{n} & \Leftrightarrow t(x)a\in\overline{J}_{n+1}\Leftrightarrow t(y)a,\,[x,y]^{2}a\in\overline{J}_{n}\\
a\in\overline{C}_{n} & \Leftrightarrow t(x)a\in\overline{C}_{n+1}\Leftrightarrow t(y)a,\,[x,y]^{2}a\in\overline{C}_{n}.
\end{align*}
\end{cor}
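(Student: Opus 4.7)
The plan is to reduce every assertion in the corollary to unique coordinate expansions in the free $T$-module $J$ (Proposition \ref{prop:J - f.g.}), and then invoke the fact that $T$ is a polynomial ring over $\mathbb{Z}/2\mathbb{Z}$ (Proposition \ref{prop:free}), hence a UFD in which $t(x)$ is a prime element to which both $t(y)$ and $[x,y]^{2}$ are coprime.

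Freeness is immediate once this is set up: since $J$ is $T$-free on the basis $[x,y]x,\,[x,y]y,\,[x,y]^{2},\,[x,y]xy$ and $T$ is a domain, multiplication by $t(x)^{n}$ is injective on $J$, so $J_{n}=t(x)^{n}J$ is $T$-free on $t(x)^{n}[x,y]x,\,t(x)^{n}[x,y]y,\,t(x)^{n}[x,y]^{2},\,t(x)^{n}[x,y]xy$. The same argument shows that $\overline{J}_{n}$ and $\overline{C}_{n}$ are $T$-free on the corresponding subsets of this basis.

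For the equivalences, I would write any $a\in J$ uniquely as
\[
a=\alpha[x,y]x+\beta[x,y]y+\gamma[x,y]^{2}+\delta[x,y]xy,\qquad \alpha,\beta,\gamma,\delta\in T.
\]
Then $a\in J_{n}$ iff all four coordinates lie in $t(x)^{n}T$; $a\in\overline{J}_{n}$ iff $\gamma=\delta=0$ and $\alpha,\beta\in t(x)^{n}T$; and $a\in\overline{C}_{n}$ iff $\alpha=\beta=\delta=0$ and $\gamma\in t(x)^{n}T$. Each of $t(x)$, $t(y)$, $[x,y]^{2}$ is central in $R$ (and by the Remark after Lemma \ref{lem:x,y} we may regard $[x,y]^{2}\in T$), so multiplication by any of them acts componentwise on the four coordinates. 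Consequently every displayed equivalence reduces to the scalar statement that for $\xi\in T$,
\[
t(x)\xi\in t(x)^{n+1}T\;\Longleftrightarrow\;\xi\in t(x)^{n}T\;\Longleftrightarrow\;t(y)\xi\in t(x)^{n}T\;\Longleftrightarrow\;[x,y]^{2}\xi\in t(x)^{n}T,
\]
together with the trivial cancellations $t(x)\gamma=0\Leftrightarrow\gamma=0$, etc., needed when passing from $J_{n}$ to $\overline{J}_{n}$ or $\overline{C}_{n}$; these last all follow from $T$ being a domain.

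The main (minor) obstacle is verifying the coprimality $\gcd(t(x),[x,y]^{2})=1$ in $T$. Here I would use Lemma \ref{lem:x,y}(3) to factor
\[
[x,y]^{2}=t(xy)\bigl(t(xy)+t(x)t(y)\bigr),
\]
which reduces modulo the prime ideal $(t(x))\subset\mathbb{Z}/2\mathbb{Z}[t(x),t(y),t(xy)]$ to $t(xy)^{2}\neq 0$; hence $t(x)\nmid[x,y]^{2}$, and primality of $t(x)$ yields $\gcd(t(x)^{n},[x,y]^{2})=1$ for all $n$. The analogous coprimality $\gcd(t(x),t(y))=1$ is obvious from the same polynomial-ring identification. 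Standard UFD divisibility then produces all the required implications, completing the proof.
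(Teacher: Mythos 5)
Your proof is correct and follows essentially the same route the paper intends: reduce to the $T$-coordinate expansion coming from the freeness of $J$ over $T$ (Proposition \ref{prop:J - f.g.}), then use that $T$ is a polynomial ring, hence a UFD in which $t(x)$ is prime, together with the factorization $[x,y]^{2}=t(xy)\bigl(t(xy)+t(x)t(y)\bigr)$ from Lemma \ref{lem:x,y}(3) to see that $t(x)$ is coprime to both $t(y)$ and $[x,y]^{2}$. You have simply filled in the divisibility details that the paper leaves implicit; no gap.
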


\begin{defn}
We define $\mathring{S}=\left\langle t(y),\,[x,y]^{2}\right\rangle $
to be the subring of $S$ generated by $t(y)$ and $[x,y]^{2}$, and
define $\mathring{T}=\mathring{S}+t(xy)\mathring{S}$.
\end{defn}

From Proposition \ref{prop:ST} one deduces that $S=\oplus_{i=0}^{\infty}t(x)^{i}\cdot\mathring{S}$,
and therefore 
\[
T=S\oplus t(xy)S=\oplus_{i=0}^{\infty}t(x)^{i}(\mathring{S}\oplus t(xy)\mathring{S})=\oplus_{i=0}^{\infty}t(x)^{i}\mathring{T}.
\]
Thus $J=\oplus_{i=0}^{\infty}t(x)^{i}(\mathring{T}[x,y]x\oplus\mathring{T}[x,y]y\oplus\mathring{T}[x,y]^{2}\oplus\mathring{T}[x,y]xy)$,
and, regarding the relation between $J_{n}$ and $J_{n+1}$, we have
\begin{align*}
J_{n} & =t(x)^{n}\oplus_{i=0}^{\infty}t(x)^{i}(\mathring{T}[x,y]x\oplus\mathring{T}[x,y]y\oplus\mathring{T}[x,y]^{2}\oplus\mathring{T}[x,y]xy)\\
 & =\oplus_{i=n}^{\infty}t(x)^{i}(\mathring{T}[x,y]x\oplus\mathring{T}[x,y]y\oplus\mathring{T}[x,y]^{2}\oplus\mathring{T}[x,y]xy)\\
 & =t(x)^{n}(\mathring{T}[x,y]x\oplus\mathring{T}[x,y]y\oplus\mathring{T}[x,y]^{2}\oplus\mathring{T}[x,y]xy)\oplus J_{n+1}.
\end{align*}

\begin{cor}
\label{prop:UFD}Let $\hat{J}$ denote the closure of $J$ in the
degree topology induced by the grading of $R$. Then every $f\in\hat{J}$
can be uniquely written as 
\[
f=\sum_{n=0}^{\infty}\sum_{i=0}^{\infty}(U_{n,i}(f)+V_{n,i}(f)+W_{n,i}(f))
\]
where $U_{n,i}(f),V_{n,i}(f),W_{n,i}(f)$ are homogeneous of degree
$i$ of the form
\begin{align*}
U_{n,i}(f) & \in t(x)^{n}\cdot(\mathring{T}\cdot[x,y]x+\mathring{T}\cdot[x,y]y)\\
V_{n,i}(f) & \in t(x)^{n}\cdot\mathring{T}\cdot[x,y]^{2}\\
W_{n,i}(f) & \in t(x)^{n}\cdot\mathring{T}\cdot[x,y]xy.
\end{align*}
\end{cor}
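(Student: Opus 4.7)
The plan is to reduce the statement to the already-established decomposition of $J$ as a $T$-module, refined by the $\mathring{T}$-module decomposition of $T$, and then to transfer this decomposition to the closure via the grading.

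First, starting from Proposition \ref{prop:J - f.g.}, $J$ is freely generated over $T$ by $[x,y]x,\,[x,y]y,\,[x,y]^2,\,[x,y]xy$. Substituting the decomposition $T=\bigoplus_{n=0}^{\infty}t(x)^{n}\mathring{T}$ coming from Proposition \ref{prop:ST}, we obtain the direct sum
\[
J=\bigoplus_{n=0}^{\infty}t(x)^{n}\cdot\bigl(\mathring{T}[x,y]x\;\oplus\;\mathring{T}[x,y]y\;\oplus\;\mathring{T}[x,y]^{2}\;\oplus\;\mathring{T}[x,y]xy\bigr).
\]
Each of the four generators is homogeneous (of degree $3,3,4,4$), and $\mathring{T}$ is a graded subring of $T$, so this is a decomposition of $J$ as a graded $\mathring{T}$-module that is compatible with the grading of $R$ provided by Proposition \ref{cor:graded}.

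Next, I would show that $\hat{J}$ consists exactly of formal series $f=\sum_{i=0}^{\infty}f_{i}$ with $f_{i}\in J\cap R^{(i)}$. One direction is immediate: such partial sums lie in $J$ and converge in the degree topology, so $f\in\hat{J}$. For the converse, a Cauchy sequence $g^{(k)}\in J$ converging to $f$ has the property that each homogeneous component $g^{(k)}_{i}$ eventually stabilises to $f_{i}$; since $J$ is a graded submodule of $R$, each $g^{(k)}_{i}$ is in $J\cap R^{(i)}$, and hence so is $f_{i}$.

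Now intersecting the first display with $R^{(i)}$ gives a finite direct sum (only the values $n\le i-3$ can contribute, because $t(x)^{n}\cdot[x,y]x$ and the other generators have degree at least $n+3$). Applying this finite decomposition to each $f_{i}\in J\cap R^{(i)}$, I would define $U_{n,i}(f)$ as the sum of the $[x,y]x$ and $[x,y]y$ components, $V_{n,i}(f)$ as the $[x,y]^{2}$ component, and $W_{n,i}(f)$ as the $[x,y]xy$ component. By construction each of these is homogeneous of degree $i$ and lies in the prescribed $\mathring{T}$-submodule, while uniqueness of the expression is inherited from the uniqueness at each degree — i.e.\ from the directness of the sum $J=\bigoplus_{n}t(x)^{n}(\mathring{T}[x,y]x\oplus\mathring{T}[x,y]y\oplus\mathring{T}[x,y]^{2}\oplus\mathring{T}[x,y]xy)$. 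I do not anticipate a serious obstacle: the only point requiring care is to make sure the formal double sum $\sum_{n,i}$ converges in the degree topology, and this is automatic since for each fixed $i$ only finitely many $n$ are nonzero, so $\sum_{n}(U_{n,i}+V_{n,i}+W_{n,i})=f_{i}$ is a finite sum in $R^{(i)}$.
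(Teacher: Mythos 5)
Your proof is correct and follows essentially the same route as the paper: you derive the graded direct-sum decomposition $J=\bigoplus_{n}t(x)^{n}(\mathring{T}[x,y]x\oplus\mathring{T}[x,y]y\oplus\mathring{T}[x,y]^{2}\oplus\mathring{T}[x,y]xy)$ from Propositions \ref{prop:J - f.g.} and \ref{prop:ST}, exactly as the paper does in the paragraph preceding the corollary, and then pass to $\hat{J}$. You merely spell out the (routine) step of identifying the closure of a graded submodule with the set of power series whose homogeneous parts lie in the graded pieces, which the paper leaves implicit.
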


\begin{rem}
Notice that given $f\in\hat{J}$, for any $n,i$ we either have $U_{n,i}(f)=0$
or $i=\deg(U_{n,i}(f))\geq n+3$. Similarly, we either have $i=\deg(V_{n,i}(f))\geq n+4$
or $V_{n,i}(f)=0$, and a similar property for $W_{n,i}(f)$.
\end{rem}

\begin{rem}
\label{rem:UFD}Notice that if $f$ lies in the closure of $J_{n_{0}}\subseteq\hat{J}$
for some $n_{0}$, then $U_{n,i}(f)=V_{n,i}(f)=W_{n,i}(f)=0$ for
any $n<n_{0}$ and $i\geq0$.
\end{rem}

\subsection{Some properties of the group $\hat{G}'$}

In this subsection we describe some properties of the group $\hat{G}'$
in view of the above discussion on the ring of the pseudo generic
matrices. As the ring $R=\left\langle x,\,y,\,1\cdot T\right\rangle =\oplus_{n=0}^{\infty}R^{(n)}$
is a graded ring (Corollary \ref{cor:graded}), we can define the
ring $\hat{R}$ of power series in $x,y$ and $1\cdot T$ with respect
to this grading on $R$. Then, the ideals
\[
\hat{R}_{n}=\left\{ \sum_{i=0}^{\infty}f_{i}\,|\,f_{i}\in R^{(i)},\,f_{0},...,f_{n-1}=0,\,\deg(f_{i})=i\right\} 
\]
\foreignlanguage{american}{serve as a basis of neighborhoods of zero
for the profinite topology of $\hat{R}$, making $\hat{R}$ a pro-$2$
ring.} Now, as for every $n\leq k$ we have $R^{(k)}\subseteq M_{2}(\Lambda,P^{n})$,
the topology of $R$ induced by the grading of $R$ is apriori stronger
than the topology of $R$ induced by the topology of $M_{2}(\Lambda)$.
Hence, we have a (continuous) ring homomorphism $\sigma:\hat{R}\to M_{2}(\Lambda)$
that sends the copy of $R$ in $\hat{R}$ to its copy in $M_{2}(\Lambda)$.
Hence, $\sigma$ induces also a group homomorphism from the closure
of $G=\left\langle 1+x,1+y\right\rangle $ in $\hat{R}$ to its closure
in $M_{2}(\Lambda)$, by sending the generators to their copy in $M_{2}(\Lambda)$.
Hence, we get the diagram
\[
\begin{array}{cccc}
\hat{F}=\left\langle \left\langle X,Y\right\rangle \right\rangle  & \rightarrow & \left\langle \left\langle 1+x,1+y\right\rangle \right\rangle \subseteq\hat{R}\\
 & \searrow & \downarrow\,\,\,\,\,\,\,\,\\
 &  & \left\langle \left\langle 1+x,1+y\right\rangle \right\rangle =\hat{G} & \subseteq M_{2}(\Lambda).
\end{array}
\]
Recall that by our assumption, the map $\pi|_{\hat{F}'}:\hat{F}'\overset{\sim}{\to}\hat{G}'$
defined by 
\begin{align*}
X & \mapsto1+x\in M_{2}(\Lambda)\,\,\,\,\,\,\,\,\,\,\,Y\mapsto1+y\in M_{2}(\Lambda).
\end{align*}
is an isomorphism. Hence, by the diagram we obtain that also the map
\begin{align*}
X & \mapsto1+x\in\hat{R}\,\,\,\,\,\,\,\,\,\,\,\,Y\mapsto1+y\in\hat{R}
\end{align*}
induces an isomorpism of $\hat{F}'$. Hence, we can replace $\hat{G}\subseteq M_{2}(\Lambda)$
by the closure of $G$ in $\hat{R}$. So, from now on, the notation
$\hat{G}$ refers to the closure of $G$ in $\hat{R}$, and the degree
of terms of elements in $\hat{G}$ will be determined by the grading
of $R$. In other words, we regard the elements of $R^{(n)}$ as the
elements of $R$ of degree $n$.
\begin{prop}
\label{prop:commutator}Let $g\in\hat{G}'$. Then $g=1+[x,y]\sum_{i=0}^{\infty}a_{i}$
for some $a_{i}\in R^{(i)}$. 
\end{prop}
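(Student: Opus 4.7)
My plan is to reformulate the claim as the inclusion $\hat{G}' \subseteq 1 + [x,y]\hat{R}$, and then prove this inclusion by exhibiting $[x,y]\hat{R}$ as a closed two-sided ideal such that the quotient $\hat{R}/[x,y]\hat{R}$ is commutative; since $\hat{G}$ maps into this commutative quotient, $\hat{G}'$ must lie in the kernel of the projection. Writing $a \in \hat{R}$ as $\sum_{i \geq 0} a_i$ with $a_i \in R^{(i)}$ then gives the stated form $g = 1 + [x,y]\sum_i a_i$.

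The first key step is to verify that $[x,y]\hat{R} = \hat{R}[x,y]$ is a closed two-sided ideal. Working in characteristic $2$, Part 2 of Lemma \ref{lem:a,b} gives $[[x,y],x] = t(x)[x,y]$, which rearranges to $x[x,y] = [x,y]\bigl(x + t(x)\bigr)$; similarly $y[x,y] = [x,y]\bigl(y + t(y)\bigr)$. Since $T \cdot 1$ is central in $M_2(\Lambda)$ and $R = \langle x, y, T \cdot 1\rangle$, this yields $R[x,y] \subseteq [x,y]R$, and the reverse inclusion follows symmetrically. Because each $R^{(n)}$ is finite over $\mathbb{Z}/2\mathbb{Z}$ (Propositions \ref{prop:free} and \ref{prop:finitely generated}), $\hat{R}$ is a compact profinite ring, and these identities, being homogeneous with respect to the grading, extend by continuity to $\hat{R}[x,y] = [x,y]\hat{R}$; the image is closed since it is the continuous image of the compact space $\hat{R}$ under left multiplication by $[x,y]$.

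The second key step is to observe that the quotient ring $\hat{R}/[x,y]\hat{R}$ is commutative. Topologically, $\hat{R}$ is generated by $x$, $y$, and elements of the central subring $T \cdot 1$. Modulo $[x,y]\hat{R}$ we have $xy = yx$, so all the topological generators pairwise commute, making $\hat{R}/[x,y]\hat{R}$ a commutative profinite ring. Consequently, the image of $\hat{G} \subseteq \hat{R}^\times$ in $\bigl(\hat{R}/[x,y]\hat{R}\bigr)^\times$ is an abelian group, so its derived subgroup is trivial. Since the projection $\hat{R} \to \hat{R}/[x,y]\hat{R}$ is continuous, the image of the closure $\hat{G}'$ of the derived subgroup is still trivial, giving $\hat{G}' \subseteq 1 + [x,y]\hat{R}$ as required.

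The main technical obstacle is the first step: confirming that $[x,y]\hat{R}$ is genuinely a \emph{two-sided} ideal of the noncommutative ring $\hat{R}$, and that $\hat{R}[x,y] = [x,y]\hat{R}$ persists after completion. This hinges on the trace identity $[[x,y],x] = t(x)[x,y]$ (and its analogue for $y$), which is exactly where the characteristic-$2$ hypothesis and the choice to work with the pseudo-generic matrices satisfying $\det(x) = \det(y) = 0$ play their role; the grading of $R$ then ensures the passage to $\hat{R}$ is unproblematic.
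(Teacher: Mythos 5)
Your proof is correct, and it takes a genuinely different route from the one in the paper. The paper argues ``bottom-up'': it first checks the two closure identities
\[
(1+[x,y]b_{1})(1+[x,y]b_{2})=1+[x,y](b_{1}+b_{2}+b_{1}[x,y]b_{2}),\qquad (1+[x,y]b)^{-1}=1+[x,y]\Bigl(b\sum_{i\geq 0}([x,y]b)^{i}\Bigr),
\]
so that it suffices to exhibit one set of topological generators of $\hat{G}'$ in the required form; it then takes the classical generating set $\{X^{n}Y^{m}X^{-n}Y^{-m}\}$ of $F'$, uses $x^{2}=t(x)x$ and $y^{2}=t(y)y$ (Lemma \ref{lem:x,y}) to write $(1+x)^{n}=1+x\,p_{1}(t(x))$ and $(1+y)^{m}=1+y\,p_{2}(t(y))$ with central coefficients, and computes the commutator directly. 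You argue ``top-down'': you build the two-sided ideal $[x,y]\hat{R}$ using the trace identity $[[x,y],x]=t(x)[x,y]$ — i.e.\ the relation $R[x,y]=[x,y]R$, which in the paper is only invoked one step later, in the proof of Corollary \ref{cor:fix} — observe that the quotient $\hat{R}/[x,y]\hat{R}$ is a commutative profinite ring, and conclude that $\hat{G}'$ dies under the induced map on unit groups, hence lands in $1+[x,y]\hat{R}$. Your version is more structural and avoids picking a generating set, at the modest cost of having to verify closedness of $[x,y]\hat{R}$ (which you handle correctly by compactness of $\hat{R}$, since each $R^{(n)}$ is a finite $\mathbb{Z}/2\mathbb{Z}$-module). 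The paper's version has the advantage of producing the explicit coefficient $p_{1}(t(x))p_{2}(t(y))\sum_{i,j}(xp_{1}(t(x)))^{i}(yp_{2}(t(y)))^{j}$ and of not needing any point-set topology beyond what is already set up. Both rely on the same underlying PI-theoretic facts about the pseudo-generic matrices with $\det(x)=\det(y)=0$ in characteristic $2$.
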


\begin{proof}
Since
\begin{align*}
(1+[x,y]b_{1})(1+[x,y]b_{2}) & =1+[x,y](b_{1}+b_{2}+b_{1}[x,y]b_{2})\\
(1+[x,y]b)^{-1} & =1+[x,y](b\sum_{i=0}^{\infty}([x,y]b)^{i})
\end{align*}
it is enough to show that $\hat{G}'$ has a set of (topological) generators
of the form $1+[x,y]\sum_{i=0}^{\infty}a_{i}$. Now, the set of commutators
$\left\{ X^{n}Y^{m}X^{-n}Y^{-m}\,|\,n,m\in\mathbb{Z}\right\} $ generates
$F'$, the commutator subgroup of $F=\left\langle X,Y\right\rangle $
(cf. \cite{key-1}, Section 2.4, Problem 13). Hence, it is enough
to show that the elements of the set
\[
\left\{ (1+x)^{n}(1+y)^{m}(1+x)^{-n}(1+y)^{-m}\,|\,n,m\in\mathbb{Z}\right\} 
\]
are of the form $1+[x,y]\sum_{i=0}^{\infty}a_{i}$. Let $n,m\in\mathbb{Z}$.
Then
\begin{align*}
(1+x)^{n} & =1+x\cdot p_{1}(x)=1+x\cdot p_{1}(t(x))\\
(1+y)^{m} & =1+y\cdot p_{2}(y)=1+y\cdot p_{2}(t(y))
\end{align*}
where $p_{1}(x)$ and $p_{2}(y)$ are power series in $x$ and $y$,
respectively. Therefore
\begin{align*}
 & (1+x)^{n}(1+y)^{m}(1+x)^{-n}(1+y)^{-m}\\
 & =1+[x\cdot p_{1}(t(x)),y\cdot p_{2}(t(y))]\sum_{i,j=0}^{\infty}(x\cdot p_{1}(t(x))^{i}(y\cdot p_{2}(t(y)))^{j}\\
 & =1+[x,y]\cdot p_{1}(t(x))\cdot p_{2}(t(y))\sum_{i,j=0}^{\infty}(x\cdot p_{1}(t(x))^{i}(y\cdot p_{2}(t(y)))^{j}
\end{align*}
 has the needed form. 
\end{proof}
Recalling $\hat{J}$ from Corollary \ref{prop:UFD} we therefore have:
\begin{cor}
\label{cor:fix}Let $g=1+\sum_{i=1}^{\infty}a_{i}\in\hat{G}''$, $a_{i}\in R^{(i)}$,
where $\hat{G}''$ is the second derived subgroup of $G$. Then, we
have $a_{i}\in J$ for every $i$. In other words, we have $\hat{G}''\subseteq\hat{J}$.
\end{cor}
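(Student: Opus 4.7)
By Proposition \ref{prop:commutator}, every element of $\hat{G}'$ has the form $1 + [x,y]\alpha$ for some $\alpha \in \hat{R}$. My plan is to show that $\hat{G} \cap (1 + \hat{J})$ is a closed subgroup of $\hat{G}$ that contains every group commutator $[g,h]_{\hat{G}}$ with $g, h \in \hat{G}'$; since $\hat{G}''$ is (topologically) generated by such commutators, this gives $\hat{G}'' \subseteq 1 + \hat{J}$, which by Corollary \ref{prop:UFD} and the homogeneity of the decomposition is equivalent to the coefficient statement $a_i \in J$ for every $i$.

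Given $g = 1 + [x,y]\alpha$ and $h = 1 + [x,y]\beta$ in $\hat{G}'$, the identity $hgg^{-1}h^{-1} = 1$ and $\mathrm{char} = 2$ yield
\[
[g,h]_{\hat{G}} - 1 \;=\; (gh - hg)\, g^{-1} h^{-1} \;=\; \bigl([x,y]\,\alpha\,[x,y]\,\beta + [x,y]\,\beta\,[x,y]\,\alpha\bigr)\, g^{-1} h^{-1}.
\]
Because $\hat{J}$ is the closure of the two-sided ideal $J$ of Proposition \ref{prop:J - f.g.}, it is itself a two-sided ideal of $\hat{R}$, and so it suffices to prove $[x,y]\, R\, [x,y] \subseteq J$; then $[x,y]\alpha[x,y]\beta \in \hat{J}$ follows by continuity and the right-ideal property, and similarly for the other summand.

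To establish $[x,y]\, a\, [x,y] \in J$ for $a \in R$, apply Part 1 of Lemma \ref{lem:a,b} to $a$ and $[x,y]$, using $t([x,y]) = 0$, to get $a[x,y] + [x,y]a = t(a)[x,y] + t(a[x,y])\cdot 1$; multiplying on the left by $[x,y]$ produces
\[
[x,y]\, a\, [x,y] \;=\; [x,y]^{2} a \;+\; t(a)\,[x,y]^{2} \;+\; t(a[x,y])\cdot[x,y].
\]
The first two summands lie in $J$ since $[x,y]^{2} \in J$ and $J$ is a two-sided ideal. For the last summand, check directly on the $T$-basis $\{1, x, y, xy\}$ of $R$ (Proposition \ref{prop:finitely generated}) that $t(a[x,y])$ always lies in $T$ and in fact in $[x,y]^{2}\cdot T$: the cases $a \in \{1, x, y\}$ give $0$ by the cyclic property of the trace (in characteristic $2$ a pair of cyclic rearrangements cancels), while $a = xy$ gives $t(xy[x,y]) = t((xy)^{2}) + t(xy^{2}x) = t(xy)^{2} + t(x)t(y)t(xy) = [x,y]^{2}$ by Lemma \ref{lem:x,y}. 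Hence $t(a[x,y])\cdot[x,y] \in T\cdot[x,y]^{3} \subseteq J$, as desired.

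It remains to note that $1 + \hat{J}$ is a closed subgroup of $1 + \hat{R}_{1}$: closed under products because $\hat{J}$ is additive and closed under multiplication, and under inverses because for $u \in \hat{J}$ we have $(1+u)^{-1} - 1 = u\sum_{i \geq 0} u^{i} \in \hat{J}$. Since $\hat{G}''$ is the closure of the subgroup generated by commutators $[g,h]_{\hat{G}}$ with $g, h \in \hat{G}'$, and each such commutator lies in $1 + \hat{J}$ by the calculation above, we conclude $\hat{G}'' \subseteq 1 + \hat{J}$. The main technical hurdle is the base identity $[x,y]\,R\,[x,y] \subseteq J$; everything hinges on the ``exotic'' coefficient $t(xy[x,y]) = [x,y]^{2}$ landing in $T$ rather than outside it, which is a consequence of the reduction $\det(x) = \det(y) = 0$ effected in Section \ref{subsec:reduction}.
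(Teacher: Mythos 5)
Your proof is correct, and it takes a genuinely different (if closely related) route from the paper's. The paper also reduces, via Proposition \ref{prop:commutator}, to showing that a commutator $[h,k]_{\hat G}$ with $h,k\in\hat G'$ has all its terms in $J$; but it then observes, from Part 2 of Lemma \ref{lem:a,b} in the form $x[x,y]=t(x)[x,y]+[x,y]x$ and $y[x,y]=t(y)[x,y]+[x,y]y$, that $R[x,y]=[x,y]R$. This lets one pull a factor of $[x,y]^{2}$ all the way to the front of the Lie-commutator term, whence $h k h^{-1}k^{-1}-1\in[x,y]^{2}\hat R\subseteq\hat J$ at once, using only that $[x,y]^{2}\in J$ and $J\vartriangleleft R$. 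Your argument instead establishes the containment $[x,y]\,R\,[x,y]\subseteq J$ directly, by expanding $[x,y]a[x,y]=[x,y]^{2}a+t(a)[x,y]^{2}+t(a[x,y])[x,y]$ from Part 1 of Lemma \ref{lem:a,b} and then checking the coefficient $t(a[x,y])$ on the $T$-basis $\{1,x,y,xy\}$ of $R$. Note that the paper's observation is strictly stronger: $R[x,y]=[x,y]R$ immediately gives $[x,y]R[x,y]=[x,y]^{2}R\subseteq J$, so it subsumes your key lemma while avoiding the basis-by-basis trace computation. What your version buys is an explicit formula for $[x,y]a[x,y]$ and a concrete identification $t(xy[x,y])=[x,y]^{2}$, which makes visible exactly where the reduction $\det(x)=\det(y)=0$ (and hence Lemma \ref{lem:x,y}) enters; the paper's version is shorter but hides this. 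Your auxiliary steps (rewriting $[g,h]_{\hat G}-1=(gh-hg)g^{-1}h^{-1}$, passing from $R$ to $\hat R$ by continuity, verifying $1+\hat J$ is a closed subgroup) are all fine and do the same bookkeeping the paper does with its $\sum_{i,j}$ notation.
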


\begin{proof}
By definition, $\hat{G}''$ is generated (topologically) by commutators
of elements of $\hat{G}'$. Hence, as $J$ is an ideal of $R$ (Proposition
\ref{prop:J - f.g.}), it is enough to show that for every two elements
$h,k\in\hat{G}'$ the terms of $g=hkh^{-1}k^{-1}$ lie in $J$. By
Proposition \ref{prop:commutator} we can write $h=1+[x,y]\sum_{l=0}^{\infty}b_{l}$
and $k=1+[x,y]\sum_{l=0}^{\infty}c_{l}$ for some $b_{l},c_{l}\in R^{(l)}$.
Hence, for $g$ we have
\[
g=1+[[x,y]\sum_{l=0}^{\infty}b_{l},[x,y]\sum_{l=0}^{\infty}c_{l}]\cdot\sum_{i,j=0}^{\infty}([x,y]\sum_{l=0}^{\infty}b_{l})^{i}([x,y]\sum_{l=0}^{\infty}c_{l})^{j}.
\]
Now, by Part 2 of Lemma \ref{lem:a,b} we have
\begin{align*}
x[x,y] & =t(x)[x,y]+[x,y]x\,\,,\,\,\,\,\,\,y[x,y]=t(y)[x,y]+[x,y]y,
\end{align*}
so that $R[x,y]=[x,y]R$. Hence, one can write $g=1+[x,y]^{2}\cdot\sum_{l=0}^{\infty}d_{l}$
for some $d_{l}\in R^{(l)}$. As $[x,y]^{2}\in J$, and $J$ is an
ideal of $R$, the claim follows.
\end{proof}
We continue with the following lemma:
\begin{lem}
\label{lem:commuting}Let $\hat{F}$ be a free pro-$p$ group, and
$\hat{F}=\hat{F}_{1},\hat{F}_{2},\hat{F}_{3},...$ its lower central
series. If $1\neq g,h\in\hat{F}$ commute, then there exists $n$
such that $g,h\in\hat{F}_{n}-\hat{F}_{n+1}$. 
\end{lem}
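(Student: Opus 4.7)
The plan is to reduce to the case of a procyclic subgroup and then use that the graded pieces $\hat{F}_n/\hat{F}_{n+1}$ are torsion-free $\mathbb{Z}_p$-modules. The key black-box input I would invoke is the pro-$p$ analogue of the Nielsen--Schreier theorem: every closed subgroup of a free pro-$p$ group is itself a free pro-$p$ group (for instance, Ribes--Zalesski). Consequently, any nontrivial closed abelian subgroup of $\hat{F}$ must be free pro-$p$ and abelian, hence procyclic, i.e., isomorphic to $\mathbb{Z}_p$.

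Given commuting $1 \neq g,h \in \hat{F}$, consider $H = \overline{\langle g,h\rangle}$, which is a closed, nontrivial, abelian subgroup of $\hat{F}$, hence procyclic by the above. Fix a topological generator $c$ of $H$, and let $n$ be the unique integer with $c \in \hat{F}_n - \hat{F}_{n+1}$. Then one can write $g = c^\alpha$, $h = c^\beta$ for some $\alpha,\beta \in \mathbb{Z}_p$; since $g,h \neq 1$ we have $\alpha,\beta \neq 0$. The lemma will follow once I show that $c^\alpha, c^\beta \in \hat{F}_n - \hat{F}_{n+1}$.

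Both $c^\alpha$ and $c^\beta$ lie in $\hat{F}_n$ since $\hat{F}_n$ is a closed subgroup containing $c$. For the non-vanishing part, recall (as used in the paper together with the Witt formula) that $\Upsilon_n(\hat{F}) = \hat{F}_n/\hat{F}_{n+1}$ is a free $\mathbb{Z}_p$-module. The map $\mathbb{Z}_p \to \Upsilon_n(\hat{F})$ defined by $\alpha \mapsto c^\alpha \hat{F}_{n+1}$ is a $\mathbb{Z}_p$-module homomorphism: this is clear for $\alpha \in \mathbb{Z}$ because $\Upsilon_n(\hat{F})$ is abelian, and extends to $\mathbb{Z}_p$ by continuity of the exponential map $\alpha \mapsto c^\alpha$ on the procyclic group $H$. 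Since the class $\bar{c}$ of $c$ is nonzero in $\Upsilon_n(\hat{F})$ and this module is torsion-free, we get $\alpha \bar{c} \neq 0$ for any $0 \neq \alpha \in \mathbb{Z}_p$. Hence $c^\alpha \notin \hat{F}_{n+1}$, and similarly $c^\beta \notin \hat{F}_{n+1}$, completing the proof.

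The only nontrivial ingredient is the Nielsen--Schreier theorem for free pro-$p$ groups (equivalently, the statement that closed abelian subgroups of $\hat{F}$ are procyclic); everything else is a straightforward manipulation using that $\Upsilon_n(\hat{F})$ is a free $\mathbb{Z}_p$-module. I expect the pro-$p$ Nielsen--Schreier citation to be the only real hurdle; the rest is routine.
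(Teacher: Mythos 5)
Your proof is correct and follows essentially the same route as the paper: invoke Serre's theorem (free pro-$p$ Nielsen--Schreier) to identify $\overline{\langle g,h\rangle}$ with $\mathbb{Z}_p$, then use torsion-freeness of $\Upsilon_n(\hat{F})\cong\mathbb{Z}_p^l$ to see that the image of a nonzero $\mathbb{Z}_p$-power of the generator stays nonzero modulo $\hat{F}_{n+1}$. The only cosmetic difference is that the paper packages the final injectivity step via Hopfianness of $\mathbb{Z}_p$, whereas you phrase it directly as $\alpha\bar{c}\neq 0$ in a torsion-free module; these are equivalent.
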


\begin{proof}
Let $\hat{H}=\left\langle \left\langle g,h\right\rangle \right\rangle $
be the closure of the group generated by $g,h$ in $\hat{F}$. obviously,
$\hat{H}$ is abelian. In addition, by the well known result of Serre,
$\hat{H}$ is a free pro-$p$ group. It follows that $\hat{H}\cong\mathbb{Z}_{p}$
and that there exists an element $1\neq k\in\hat{H}$ such that $\hat{H}=\left\langle \left\langle k\right\rangle \right\rangle $.
As $\hat{F}$ is pro-nilpotent, there exists $n$ such that $k\in\hat{F}_{n}-\hat{F}_{n+1}$.
So $\hat{H}\subseteq\hat{F}_{n}$. Consider the homomorphic image
of $\hat{H}$ in the quotient $\Upsilon_{n}(\hat{F})=\hat{F}_{n}/\hat{F}_{n+1}$.
As $\Upsilon_{n}(\hat{F})$ is known to be isomorphic to $\mathbb{Z}_{p}^{l}$
for some $l$ \foreignlanguage{american}{(See \cite{key-12}, Proposition
2.7)}, the image of $\hat{H}$ in $\Upsilon_{n}(\hat{F})$ is a procyclic
pro-$p$ group of inifinte order. Hence, this image is isomorphic
to $\mathbb{Z}_{p}\cong\hat{H}$. As $\hat{H}$ is Hopfian, it follows
that the homomorphism of $\hat{H}$ into $\Upsilon_{n}(\hat{F})$
is an embedding. In particular, the image of $h,g$ in $\Upsilon_{n}(\hat{F})$
is non-trivial, so the claim follows.
\end{proof}
As we assume that $\hat{G}'=\left\langle \left\langle 1+x,1+y\right\rangle \right\rangle '$
is isomorphic to $\hat{F}'=\left\langle \left\langle X,Y\right\rangle \right\rangle '$
through the map $X\mapsto1+x$ and $Y\mapsto1+y$, we obtain:
\begin{cor}
\label{cor:commute}For every $1\neq g\in\hat{G}'$ and $m\geq1$,
we have $[(1+x)^{m},g]_{\hat{G}}\neq1$.
\end{cor}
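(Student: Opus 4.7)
The plan is to argue by contradiction using Lemma \ref{lem:commuting} together with the standing hypothesis that $\pi\colon \hat{F}\to\hat{G}$ is injective. Suppose that for some $m\geq 1$ and some $1\neq g\in\hat{G}'$ we have $[(1+x)^m,g]_{\hat{G}}=1$, so that $(1+x)^m$ and $g$ commute in $\hat{G}$. Since $\pi$ is injective, write $g=\pi(g')$ for a unique $g'\in\hat{F}'$, with $g'\neq 1$; also $(1+x)^m=\pi(X^m)$ and $X^m\neq 1$ because $X$ has infinite order in the free pro-$2$ group $\hat F$. Applying $\pi^{-1}$, the elements $X^m$ and $g'$ commute in $\hat{F}$.

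Now invoke Lemma \ref{lem:commuting}: there is an $n\geq 1$ such that $X^m,\,g'\in\hat{F}_n-\hat{F}_{n+1}$. The image of $X^m$ in the abelianization $\hat{F}/\hat{F}_2\cong\mathbb{Z}_2^{\,2}$ (free pro-$2$ of rank $2$ by the Witt formula \eqref{eq:l2n}) is $(m,0)$, which is non-zero in $\mathbb{Z}_2^{\,2}$ because the integer $m\geq 1$ is a non-zero element of $\mathbb{Z}_2$. Hence $X^m\in\hat{F}_1-\hat{F}_2$, forcing $n=1$. But $g'\in\hat{F}'=\hat{F}_2$, which contradicts $g'\in\hat{F}_1-\hat{F}_2$. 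So $(1+x)^m$ cannot commute with any non-trivial $g\in\hat{G}'$, proving the corollary.

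There is essentially no obstacle here beyond being careful that $X^m\notin\hat{F}_2$ for every $m\geq 1$ (which is why the statement quantifies only over $m\geq 1$ and not $m=0$). The corollary is a formal consequence of the isomorphism $\pi|_{\hat{F}'}\colon \hat{F}'\xrightarrow{\sim}\hat{G}'$ extended (by injectivity of $\pi$) to an isomorphism $\hat{F}\cong\hat{G}$, combined with the general commutativity lemma for free pro-$p$ groups.
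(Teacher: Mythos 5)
Your argument is correct and is essentially the paper's intended (but unstated) proof: the standing assumption after Theorem~\ref{thm:main-1} is precisely that $\pi$ is injective, hence $\pi:\hat{F}\to\hat{G}$ is a topological isomorphism, and the corollary follows by transporting the commuting pair to $\hat{F}$ and invoking Lemma~\ref{lem:commuting} exactly as you do (checking $X^m\notin\hat F_2$ while $g'\in\hat F_2$). One can phrase the same argument without pulling back, by noting $\hat{G}\cong\hat{F}$ is itself free pro-$2$ and applying Lemma~\ref{lem:commuting} directly to $(1+x)^m$ and $g$ inside $\hat{G}$, but this is only cosmetic.
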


\subsection{\label{subsec:Proof-of-Theorem}Proof of Theorem \ref{thm:main-1}}

We are now ready to present the core argument of the proof of Theorem
\ref{thm:main-1}. We begin with some definitions. In view of Corollaries
\ref{prop:UFD} and \ref{cor:fix} we define:
\begin{defn}
\label{def:minimal component}Let $1\neq g\in\hat{G}''$. We define
\begin{align*}
n(g) & =\min\{n\,|\,\exists i\in\mathbb{N}\,\,\textrm{such\,that}\,\,U_{n,i}(g)+V_{n,i}(g)+W_{n,i}(g)\neq0\}\\
\overline{n}(g) & =\min\{n\,|\,\exists i\in\mathbb{N}\,\,\textrm{such\,that}\,\,U_{n,i}(g)\neq0\}\\
\overline{i}(g) & =\min\{i\,|\,U_{\overline{n}(g),i}(g)\neq0\}.
\end{align*}
Notice that as $g\neq1$, $n(g)$ is well defined. If $U_{n,i}(g)=0$
for any $n,i\in\mathbb{N}$ we denote $\overline{n}(g)=\overline{i}(g)=\infty$.
Notice that by definition $n(g)\leq\overline{n}(g)\leq\overline{i}(g)$.
If $\overline{n}(g)<\infty$, we define 
\[
\textrm{min}_{x}(g)=U_{\overline{n}(g),\overline{i}(g)}(g).
\]
\end{defn}

\begin{defn}
Let $1\neq g\in\hat{G}''$. We define the operator 
\[
\varphi_{x}(g)=[(1+x)^{4},g]_{\hat{G}}=[1+t(x)^{3}x,g]_{\hat{G}}.
\]
When it will be convenient, we will use also the notation $g_{x}=\varphi_{x}(g)$. 
\end{defn}

\begin{defn}
We define $\breve{G}$ to be the subset of $\hat{G}''$ consisting
of all elements whose terms lie in $J_{3}$:
\[
\breve{G}=\left\{ g=1+\sum_{i=1}^{\infty}a_{i}\in\hat{G}''\,|\,a_{i}\in J_{3},\,\deg(a_{i})=i\right\} .
\]
Using the identity $(1+a)(1+b)(1+a)^{-1}=1+b+[a,b]\sum_{i=0}^{\infty}a^{i}$
and the fact that $J_{3}$ is a two sided ideal of $R$ (Corollary
\ref{cor:two sided}), it is easy to see that:
\end{defn}

\begin{prop}
The set $\breve{G}$ is a normal closed subgroup of $\hat{G}$. 
\end{prop}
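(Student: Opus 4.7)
The plan is to verify three things: that $\breve{G}$ is closed under the group operations, that it is normalized by $\hat{G}$, and that it is topologically closed in $\hat{G}$. All three rest on the observation that the set of ``admissible tails'' $\hat{J}_{3}=\{\sum_{i\geq 1}a_{i}:a_{i}\in J_{3}\cap R^{(i)}\}$ is both an additive subgroup of $\hat{R}$ and invariant under left/right multiplication by $\hat{R}$, because $J_{3}$ is a two-sided ideal of $R$ (Corollary \ref{cor:two sided}) and the grading is compatible with multiplication.

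For the subgroup property, given $g=1+a$ and $h=1+b$ in $\breve{G}$ with $a,b\in\hat{J}_{3}$, one writes $gh=1+(a+b+ab)$. Since $a+b\in\hat{J}_{3}$ and $ab\in\hat{J}_{3}$ (by the ideal property), we have $gh\in\breve{G}$, provided $gh\in\hat{G}''$ --- which is automatic since $\hat{G}''$ is a group. For inverses, we exploit that $\mathrm{char}(\Lambda)=2$: formally, $(1+a)^{-1}=\sum_{k=0}^{\infty}a^{k}$, and every term $a^{k}$ with $k\geq 1$ lies in $\hat{J}_{3}$ because $\hat{J}_{3}$ is an ideal. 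Hence $g^{-1}=1+\sum_{k\geq 1}a^{k}\in\breve{G}$.

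For normality, let $g=1+a\in\breve{G}$ and let $h=1+c\in\hat{G}$ be arbitrary (so $c\in\hat{R}_{1}$). The hinted identity gives
\[
hgh^{-1}=(1+c)(1+a)(1+c)^{-1}=1+a+[c,a]\sum_{i=0}^{\infty}c^{i}.
\]
Here $[c,a]=ca+ac\in\hat{J}_{3}$ since $\hat{J}_{3}$ is a two-sided ideal of $\hat{R}$, and then multiplying by $\sum c^{i}$ on the right keeps it in $\hat{J}_{3}$ for the same reason. Adding $a\in\hat{J}_{3}$ keeps the total in $\hat{J}_{3}$, while the fact that $\hat{G}''$ is normal in $\hat{G}$ ensures $hgh^{-1}\in\hat{G}''$. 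Thus $hgh^{-1}\in\breve{G}$.

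For closedness, the key observation is that each graded piece $R^{(i)}$ is a finite $\mathbb{F}_{2}$-vector space, so $J_{3}\cap R^{(i)}$ is automatically closed in $R^{(i)}$. A net $g_{\alpha}=1+\sum_{i\geq 1}a_{i}^{(\alpha)}$ in $\breve{G}$ converging in $\hat{R}$ to $g=1+\sum_{i\geq 1}a_{i}$ means, by the definition of the neighborhood basis $\hat{R}_{n}$, that for each fixed $i$ the component $a_{i}^{(\alpha)}$ is eventually equal to $a_{i}$; hence $a_{i}\in J_{3}\cap R^{(i)}$ for every $i$, and so $g\in\breve{G}$ (again using that $\hat{G}''$ is itself closed). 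I do not anticipate a real obstacle here; the only subtle point is keeping track of the two different topologies on $\hat{R}$ versus $M_{2}(\Lambda)$, which was already reconciled in the preceding subsection when identifying $\hat{G}$ with the closure of $G$ in $\hat{R}$.
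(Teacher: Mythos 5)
Your proof is correct and follows exactly the route the paper sketches (the paper only states the conjugation identity and the fact that $J_3$ is a two-sided ideal of $R$, and calls the rest ``easy to see''); your write-up just supplies the routine verifications for the subgroup, normality, and closedness parts. One minor remark: for the closedness step you do not actually need the graded pieces $R^{(i)}$ to be finite, since convergence in the $\hat{R}_n$-topology forces eventual \emph{equality} of each fixed-degree component, which already pins each $a_i$ down in $J_3\cap R^{(i)}$.
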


In Subsection \ref{subsec:Preparations}, using Corollary \ref{cor:commute},
we prove that (see Lemma \ref{prop:pre-good}):
\begin{prop}
\label{prop:less than}For any $1\neq g\in\breve{G}$ we have $\overline{n}(g_{x})\leq n(g_{x})+1<\infty$.
\end{prop}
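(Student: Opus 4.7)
First I invoke Corollary~\ref{cor:commute} with $m=4$ on $g \in \hat{G}'' \subseteq \hat{G}'$ to get $g_x \neq 1$. Writing $A = g-1$ and $u = t(x)^3 x$ (so that $(1+x)^4 = 1+x^4 = 1+u$ by Cayley--Hamilton in characteristic $2$), the commutator identity $[1+a,1+b]_{\hat{G}} = 1+[a,b](1+a)^{-1}(1+b)^{-1}$ used in the proof of Corollary~\ref{cor:fix} yields
\[
g_x - 1 \;=\; t(x)^3[x,A](1+u)^{-1}g^{-1} \;=\; L\,(1+cx)(1+A'), \qquad L := t(x)^3[x,A],
\]
where $c = \sum_{k\ge 1} t(x)^{4k-1}$ (obtained from $u^k = t(x)^{4k-1}x$) and $A' = \sum_{k\ge 1}A^k \in \hat{J}_{n(g)} \subseteq \hat{J}_3$. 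Since $cx \in t(x)^3\hat{R}$ and $A' \in \hat{J}_3$, the ideal bounds $\hat{J}_m\hat{R} \subseteq \hat{J}_m$ and $\hat{J}_m\hat{J}_{m'} \subseteq \hat{J}_{m+m'}$ coming from Corollary~\ref{cor:two sided} show that the correction $L\,(cx + A' + cxA')$ lies in the closure of $J_{n(L)+3}$, hence contributes nothing to levels $n(L)$ and $n(L)+1$ in the canonical decomposition by the remark following Corollary~\ref{prop:UFD}.

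The crux will be the claim: for every $A \in \hat{J}$ with $[x,A] \neq 0$, one has $\overline{n}([x,A]) \leq n([x,A]) + 1$. I will prove this by first computing, via Lemma~\ref{lem:x,y},
\begin{align*}
[x,[x,y]x] &= t(x)[x,y]x, & [x,[x,y]y] &= t(x)[x,y]y + [x,y]^2, \\
[x,[x,y]^2] &= 0, & [x,[x,y]xy] &= t(xy)[x,y]x.
\end{align*}
These identities show that $[x,\cdot]$ never produces a $W$-component; it sends $U_{n,\cdot}(A) = t(x)^n(f[x,y]x + g[x,y]y)$ to a $V$-part $t(x)^n g[x,y]^2$ at level $n$ together with a $U$-part $t(x)^{n+1}(f[x,y]x+g[x,y]y)$ at level $n+1$; it kills $V_{n,\cdot}(A)$; and it sends $W_{n,\cdot}(A) = t(x)^n k[x,y]xy$ to $t(x)^n t(xy)k[x,y]x$, which after rewriting $t(xy)k$ in $\mathring{T} \oplus t(x)\mathring{T}$ via $t(xy)^2 = [x,y]^2 + t(x)t(y)t(xy)$ yields $U$-parts at levels $n$ and $n+1$, both purely in the $[x,y]x$-direction. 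Setting $N_0 := n([x,A])$: if the level-$N_0$ $U$-component of $[x,A]$ is non-zero I am done; otherwise, since $[x,\cdot]$ produces no $W$-component, the level-$N_0$ part equals a $V$-component $t(x)^{N_0} g_{N_0}[x,y]^2$ with $g_{N_0} \neq 0$, contributed solely by the $[x,y]y$-coefficient of $U_{N_0,\cdot}(A)$. The same $U_{N_0,\cdot}(A)$ then contributes the $[x,y]y$-term $t(x)^{N_0+1}g_{N_0}[x,y]y$ to the level-$(N_0+1)$ $U$-component of $[x,A]$, which cannot cancel since all other possible level-$(N_0+1)$ $U$-contributions lie in the $[x,y]x$-direction; by the uniqueness in Corollary~\ref{prop:UFD} this forces $\overline{n}([x,A]) = N_0+1$.

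Multiplying by $t(x)^3$ shifts every level up by $3$ in the free $\mathring{T}$-decomposition, giving $n(L) = n([x,A]) + 3$ and $\overline{n}(L) \leq n(L) + 1$. Combined with the first-paragraph observation that the correction adds nothing at levels $n(L)$ and $n(L)+1$, this yields $n(g_x) = n(L)$ and $\overline{n}(g_x) = \overline{n}(L) \leq n(g_x) + 1$; together with $n(g_x) < \infty$ (from $g_x \neq 1$) this is the proposition. The main obstacle will be the bookkeeping for $[x,W_{n,\cdot}(A)]$: because $\mathring{T}$ is not closed under multiplication (in particular $t(xy)\cdot\mathring{T} \not\subseteq \mathring{T}$), one must invoke the identity $t(xy)^2 = [x,y]^2 + t(x)t(y)t(xy)$ to split $t(xy)k$ cleanly into its $\mathring{T}$ and $t(x)\mathring{T}$ parts so that uniqueness of the canonical decomposition can be applied without ambiguity.
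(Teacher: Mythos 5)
Your proof is correct and is essentially the same argument the paper gives inside Lemma \ref{prop:pre-good}: one isolates $L = t(x)^3[x,g-1]$ as the low-level part of $g_x - 1$ (the rest landing three levels higher), uses the explicit formulas for $[x,\cdot]$ on the $T$-basis $\{[x,y]x,\,[x,y]y,\,[x,y]^2,\,[x,y]xy\}$ of $J$, and observes that if the $[x,y]x$-direction contribution vanishes at level $n([x,A])$ then the $[x,y]y$-coefficient of $A$ at that level must be nonzero (detected through the $[x,y]^2$ it produces) and resurfaces in the $U$-part one level up, which cannot cancel. The only difference is cosmetic: you phrase the bookkeeping directly in the $(n,i)$-indexed decomposition of Corollary \ref{prop:UFD}, whereas the paper tracks the same information through the $T$-module coefficients $\lambda_{i_0},\beta_{i_0}$ of $[x,a_{i_0}]$.
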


Recall the lower central series of $\hat{G}=\hat{G}_{1},\hat{G}_{2},...$.
In Subsection \ref{subsec:Conclusions}, using trace identities, the
Artin-Rees lemma and some ideas from the proof of Hilbert's basis
theorem, we prove the following main proposition:
\begin{prop}
\label{prop:step1}Let $1\neq g\in\breve{G}$ and $k\geq0$. There
exists a natural number $\rho$ such that if
\[
\overline{n}(g_{x})\geq\rho\,\,\,\,\textrm{and}\,\,\,\,\overline{i}(g_{x})\geq\rho+8k
\]
then there exists $h\in\breve{G}\cap\hat{G}_{k}$ that satisfies one
of the following conditions:
\begin{itemize}
\item $\overline{n}((gh)_{x})>\overline{n}(g_{x})$ or
\item $\overline{n}((gh)_{x})=\overline{n}(g_{x})$ and $\overline{i}((gh)_{x})>\overline{i}(g_{x})$.
\end{itemize}
\end{prop}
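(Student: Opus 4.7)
My plan is to cancel the leading $U$-component of $g_x$ by multiplying $g$ by a correction $h\in\breve{G}\cap\hat G_k$ whose operator value $h_x$ has the same $U$-component at the critical position $(\overline{n}(g_x),\overline{i}(g_x))$. The starting point is the group identity $[a,gh]_{\hat G}=[a,g]_{\hat G}\cdot g[a,h]_{\hat G}g^{-1}$, which gives
\[
(gh)_x\;=\;g_x\cdot gh_xg^{-1}.
\]
Because every $g\in\breve{G}$ has $g-1$ with minimal degree at least $6$ (the minimal degree of a generator of $J_3$), the difference $gh_xg^{-1}-h_x$ has minimal degree at least $6$ above that of $h_x$. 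Expanding with the decomposition of Corollary \ref{prop:UFD} and invoking Remark \ref{rem:UFD}, the $U$-component of $(gh)_x$ at position $(\overline{n}(g_x),\overline{i}(g_x))$ is simply $U_{\overline{n}(g_x),\overline{i}(g_x)}(g_x)+U_{\overline{n}(g_x),\overline{i}(g_x)}(h_x)$. In characteristic $2$ these two contributions cancel, pushing the first nonzero $U$-component of $(gh)_x$ to a strictly larger position, as required.

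The second step is to pin down the set of $U$-components realizable as $U_{n,i}(h_x)$ for $h\in\breve{G}\cap\hat G_k$. Using Proposition \ref{prop:commutator} to write such an $h$ in the form $1+[x,y]\sum a_i$, together with the trace identities of Lemma \ref{lem:x,y} and the explicit formula $[(1+x)^4,h]_{\hat G}=[1+t(x)^3x,h]_{\hat G}$, one identifies the collection of $U_{n,i}(h_x)$ as a module over the coefficient ring $T=\mathbb F_2[t(x),t(y),t(xy)]$ (a polynomial ring, hence Noetherian by Hilbert's basis theorem) inside the free graded $T$-module $\bigoplus_n t(x)^n\bigl(T[x,y]x\oplus T[x,y]y\bigr)$. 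Let $N_k$ denote this $T$-module. By the Noetherian property $N_k$ is finitely generated; let $\rho_0=\rho_0(k)$ bound the degrees of a homogeneous generating set.

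The third step converts finite generation into the quantitative bound $\overline{i}(g_x)\geq\rho+8k$. For this I would apply the Artin\textendash Rees lemma to the $t(x)$-adic filtration on $N_k$ inside the ambient Noetherian module, producing a constant $c(k)$ controlling the stabilization of $t(x)^nN_k\cap\langle\text{generators}\rangle$. Tracking the degree shifts that accumulate --- the factor of $4$ from $[(1+x)^4,\cdot]=[1+t(x)^3x,\cdot]$, the factor of $k$ entering through the minimal degree of elements of $\hat G_k$, and the additional contribution from the double-commutator built into $\breve{G}\subseteq\hat G''$ --- yields, after careful bookkeeping, a bound of the form $\rho+8k$. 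Once $\overline{n}(g_x)\geq\rho$ and $\overline{i}(g_x)\geq\rho+8k$, the component $U_{\overline{n}(g_x),\overline{i}(g_x)}(g_x)$ lies in the $T$-span of the generators of $N_k$, and a matching $h\in\breve{G}\cap\hat G_k$ can be assembled. I expect the main obstacle to be precisely this last step: verifying that the cumulative degree shift is \emph{linear} in $k$ (rather than quadratic or worse), which hinges on choosing $N_k$ so that the Artin\textendash Rees correction and the operator-induced shifts combine additively rather than multiplicatively.
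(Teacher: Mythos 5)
The overall shape of your argument---cancel the leading $U$-component of $g_x$ via some $h\in\breve{G}\cap\hat{G}_k$, using Noetherianity to get a finite generating set and Artin--Rees to get a degree bound---is the right one, and the identity $(gh)_x=g_x\cdot g h_x g^{-1}$ and the ``degree $\geq6$ gap'' observation are both correct and present in the paper's argument in implicit form. But you have not found the device that actually closes the gap you flag at the end, and the gap is real.

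The central difficulty is not the Noetherianity or the cancellation; it is the \emph{passage} from ``$U_{\overline{n}(g_x),\overline{i}(g_x)}(g_x)$ lies in a module and has high degree'' to ``there is an $h$ realizing this minimal component that actually lies in $\hat{G}_k$.'' Your plan defines a $k$-dependent module $N_k$ of realizable $U$-components. This is the wrong thing to make Noetherian: each $N_k$ comes with its own finite generating set and its own Artin--Rees constant, and there is no a priori control on how those grow with $k$; nothing in the Noetherian package makes the resulting bound linear in $k$, which is exactly what you yourself identify as the weak point. The paper instead works with a single, $k$-independent $\breve{S}$-module $N$ generated by $\textrm{min}_x(\theta)$ over all good $\theta\in\breve{G}$, where $\breve{S}=\langle t(x)^4,t(y),[x,y]^4\rangle$. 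The reason for this peculiar coefficient ring is that its three generators are precisely the multipliers produced by the three operators $\varphi_x(\cdot)=[(1+x)^4,\cdot]_{\hat{G}}$, $\varphi_y(\cdot)=[1+y,\cdot]_{\hat{G}}$, $\psi(\cdot)=[([1+x,1+y]_{\hat{G}})^2,\cdot]_{\hat{G}}$: by Lemmas \ref{prop:t(x)}, \ref{prop:t(y)}, \ref{prop:=00005Bx,y=00005D^4}, one has $\textrm{min}_x(\varphi_x(g))=t(x)^4\textrm{min}_x(g)$, $\textrm{min}_x(\varphi_y(g))=t(y)\textrm{min}_x(g)$, $\textrm{min}_x(\psi(g))=[x,y]^4\textrm{min}_x(g)$, and each operator is a group commutator, hence pushes its argument one step further down the lower central series. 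So a monomial $\lambda=t(x)^{4u}t(y)^v[x,y]^{4w}\in\breve{S}$ of degree $\geq 8k$ forces $u+v+w\geq k$, and the composite $\psi^{w}\circ\varphi_y^{v}\circ\varphi_x^{u}$ applied to a fixed generator $\theta\in\Theta$ lands in $\hat{G}_k$ while producing $\lambda\cdot\textrm{min}_x(\theta)$. That is the single mechanism that makes the dependence on $k$ linear (in fact additive), and it is absent from your proposal. Without it, ``a matching $h\in\breve{G}\cap\hat{G}_k$ can be assembled'' remains an assertion rather than a construction.

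Two smaller issues. First, you take the coefficient ring to be $T=\mathbb{F}_2[t(x),t(y),t(xy)]$, but $T$-multiples of a realizable $\textrm{min}_x(\theta)$ are not all realizable by elements of $\hat{G}_k$; only $\breve{S}$-multiples are, precisely because $\breve{S}$ matches the operator repertoire. Second, after the cancellation one still has to check that the newly created lower-order $\overline{C}$- and $\overline{J}$-contamination from the product does not re-lower $\overline{n}$ or $\overline{i}$; the paper handles this by factoring the final $h$ as $\varphi_x(\prod_j h_j)$ and by the explicit term-by-term bookkeeping in the ``good element'' lemmas, a verification your sketch does not attempt.
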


Assuming Proposition \ref{prop:step1} we now prove Theorem \ref{thm:main-1}.
Let $1\neq v\in\hat{G}''$. By Corollary \ref{cor:fix} all the terms
of $v$ lie in $J$. By Corollary \ref{cor:commute}, as $v\in\hat{G}''\subseteq\hat{G}'$,
it does not commute with $(1+x)^{2}$. Therefore, $[v,(1+x)^{2}]_{\hat{G}}\neq1$.
As $\hat{G}$ is pro nilpotent, there exists a unique $s$ such that
\[
[v,(1+x)^{2}]_{\hat{G}}\in\hat{G}_{s}-\hat{G}_{s+1}.
\]
By our assumption, $\hat{G}'\cong\hat{F}'=\left\langle \left\langle X,Y\right\rangle \right\rangle '$
through the map $X\mapsto1+x$ and $Y\mapsto1+y$. Hence, the quotient
$\hat{G}_{s}/\hat{G}_{s+1}$ is a free abelian pro-$2$ group \cite{key-12}.
Thus $([v,(1+x)^{2}]_{\hat{G}})^{2^{r}}\in\hat{G}_{s}-\hat{G}_{s+1}$
for every $r$. Recall $\rho$ from Proposition \ref{prop:step1},
and choose $r$ large enough such that $2^{r}\geq\rho+8(s+1)$. Define
\[
g=([v,(1+x)^{2}]_{\hat{G}})^{2^{r}}=([v,1+t(x)x]_{\hat{G}}){}^{2^{r}}\in\hat{G}_{s}-\hat{G}_{s+1}.
\]
As obviously, all the terms of $[v,(1+x)^{2}]_{\hat{G}}=[v,1+t(x)x]_{\hat{G}}$
lie in $t(x)J$, it follows that all the terms of $g$ lie in $t(x)^{2^{r}}J=J_{2^{r}}$.
In particular, $g\in\breve{G},$ and in view of Remark \ref{rem:UFD}
we have
\[
\overline{i}(g{}_{x})\geq\overline{n}(g{}_{x})\geq n(g_{x})\geq2^{r}\geq\rho+8(s+1)\geq\rho.
\]
({*}) Now, let $1\leq k_{1}\in\mathbb{N}$ be the largest number satisfying
the inequality
\[
\overline{i}(g_{x})\geq\rho+8(s+k_{1}).
\]
By Proposition \ref{prop:step1}, we can construct an element $h_{1}\in\breve{G}\cap G_{s+k_{1}}$
such that
\begin{itemize}
\item $\overline{n}((gh_{1})_{x})>\overline{n}(g_{x})$ or
\item $\overline{n}((gh_{1})_{x})=\overline{n}(g{}_{x})$ and $\overline{i}((gh_{1})_{x})>\overline{i}(g_{x})$.
\end{itemize}
Notice that as $g\in\breve{G}\cap(\hat{G}_{s}-\hat{G}_{s+1})$ and
$h_{1}\in\breve{G}\cap\hat{G}_{s+1}$ we have
\begin{align*}
gh_{1} & \in\breve{G}\cap(\hat{G}_{s}-\hat{G}_{s+1})\,\,,\,\,\,\,gh_{1}=g\,\,\textrm{mod}\,\,\hat{G}_{s+1}\\
\textrm{and}\,\,\,\,\overline{i}((gh_{1})_{x}) & \geq\overline{n}((gh_{1})_{x})\geq\overline{n}(g{}_{x})\geq\rho+8(s+1)\geq\rho.
\end{align*}
Hence, we can repeat the algorithm, starting from ({*}), for $gh_{1}$
instead of $g$. Repeating the algorithm, each time for $gh_{1}\cdot...\cdot h_{j}h_{j+1}$
instead of $gh_{1}\cdot...\cdot h_{j}$, we have two options:
\begin{itemize}
\item For some $j$ we have: $\overline{n}((gh_{1}\cdot...\cdot h_{j})_{x})>\overline{n}(g{}_{x})$.
The first time it happens we define $w_{1}=h_{1}\cdot...\cdot h_{j}$.
\item For every $j$ we have $\overline{n}((gh_{1}\cdot...\cdot h_{j})_{x})=\overline{n}(g{}_{x})$.
In this case 
\[
\overline{i}((gh_{1}\cdot...\cdot h_{j})_{x})\overset{j\to\infty}{\longrightarrow}\infty
\]
and therefore $k_{j}\overset{j\to\infty}{\longrightarrow}\infty$.
Hence, the sequence $\breve{G}\cap\hat{G}_{s+k_{j}}\ni h_{j}\overset{j\to\infty}{\longrightarrow}1$,
and therefore, the sequence $h_{1}\cdot...\cdot h_{j}$ converges
to an element $w_{1}\in\breve{G}\cap\hat{G}_{s+1}$ with the property
\[
\overline{n}((gw_{1})_{x})>\overline{n}(g{}_{x}).
\]
\end{itemize}
Now, as $g\in\breve{G}\cap(\hat{G}_{s}-\hat{G}_{s+1})$ and $w_{1}\in\breve{G}\cap\hat{G}_{s+1}$
we have
\begin{align*}
gw_{1} & \in\breve{G}\cap(\hat{G}_{s}-\hat{G}_{s+1})\,\,,\,\,\,\,gw_{1}=g\,\,\textrm{mod}\,\,\hat{G}_{s+1}\\
\textrm{and}\,\,\,\,\overline{i}((gw_{1})_{x}) & \geq\overline{n}((gw_{1})_{x})\geq\overline{n}(g{}_{x})+1\geq\rho+8(s+1)+1\geq\rho.
\end{align*}
Hence, we can repeat the algorithm, starting from ({*}), for $gw_{1}$
instead of $g$. Repeating the algorithm $7$ times more we get $w_{1},...,w_{8}\in\breve{G}\cap\hat{G}_{s+1}$
such that
\begin{align*}
gw_{1}\cdot...\cdot w_{8} & \in\breve{G}\cap(\hat{G}_{s}-\hat{G}_{s+1})\,\,,\,\,\,\,gw_{1}\cdot...\cdot w_{8}=g\,\,\textrm{mod}\,\,\hat{G}_{s+1}\\
\textrm{and}\,\,\,\,\overline{i}((gw_{1}\cdot...\cdot w_{8})_{x}) & \geq\overline{n}((gw_{1}\cdot...\cdot w_{8})_{x})\geq\overline{n}(g{}_{x})+8\geq\rho+8(s+2)\geq\rho.
\end{align*}
Hence, by repeating the algorithm, starting from ({*}), for $gw_{1}\cdot...\cdot w_{8}$
instead of $g$ we can construct $w_{9},...,w_{16}\in\breve{G}\cap\hat{G}_{s+2}$
such that 
\begin{align*}
gw_{1}\cdot...\cdot w_{16} & \in\breve{G}\cap(\hat{G}_{s}-\hat{G}_{s+1})\,\,,\,\,\,\,gw_{1}\cdot...\cdot w_{16}=g\,\,\textrm{mod}\,\,\hat{G}_{s+1}\\
\textrm{and}\,\,\,\,\overline{i}((gw_{1}\cdot...\cdot w_{16})_{x}) & \geq\overline{n}((gw_{1}\cdot...\cdot w_{16})_{x})\geq\overline{n}(g{}_{x})+16\geq\rho+8(s+3)\geq\rho.
\end{align*}
Repeating the algorithm, starting from ({*}), we can get a sequence
$w_{j}\overset{j\to\infty}{\longrightarrow}1$ such that the product
converges to an element
\[
w=\lim_{j\to\infty}w_{1}\cdot...\cdot w_{j}\in\breve{G}\cap\hat{G}_{s+1}
\]
with the properties $gw\in\breve{G}\cap(\hat{G}_{s}-\hat{G}_{s+1})$,
$gw=g\,\,\textrm{mod}\,\,\hat{G}_{s+1}$ and $\overline{n}((gw)_{x})=\infty$.
This is a contradiction to Proposition \ref{prop:less than}, and
hence, up to the proofs of Propositions \ref{prop:less than} and
\ref{prop:step1}, we have finished the proof of Theorem \ref{thm:main-1}. 

\subsection{\label{subsec:Preparations}Preparations for the proof of Proposition
\ref{prop:step1}}

In the following definition and discussion, given an element 
\[
a\in J_{n}=\overline{J}_{n}\oplus\overline{C}_{n}\oplus t(x)^{n}T\cdot[x,y]xy,
\]
we denote its projection to $\overline{J}_{n}$ by $\overline{a}$.
Regarding this notation, given an element $1\neq g=1+\sum_{i=1}^{\infty}a_{i}\in\hat{G}''$
where $a_{i}\in J$ is of degree $i$, during the following discussion,
it will be convenient to have in mind the following equivalent definitions
(compare with Definition (\ref{def:minimal component})): 
\begin{align*}
n(g) & =\max\{n\,|\,\forall i\,\,a_{i}\in J_{n}\}\\
\overline{n}(g) & =\max\{n\,|\,\forall i\,\,\overline{a}_{i}\in\overline{J}_{n}\}\\
\overline{i}(g) & =\min\{i\,|\,\overline{a}_{i}\notin\overline{J}_{\overline{n}(g)+1}\}.
\end{align*}

\begin{defn}
We say that an element $1\neq g\in\breve{G}$ is \uline{good} if
$\overline{n}(g)<\infty$ and it has the form
\begin{align*}
g=1+\textrm{min}_{x}(g) & +\textrm{terms\,\,in}\,\,\overline{J}_{\overline{n}(g)}\,\,\textrm{of\,\,degree}\,\,>\textrm{deg}(\textrm{min}_{x}(g))\\
 & +\textrm{terms\,\,in}\,\,\overline{J}_{\overline{n}(g)+1}\\
 & +\textrm{terms\,\,in}\,\,\overline{C}_{\overline{n}(g)-1}\\
 & +\textrm{terms\,\,in}\,\,J_{\overline{n}(g)+2}.
\end{align*}
\end{defn}

\begin{lem}
\label{prop:multi}Let $1\neq g,\,h\in\breve{G}$ be good elements
such that $n_{0}=\overline{n}(g)=\overline{n}(h)$, $i_{0}=\overline{i}(g)=\overline{i}(h)$
and $\textrm{min}_{x}(g)\neq\textrm{min}_{x}(h)$. Then $g\cdot h$
is also good, $\overline{n}(gh)=n_{0}$, $\overline{i}(gh)=i_{0}$,
and 
\[
\textrm{min}_{x}(g\cdot h)=\textrm{min}_{x}(g)+\textrm{min}_{x}(h).
\]
\end{lem}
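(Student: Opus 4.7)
The plan is to expand $gh - 1 = A_g + A_h + A_g A_h$, where $A_g = g-1$ and $A_h = h-1$ (using $\textrm{char}(\Lambda) = 2$); each summand lies in $\hat{J}$ by Corollary \ref{cor:fix} and so admits the unique decomposition of Corollary \ref{prop:UFD}. I will argue separately that (a) $A_g + A_h$ already has the ``good'' form with the asserted $\textrm{min}_{x}$, and (b) the cross term $A_g A_h$ sits in $J_{n_0+2}$ and is therefore absorbed by the final summand of the good-form decomposition without disturbing the leading data.

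Step (a) is pure bookkeeping. Since $g$ and $h$ are good with matching $n_0, i_0$, each of $A_g, A_h$ is the sum of its $U_{n_0, i_0}$-component ($\textrm{min}_{x}(g)$ or $\textrm{min}_{x}(h)$, respectively) and further pieces lying in $\overline{J}_{n_0}$ of degree $> i_0$, in $\overline{J}_{n_0+1}$, in $\overline{C}_{n_0-1}$, and in $J_{n_0+2}$. Adding preserves the shape, and the new $U_{n_0, i_0}$-component is $\textrm{min}_{x}(g) + \textrm{min}_{x}(h)$, which in characteristic $2$ is nonzero precisely because $\textrm{min}_{x}(g) \neq \textrm{min}_{x}(h)$. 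Hence $A_g + A_h$ has the good form with $\overline{n} = n_0$, $\overline{i} = i_0$, and the asserted minimum.

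For step (b) the main tool is the inclusion $J_m \cdot J_k \subseteq J_{m+k}$, which follows at once from Proposition \ref{prop:J - f.g.} since $t(x)$ is central in $R$ and $J$ is a two-sided ideal: $t(x)^m J \cdot t(x)^k J = t(x)^{m+k} J J \subseteq t(x)^{m+k} J$. From the good form one reads off $A_g \in J_{n(g)}$ and $A_h \in J_{n(h)}$, with $n(g), n(h) \in \{n_0-1, n_0\}$ depending on whether the $\overline{C}_{n_0-1}$-piece is present. If it is absent for both, then $n(g) = n(h) = n_0 \geq 3$ (using $g, h \in \breve{G}$), so $n(g) + n(h) = 2n_0 \geq n_0 + 3$. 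If it is present for at least one of them, then the $\breve{G}$-condition $n(\cdot) \geq 3$ forces $n_0 - 1 \geq 3$, i.e., $n_0 \geq 4$, whence $n(g) + n(h) \geq 2(n_0 - 1) \geq n_0 + 2$. Either way $A_g A_h \in J_{n_0+2}$, which is exactly one of the summands allowed in the good form.

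I expect the main obstacle to be the case split in (b): the naive bound $A_g A_h \in J_{2n_0 - 2}$ is too weak precisely when the $\overline{C}_{n_0-1}$ piece is present, and one must invoke the definition of $\breve{G}$ to upgrade $n_0 \geq 3$ to $n_0 \geq 4$ in that case. Once this is in place, combining (a) and (b) immediately yields that $gh$ is good with $\overline{n}(gh) = n_0$, $\overline{i}(gh) = i_0$, and $\textrm{min}_{x}(gh) = \textrm{min}_{x}(g) + \textrm{min}_{x}(h)$.
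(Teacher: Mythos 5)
Your proof is correct and follows the same basic approach as the paper: expand $gh - 1 = A_g + A_h + A_g A_h$, observe that $A_g + A_h$ has the good form with leading piece $\textrm{min}_x(g)+\textrm{min}_x(h)$ (nonzero in characteristic $2$ precisely because the two minima differ), and then show the cross term $A_g A_h$ is absorbed into the $J_{n_0+2}$ slot. The paper's published proof is considerably terser on the cross-term step --- it simply cites that the terms of $g,h$ lie in $J_3 \subseteq t(x)^3 R$ and declares the displayed form for $gh$ --- whereas you make the absorption argument explicit via the containment $J_m J_k \subseteq J_{m+k}$ together with the case split on whether the $\overline{C}_{n_0-1}$ component is genuinely present (forcing $n_0\geq 4$ in that case, since $\breve{G}$ requires all terms in $J_3$). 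This case split is exactly the detail needed to justify $A_g A_h \in J_{n_0+2}$, since the naive bound $A_g A_h\in J_{2n_0-2}$ alone does not suffice when $n_0=3$ unless one also invokes the $\breve{G}$-constraint. So your proposal is a correct and more carefully argued version of the paper's argument rather than a genuinely different route.
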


\begin{proof}
Recall that by the definition of $\breve{G}$, the terms of $g,h$
are in $J_{3}\subseteq t(x)^{3}R$. Hence, $g\cdot h$ has the form
\begin{align}
g\cdot h=1+\textrm{min}_{x}(g)+\textrm{min}_{x}(h) & +\textrm{terms\,\,in}\,\,\overline{J}_{n_{0}}\,\,\textrm{of\,\,degree}\,\,>i_{0}\nonumber \\
 & +\textrm{terms\,\,in}\,\,\overline{J}_{n_{0}+1}\nonumber \\
 & +\textrm{terms\,\,in}\,\,\overline{C}_{n_{0}-1}\label{eq:gh1}\\
 & +\textrm{terms\,\,in}\,\,J_{n_{0}+2}.\nonumber 
\end{align}
By assumption, $0\neq\textrm{min}_{x}(g)+\textrm{min}_{x}(h)$ is
in $t(x)^{n_{0}}\cdot(\mathring{T}\cdot[x,y]x+\mathring{T}\cdot[x,y]y)$
and of degree $i_{0}$. By (\ref{eq:gh1}), it follows that $\overline{n}(g\cdot h)=n_{0}$,
$\bar{i}(g\cdot h)=i_{0}$, and that $\textrm{min}_{x}(g\cdot h)=\textrm{min}_{x}(g)+\textrm{min}_{x}(h)$.
Hence, $g\cdot h$ is good, as required.
\end{proof}
\begin{lem}
\label{prop:pre-good}Let $1\neq g=1+\sum_{i=1}^{\infty}a_{i}\in\breve{G}$.
Then:
\begin{itemize}
\item $g_{x}$ is good,
\item $n(g_{x})=n_{0}+3$ where $n_{0}=\max\{n\,|\,\forall i\,\,[x,a_{i}]\in J_{n}\}$,
and
\item $n(g_{x})\leq\overline{n}(g_{x})\leq n(g_{x})+1$.
\end{itemize}
\end{lem}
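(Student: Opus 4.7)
First, I would simplify $(1+x)^4$ using characteristic $2$: by Lemma \ref{lem:x,y}(1), $x^2 = t(x)x$, hence $(1+x)^2 = 1+t(x)x$ and $(1+x)^4 = 1 + t(x)^3 x$. Setting $u = t(x)^3 x$ and writing $g = 1+a$ with $a = \sum_i a_i$ ($a_i \in J_3$ of degree $i$), a standard characteristic-$2$ commutator manipulation yields
\[
g_x - 1 = t(x)^3[x,a]\cdot \tilde P, \qquad \tilde P := (1+a)^{-1}(1+u)^{-1}.
\]
Both $a$ and $u$ lie in $t(x)^3 \hat R$, so $\tilde P - 1 \in t(x)^3 \hat R$; combined with $t(x)^3[x,a] \in J_{n_0+3}$, this gives the key reduction $g_x - 1 \equiv t(x)^3[x,a] \pmod{J_{n_0+6}}$, so the $\overline{J}$-, $\overline{C}$-, and $T[x,y]xy$-parts of $c := g_x - 1$ are determined modulo the corresponding level-$(n_0+6)$ subspaces.

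Next, I would evaluate $[x,-]$ on the four $T$-generators $[x,y]x,[x,y]y,[x,y]^2,[x,y]xy$ of $J$ via Lemma \ref{lem:a,b}(1), centrality of $[x,y]^2$ (Lemma \ref{lem:x,y}(3)), and $xyx = t(xy)x$. Writing each $a_i = t(x)^3 b_i$ with $b_i = \alpha_i[x,y]x + \beta_i[x,y]y + \gamma_i[x,y]^2 + \delta_i[x,y]xy$ ($\alpha_i, \beta_i, \gamma_i, \delta_i \in T$, uniquely by Proposition \ref{prop:J - f.g.}), one obtains
\[
[x,a_i] = A_i[x,y]x + B_i[x,y]y + C_i[x,y]^2,
\]
with $A_i = t(x)^4\alpha_i + t(x)^3 t(xy)\delta_i$, $B_i = t(x)^4\beta_i$, $C_i = t(x)^3\beta_i$. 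Two structural features are critical: the $T[x,y]xy$-coefficient vanishes, and $B_i = t(x) C_i$. Since each $a_i$ is homogeneous of degree $i$, the elements $A_i,B_i,C_i$ lie in pairwise-distinct degrees of $T$, so the $t(x)$-adic valuation of $\sum_i A_i$ (resp.\ $\sum B_i, \sum C_i$) equals the minimum over $i$ of the valuations of the summands.

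Finally, I would read off the three claims. Since $[x,a_i] \in J_{n_0}$ for all $i$, $t(x)^3[x,a] \in J_{n_0+3}$, so $n(g_x) \geq n_0+3$. Conversely, if $c \in J_{n_0+4}$ then by the key reduction and homogeneity each $[x,a_i] \in J_{n_0+1}$, contradicting the definition of $n_0$; so $n(g_x) = n_0+3$, and hence $\overline{n}(g_x) \geq n(g_x) = n_0+3$. For $\overline{n}(g_x) \leq n_0+4$, let $i_*$ be minimal with $[x,a_{i_*}] \notin J_{n_0+1}$; if $c$'s $\overline{J}$-part lay in $\overline{J}_{n_0+5}$, then by homogeneity $A_{i_*}, B_{i_*} \in t(x)^{n_0+2}T$, so $A_{i_*} \in t(x)^{n_0+1}T$ and, by $B_i = t(x)C_i$, also $C_{i_*} \in t(x)^{n_0+1}T$, giving $[x,a_{i_*}] \in J_{n_0+1}$, a contradiction. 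Finiteness of $\overline{n}(g_x)$ follows from $g_x \neq 1$, guaranteed by Corollary \ref{cor:commute}. For \emph{good}, the key reduction places the $\overline{C}$-part of $c$ in $\overline{C}_{n_0+3} \subseteq \overline{C}_{\overline{n}(g_x)-1}$ and the $T[x,y]xy$-part in $J_{n_0+6} \subseteq J_{\overline{n}(g_x)+2}$; the $\overline{J}$-part then decomposes naturally into $\textrm{min}_x(g_x)$, other level-$\overline{n}(g_x)$ terms of higher degree, and level-$(\overline{n}(g_x)+1)$ terms. The main obstacle is the $t(x)$-adic bookkeeping across these coefficient components, which pivots on the structural link $B_i = t(x) C_i$ between the $\overline{J}$- and $\overline{C}$-coefficients.
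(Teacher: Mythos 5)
Your proof is correct and takes essentially the same route as the paper: expand the group commutator, reduce to $g_x - 1 \equiv t(x)^3[x,a] \pmod{J_{n_0+6}}$, compute $[x,a_i]$ in the $T$-basis of $J$ to see the $[x,y]xy$-coefficient vanish and the $[x,y]y$-coefficient equal $t(x)$ times the $[x,y]^2$-coefficient, then read off $n(g_x) = n_0+3$, the bound $\overline{n}(g_x) \le n_0+4$, and goodness. (Two cosmetic slips that do not affect the argument: $\tilde P$ should be $(1+u)^{-1}(1+a)^{-1}$ rather than $(1+a)^{-1}(1+u)^{-1}$, and the minimality of $i_*$ is never used — any index with $[x,a_{i_*}]\notin J_{n_0+1}$ works.)
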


\begin{proof}
Notice first that as $g\neq1$ and $\breve{G}\subseteq\hat{G}''$,
by Corollary \ref{cor:commute}, $g$ does not commute with $1+x$,
and hence there exists $i$ such that $[x,a_{i}]\neq0$. Thus, $n_{0}$
is well defined. We claim now that $n(g_{x})=n_{0}+3$. Indeed, denote
$g_{x}=1+\sum_{i=1}^{\infty}b_{i}$, and write 
\begin{align*}
g_{x} & =1+\sum_{i=1}^{\infty}b_{i}=1+t(x)^{3}[x,\sum_{i=1}^{\infty}a_{i}]\sum_{k,l=0}^{\infty}(t(x)^{3}x)^{k}(\sum_{i=1}^{\infty}a_{i})^{l}.
\end{align*}
Now, by assumption, for every $i$ we have $a_{i}\in J_{3}\subseteq t(x)^{3}R$,
and $[x,a_{i}]\in J_{n_{0}}$. Hence, for every $i$ we have 
\begin{align*}
b_{i+4} & =t(x)^{3}[x,a_{i}]\,\,\,\,\textrm{mod}\,\,J_{n_{0}+3+3}.
\end{align*}
Hence $b_{i+4}\in J_{n_{0}+3}$ for every $i$, and thus $n(g_{x})\geq n_{0}+3$.
On the other hand, by assumption, there exists $i_{0}$ such that
$[x,a_{i_{0}}]\notin J_{n_{0}+1}$, so by Corollary \ref{prop:iff}
$t(x)^{3}[x,a_{i_{0}}]\notin J_{n_{0}+4}$. Therefore
\[
b_{i_{0}+4}=t(x)^{3}[x,a_{i_{0}}]+\textrm{terms\,\,in}\,\,J_{n_{0}+6}
\]
is not in $J_{n_{0}+4}$. Thus $n(g_{x})=n_{0}+3,$ which is the second
part of the assertion. 

Now, denote 
\[
a_{i}=\alpha_{i}[x,y]x+\beta_{i}[x,y]y+\gamma_{i}[x,y]^{2}+\delta_{i}[x,y]xy
\]
for $\alpha_{i},\,\beta_{i},\,\gamma_{i},\,\delta_{i}\in T$. By Lemmas
\ref{lem:a,b} and \ref{lem:x,y}, one has
\[
[x,a_{i}]=(t(x)\alpha_{i}+t(xy)\delta_{i})[x,y]x+t(x)\beta_{i}[x,y]y+\beta_{i}[x,y]^{2}.
\]
Set $\lambda_{i}=t(x)\alpha_{i}+t(xy)\delta_{i}$. Now, let $i_{0}$
be any index such that $b_{i_{0}+4}\notin J_{n(g_{x})+1}$. Then,
as $n_{0}+6=n(g_{x})+3$, we have
\begin{align*}
b_{i_{0}+4} & =t(x)^{3}[x,a_{i_{0}}]+\textrm{terms\,\,in}\,\,J_{n(g_{x})+3}\\
 & =t(x)^{3}(\lambda_{i_{0}}[x,y]x+t(x)\beta_{i_{0}}[x,y]y+\beta_{i_{0}}[x,y]^{2})+\textrm{terms\,\,in}\,\,J_{n(g_{x})+3}
\end{align*}
Hence, we have two options:
\begin{enumerate}
\item $t(x)^{3}\lambda_{i_{0}}\notin t(x)^{n(g_{x})+1}T$. In this case
we also have $\overline{b}_{i_{0}+4}\notin\overline{J}_{n(g_{x})+1}$.
In particular, $\overline{n}(g_{x})\leq n(g_{x})$. Otherwise:
\item $t(x)^{3}\beta_{i_{0}}\notin t(x)^{n(g_{x})+1}T$. Hence $t(x)^{4}\beta_{i_{0}}\notin t(x)^{n(g_{x})+2}T$.
So in this case we have $\overline{b}_{i_{0}+4}\notin\overline{J}_{n(g_{x})+2}$.
In particular ,$\overline{n}(g_{x})\leq n(g_{x})+1$. 
\end{enumerate}
In both cases we get $\overline{n}(g_{x})\leq n(g_{x})+1$, as claimed
in the third assertion. In particular, $\overline{n}(g_{x})<\infty$.
Hence $\textrm{min}_{x}(g_{x})$ exists and we have

\begin{align}
g_{x}= & 1+t(x)^{3}\sum_{i=0}^{\infty}(\lambda_{i_{0}}[x,y]x+t(x)\beta_{i_{0}}[x,y]y+\beta_{i_{0}}[x,y]^{2})+\textrm{terms\,\,in}\,\,J_{n(g_{x})+3}\nonumber \\
= & 1+\textrm{min}_{x}(g_{x})+\textrm{terms\,\,in}\,\,\overline{J}_{\overline{n}(g_{x})}\,\,\textrm{of\,\,degree}\,\,>\textrm{deg}(\textrm{min}_{x}(g_{x}))\nonumber \\
 & \qquad\qquad\quad\,\,\,\,+\textrm{terms\,\,in}\,\,\overline{J}_{\overline{n}(g_{x})+1}\label{eq:comp}\\
 & \qquad\qquad\quad\,\,\,\,+\textrm{terms\,\,in}\,\,\bar{C}_{n(g_{x})}\nonumber \\
 & \qquad\qquad\quad\,\,\,\,+\textrm{terms\,\,in}\,\,J_{n(g_{x})+3}.\nonumber 
\end{align}
Combining the latter estimation for $\overline{n}(g_{x})$ with equation
(\ref{eq:comp}) we get that $g_{x}$ is good, as required.
\end{proof}
\begin{lem}
\label{prop:t(x)}Let $1\neq g\in\breve{G}$ be a good element. Then
$\varphi_{x}(g)$ is also good, and 
\[
\textrm{min}_{x}(\varphi_{x}(g))=t(x)^{4}\textrm{min}_{x}(g)
\]
In particular $\overline{n}(\varphi_{x}(g))=\overline{n}(g)+4$.
\end{lem}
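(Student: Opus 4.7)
The plan is to compute $\varphi_x(g)-1$ from the identity
\[
\varphi_x(g)-1 \;=\; t(x)^{3}\,[x,g-1]\,(1+t(x)^{3}x)^{-1}g^{-1},
\]
obtained (as in the proof of Lemma \ref{prop:pre-good}) from the group-theoretic calculation $hgh^{-1}g^{-1}-1=[h,g]h^{-1}g^{-1}$ with $h=1+t(x)^{3}x$, combined with $[h,g]=t(x)^{3}[x,g-1]$. The strategy is to read off the leading $\overline{J}$-term from the factor $t(x)^{3}[x,g-1]$ and to absorb the ``correction'' coming from $(1+t(x)^{3}x)^{-1}g^{-1}-1$ into the $J_{\overline{n}(g)+6}$ slot of the good form for $\varphi_x(g)$.

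For the first factor, I would apply $[x,\cdot]$ termwise to the good decomposition of $g-1$, using
\begin{align*}
[x,\alpha[x,y]x] &= \alpha\, t(x)[x,y]x, & [x,\beta[x,y]y] &= \beta\, t(x)[x,y]y+\beta[x,y]^{2},\\
[x,\gamma[x,y]^{2}] &= 0, & [x,\delta[x,y]xy] &= \delta\, t(xy)[x,y]x.
\end{align*}
Two structural facts drop out at once: $[x,g-1]$ has no $[x,y]xy$ component, and the $\overline{C}_{\overline{n}(g)-1}$ piece of $g-1$ is annihilated by the centrality of $[x,y]^{2}$ (Lemma \ref{lem:x,y}). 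Writing $\textrm{min}_{x}(g)=\alpha_{0}[x,y]x+\beta_{0}[x,y]y$ with $\alpha_{0},\beta_{0}\in t(x)^{\overline{n}(g)}\mathring{T}$, the lowest-degree contribution is
\[
[x,\textrm{min}_{x}(g)]\;=\;t(x)\,\textrm{min}_{x}(g)+\beta_{0}[x,y]^{2}\;\in\;\overline{J}_{\overline{n}(g)+1}\oplus\overline{C}_{\overline{n}(g)}.
\]
After multiplying by $t(x)^{3}$, the $\overline{J}_{\overline{n}(g)+4}$-part of degree $\textrm{deg}(\textrm{min}_{x}(g))+4$ is exactly $t(x)^{4}\textrm{min}_{x}(g)$; the ``other $\overline{J}_{\overline{n}(g)}$ of higher degree'' tail of $g-1$ contributes further $\overline{J}_{\overline{n}(g)+4}$-terms of strictly higher degree, while $[x,\overline{J}_{\overline{n}(g)+1}]$ and $[x,J_{\overline{n}(g)+2}]$ land in $\overline{J}_{\overline{n}(g)+5}\oplus\overline{C}_{\overline{n}(g)+4}$ after the $t(x)^{3}$ scaling, and the $\beta_{0}[x,y]^{2}$ contribution sits in $\overline{C}_{\overline{n}(g)+3}$.

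For the correction factor $\eta=(1+t(x)^{3}x)^{-1}g^{-1}-1$, both summands $(1+t(x)^{3}x)^{-1}-1$ and $g^{-1}-1=g^{-1}(g-1)$ lie in $t(x)^{3}\hat R$ (the latter because $g-1\in\hat J_{3}\subseteq t(x)^{3}\hat R$), hence $\eta\in t(x)^{3}\hat R$. Combined with $t(x)^{3}[x,g-1]\in\hat J_{\overline{n}(g)+3}$ and the fact that $\hat J$ is a two-sided ideal of $\hat R$, this gives $t(x)^{3}[x,g-1]\cdot\eta\in\hat J_{\overline{n}(g)+6}$, which is absorbed into the ``$J_{\overline{n}(g)+6}$'' slot of the good form and contributes nothing to the $\overline{J}$-projection at filtration $\le\overline{n}(g)+5$. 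Assembling the two pieces proves $\varphi_x(g)$ is good with $\textrm{min}_{x}(\varphi_{x}(g))=t(x)^{4}\textrm{min}_{x}(g)$ and hence $\overline{n}(\varphi_x(g))=\overline{n}(g)+4$.

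The main obstacle will be the bookkeeping in the middle step: verifying that no other summand of $g-1$ contributes a $U_{\overline{n}(g)+4,\overline{i}(g)+4}$-component that could cancel or perturb $t(x)^{4}\textrm{min}_{x}(g)$. The delicate point is that for $a\in J_{\overline{n}(g)+2}$ the $[x,y]x$-coefficient of $[x,a]$ sits in $t(x)^{\overline{n}(g)+2}T$ (not in $t(x)^{\overline{n}(g)+3}T$) because of the $t(xy)\delta$ summand, so after the $t(x)^{3}$ scaling it lands only in $\overline{J}_{\overline{n}(g)+5}$ — one filtration level above the target — which is exactly what the good form permits.
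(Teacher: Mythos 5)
Your proof is correct and takes essentially the same route as the paper: factor $\varphi_x(g)-1 = t(x)^3[x,g-1]\cdot h^{-1}g^{-1}$ with $h=1+t(x)^3x$, apply $[x,\cdot]$ termwise to the good decomposition of $g-1$ using the same bracket identities, and absorb the geometric-series tail into the $J$-slot. One small point in your favor: the paper's bullet for $J_{\overline{n}(g)+2}$-terms only records "terms in $J_{\overline{n}(g)+2}$," and after the $t(x)^3$ scaling its final display lists "terms in $J_{\overline{n}(g)+5}$," which on its face overshoots the good form's $J_{\overline{n}(\varphi_x(g))+2}=J_{\overline{n}(g)+6}$ slot; your explicit computation $[x,\delta[x,y]xy]=\delta\,t(xy)[x,y]x$ shows $[x,J_{\overline{n}(g)+2}]$ has no $[x,y]xy$-component and so lands in $\overline{J}_{\overline{n}(g)+5}\oplus\overline{C}_{\overline{n}(g)+5}$, which \emph{does} fit the other slots — a cleaner justification for why the form is really good.
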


\begin{proof}
The fact that $\varphi_{x}(g)=g_{x}$ is good was already proven in
the previous lemma in a more general case. Now, write $g=1+a$ for
$a$ of the form
\begin{align*}
a=\textrm{min}_{x}(g) & +\textrm{terms\,\,in}\,\,\overline{J}_{\overline{n}(g)}\,\,\textrm{of\,\,degree}\,\,>\textrm{deg}(\textrm{min}_{x}(g))\\
 & +\textrm{terms\,\,in}\,\,\overline{J}_{\overline{n}(g)+1}\\
 & +\textrm{terms\,\,in}\,\,\overline{C}_{\overline{n}(g)-1}\\
 & +\textrm{terms\,\,in}\,\,J_{\overline{n}(g)+2}.
\end{align*}
 Recall that the terms of $g$ lie in $J_{3}\subseteq t(x)^{3}R$.
Hence
\begin{align*}
\varphi_{x}(g) & =1+t(x)^{3}[x,a]\sum_{k,l=0}^{\infty}(t(x)^{3}x){}^{k}a{}^{l}=1+t(x)^{3}[x,a]+\textrm{terms\,\,in}\,\,J_{\overline{n}(g)+6}.
\end{align*}
Noticing the identities
\begin{align*}
[x,[x,y]x] & =t(x)[x,y]x\,\,\,\,\textrm{and}\,\,\,\,[x,[x,y]y]=t(x)[x,y]y+[x,y]^{2}
\end{align*}
 we have:
\begin{itemize}
\item $[x,\textrm{min}_{x}(g)]=t(x)\textrm{min}_{x}(g)+\textrm{terms\,\,in}\,\,\overline{C}_{\overline{n}(g)}$.
\item $[x,\textrm{terms\,\,in}\,\,\overline{J}_{\overline{n}(g)}\,\,\textrm{of\,\,degree}\,\,>\textrm{deg}(\textrm{min}_{x}(g))]$
\begin{align*}
= & \textrm{terms\,\,in}\,\,\overline{J}_{\overline{n}(g)+1}\,\,\textrm{of\,\,degree}\,\,>\textrm{deg}(t(x)\textrm{min}_{x}(g))\\
 & +\textrm{terms\,\,in}\,\,\overline{C}_{\overline{n}(g)}
\end{align*}
\item $[x,\textrm{terms\,\,in}\,\,\overline{J}_{\overline{n}(g)+1}]=\textrm{terms\,\,in}\,\,\overline{J}_{\overline{n}(g)+2}+\textrm{terms\,\,in}\,\,\overline{C}_{\overline{n}(g)+1}$.
\item $[x,\textrm{terms\,\,in}\,\,\overline{C}_{\overline{n}(g)-1}]=0$.
\item $[x,\textrm{terms\,\,in}\,\,J_{\overline{n}(g)+2}]=\textrm{terms\,\,in}\,\,J_{\overline{n}(g)+2}$.
\end{itemize}
Hence
\begin{align*}
\varphi_{x}(g)= & 1+t(x)^{3}[x,a]+\textrm{terms\,\,in}\,\,J_{\overline{n}(g)+6}\\
= & 1+t(x)^{4}\textrm{min}_{x}(g)+\textrm{terms\,\,in}\,\,\overline{J}_{\overline{n}(g)+4}\,\,\textrm{of\,\,degree}\,\,>\textrm{deg}(t(x)^{4}\textrm{min}_{x}(g))\\
 & \qquad\qquad\qquad\quad\,\,+\textrm{terms\,\,in}\,\,\overline{J}_{\overline{n}(g)+5}\\
 & \qquad\qquad\qquad\quad\,\,+\textrm{terms\,\,in}\,\,\overline{C}_{\overline{n}(g)+3}\\
 & \qquad\qquad\qquad\quad\,\,+\textrm{terms\,\,in}\,\,J_{\overline{n}(g)+5}.
\end{align*}
Recalling Corollary \ref{prop:iff}, it follows that $\overline{n}(\varphi_{x}(g))=\overline{n}(g)+4$,
and that we have $\textrm{min}_{x}(\varphi_{x}(g))=t(x)^{4}\textrm{min}_{x}(g)$
as required.
\end{proof}
\begin{lem}
\label{prop:t(y)}Let $1\neq g\in\breve{G}$ be a good element, and
let $\varphi_{y}$ be the operator $\varphi_{y}(g)=[1+y,g]_{\hat{G}}$.
Then, $\varphi_{y}(g)$ is also good, and 
\[
\textrm{min}_{x}(\varphi_{y}(g))=t(y)\textrm{min}_{x}(g).
\]
In particular, $\overline{n}(\varphi_{y}(g))=\overline{n}(g)$.
\end{lem}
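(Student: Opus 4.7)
The plan is to follow the template of Lemma~\ref{prop:t(x)}, starting from the identity
\[
\varphi_{y}(g)-1\,=\,[y,a]\,(1+y)^{-1}(1+a)^{-1},\qquad a=g-1.
\]
Using $y^{2}=t(y)y$ one has $(1+y)^{-1}=1+Ey$ with $E=1+t(y)+t(y)^{2}+\cdots$ a central element; and $(1+a)^{-1}=1+aF$ with $F=\sum_{\ell\ge 0}a^{\ell}$. Expanding gives the clean decomposition
\[
\varphi_{y}(g)-1\;=\;[y,a]\;+\;E\,[y,a]\,y\;+\;[y,a]\,aF\;+\;E\,[y,a]\,y\,aF.
\]
The goal is to show that only the first summand contributes at the $\overline{n}(g)$-level, and that its contribution is exactly $t(y)\min_{x}(g)$.

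The computation of $[y,a]$ proceeds via the identities $[y,[x,y]x]=t(y)[x,y]x+[x,y]^{2}$, $[y,[x,y]y]=t(y)[x,y]y$, and $[y,[x,y]^{2}]=0$ (the last because $[x,y]^{2}$ is central, Part~3 of Lemma~\ref{lem:x,y}). Applied termwise to the good decomposition of $a$, this gives $[y,\min_{x}(g)]=t(y)\min_{x}(g)+(\text{element of }\overline{C}_{\overline{n}(g)})$, with the higher-degree $\overline{J}_{\overline{n}(g)}$ and $\overline{J}_{\overline{n}(g)+1}$ pieces producing analogous $t(y)$-scaled contributions plus $\overline{C}$-tails, the $\overline{C}_{\overline{n}(g)-1}$ piece dying, and the $J_{\overline{n}(g)+2}$ tail staying put. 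The leading piece $t(y)\min_{x}(g)$ is nonzero because $t(y)$ is a nonzero-divisor in the polynomial ring $T$ (Proposition~\ref{prop:free}).

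The delicate correction is $E\,[y,a]\,y$. Since $[y,y]=0$, Leibniz gives $[y,a]\,y=[y,ay]$, so I would compute $ay$ from the right-multiplication formulas $[x,y]x\cdot y=[x,y]xy$, $[x,y]y\cdot y=t(y)[x,y]y$, $[x,y]xy\cdot y=t(y)[x,y]xy$, and $[x,y]^{2}\cdot y=t(xy)[x,y]y+t(y)[x,y]xy$, then apply $[y,\cdot]$. Two facts are decisive: $[y,[x,y]xy]=t(xy)[x,y]y$ (direct from Lemmas~\ref{lem:a,b} and \ref{lem:x,y}), and, crucially, $[y,cy]=cy^{2}+cy^{2}=0$ for any central scalar $c\in T$. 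The latter annihilates the $\overline{C}_{\overline{n}(g)-1}\cdot y$ contribution outright in characteristic~$2$; the former routes the new $[x,y]xy$-content produced by right multiplication back into the $\overline{J}$-stratum at one higher degree. The outcome is that $[y,ay]$ lies in $\overline{J}_{\overline{n}(g)}$ at degree $\ge\overline{i}(g)+2$, plus pieces in $\overline{J}_{\overline{n}(g)+1}$ and $J_{\overline{n}(g)+2}$, with no component at any $n<\overline{n}(g)$; multiplying by the central $E$ preserves this.

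Finally, the remaining two corrections lie in $J_{\overline{n}(g)}\cdot J_{3}\subseteq J_{\overline{n}(g)+3}$, since $aF\in J_{3}$ and $J_{n}J_{m}\subseteq J_{n+m}$ follows from $J_{n}=t(x)^{n}J$ together with $J$ being an ideal (Proposition~\ref{prop:J - f.g.}). Decomposing $J_{\overline{n}(g)+3}$ into $\overline{J}_{\overline{n}(g)+3}\oplus\overline{C}_{\overline{n}(g)+3}\oplus t(x)^{\overline{n}(g)+3}T[x,y]xy$, each summand fits inside the $\overline{J}_{\overline{n}(g)+1}$, $\overline{C}_{\overline{n}(g)-1}$, or $J_{\overline{n}(g)+2}$ slot of the good form. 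Assembling the four contributions gives $\varphi_{y}(g)$ in good form with $\overline{n}(\varphi_{y}(g))=\overline{n}(g)$ and $\min_{x}(\varphi_{y}(g))=t(y)\min_{x}(g)$. The main obstacle is exactly the vanishing $[y,cy]=0$: without this characteristic-$2$ identity, right multiplication by $y$ would inject $\overline{C}_{\overline{n}(g)-1}\cdot y$ contributions into $[y,ay]$ at levels $n<\overline{n}(g)$, and the good-form structure would collapse.
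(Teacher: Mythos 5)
Your proof is correct and takes essentially the same approach as the paper's: both expand the group commutator via $\varphi_{y}(g)-1=[y,a](1+y)^{-1}(1+a)^{-1}$ and push each stratum of the good decomposition of $a$ through the ring-commutator identities for the $J$-generators, with your explicit four-term split merely making visible the bookkeeping the paper performs inside the single expression $[y,a]\sum_{k}y^{k}$. One small correction to your closing remark: $[y,cy]=0$ for a central scalar $c$ is not a characteristic-$2$ identity but a trivial one in any ring (it equals $c[y,y]$); the genuine characteristic-$2$ content of this lemma lies in identities such as $[y,[x,y]x]=t(y)[x,y]x+[x,y]^{2}$ and $[y,[x,y]xy]=t(xy)[x,y]y$ and in the fact that $J$ is a well-behaved ideal in the first place, not in the vanishing of $[y,cy]$.
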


\begin{proof}
Write $g=1+a$ where $a$ is built up from
\begin{align*}
a=\textrm{min}_{x}(g) & +\textrm{terms\,\,in}\,\,\overline{J}_{\overline{n}(g)}\,\,\textrm{of\,\,degree}\,\,>\textrm{deg}(\textrm{min}_{x}(g))\\
 & +\textrm{terms\,\,in}\,\,\overline{J}_{\overline{n}(g)+1}\\
 & +\textrm{terms\,\,in}\,\,\overline{C}_{\overline{n}(g)-1}\\
 & +\textrm{terms\,\,in}\,\,J_{\overline{n}(g)+2}.
\end{align*}
In addition, we can think of $a$ as being built up of terms in $t(x)^{3}R$.
Hence
\begin{align*}
\varphi_{y}(g)= & 1+[y,a]\sum_{k,l=0}^{\infty}y{}^{k}a{}^{l}=1+[y,a]\sum_{k=0}^{\infty}y{}^{k}+\textrm{terms\,\,in}\,\,J_{\overline{n}(g)+3}.
\end{align*}

Using Part 4 of Lemma \ref{lem:a,b}, Corollary \ref{cor:trace=00003D0}
and the identities 
\[
[y,[x,y]x]=t(y)[x,y]x+[x,y]^{2}\,\,,\,\,\,\,[y,[x,y]y]=t(y)[x,y]y
\]
we have:
\begin{itemize}
\item $[y,\textrm{min}_{x}(g)]\sum_{k=0}^{\infty}y{}^{k}$
\begin{align*}
= & t(y)\textrm{min}_{x}(g)+\textrm{terms\,\,in}\,\,\overline{C}_{\overline{n}(g)}\\
 & +\textrm{terms\,\,in}\,\,\overline{J}_{\overline{n}(g)}\,\,\textrm{of\,\,degree}\,\,>\textrm{deg}(t(y)\textrm{min}_{x}(g))
\end{align*}
\item $[y,\textrm{terms\,\,in}\,\,\overline{J}_{\overline{n}(g)}\,\,\textrm{of\,\,degree}\,\,>\textrm{deg}(\textrm{min}_{x}(g))]\sum_{k=0}^{\infty}y{}^{k}$
\begin{align*}
= & \textrm{terms\,\,in}\,\,\overline{J}_{\overline{n}(g)}\,\,\textrm{of\,\,degree}\,\,>\textrm{deg}(t(y)\textrm{min}_{x}(g))\\
 & +\textrm{terms\,\,in}\,\,\overline{C}_{\overline{n}(g)}
\end{align*}
\item $[y,\textrm{terms\,\,in}\,\,\overline{J}_{\overline{n}(g)+1}]\sum_{k=0}^{\infty}y{}^{k}=\textrm{terms\,\,in}\,\,\overline{J}_{\overline{n}(g)+1}+\textrm{terms\,\,in}\,\,\overline{C}_{\overline{n}(g)+1}$.
\item $[y,\textrm{terms\,\,in}\,\,\overline{C}_{\overline{n}(g)-1}]\sum_{k=0}^{\infty}y{}^{k}=0$.
\item $[y,\textrm{terms\,\,in}\,\,J_{\overline{n}(g)+2}]\sum_{k=0}^{\infty}y{}^{k}=\textrm{terms\,\,in}\,\,J_{\overline{n}(g)+2}$.
\end{itemize}
Hence
\begin{align*}
\varphi_{y}(g)= & 1+[y,a]\sum_{k=0}^{\infty}y{}^{k}+\textrm{terms\,\,in}\,\,J_{\overline{n}(g)+3}\\
= & 1+t(y)\textrm{min}_{x}(g)+\textrm{terms\,\,in}\,\,\overline{J}_{\overline{n}(g)}\,\,\textrm{of\,\,degree}\,\,>\textrm{deg}(t(y)\textrm{min}_{x}(g))\\
 & \qquad\qquad\qquad\,\,\,\,\,+\textrm{terms\,\,in}\,\,\overline{J}_{\overline{n}(g)+1}\\
 & \qquad\qquad\qquad\,\,\,\,\,+\textrm{terms\,\,in}\,\,\overline{C}_{\overline{n}(g)}\\
 & \qquad\qquad\qquad\,\,\,\,\,+\textrm{terms\,\,in}\,\,J_{\overline{n}(g)+2}
\end{align*}
Recalling Corollary \ref{prop:iff}, we obtain the assertions in the
proposition.
\end{proof}
\begin{lem}
\label{prop:=00005Bx,y=00005D^4}Let $1\neq g\in\breve{G}$ be a good
element, and let $\psi$ be the operator $\psi(g)=[([1+x,1+y]_{\hat{G}})^{2},g]_{\hat{G}}$.
Then $\psi(g)$ is also good, and
\[
\textrm{min}_{x}(\psi(g))=[x,y]^{4}\textrm{min}_{x}(g)
\]
In particular, $\overline{n}(\psi(g))=\overline{n}(g)$.
\end{lem}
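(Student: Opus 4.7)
The plan is to follow the same structural approach as in Lemmas \ref{prop:t(x)} and \ref{prop:t(y)}, after first establishing a key algebraic identity for $c^{2}=([1+x,1+y]_{\hat{G}})^{2}$ that pulls out a central factor $[x,y]^{2}$. Using $x^{2}=t(x)x$ in characteristic $2$, one checks that $(1+x)^{-1}=1+Xx$ for $X=(1+t(x))^{-1}\in T$, and similarly $(1+y)^{-1}=1+Yy$. Writing $c=(1+x)(1+y)(1+Xx)(1+Yy)$ and using the power-series expansion from the proof of Proposition \ref{prop:commutator}, we get $c=1+[x,y]\,r$ with $r=(1+Xx)(1+Yy)$. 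A direct calculation using Lemmas \ref{lem:a,b}(2) and \ref{lem:x,y} (in particular $x[x,y]=t(x)[x,y]+[x,y]x$ and its $y$-analogue, together with $xy[x,y]=t(x)t(y)[x,y]+t(y)[x,y]x+t(x)[x,y]y+[x,y]xy$) gives the crucial commutation relation $r[x,y]=[x,y]\cdot XY(1+x)(1+y)$, from which
\[
c^{2}=1+(c-1)^{2}=1+[x,y]\,r[x,y]\,r=1+[x,y]^{2}\cdot XY\cdot c=1+\eta c,
\]
where $\eta=XY[x,y]^{2}\in T$ is central.

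From $c^{2}=1+\eta c$ and the centrality of $\eta$, one deduces $\psi(g)-1=\eta\,[c,a]\,c^{-2}g^{-1}$ with $a=g-1$, and I would expand $[c,a]=[[x,y]r,a]=[x,y][r,a]+[[x,y],a]\,r$ via the ring identity $[AB,C]=A[B,C]+[A,C]B$. For $g$ good, writing $a=\textrm{min}_{x}(g)+a''$ with $a''$ the higher-order pieces of the good decomposition, I would compute each contribution to $[[x,y],a]$ and $[r,a]$ using the commutator identities $[x,[x,y]x]=t(x)[x,y]x$, $[y,[x,y]x]=t(y)[x,y]x+[x,y]^{2}$, $[[x,y],[x,y]x]=t(x)[x,y]^{2}$ and their $y$-analogues (all consequences of Lemmas \ref{lem:a,b} and \ref{lem:x,y}, already exploited in the proofs of Lemmas \ref{prop:t(x)} and \ref{prop:t(y)}). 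After multiplying by $\eta$ and using the centrality of $[x,y]^{2}$, I would reduce products in $J$ via
\[
[x,y]^{2}x=t(xy)[x,y]x+t(x)[x,y]^{2}+t(x)[x,y]xy,\qquad[x,y]^{2}y=t(xy)[x,y]y+t(y)[x,y]xy,
\]
which are straightforward consequences of Lemma \ref{lem:x,y}.

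Finally, organizing the expansion of $\psi(g)-1$ by degree and by its $\overline{J}$, $\overline{C}$, and $T\cdot[x,y]xy$ components, the goal is to verify: (i) at every degree strictly less than $\deg(\textrm{min}_{x}(g))+8$, the $\overline{J}$ contributions from $[x,y][r,a]$ and $[[x,y],a]\,r$ cancel pairwise in characteristic $2$, while the surviving $\overline{C}$ residues lie in $\overline{C}_{\overline{n}(g)-1}$ and the surviving $T\cdot[x,y]xy$ residues lie in $J_{\overline{n}(g)+2}$; and (ii) at degree $\deg(\textrm{min}_{x}(g))+8$, the surviving leading $\overline{J}$ contribution is $[x,y]^{4}\textrm{min}_{x}(g)=\alpha[x,y]^{5}x+\beta[x,y]^{5}y\in\overline{J}_{\overline{n}(g)}$. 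All remaining terms then fall into $\overline{J}_{\overline{n}(g)}$ of higher degree, $\overline{J}_{\overline{n}(g)+1}$, $\overline{C}_{\overline{n}(g)-1}$, or $J_{\overline{n}(g)+2}$. Invoking Corollary \ref{prop:iff} then yields $\overline{n}(\psi(g))=\overline{n}(g)$ and $\textrm{min}_{x}(\psi(g))=[x,y]^{4}\textrm{min}_{x}(g)$, so that $\psi(g)$ is good with the claimed minimal component.

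The main obstacle is the bookkeeping in the final step: many terms are generated by expanding $[c,a]$, and the precise characteristic-$2$ cancellations in the $\overline{J}$ direction at each intermediate degree must be verified. The centrality of $\eta=XY[x,y]^{2}$ combined with the structural identity $c^{2}=1+\eta c$ is precisely what aligns these cancellations and produces the factor $[x,y]^{4}$ in $\textrm{min}_{x}(\psi(g))$.
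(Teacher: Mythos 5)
Your identity $c^{2}=1+\eta c$ with $\eta=XY[x,y]^{2}$ central is correct, and it is a genuinely new structural observation not present in the paper. (I verified it: since $1+Xt(x)=X$ in characteristic $2$, one has $(1+Xx)[x,y]=X[x,y](1+x)$ and $(1+Yy)[x,y]=Y[x,y](1+y)$, whence $r[x,y]=[x,y]XY(1+x)(1+y)$ and $c^{2}-1=(c-1)^{2}=[x,y]\,r[x,y]\,r=[x,y]^{2}XY(1+x)(1+y)r=\eta c$.) It makes visible, on the level of $c^{2}$, the central scalar $[x,y]^{2}$ which the paper instead extracts by directly expanding $b:=(c-1)^{2}=[x,y]^{2}+[x,y]^{3}xy+(\text{trace-zero terms of degree}\geq 5)+(\text{degree}\geq 9)$.

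Nevertheless, what you have written is not a proof of the lemma: the entire substance of this lemma lives in the bookkeeping that you explicitly defer (``the main obstacle is the bookkeeping in the final step''). And your setup makes that bookkeeping \emph{harder} than the paper's, not easier. Under the split $[c,a]=[x,y][r,a]+[[x,y],a]r$, \emph{both} summands produce $\overline{J}$-contributions starting already at degree $\overline{i}(g)+2$, and one must show that these reorganize into $X^{2}Y^{2}[x,y]^{4}\textrm{min}_{x}(g)$ plus absorbable residues. This does work out: for $a=\textrm{min}_{x}(g)=p[x,y]x+q[x,y]y$ one computes $[x,a]=t(x)a+q[x,y]^{2}$, $[y,a]=t(y)a+p[x,y]^{2}$, $[xy,a]=t(xy)a+pt(x)[x,y]^{2}$, $[[x,y],a]=(pt(x)+qt(y))[x,y]^{2}$, and using $Xt(x)=X+1$, $Yt(y)=Y+1$ together with your reduction formulas (note the asymmetry: $[x,y]^{2}x$ acquires a $t(x)[x,y]^{2}$ summand, $[x,y]^{2}y$ does not) the $\overline{J}$-component of $[c,a]$ collapses to exactly $XY[x,y]^{2}a$. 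But this is a nontrivial computation, and nothing in the way you have phrased step (i) makes it evident it will succeed. By contrast, the paper dispatches \emph{all} non-leading terms in one stroke using the observation (from Part 1 of Lemma \ref{lem:a,b} and Corollary \ref{cor:trace=00003D0}) that for trace-zero $u,v\in J$ one has $[u,v]=t(uv)\cdot 1\in[x,y]^{2}T\subseteq\overline{C}$; since $b$ modulo $[x,y]^{2}+[x,y]^{3}xy$ and a degree-$\geq 9$ tail is trace-zero, and $\textrm{min}_{x}(g)$ is trace-zero, every subleading commutator is immediately pushed into $\overline{C}$ with no cancellation to check. I would either import that trace-zero shortcut into your framework (apply it to $[\eta c,a]$, where $\eta c=b$), or carry your cancellation analysis through completely; as written, the claimed goodness of $\psi(g)$ and the formula $\textrm{min}_{x}(\psi(g))=[x,y]^{4}\textrm{min}_{x}(g)$ are stated but not established.
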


\begin{proof}
As in the previous lemma, we write $g=1+a$ where $a$ can be written
as
\begin{align*}
a=\textrm{min}_{x}(g) & +\textrm{terms\,\,in}\,\,\overline{J}_{\overline{n}(g)}\,\,\textrm{of\,\,degree}\,\,>\textrm{deg}(\textrm{min}_{x}(g))\\
 & +\textrm{terms\,\,in}\,\,\overline{J}_{\overline{n}(g)+1}\\
 & +\textrm{terms\,\,in}\,\,\overline{C}_{\overline{n}(g)-1}\\
 & +\textrm{terms\,\,in}\,\,J_{\overline{n}(g)+2}.
\end{align*}
In addition, we think of $a$ as being built up of terms in $t(x)^{3}R$. 

Denote $([1+x,1+y]_{\hat{G}})^{2}=1+b$. then
\begin{align*}
\psi(g)= & 1+[b,a]\sum_{k,l=0}^{\infty}b{}^{k}a{}^{l}=1+[b,a]\sum_{k=0}^{\infty}b{}^{k}+\textrm{terms\,\,in}\,\,J_{\overline{n}(g)+3}.
\end{align*}
Notice that for every $u,\,v\in M_{2}(\Lambda)$ we have $t(uv+vu)=t([u,v])=0$.
Also, notice that $([x,y]xy)^{2}=[x,y]^{3}xy$ and that $([x,y]x)^{2}=([x,y]y)^{2}=0$.
Hence, a direct computation shows that 
\begin{align*}
b=([x,y]\sum_{i,j=0}^{\infty}x^{i}y^{j})^{2}= & [x,y]^{2}+[x,y]^{3}xy\\
 & +\textrm{terms\,\,in\,\,}J\,\,\textrm{of\,\,zero\,\,trace\,\,of\,\,degree}\geq5\\
 & +\textrm{terms\,\,in}\,\,J\,\,\textrm{of\,\,degree}\,\,\geq9.
\end{align*}
Observe that by Part 1 of Lemma \ref{lem:a,b} and Corollary \ref{cor:trace=00003D0},
for $u,v\in J$ such that $t(u)=t(v)=0$ we have $[u,v]\in[x,y]^{2}T$.
Hence, using Part 4 of Lemma \ref{lem:a,b}, Corollary \ref{cor:trace=00003D0},
and the identities 
\[
[[x,y]xy,[x,y]x]=[x,y]^{2}\cdot[x,y]x\,\,,\,\,\,\,[[x,y]xy,[x,y]y]=[x,y]^{2}\cdot[x,y]y
\]
we have:
\begin{itemize}
\item $[b,\textrm{min}_{x}(g)]\sum_{k=0}^{\infty}b{}^{k}$
\begin{align*}
= & [b,\textrm{min}_{x}(g)]+\textrm{terms\,\,in}\,\,\overline{J}_{\overline{n}(g)}\,\,\textrm{of\,\,degree}\,\,>\textrm{deg}([x,y]^{4}\textrm{min}_{x}(g))\\
 & \qquad\qquad\,\,\,\,\,\,+\textrm{terms\,\,in}\,\,\overline{C}_{\overline{n}(g)}\\
= & [[x,y]^{3}xy,\textrm{min}_{x}(g)]\\
 & +\textrm{terms\,\,in}\,\,\overline{J}_{\bar{n}(g)}\,\,\textrm{of\,\,degree}\,\,>\textrm{deg}([x,y]^{4}\textrm{min}_{x}(g))\\
 & +\textrm{terms\,\,in}\,\,\overline{C}_{\overline{n}(g)}\\
= & [x,y]^{4}\textrm{min}_{x}(g)\\
 & +\textrm{terms\,\,in}\,\,\overline{J}_{\overline{n}(g)}\,\,\textrm{of\,\,degree}\,\,>\textrm{deg}([x,y]^{4}\textrm{min}_{x}(g))\\
 & +\textrm{terms\,\,in}\,\,\overline{C}_{\overline{n}(g)}
\end{align*}
\item $[b,\textrm{terms\,\,in}\,\,\overline{J}_{\overline{n}(g)}\,\,\textrm{of\,\,degree}\,\,>\textrm{deg}(\textrm{min}_{x}(g))]\sum_{k=0}^{\infty}b{}^{k}$
\begin{align*}
= & [b,\textrm{terms\,\,in}\,\,\overline{J}_{\overline{n}(g)}\,\,\textrm{of\,\,degree}\,\,>\textrm{deg}(\textrm{min}_{x}(g))]\\
 & +\textrm{terms\,\,in}\,\,\overline{J}_{\overline{n}(g)}\,\,\textrm{of\,\,degree}\,\,>\textrm{deg}([x,y]^{4}\textrm{min}_{x}(g))\\
 & +\textrm{terms\,\,in}\,\,\overline{C}_{\overline{n}(g)}\\
= & \textrm{terms\,\,in}\,\,\overline{J}_{\overline{n}(g)}\,\,\textrm{of\,\,degree}\,\,>\textrm{deg}([x,y]^{4}\textrm{min}_{x}(g))\\
 & +\textrm{terms\,\,in}\,\,\overline{C}_{\overline{n}(g)}
\end{align*}
\item $[b,\textrm{terms\,\,in}\,\,\overline{J}_{\overline{n}(g)+1}]\sum_{k=0}^{\infty}b{}^{k}=\textrm{terms\,\,in}\,\,\overline{J}_{\overline{n}(g)+1}+\textrm{terms\,\,in}\,\,\overline{C}_{\overline{n}(g)+1}$.
\item $[b,\textrm{terms\,\,in}\,\,\overline{C}_{\overline{n}(g)}]\sum_{k=0}^{\infty}b{}^{k}=0$.
\item $[b,\textrm{terms\,\,in}\,\,J_{\overline{n}(g)+2}]\sum_{k=0}^{\infty}b{}^{k}=\textrm{terms\,\,in}\,\,J_{\overline{n}(g)+2}$.
\end{itemize}
Hence
\begin{align*}
\psi(g)= & 1+[b,a]\sum_{k=0}^{\infty}b{}^{k}+\textrm{terms\,\,in}\,\,J_{\overline{n}(g)+3}\\
= & 1+[x,y]^{4}\textrm{min}_{x}(g)+\textrm{terms\,\,in}\,\,\overline{J}_{\overline{n}(g)}\,\,\textrm{of\,\,degree}\,\,>\textrm{deg}([x,y]^{4}\textrm{min}_{x}(g))\\
 & \qquad\qquad\qquad\quad\,\,\,\,+\textrm{terms\,\,in}\,\,\overline{J}_{\overline{n}(g)+1}\\
 & \qquad\qquad\qquad\quad\,\,\,\,+\textrm{terms\,\,in}\,\,\overline{C}_{\overline{n}(g)}\\
 & \qquad\qquad\qquad\quad\,\,\,\,+\textrm{terms\,\,in}\,\,J_{\overline{n}(g)+2}.
\end{align*}
Recalling Corollary \ref{prop:iff}, we obtain the assertions in the
proposition.
\end{proof}

\subsection{\label{subsec:Conclusions}Proof of Proposition \ref{prop:step1}}

In this subsection we are going to prove Proposition \ref{prop:step1}.
Let us recall the proposition.
\begin{prop}
Let $1\neq g\in\breve{G}$ and $k\geq0$. There exists a natural number
$\rho$ such that if
\[
\overline{n}(g_{x})\geq\rho\,\,\,\,\textrm{and}\,\,\,\,\overline{i}(g_{x})\geq\rho+8k
\]
then there exists $h\in\breve{G}\cap\hat{G}_{k}$ that satisfies one
of the following conditions:
\begin{itemize}
\item $\overline{n}((gh)_{x})>\overline{n}(g_{x})$ or
\item $\overline{n}((gh)_{x})=\overline{n}(g_{x})$ and $\overline{i}((gh)_{x})>\overline{i}(g_{x})$.
\end{itemize}
\end{prop}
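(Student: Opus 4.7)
The strategy is to exhibit $h \in \breve{G} \cap \hat{G}_k$ for which $h_x$ is good with $\overline{n}(h_x) = \overline{n}(g_x)$, $\overline{i}(h_x) = \overline{i}(g_x)$, and $\min_x(h_x) = \min_x(g_x)$. In characteristic $2$ these leading terms cancel inside $gh$, so the good-element bookkeeping carried out in the proof of Lemma \ref{prop:multi} forces $(gh)_x$ to satisfy one of the two alternatives in the proposition.

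To construct such $h$, I would exploit the operators $\varphi_x, \varphi_y, \psi$ of Lemmas \ref{prop:t(x)}, \ref{prop:t(y)}, \ref{prop:=00005Bx,y=00005D^4}. Each pushes a good element one (respectively two, for $\psi$) level(s) deeper in the lower central series while multiplying $\min_x$ by $t(x)^4$, $t(y)$, $[x,y]^4$ respectively. In particular, iterating $\varphi_x$ exactly $k$ times carries $\breve{G}$ into $\breve{G}\cap\hat{G}_k$ and multiplies $\min_x$ by $t(x)^{4k}$. Combined with Lemma \ref{prop:multi}, which shows that products of good elements with distinct minima add their $\min_x$, these operators generate a rich supply of admissible minima.

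The noetherian ingredient enters as follows. By Proposition \ref{prop:free}, $T = \mathbb{F}_2[t(x),t(y),t(xy)]$ is a polynomial ring, hence noetherian by Hilbert's basis theorem; by Proposition \ref{prop:J - f.g.}, $J$ is a free $T$-module of rank $4$. Therefore the $T$-submodule $M \subseteq J$ generated by
\[
\{\min_x(\tilde h_x)\,:\,\tilde h\in\breve{G},\,\tilde h_x\text{ good}\}
\]
is finitely generated. Applying the Artin-Rees lemma to $M$ and the ideal $I=(t(x))\subseteq T$ yields a constant $c$ such that $M\cap I^n J \subseteq I^{n-c}M$ for all $n\geq c$. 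Since $\min_x(g_x)\in M\cap I^{\overline{n}(g_x)}J$ by Lemma \ref{prop:pre-good}, setting $\rho$ to be $c+4k$ plus a small degree-budget slack gives $\min_x(g_x)\in t(x)^{4k}M$ whenever $\overline{n}(g_x)\geq\rho$.

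The remaining and technically most delicate step is to realize the element $m := t(x)^{-4k}\min_x(g_x)\in M$ as a single $\min_x(\tilde h_x)$ with $\tilde h\in\breve{G}$; one then sets $h := \varphi_x^k(\tilde h)\in\breve{G}\cap\hat{G}_k$, which satisfies $\min_x(h_x)=t(x)^{4k}\min_x(\tilde h_x)=\min_x(g_x)$ by Lemma \ref{prop:t(x)}. Here I would borrow the algorithm from the proof of Hilbert's basis theorem: fix a finite set of generators of $M$, each realized concretely as $\min_x$ of an explicit element of $\breve{G}$, and iteratively cancel the leading coefficients of $m$ by pre-multiplying those generators by monomials of the form $t(y)^a[x,y]^{4b}$ (produced by $\varphi_y^a\psi^b$) and assembling the results additively via Lemma \ref{prop:multi}. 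The hypothesis $\overline{i}(g_x)\geq\rho+8k$ precisely budgets the degree growth incurred by these operator applications (at most $8$ per $\psi$). The principal obstacle is this last realization step: one must check that every $T$-scaling appearing in the finite $T$-linear decomposition of $m$ can in fact be executed by the limited operator toolkit while preserving goodness, staying within $\breve{G}\cap\hat{G}_k$, and never exceeding the allotted degree.
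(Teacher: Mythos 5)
Your overall outline — finitely generated module of minima, Artin--Rees to push the minimum deep enough, realize it via the operators $\varphi_x,\varphi_y,\psi$, and use Lemma \ref{prop:multi} to assemble — is the same skeleton as the paper's argument. But there is one substantive error in the choice of coefficient ring, and it is not a technicality: it is exactly where the realization step you flag as ``the principal obstacle'' actually breaks.

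You take $M$ to be a $T$-submodule of $J$, with $T=\mathbb{F}_2[t(x),t(y),t(xy)]$, and apply Artin--Rees with the ideal $(t(x))\subseteq T$. A finite $T$-linear decomposition of $t(x)^{-4k}\min_x(g_x)$ in terms of a finite generating set of $M$ will, in general, have coefficients that are arbitrary polynomials in $t(x),t(y),t(xy)$. However, the only scalings your toolkit can produce on $\min_x$ are monomials in $t(x)^4$ (via $\varphi_x$), $t(y)$ (via $\varphi_y$), and $[x,y]^4$ (via $\psi$); there is no operator whose effect on $\min_x$ is multiplication by $t(xy)$, nor by $t(x)$ alone, nor by $[x,y]^2$ alone. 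So a $T$-decomposition is not ``executable,'' and you cannot check away the gap you acknowledge at the end of your write-up. The fix, and what the paper does, is to work from the start with the subring $\breve{S}=\langle t(x)^4,\,t(y),\,[x,y]^4\rangle$ and the ideal $\breve{I}=t(x)^4\breve{S}$. Because $S$ (and hence $T$, and hence $J$) is a finitely generated $\breve{S}$-module, the Noetherian and Artin--Rees steps go through unchanged, but now every coefficient appearing in the decomposition is a monomial $t(x)^{4u}t(y)^v[x,y]^{4w}$, which is literally $\psi^w\circ\varphi_y^v\circ\varphi_x^u$ applied to the corresponding $\theta\in\Theta$. This is why the paper defines the module over $\breve{S}$ rather than over $T$ or $S$.

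Two smaller discrepancies. First, you aim to realize $m$ as a \emph{single} $\min_x(\tilde h_x)$; the paper instead realizes each summand $\lambda_j\cdot\min_x(\mathring h_j)$ separately and combines with Lemma \ref{prop:multi}, after first discarding the summands whose $\overline n,\overline i$ do not match (their total contribution to the minimal term vanishes) and reordering so partial sums never cancel — this bookkeeping is needed for Lemma \ref{prop:multi} to apply inductively and is not free. Second, your $\rho$ grows linearly in $k$. The statement is used in the main argument with a single $\rho$ that is fixed before $k$ is allowed to tend to infinity (``choose $r$ large enough such that $2^r\geq\rho+8(s+1)$'' and then iterate with $k=s+k_j\to\infty$), so $\rho$ must be independent of $k$; the paper achieves this by putting all of the $k$-dependence into the second hypothesis $\overline i(g_x)\geq\rho+8k$ and using that the coefficients $\lambda_j$ automatically have degree at least $8k$ once the degree of $\min_x(g_x)$ is large enough.
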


\begin{proof}
Denote the subring (with identity) $\breve{S}=\left\langle t(x)^{4},\,t(y),\,[x,y]^{4}\right\rangle \subseteq S$,
and the ideal $\breve{I}=t(x)^{4}\breve{S}\vartriangleleft\breve{S}$.
It is easy to see that $S$ is a finitely generated $\breve{S}$-module,
and therefore $J$ is a finitely generated $\breve{S}$-module (see
Propositions \ref{prop:ST} and \ref{prop:J - f.g.}). Define the
set 
\[
M=\left\{ \textrm{min}_{x}(g)\,|\,g\in\breve{G}\,\,\textrm{is\,\,good}\right\} 
\]
and define $N$ to be the $\breve{S}$-submodule of $J$ generated
by the elements of $M$. As $\breve{S}$ is Noetherian and $J$ is
a finitely generated $\breve{S}$-module, $N$ is also a finitely
generated $\breve{S}$-module, and hence there exists a finite subset
$\Theta\subseteq\breve{G}$ of good elements such that
\[
N=\sum_{\theta\in\Theta}\breve{S}\cdot\textrm{min}_{x}(\theta).
\]
In addition, by the Artin-Rees lemma, there exists a number $\varrho$
such that
\[
M\cap(J_{4\varrho})\subseteq N\cap(J\cdot\breve{I}^{\varrho})\subseteq N\cdot\breve{I}^{2}.
\]
\begin{defn}
We define 
\[
\rho=\max\{4\varrho,\deg(\textrm{min}_{x}(\theta))+8\,|\,\theta\in\Theta\}.
\]
\end{defn}

Now, let $1\neq g\in\breve{G}$ and $k\geq0$ as in the proposition.
By assumption $\bar{n}(g_{x})\geq4\varrho$. By Lemma \ref{prop:pre-good},
$g_{x}\in\breve{G}$ is good. Hence
\[
\textrm{min}_{x}(g_{x})\in M\cap J_{4\rho}\subseteq N\cdot\breve{I}^{2}.
\]
Hence, there exist good elements $\mathring{h}_{1},...,\mathring{h}_{m}\in\Theta$
(repetitions are allowed) and elements $\lambda_{1},...,\lambda_{m}\in\breve{S}$
such that 
\[
\textrm{min}_{x}(g_{x})=t(x)^{8}(\lambda_{1}\cdot\textrm{min}_{x}(\mathring{h}_{1})+...+\lambda_{m}\cdot\textrm{min}_{x}(\mathring{h}_{m}))
\]
where $\lambda_{j}=t(x)^{4u_{j}}t(y)^{v_{j}}[x,y]^{4w_{j}}$, $j=1,...,m$,
for some $u_{j},\,v_{j},\,w_{j}\geq0$. Also, as each of the summands
is homogeneous, we can assume that each summand $\lambda_{j}\cdot\textrm{min}_{x}(\mathring{h}_{j})$
satisfies
\[
\deg(\lambda_{j}\cdot\textrm{min}_{x}(\mathring{h}_{j}))+8=\deg(\textrm{min}_{x}(g_{x}))=\bar{i}(g_{x})\geq\rho+8k.
\]
As $\rho\geq\deg(\textrm{min}_{x}(\theta))+8$ for any $\theta\in\Theta$,
we have $\deg(\lambda_{j})\geq8k$, and hence $u_{j}+v_{j}+w_{j}\geq k$
for any $j$. Now, define
\[
h_{j}=\psi^{w_{j}}\circ\varphi_{y}^{v_{j}}\circ\varphi_{x}^{u_{j}}(\mathring{h}_{j})\in\breve{G}\cap G_{k}.
\]
By Lemmas \ref{prop:t(x)}, \ref{prop:t(y)}, and \ref{prop:=00005Bx,y=00005D^4}
we have 
\[
\textrm{min}_{x}(h_{j})=t(x)^{4u_{j}}t(y)^{v_{j}}[x,y]^{4w_{j}}\textrm{min}_{x}(\mathring{h}_{j})=\lambda_{j}\cdot\textrm{min}_{x}(\mathring{h}_{j}).
\]
Therefore, denoting $\Omega=\left\{ j\,|\,\overline{n}(g{}_{x})=\overline{n}(h_{j})+8,\,\overline{i}(g{}_{x})=\overline{i}(h_{j})+8\right\} $,
by Corollary \ref{prop:UFD} we have 
\begin{equation}
\textrm{min}_{x}(g_{x})=t(x)^{8}\sum_{j=1}^{m}\textrm{min}_{x}(h_{j})=t(x)^{8}\sum_{j\in\Omega}\textrm{min}_{x}(h_{j})\label{eq:sum}
\end{equation}
Hence, we can assume that $\overline{n}(g{}_{x})=\overline{n}(h_{j})+8$,
$\overline{i}(g{}_{x})=\overline{i}(h_{j})+8$ for every $j$ and
in particular $\overline{n}(h_{j})=\overline{n}(h_{j'})$ and $\overline{i}(h_{j})=\overline{i}(h_{j'})$
for every $j,j'$. In addition, without loss of generality, we can
assume that for every $1\leq l\leq m-1$ one has
\[
\sum_{j=1}^{l}\textrm{min}_{x}(h_{j})\neq\textrm{min}_{x}(h_{l+1}).
\]
Otherwise, if $\sum_{j=1}^{l}\textrm{min}_{x}(h_{j})=\textrm{min}_{x}(h_{l+1})$
for some $l$, we can omit $h_{1},...,h_{l+1}$ from the sum in equation
(\ref{eq:sum}). 
\begin{defn}
Under the above assumptions we define 
\[
h=\varphi_{x}(\prod_{j=1}^{m}h_{j})\in\breve{G}\cap G_{k}.
\]
\end{defn}

It remains to show that $h$ satisfies the conditions in Proposition
\ref{prop:step1}. Indeed, by Lemmas \ref{prop:multi} and \ref{prop:t(x)},
$h$ is good, 
\[
\textrm{min}_{x}(h)=t(x)^{4}\sum_{j=1}^{m}\textrm{min}_{x}(h_{j}),
\]
and we have $\overline{n}(g_{x})=\overline{n}(h_{x})=\overline{n}(h)+4$.
Write $g=1+a=1+\sum_{i=1}^{\infty}a_{i}$ and $h=1+b=1+\sum_{i=1}^{\infty}b_{i}$,
so 
\[
gh=1+a+b+ab.
\]
By the construction, $h$ is of the form $h=\varphi_{x}(h')$ for
some good element $h'\in\breve{G}$. Hence $\overline{n}(h)-1\leq n(h)$
by Lemma \ref{prop:pre-good}. Thus, as the terms of $a$ lie in $t(x)^{3}R$
we have
\[
ab=\textrm{terms\,\,in}\,\,J_{n(h)+3}\subseteq J_{\overline{n}(h)+2}.
\]
By Lemma \ref{prop:pre-good} we have 
\begin{align*}
\max\{n\,|\,\forall i\,\,[x,a_{i}]\in J_{n}\} & =n(g_{x})-3\geq\overline{n}(g_{x})-4=\overline{n}(h)\\
\max\{n\,|\,\forall i\,\,[x,b_{i}]\in J_{n}\} & =n(h_{x})-3\geq\overline{n}(h_{x})-4=\overline{n}(h).
\end{align*}
 Therefore
\begin{align*}
(gh)_{x}= & 1+t(x)^{3}[x,a+b+ab]\sum_{i,j=0}^{\infty}(t(x)^{3}x)^{i}(a+b+ab)^{j}\\
= & 1+t(x)^{3}[x,a]+t(x)^{3}[x,b]+\textrm{terms\,\,in}\,\,J_{\overline{n}(h)+5}.
\end{align*}
Now, as in the proof of Lemma \ref{prop:pre-good}, we have
\begin{align*}
t(x)^{3}[x,a]=\textrm{min}_{x}(g_{x}) & +\textrm{terms\,\,in}\,\,\overline{J}_{\overline{n}(g_{x})}\,\,\textrm{of\,\,degree}\,\,>\textrm{deg}(\textrm{min}_{x}(g_{x}))\\
 & +\textrm{terms\,\,in}\,\,\overline{J}_{\overline{n}(g_{x})+1}+\textrm{terms\,\,in}\,\,\overline{C}_{\overline{n}(g_{x})-1}
\end{align*}
\begin{align*}
t(x)^{3}[x,b]=\textrm{min}_{x}(h_{x}) & +\textrm{terms\,\,in}\,\,\overline{J}_{\overline{n}(h_{x})}\,\,\textrm{of\,\,degree}\,\,>\textrm{deg}(\textrm{min}_{x}(h_{x}))\\
 & +\textrm{terms\,\,in}\,\,\overline{J}_{\overline{n}(h_{x})+1}+\textrm{terms\,\,in}\,\,\overline{C}_{\overline{n}(h_{x})-1}.
\end{align*}
Now, as $h$ is good, we have 
\[
\textrm{min}_{x}(h_{x})=t(x)^{4}\textrm{min}_{x}(h)=\textrm{min}_{x}(g_{x})
\]
by the construction of $h$ and Lemma \ref{prop:t(x)}. In addition,
$\overline{n}(g_{x})=\overline{n}(h)+4=\overline{n}(h_{x})$. Hence,
we get that
\begin{align*}
(gh)_{x}= & 1+t(x)^{3}[x,a]+t(x)^{3}[x,b]+\textrm{terms\,\,in}\,\,J_{\overline{n}(g_{x})+1}\\
= & 1+\textrm{terms\,\,in}\,\,\overline{J}_{\overline{n}(g_{x})}\,\,\textrm{of\,\,degree}\,\,>\textrm{deg}(\textrm{min}_{x}(g_{x}))\\
 & +\textrm{terms\,\,in}\,\,\overline{C}_{\overline{n}(g_{x})-1}\\
 & +\textrm{terms\,\,in}\,\,J_{\overline{n}(g_{x})+1}.
\end{align*}
It follows that either $\textrm{min}_{x}((gh)_{x})\in\overline{J}_{\overline{n}(g_{x})+1}$,
or $\textrm{min}_{x}((gh)_{x})\in\overline{J}_{\overline{n}(g_{x})}$
and $\textrm{deg}(\textrm{min}_{x}((gh)_{x}))>\textrm{deg}(\textrm{min}_{x}(g_{x}))$,
which is equivalent to our assertion. 
\end{proof}

\section{\label{sec:char=00003D0}Some remarks regarding $p=2$ versus $p\protect\neq2$}

\subsection{\label{subsec:universal 1}Stating the dichotomy}

\selectlanguage{american}%
In this section we describe exactly where Zubkov's approach fails
when $p=2$, and that in some sense, $2\times2$ pro-$2$ linear groups
indeed have less pro-$2$ identities (if any). We recall the notation
from $\mathsection$\ref{sec:Zubkov}:
\begin{itemize}
\item $\Pi_{*}=\mathbb{Z}_{p}\left\langle \left\langle x_{i,j},y_{i,j}\,|\,1\leq i,j\leq2\right\rangle \right\rangle $
is the associative ring (with identity) of formal power series in
the free commuting variables $x_{i,j}$ and $y_{i,j}$ over $\mathbb{Z}_{p}$.
\item $\Pi_{*}\vartriangleright Q_{*n}=\left\{ f=\sum_{i=0}^{\infty}f_{i}\in\Pi_{*}\,|\,f_{0},...,f_{n-1}=0,\,\deg(f_{i})=i\right\} $.
\item $x_{*},y_{*}\in M_{d}(\Pi_{*},Q_{*1})=\ker(M_{d}(\Pi_{*})\to M_{d}(\Pi_{*}/Q_{*1}))$
are the generic matrices
\[
x_{*}=\left(\begin{array}{cc}
x_{11} & x_{12}\\
x_{21} & x_{22}
\end{array}\right),\,\,\,\,\,\,y_{*}=\left(\begin{array}{cc}
y_{11} & y_{12}\\
y_{21} & y_{22}
\end{array}\right).
\]
\item $\pi_{*}:\hat{F}\to\hat{G}_{*}=\left\langle \left\langle 1+x_{*},1+y_{*}\right\rangle \right\rangle \subseteq1+M_{d}(\Pi_{*},Q_{*1})$
is the \foreignlanguage{english}{universal representation}.
\end{itemize}
\selectlanguage{english}%
Going back to Theorem \ref{thm:universal-1} we get that in some sense,
measuring how far $\hat{G}_{*}$ is from being a free pro-$p$ group
measures the amount of pro-$p$ identities of $2\times2$ pro-$p$
linear groups. 

\selectlanguage{american}%
Now, let $\hat{H}$ be a pro-$p$ group generated by two elements.
Let $\hat{H}=\hat{H}_{1},\hat{H}_{2},...$ be the lower central series
of $\hat{H}$, and denote $\Upsilon_{n}(\hat{H})=\hat{H}_{n}/\hat{H}_{n+1}$.
Notice that from the definition of the lower central series, for every
$n$ we have an epimorphism $\hat{F}_{n}\twoheadrightarrow\hat{H}_{n}$
that induces an epimorphism
\[
\Upsilon_{n}(\hat{F})\twoheadrightarrow\Upsilon_{n}(\hat{H}).
\]
\foreignlanguage{english}{The following proposition suggests that
one way to measure how far $\hat{H}$ is from being a free pro-$p$
group is to evaluate $\Upsilon_{n}(\hat{H})$.}
\begin{prop}
\label{prop:gap}The map $\hat{F}\twoheadrightarrow\hat{H}$ is an
isomorphiam if and only if for every $n$ the abelian pro-$p$ group
$\Upsilon_{n}(\hat{H})$ is isomorphic to $\Upsilon_{n}(\hat{H})\cong\Upsilon_{n}(\hat{F})\cong\mathbb{Z}_{p}^{l_{2}(n)}$
(see equation (\ref{eq:l2n}) for the definition of $l_{2}(n)$).
\end{prop}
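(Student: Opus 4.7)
The plan is to prove both implications separately. The forward direction is immediate: a continuous isomorphism $\hat{F}\overset{\sim}{\to}\hat{H}$ respects the closed lower central series, hence induces isomorphisms $\Upsilon_{n}(\hat{F})\overset{\sim}{\to}\Upsilon_{n}(\hat{H})$ for every $n$.

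For the converse, since $\hat{H}$ is (topologically) generated by two elements, the universal property of $\hat{F}$ supplies a continuous surjection $\pi:\hat{F}\twoheadrightarrow\hat{H}$; the goal is to prove that $\pi$ is injective. First one checks by induction on $n$ that $\pi(\hat{F}_{n})=\hat{H}_{n}$: $\pi$ sends the discrete commutator $[\hat{F},\hat{F}_{n}]$ onto $[\hat{H},\hat{H}_{n}]$ by surjectivity, and because $\pi$ is a continuous map of compact groups, $\pi(\overline{A})=\overline{\pi(A)}$ for every closed $A\subseteq\hat{F}$. Consequently $\pi$ induces continuous surjections $\pi_{n}:\Upsilon_{n}(\hat{F})\twoheadrightarrow\Upsilon_{n}(\hat{H})$. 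By hypothesis both sides are isomorphic to the finitely generated $\mathbb{Z}_{p}$-module $\mathbb{Z}_{p}^{l_{2}(n)}$, and such modules are Hopfian (every surjective endomorphism of a finitely generated module over the Noetherian local ring $\mathbb{Z}_{p}$ is an isomorphism, by Nakayama's lemma). Hence each $\pi_{n}$ is an isomorphism.

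Next I induct on $n$ to prove that $\bar{\pi}_{n}:\hat{F}/\hat{F}_{n+1}\to\hat{H}/\hat{H}_{n+1}$ is an isomorphism. The case $n=0$ is trivial. For the inductive step, apply the five lemma to the commutative diagram with exact rows
\[
\begin{array}{ccccccccc}
1 & \to & \Upsilon_{n}(\hat{F}) & \to & \hat{F}/\hat{F}_{n+1} & \to & \hat{F}/\hat{F}_{n} & \to & 1\\
 &  & \downarrow\pi_{n} &  & \downarrow\bar{\pi}_{n} &  & \downarrow\bar{\pi}_{n-1} &  & \\
1 & \to & \Upsilon_{n}(\hat{H}) & \to & \hat{H}/\hat{H}_{n+1} & \to & \hat{H}/\hat{H}_{n} & \to & 1
\end{array}
\]
whose outer vertical arrows are isomorphisms by the Hopfian step and the inductive hypothesis, respectively.

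Finally, since $\hat{F}$ and $\hat{H}$ are pro-$p$ and therefore pro-nilpotent, one has $\bigcap_{n}\hat{F}_{n}=1$ and $\bigcap_{n}\hat{H}_{n}=1$; equivalently both groups are canonically the inverse limits of their nilpotent quotients. The compatible family of isomorphisms $\bar{\pi}_{n}$ therefore assembles into an isomorphism of these inverse limits, which coincides with $\pi$. The only subtle point is the identification $\pi(\hat{F}_{n})=\hat{H}_{n}$ for the closed lower central series; once this is in hand, the argument is a routine Hopfian-plus-five-lemma induction.
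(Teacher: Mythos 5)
Your proof is correct and rests on the same two pillars the paper uses: Hopficity of $\mathbb{Z}_{p}^{l_{2}(n)}$ to upgrade each surjection $\Upsilon_{n}(\hat{F})\twoheadrightarrow\Upsilon_{n}(\hat{H})$ to an isomorphism, and pro-nilpotence ($\bigcap_{n}\hat{F}_{n}=1$) to pass from graded information to the whole group. The difference is in how the second step is organized. You build a compatible tower of isomorphisms $\bar{\pi}_{n}:\hat{F}/\hat{F}_{n+1}\to\hat{H}/\hat{H}_{n+1}$ via the five lemma and then pass to the inverse limit, needing along the way the (correct, but not free) facts that $\pi(\hat{F}_{n})=\hat{H}_{n}$ and that a finitely generated pro-$p$ group is the inverse limit of its nilpotent quotients. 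The paper short-circuits all of this: assume $\ker\pi\neq1$, pick $1\neq g\in\ker\pi$, use pro-nilpotence to find the unique $n$ with $g\in\hat{F}_{n}-\hat{F}_{n+1}$, and observe that the image of $g$ in $\Upsilon_{n}(\hat{F})$ is nonzero yet dies under $\pi_{n}$, contradicting that $\pi_{n}$ is injective. That direct contradiction avoids the five lemma and the inverse-limit assembly entirely, at the cost of being slightly less ``structural.'' Your version is also fine; the side remark that $\pi(\overline{A})=\overline{\pi(A)}$ holds ``for every closed $A$'' is true for \emph{every} subset $A$ of a compact group under a continuous surjection (closed map plus continuity), which is what the inductive computation of $\pi(\hat{F}_{n})$ actually needs, since $[\hat{F},\hat{F}_{n}]$ is generally not closed.
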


\selectlanguage{english}%
\begin{proof}
If $\hat{F}\twoheadrightarrow\hat{H}$ is an isomorphism then clearly
$\Upsilon_{n}(\hat{H})\cong\Upsilon_{n}(\hat{F})\cong\mathbb{Z}_{p}^{l_{2}(n)}$.
On the other hand, if $\Upsilon_{n}(\hat{H})\cong\Upsilon_{n}(\hat{F})\cong\mathbb{Z}_{p}^{l_{2}(n)}$
for every $n$, then as\foreignlanguage{american}{ $\mathbb{Z}_{p}^{l_{2}(n)}$
is Hopfian (as a finitely generated profinite group), we get that
the surjective map $\Upsilon_{n}(\hat{F})\twoheadrightarrow\Upsilon_{n}(\hat{H})$}
is an isomorphism for every $n$. Now, assume that $\hat{F}\twoheadrightarrow\hat{H}$
is not an isomorphism, and let $1\neq g\in\ker(\hat{F}\twoheadrightarrow\hat{H})$.
As $\hat{F}$ is pro nilpotent, there exists a unique $n$ such that
$g\in\hat{F}_{n}-\hat{F}_{n+1}$. For this $n$ the map $\Upsilon_{n}(\hat{F})\twoheadrightarrow\Upsilon_{n}(\hat{H})$
is not injective, what leads to a contradiction. 
\end{proof}
We are going to prove the following proposition:
\begin{prop}
\label{prop:dichotomy}For every $p$ and every $n\leq5$ one has
$\Upsilon_{n}(\hat{G}_{*})\cong\mathbb{Z}_{p}^{l_{2}(n)}$. Continuing
to $n=6$ we have:
\begin{itemize}
\item For $p\neq2$ (Zubkov): $\Upsilon_{6}(\hat{G}_{*})\cong\mathbb{Z}_{p}^{6}$,
and in general $\Upsilon_{n}(\hat{G}_{*})\cong\mathbb{Z}_{p}^{m(n)}$
where
\[
m(n)=\begin{cases}
n(n+2)/8 & n\,\,\,is\,\,\,even\\
(n-1)(n+1)/4 & n\,\,\,is\,\,\,odd.
\end{cases}
\]
\item For $p=2$: $\Upsilon_{6}(\hat{G}_{*})$ is an abelian group that
is generated by at least $l_{2}(6)=9$ generators.
\end{itemize}
\end{prop}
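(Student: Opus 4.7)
My strategy is to work within the chain of surjections
\[
\Upsilon_n(\hat{F}) \;\cong\; \mathbb{Z}_p^{l_2(n)} \;\twoheadrightarrow\; \Upsilon_n(\hat{G}_*) \;\twoheadrightarrow\; \Omega_n(\hat{G}_*) \;\hookrightarrow\; \mathring{L}_*^{(n)} \cap A_*
\]
supplied by Proposition \ref{prop:min-1}, the inclusions (\ref{eq:inequality-1}), and the Witt formula (\ref{eq:l2n}), together with the Hopfian property of finitely generated $\mathbb{Z}_p$-modules and the observation that $\dim_{\mathbb{Q}_p}\mathring{L}_*^{(n)} = m(n)$ for \emph{every} $p$ (the dimension count of Subsection \ref{subsec:Zubkov's proof} takes place inside $\mathbb{Q}_p$ and uses only that $\alpha_*,\beta_*,\gamma_*$ are algebraically independent over $\mathbb{Q}_p$). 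For $n\le 5$ I would simply tabulate the two formulas to see $l_2(n) = m(n)$ at $n=2,3,4,5$; then $\Omega_n(\hat{G}_*)$ is sandwiched between a $\mathbb{Z}_p$-module of rank at most $m(n)=l_2(n)$ and a surjection from one of the same rank, so Hopfianness forces both arrows to be isomorphisms and gives $\Upsilon_n(\hat{G}_*)\cong\mathbb{Z}_p^{l_2(n)}$ for every prime $p$.

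For odd $p$ and $n\ge 6$, Zubkov's Proposition \ref{prop:equality} pins down $\Omega_n(\hat{G}_*)\cong L_*^{(n)}\cong\mathbb{Z}_p^{m(n)}$. To promote this from $\Omega_n$ to $\Upsilon_n$, I would invoke the fact that for $p\ne 2$ the Baker--Campbell--Hausdorff formula (which is precisely the mechanism behind Proposition \ref{prop:min-1}) can be inverted integrally over $\mathbb{Z}_p$; this identifies the lower central filtration $\hat{G}_{*,\bullet}$ with the dimension filtration $\omega_\bullet(\hat{G}_*)$, so $\Upsilon_n(\hat{G}_*) = \Omega_n(\hat{G}_*)\cong\mathbb{Z}_p^{m(n)}$. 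Specialising to $n=6$ yields the asserted $\mathbb{Z}_p^6$, which is a proper quotient of $\Upsilon_6(\hat{F})\cong\mathbb{Z}_p^9$; this is nothing but a reformulation of Zubkov's construction of a pro-$p$ identity in degree $6$.

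The essentially new content, and the step I expect to be the hardest, is the $p=2$, $n=6$ case. Here I need to show that $\Upsilon_6(\hat{G}_*)$ requires at least $l_2(6)=9$ generators; equivalently, by Nakayama applied to the surjection from $\Upsilon_6(\hat{F})\cong\mathbb{Z}_2^9$, that the mod-$2$ reduction of $\Upsilon_6(\hat{F})\twoheadrightarrow\Upsilon_6(\hat{G}_*)$ is injective as a map of $\mathbb{F}_2$-vector spaces. Over $\mathbb{F}_2$ the Zubkov coefficients degenerate to the perfect squares $\alpha_*\equiv t(x_*)^2$, $\beta_*\equiv t(x_*)t(y_*)$, $\gamma_*\equiv t(y_*)^2$, and the apparent extra relation $\alpha_*\gamma_*\equiv\beta_*^2$ is a Frobenius identity in the polynomial ring of traces rather than a Lie-algebra relation on $\mathring{L}_*^{(6)}\cap A_*$. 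Concretely, I would exhibit nine explicit degree-$6$ Lie monomials (the six monomials $\alpha_*^r\beta_*^s\gamma_*^t[x_*,y_*]$ with $r+s+t=2$, supplemented by three further candidates that arise naturally from dividing Zubkov's relations by the appropriate power of $2$ in characteristic-$0$ lifts and reducing mod $2$) and verify their $\mathbb{F}_2$-linear independence inside $M_2(\mathbb{F}_2[\bar x_{i,j},\bar y_{i,j}])$, using the PI-theoretic trace identities developed in the body of the paper. The delicate point, and the anticipated obstacle, is to check that none of the characteristic-$2$ coincidences among $\alpha_*,\beta_*,\gamma_*$ actually descends to a linear relation among those nine Lie monomials at level $n=6$; this is precisely the source of the dichotomy between $p=2$ and $p\ne 2$ that the section is advertising.
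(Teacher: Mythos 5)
Your treatment of $n\leq5$ is essentially the paper's: tabulate $l_2(n)=m(n)$ for $n=2,\dots,5$, then sandwich and invoke Hopfianness of $\mathbb{Z}_p^{l_2(n)}$. One small caution: the middle arrow in your chain, $\Upsilon_n(\hat{G}_*)\twoheadrightarrow\Omega_n(\hat{G}_*)$, is not automatic; a priori the image of $\Upsilon_n(\hat{G}_*)$ inside $\Omega_n(\hat{G}_*)$ is only the subgroup $L_*^{(n)}$. The paper fixes this by an induction that simultaneously establishes $\hat{G}_{*n}=\omega_n(\hat{G}_*)$ for $n\leq6$, and the Hopfian argument you run actually establishes exactly that. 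Your $p\neq2$ clause gives the right statement but a wrong mechanism: the Baker--Campbell--Hausdorff series cannot be inverted integrally over $\mathbb{Z}_p$ for any prime, since its coefficient denominators involve every prime up to the degree (e.g.\ the $\tfrac{1}{12}$ in degree three already kills $p=3$). The correct mechanism, and the one the paper uses, is Zubkov's equality $L_*^{(n)}=\mathring{L}_*^{(n)}\cap A_*$ (Proposition \ref{prop:equality}), fed into the approximation algorithm of $\mathsection$\ref{subsec:Zubkov's proof} to force $\omega_n(\hat{G}_*)=\hat{G}_{*n}$.

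The $p=2$, $n=6$ step as you propose it cannot work. You want nine degree-$6$ Lie elements inside $\mathring{L}_*^{(6)}\cap A_*$ whose mod-$2$ reductions are $\mathbb{F}_2$-independent, but $\mathring{L}_*^{(6)}\cap A_*$ is a $\mathbb{Z}_2$-submodule of the $6$-dimensional $\mathbb{Q}_2$-space $\mathring{L}_*^{(6)}$, hence is free of rank exactly $6$; its mod-$2$ reduction has dimension $6$, so no nine elements of it can be independent. The ``extra candidates obtained by dividing by $2$'' you propose would still live in the same rank-$6$ lattice, so they gain you nothing. The actual source of the extra three generators is not in degree $6$ at all: the paper uses the short exact sequence
\begin{equation*}
1\;\longrightarrow\;(\omega_{7}(\hat{G})\cap\hat{G}_{6})/\hat{G}_{7}\;\longrightarrow\;\Upsilon_{6}(\hat{G})\;\longrightarrow\;\Omega_{6}(\hat{G})\cong L^{(6)}\cong\mathbb{Z}_2^{6}\;\longrightarrow\;1
\end{equation*}
together with the generator-count lemma $d(\hat{H}_2)=d(\hat{H}_1)+d(\hat{H}_3)$ when $\hat{H}_3$ is free abelian. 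It then exhibits three explicit elements $g_1,g_2,g_3\in\hat{G}_{6}\cap\omega_{7}(\hat{G})$ (five-fold commutators of $1+x$, $1+y$) whose minimal components live in \emph{degree seven}, namely $2\delta[x,y,x]$, $\beta\bigl(\tfrac{\beta[x,y,x]+\alpha[x,y,y]}{2}\bigr)+\beta\bigl(\tfrac{\beta[x,y,y]+\gamma[x,y,x]}{2}\bigr)$, $2\delta[x,y,y]$ modulo $L^{(7)}$, and uses the failure of $L^{(7)}=\mathring{L}^{(7)}\cap A$ at $p=2$ (Proposition \ref{prop:minimal}) to show these give a copy of $(\mathbb{Z}/2\mathbb{Z})^{3}$ in the quotient $\chi(\omega_{7}(\hat{G})\cap\hat{G}_{6})/L^{(7)}$. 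In short, the dichotomy shows up one degree higher than the weight you are examining, as $2$-torsion in the quotient of the degree-$7$ lattice; your degree-$6$ dimension count cannot see it.
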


In view of Proposition \ref{prop:gap} and Theorem \ref{thm:universal-1},
Proposition \ref{prop:dichotomy} shows that in some sense $\hat{G}_{*}$
is closer to being a free pro-$p$ group when $p=2$, and that there
is a real difference between the case $p=2$ and the cases $p\neq2$.
We notice that we do not have a useful tool to say substantially more
informative details regarding this difference. We cannot even say
that $\Upsilon_{6}(\hat{G}_{*})\cong\mathbb{Z}_{p}^{9}$ when $p=2$,
i.e. that $\Upsilon_{6}(\hat{G}_{*})$ is torsion free. However, on
the way to proving Proposition \ref{prop:dichotomy}, we will show
how this dichotomy arises in more detail than is actually needed in
order to prove the proposition (see Proposition \ref{prop:minimal}). 

\subsection{\label{subsec:Pseudo-Generic}The Pseudo Generic Matrices}

We start with presenting appropriate ``pseudo generic matrices''
that will help us to present Zubkov's approach in a bit simpler way
than in \cite{key-3}. We notice that we can define these ``pseudo
generic matrices'' in a similar way as we did in Section \ref{sec:char=00003D2}.
However, it turns out that over $\mathbb{Z}_{p}$, in order to move
from the original generic matrices to the pseudo generic matrices,
one can use a simpler argument that allows us to use a much simpler
definition for the pseudo generic matrices (see Corollary \ref{cor:min-new}). 

Let $x$ and $y$ denote the \uline{pseudo generic} matrices \foreignlanguage{american}{
\[
x=\left(\begin{array}{cc}
x_{11} & x_{12}\\
0 & 0
\end{array}\right),\,\,\,\,\,\,y=\left(\begin{array}{cc}
y_{11} & 0\\
y_{21} & 0
\end{array}\right)\in M_{2}(\Pi)
\]
where $\Pi=\mathbb{Z}_{p}\left\langle \left\langle x_{12},x_{11},y_{11},y_{21}\right\rangle \right\rangle \subseteq\Pi_{*}$.
Notice that $\det(x)=\det(y)=0$. }Being careful to distinguish between
$+$ and $-$, the following lemmas are obtained in a similar way
as Lemmas \ref{lem:a,b} and \ref{lem:x,y}:
\begin{lem}
\label{lem:a,b-1}Let $a,b\in M_{2}(\Pi)$. Then:
\begin{enumerate}
\item $ab+ba=t(a)b+t(b)a+\left(t(ab)-t(a)t(b)\right)\cdot1$.
\item $[a,b,a]=-t(a)[a,b]+2[a,b]a$. 
\item If $t(a)=0$, then $a^{2}=-\det(a)\cdot1\in M_{2}(\Pi)$ is central.
\end{enumerate}
\end{lem}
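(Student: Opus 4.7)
The plan is to follow the template of Lemma \ref{lem:a,b} essentially verbatim, the only novelty being careful sign bookkeeping because we now work over $\mathbb{Z}_p$ (so the Cayley--Hamilton identity reads $c^{2}-t(c)c+\det(c)\cdot 1=0$ rather than $c^{2}+t(c)c+\det(c)\cdot 1=0$). All three parts are direct consequences of Cayley--Hamilton applied to $a$, $b$, and $a+b$, so no substantive difficulty arises; the verification is purely algebraic.

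For Part 1, I would apply Cayley--Hamilton to $a+b$:
\[
(a+b)^{2}=t(a+b)(a+b)-\det(a+b)\cdot 1.
\]
Expanding the left-hand side, subtracting the Cayley--Hamilton identities $a^{2}=t(a)a-\det(a)\cdot 1$ and $b^{2}=t(b)b-\det(b)\cdot 1$, and collecting terms yields
\[
ab+ba=t(a)b+t(b)a+\bigl(\det(a)+\det(b)-\det(a+b)\bigr)\cdot 1.
\]
I would then invoke the standard $2\times 2$ polarization identity $\det(a+b)=\det(a)+\det(b)+t(a)t(b)-t(ab)$ (immediate from $\det=\tfrac{1}{2}(t(\cdot)^{2}-t((\cdot)^{2}))$ in characteristic $0$, and provable entry-wise in any commutative ring) to simplify the scalar term to $t(ab)-t(a)t(b)$.

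For Part 2, I would compute directly
\[
[a,b,a]=[a,b]a-a[a,b]=2aba-a^{2}b-ba^{2},
\]
then substitute $a^{2}=t(a)a-\det(a)\cdot 1$ to obtain $[a,b,a]=2aba-t(a)(ab+ba)+2\det(a)b$. On the other hand,
\[
-t(a)[a,b]+2[a,b]a=-t(a)(ab-ba)+2(aba-ba^{2})=2aba-t(a)(ab+ba)+2\det(a)b
\]
after another application of Cayley--Hamilton to $ba^{2}$, matching the previous expression. Part 3 is immediate: setting $t(a)=0$ in Cayley--Hamilton gives $a^{2}=-\det(a)\cdot 1$, which is central because $\det(a)\in\Pi$ and the identity matrix is central.

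The only ``obstacle'' worth mentioning is making sure the polarization identity for $\det(a+b)$ is valid in the non-commutative setting over $\Pi$; this is not a problem because the identity holds coefficient-by-coefficient in the polynomial ring $\mathbb{Z}[x_{ij},y_{ij}]$ and then specializes. Note also that, unlike Lemma \ref{lem:a,b}, the authors here omit the analogue of the trace-vanishing identity $t([a,b]b^{n})=0$; it would however still hold and could be proven by the same induction using $b^{2}=t(b)b-\det(b)\cdot 1$ should it be needed later.
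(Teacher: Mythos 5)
Your proof is correct and follows exactly the route the paper intends: the paper itself states that Lemma \ref{lem:a,b-1} is ``obtained in a similar way as Lemmas \ref{lem:a,b} and \ref{lem:x,y}'' being careful about signs, and your argument is precisely the Cayley--Hamilton computation from the proof of Lemma \ref{lem:a,b} with the $\mathbb{Z}_p$ sign conventions tracked correctly.
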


\begin{lem}
\label{lem:x,y-1}For the \foreignlanguage{american}{pseudo-generic
matrices} $x,\,y$ we have
\begin{enumerate}
\item $x^{2}=t(x)x,\,\,\,\,y^{2}=t(y)y,\,\,\,\,(xy)^{2}=t(xy)xy,\,\,\,\,(yx)^{2}=t(xy)yx$.
\item $xyx=t(xy)x,\,\,\,\,yxy=t(xy)y$.
\item $[x,y]^{2}=(t(xy)^{2}-t(x)t(y)t(xy))\cdot1$.
\end{enumerate}
\end{lem}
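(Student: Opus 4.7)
The proof of Lemma \ref{lem:x,y-1} should closely parallel the proof of Lemma \ref{lem:x,y} in the characteristic-$2$ case, with the sole extra bookkeeping that over $\mathbb{Z}_p$ we must track signs carefully (so $ab+ba$ in Lemma \ref{lem:a,b-1}(1) no longer coincides with the commutator, and Cayley--Hamilton has a sign in the determinant term). The key observation enabling everything is that the pseudo-generic matrices have $\det(x)=\det(y)=0$ by construction, since each has a zero row and a zero column.

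First I would dispose of Part 1. By Cayley--Hamilton and $\det(x)=\det(y)=0$ we immediately obtain $x^{2}=t(x)x$ and $y^{2}=t(y)y$. Then $\det(xy)=\det(x)\det(y)=0$ and $\det(yx)=0$, and $t(yx)=t(xy)$, so Cayley--Hamilton applied to $xy$ and to $yx$ gives $(xy)^{2}=t(xy)xy$ and $(yx)^{2}=t(xy)yx$.

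For Part 2, I would use Part 1 of Lemma \ref{lem:a,b-1} with $a=x$, $b=y$, which yields
\[
yx=-xy+t(x)y+t(y)x+\bigl(t(xy)-t(x)t(y)\bigr)\cdot 1.
\]
Multiplying on the left by $x$ and using $x^{2}=t(x)x$ from Part 1, the two terms $-t(x)xy$ and $+t(x)xy$ cancel and the $t(x)t(y)x$ contributions also cancel, leaving $xyx=t(xy)x$. The identity $yxy=t(xy)y$ is obtained by the symmetric computation.

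For Part 3, I would expand $[x,y]^{2}=(xy-yx)^{2}=(xy)^{2}+(yx)^{2}-xy\cdot yx-yx\cdot xy$, and then reduce each of the mixed terms using Part 1: $xy\cdot yx=x(y^{2})x=t(y)\,xyx=t(y)t(xy)\,x$, and similarly $yx\cdot xy=t(x)t(xy)\,y$. Substituting $(xy)^{2}=t(xy)xy$, $(yx)^{2}=t(xy)yx$, and then applying Lemma \ref{lem:a,b-1}(1) once more to rewrite $xy+yx$, the $x$ and $y$ contributions cancel and only the scalar term $t(xy)\bigl(t(xy)-t(x)t(y)\bigr)\cdot 1$ survives, which is the desired identity. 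I do not expect any genuine obstacle here: the computation is mechanical, and the only thing to watch is the sign in Lemma \ref{lem:a,b-1}(1), which is exactly what makes the formula in Part 3 read $t(xy)^{2}-t(x)t(y)t(xy)$ rather than $t(xy)^{2}+t(x)t(y)t(xy)$.
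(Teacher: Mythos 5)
Your proof is correct and takes essentially the same approach the paper indicates (it explicitly says Lemma \ref{lem:x,y-1} is obtained in the same way as Lemma \ref{lem:x,y}, only keeping track of signs): Cayley--Hamilton with $\det(x)=\det(y)=0$ for Part 1, Lemma \ref{lem:a,b-1}(1) plus Part 1 for Part 2, and an expansion of $(xy-yx)^{2}$ reduced via Parts 1--2 and Lemma \ref{lem:a,b-1}(1) for Part 3. One tiny slip in your opening remark: $x$ has a zero row and $y$ has a zero column (not both for each matrix), but this is immaterial since $\det(x)=\det(y)=0$ is what you actually use.
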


We define the subrings (with identity) of $\Pi$ 
\begin{align*}
T & =\mathbb{Z}_{p}\left\langle t(x),\,t(y),\,t(xy)\right\rangle \subseteq\Pi\\
S & =\mathbb{Z}_{p}\left\langle t(x)^{2},\,t(y)^{2},\,[x,y]^{2}\right\rangle \subseteq\Pi.
\end{align*}

The following proposition is proved by similar (but easier) arguments
as Proposition \ref{prop:free}.
\begin{prop}
\label{prop:free-1}The ring $T$ is freely generated by $t(x),\,t(y),\,t(xy)$
as a commutative algebra over $\mathbb{Z}_{p}$. 
\end{prop}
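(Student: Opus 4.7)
The plan is to transport, almost verbatim, the proof of Proposition \ref{prop:free} from $\mathbb{Z}/2\mathbb{Z}$ to $\mathbb{Z}_p$. The idea is to exhibit a specialization of the pseudo-generic matrices landing in a polynomial ring over $\mathbb{Z}_p$ in three algebraically independent commuting variables, so that the three generators $t(x), t(y), t(xy)$ of $T$ map to those variables. Algebraic independence downstairs then forces algebraic independence upstairs.

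Concretely, I would introduce free commuting variables $\lambda, \theta, \vartheta$ over $\mathbb{Z}_p$ and define
$$\overline{x} = \begin{pmatrix} \lambda & \vartheta - \lambda\theta \\ 0 & 0 \end{pmatrix}, \qquad \overline{y} = \begin{pmatrix} \theta & 0 \\ 1 & 0 \end{pmatrix},$$
the same matrices as in the proof of Proposition \ref{prop:free}, now considered over $\mathbb{Z}_p$. Both have vanishing determinant, matching the defining property $\det(x) = \det(y) = 0$ of the pseudo-generic matrices. A direct computation gives $t(\overline{x}) = \lambda$, $t(\overline{y}) = \theta$, and $\overline{x}\,\overline{y} = \bigl(\begin{smallmatrix} \vartheta & 0 \\ 0 & 0 \end{smallmatrix}\bigr)$, so that $t(\overline{x}\,\overline{y}) = \vartheta$.

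Because $x_{11}, x_{12}, y_{11}, y_{21}$ are free commuting variables, the assignment
$$x_{11} \mapsto \lambda,\quad x_{12} \mapsto \vartheta - \lambda\theta,\quad y_{11} \mapsto \theta,\quad y_{21} \mapsto 1$$
extends to a $\mathbb{Z}_p$-algebra homomorphism from the discrete polynomial ring $\mathbb{Z}_p\langle x_{11}, x_{12}, y_{11}, y_{21}\rangle \subseteq \Pi$ to $\mathbb{Z}_p[\lambda, \theta, \vartheta]$. Applying it entrywise yields a ring homomorphism on the discrete subring of $M_2(\Pi)$ generated by $x, y$, sending $x \mapsto \overline{x}$ and $y \mapsto \overline{y}$, and therefore inducing a $\mathbb{Z}_p$-algebra homomorphism $T \to \mathbb{Z}_p[\lambda, \theta, \vartheta]$ with $t(x) \mapsto \lambda$, $t(y) \mapsto \theta$, $t(xy) \mapsto \vartheta$.

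Since $\mathbb{Z}_p[\lambda, \theta, \vartheta]$ is a genuine polynomial ring in three algebraically independent variables, any nontrivial polynomial relation over $\mathbb{Z}_p$ among $t(x), t(y), t(xy)$ would push down to a nontrivial relation among $\lambda, \theta, \vartheta$, which is impossible. Hence $t(x), t(y), t(xy)$ are algebraically independent over $\mathbb{Z}_p$, and $T$ is freely generated by them as a commutative $\mathbb{Z}_p$-algebra. I do not anticipate any real obstacle: the argument is genuinely ``easier'' than the proof of Proposition \ref{prop:free} because there are no auxiliary parameters $\overline{\mu}, \overline{\nu}$ to kill along the way, and no subtleties arising from characteristic $2$ when reading off the traces.
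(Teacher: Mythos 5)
Your proof is correct and is exactly the argument the paper intends: the paper states that Proposition~\ref{prop:free-1} ``is proved by similar (but easier) arguments as Proposition~\ref{prop:free},'' and your specialization $x_{11}\mapsto\lambda$, $x_{12}\mapsto\vartheta-\lambda\theta$, $y_{11}\mapsto\theta$, $y_{21}\mapsto1$ (with the same $\overline{x},\overline{y}$) is precisely that adaptation, made simpler since the Section~4 pseudo-generic matrices already have zero determinant and there are no auxiliary roots $\overline{\mu},\overline{\nu}$ to annihilate.
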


As a corollary of Part 3 in Lemma \ref{lem:x,y-1} and the above proposition
we have:
\begin{cor}
\label{prop:ST-1}The ring $S$ is contained in $T$, and it is freely
generated by $t(x)^{2},\,t(y)^{2},\,[x,y]^{2}$ as a commutative algebra
over $\mathbb{Z}_{p}$. 
\end{cor}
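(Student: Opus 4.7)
My plan is to prove the two parts of the corollary in sequence: first the containment $S \subseteq T$, then the algebraic independence of the three generators of $S$.

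For the containment, I would simply note that $t(x)^{2}$ and $t(y)^{2}$ are visibly in $T = \mathbb{Z}_{p}\langle t(x),t(y),t(xy)\rangle$, while by Part 3 of Lemma \ref{lem:x,y-1} we have
\[
[x,y]^{2} = t(xy)^{2} - t(x)t(y)t(xy) \in T,
\]
so the subring generated by these three elements is contained in $T$.

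For the free generation statement, the plan is to reduce it to an identity inside the polynomial ring $T$. By Proposition \ref{prop:free-1}, the map $\mathbb{Z}_{p}[u,v,w] \to T$ sending $u \mapsto t(x)$, $v \mapsto t(y)$, $w \mapsto t(xy)$ is an isomorphism of commutative rings. So to prove that $t(x)^{2},\ t(y)^{2},\ [x,y]^{2}$ are algebraically independent over $\mathbb{Z}_{p}$, it suffices to check that $u^{2},\ v^{2},\ w^{2}-uvw$ are algebraically independent in $\mathbb{Z}_{p}[u,v,w]$.

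I would establish this last claim by a degree argument in the variable $w$. Suppose $P(A,B,C) \in \mathbb{Z}_{p}[A,B,C]$ satisfies $P(u^{2},v^{2},w^{2}-uvw)=0$, and write
\[
P(A,B,C) = \sum_{k=0}^{K} P_{k}(A,B)\, C^{k}, \qquad P_{K} \neq 0.
\]
Since $u^{2}$ and $v^{2}$ contribute $w$-degree $0$, while $(w^{2}-uvw)^{k}$ has $w$-degree exactly $2k$ with leading coefficient $1$, the $w$-degree of the substitution is $2K$ with leading coefficient $P_{K}(u^{2},v^{2})$. Vanishing forces $P_{K}(u^{2},v^{2}) = 0$ in $\mathbb{Z}_{p}[u,v]$, and since $u^{2},v^{2}$ are clearly algebraically independent in $\mathbb{Z}_{p}[u,v]$ (another one-line degree argument), we conclude $P_{K}=0$, contradicting the choice of $K$. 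Hence $P=0$, which is exactly the freeness claim.

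The main obstacle is essentially bookkeeping rather than conceptual: one has to pick a variable in which $w^{2}-uvw$ has a simple leading term so the leading-coefficient extraction is clean. Choosing $w$ as the distinguished variable works perfectly because $w^{2}-uvw = w(w-uv)$ is visibly monic of degree $2$ in $w$, so no cancellation occurs among different values of $k$ at the top $w$-degree.
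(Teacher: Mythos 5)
Your proof is correct and fills in, with a clean degree-in-$w$ argument, the algebraic-independence step that the paper leaves implicit (the paper only cites Lemma \ref{lem:x,y-1}(3) and Proposition \ref{prop:free-1} and calls it a corollary). The route is essentially the paper's intended one: use $[x,y]^2 = t(xy)^2 - t(x)t(y)t(xy)$ for containment, then transfer freeness along the isomorphism $T \cong \mathbb{Z}_p[u,v,w]$ and verify independence of $u^2$, $v^2$, $w^2-uvw$.
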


\subsection{\label{subsec:structure}The structure of a minimal component}

\selectlanguage{american}%
Noticing that $\Pi$ can also be viewed as the image of $\Pi_{*}$
under the projection 
\[
x_{21},x_{22},y_{12},y_{22}\overset{\sigma}{\mapsto}0,
\]
we use the following notation regarding the pseudo generic matrices
(see Subsection \ref{subsec:structure-1}):
\selectlanguage{english}%
\begin{itemize}
\item $\mathring{\Pi}=\sigma(\mathring{\Pi}_{*})=\mathbb{Q}_{p}\left\langle \left\langle x_{12},x_{11},y_{11},y_{21}\right\rangle \right\rangle $\foreignlanguage{american}{. }
\item $\Pi=\sigma(\Pi_{*})=\mathbb{Z}_{p}\left\langle \left\langle x_{12},x_{11},y_{11},y_{21}\right\rangle \right\rangle \subseteq\mathring{\Pi}$\foreignlanguage{american}{. }
\selectlanguage{american}%
\item $Q_{n}=\sigma(Q_{*n})$ and $\mathring{Q}_{n}=\sigma(\mathring{Q}_{*n})$.
\item $A=\sigma(A_{*})=\mathbb{Z}_{p}\left\langle \left\langle x,y\right\rangle \right\rangle \subseteq M_{2}(\Pi)$.
\item $\mathring{L}=\sigma(\mathring{L}_{*})$ = the Lie algebra generated
by $x,y$ over $\mathbb{Q}_{p}$.
\item $\mathring{L}^{(n)}=\sigma(\mathring{L}_{*}^{(n)})$ = the subspace
of $L$ of homogeneous elements of degree $n$.
\item $L=\sigma(L_{*})\subseteq\mathring{L}$ = the Lie algebra generated
by $x,y$ over $\mathbb{Z}_{p}$.
\item $L^{(n)}=\sigma(L_{*}^{(n)})\subseteq\mathring{L}^{(n)}$ = the additive
subgroup of $L$ of homogeneous elements of degree $n$.
\item For an element of the form $g=1+a_{n}+a_{n+1}+...\in1+M_{2}(\Pi,Q_{1})$
\foreignlanguage{english}{where $a_{i}$ is the term of $g$ of degree
$i$, and $a_{n}\neq0$, we denote
\[
\min(g)=a_{n}.
\]
}
\end{itemize}
By applying $\sigma$ on Zubkov's formulas from\foreignlanguage{american}{
$\mathsection$\ref{sec:Zubkov}}, one can see now that regarding
the pseudo generic matrices, one has\foreignlanguage{american}{
\begin{align*}
[x,y,x,x] & =\alpha[x,y]\,\,\,\,\textrm{for}\,\,\,\,\alpha=t(x)^{2}\\{}
[x,y,x,y]=[x,y,y,x] & =\beta[x,y]\,\,\,\,\textrm{for}\,\,\,\,\beta=2t(xy)-t(x)t(y)\\{}
[x,y,y,y] & =\gamma[x,y]\,\,\,\,\textrm{for}\,\,\,\,\gamma=t(y)^{2}
\end{align*}
and hence
\begin{align}
\mathring{L}^{(n)} & =\begin{cases}
\sum_{r+s+t=(n-2)/2}(\mathbb{Q}_{p}\alpha^{r}\beta^{s}\gamma^{t}[x,y]) & n=\textrm{even}\\
\sum_{r+s+t=(n-3)/2}(\mathbb{Q}_{p}\alpha^{r}\beta^{s}\gamma^{t}[x,y,x]+\mathbb{Q}_{p}\alpha^{r}\beta^{s}\gamma^{t}[x,y,y]) & n=\textrm{odd}
\end{cases},\label{eq:sums2}
\end{align}
and we have a similar description for $L^{(n)}$. Let $\mathbb{Q}_{p}\cdot T=\mathbb{Q}_{p}\left\langle t(x),\,t(y),\,t(xy)\right\rangle \subseteq\mathring{\Pi}$.
Like in Propositions \ref{prop:free-1} and \ref{prop:free}, $\mathbb{Q}_{p}\cdot T$
is freely generated by $t(x),t(y),t(xy)$ over $\mathbb{Q}_{p}$,
and hence a unique factorization domain. The following proposition
is an easy corollary of this fact:}
\begin{prop}
\label{prop:alpha}The ring $\mathbb{Q}_{p}\left\langle \alpha,\beta,\gamma\right\rangle \subseteq\mathbb{Q}_{p}\cdot T$
is freely generated by $\alpha,\beta,\gamma$ as an algebra over $\mathbb{Q}_{p}$.
\end{prop}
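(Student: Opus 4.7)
The plan is to reduce the algebraic independence of $\alpha$, $\beta$, $\gamma$ over $\mathbb{Q}_p$ to a transparent change of variables in the polynomial ring $\mathbb{Q}_p\cdot T$. By the $\mathbb{Q}_p$-analogue of Proposition \ref{prop:free-1} (stated in the paragraph above), $\mathbb{Q}_p\cdot T$ is a polynomial ring in the three algebraically independent generators $u:=t(x)$, $v:=t(y)$, $w:=t(xy)$; so the claim becomes that
\[
\alpha=u^{2},\qquad \beta=2w-uv,\qquad \gamma=v^{2}
\]
are algebraically independent over $\mathbb{Q}_p$ inside $\mathbb{Q}_p[u,v,w]$.

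The key observation is that, since $2\in\mathbb{Q}_p^{\times}$, the assignment $u\mapsto u$, $v\mapsto v$, $w\mapsto 2w-uv$ extends to a $\mathbb{Q}_p$-algebra automorphism $\tau$ of $\mathbb{Q}_p[u,v,w]$, with inverse $w\mapsto\tfrac{1}{2}(w+uv)$. Automorphisms preserve algebraic independence, hence $u$, $v$, $\beta=\tau(w)$ are algebraically independent, and
\[
\mathbb{Q}_p[u,v,w]=\mathbb{Q}_p[u,v,\beta]
\]
as polynomial rings on the indicated generators. Inside this reformulated polynomial ring, $\alpha=u^{2}$, $\beta$, $\gamma=v^{2}$ lie in three pairwise independent polynomial sub-algebras, so any relation $P(\alpha,\beta,\gamma)=0$ would pull back to a polynomial identity in the independent variables $u$, $v$, $\beta$, forcing $P\equiv 0$. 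That yields the asserted freeness.

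There is really no substantive obstacle: the whole argument hinges on being able to invert $2$ in the scalar ring, which is automatic over the characteristic-zero field $\mathbb{Q}_p$ for every prime $p$. An alternative route, worth keeping in reserve if one prefers, is the Jacobian criterion: the Jacobian matrix of $(\alpha,\beta,\gamma)$ with respect to $(u,v,w)$ has determinant $-8uv\neq 0$ in $\mathbb{Q}_p[u,v,w]$, which in characteristic zero forces algebraic independence. Finally, it is worth noting for the purposes of Section \ref{sec:char=00003D0} that the invertibility of $2$ used here is precisely the step whose failure at the integral level over $\mathbb{Z}_2$ is responsible for the dichotomy between $p=2$ and $p\neq 2$ — at the $\mathbb{Q}_p$-level, as Proposition \ref{prop:alpha} shows, no dichotomy is yet visible.
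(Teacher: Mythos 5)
Your proof is correct. The paper itself does not spell out an argument for this proposition; it simply records that $\mathbb{Q}_{p}\cdot T$ is a polynomial ring in $t(x),t(y),t(xy)$ (hence a UFD) and calls the proposition ``an easy corollary of this fact,'' so there is no official proof to compare against beyond that one-line reduction. Your write-up fills in exactly that gap. The change-of-variables argument is sound: since $2\in\mathbb{Q}_{p}^{\times}$, the map $w\mapsto 2w-uv$ (with $u,v$ fixed) is a $\mathbb{Q}_{p}$-algebra automorphism of $\mathbb{Q}_{p}[u,v,w]$, so $\{u,v,\beta\}$ is again a free generating set; and then $u^{2},\beta,v^{2}$ are algebraically independent because the substitution $A\mapsto u^{2}$, $B\mapsto\beta$, $C\mapsto v^{2}$ sends distinct monomials $A^{i}B^{j}C^{k}$ to distinct monomials $u^{2i}\beta^{j}v^{2k}$, hence is injective. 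The Jacobian alternative ($\det=-8uv\neq 0$ in characteristic zero) is an equally valid one-liner. Your closing remark is a fair heuristic: the inverse substitution $w\mapsto\tfrac{1}{2}(w+uv)$ is not defined over $\mathbb{Z}_{2}$, and indeed it is precisely the non-invertibility of $2$ that forces the extra generator $\delta=\tfrac{\beta^{2}-\alpha\gamma}{4}$ to enter in Proposition~\ref{prop:minimal}; over $\mathbb{Q}_{p}$ no such phenomenon is visible, which is the point of the present proposition.
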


\begin{cor}
The sums in (\ref{eq:sums2}) are direct. 
\end{cor}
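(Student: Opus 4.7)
The plan is to combine Proposition \ref{prop:alpha} (linear independence of the monomials $\alpha^{r}\beta^{s}\gamma^{t}$ in $\mathring{\Pi}$) with the elementary fact that $\mathring{\Pi}=\mathbb{Q}_{p}\langle\langle x_{11},x_{12},y_{11},y_{21}\rangle\rangle$ is an integral domain, being a formal power series ring in four commuting variables over a field. In each parity case I would assume a vanishing relation in $\mathring{L}^{(n)}\subseteq M_{2}(\mathring{\Pi})$ and extract a specific matrix entry so as to reduce the linear independence question to vanishing of a polynomial in $\alpha,\beta,\gamma$.

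For the even case, a short direct computation with the pseudo-generic matrices $x,y$ shows $[x,y]_{11}=x_{12}y_{21}$, which is nonzero in $\mathring{\Pi}$. Given a hypothetical relation $\sum_{r+s+t=(n-2)/2}\lambda_{rst}\alpha^{r}\beta^{s}\gamma^{t}[x,y]=0$, reading off the $(1,1)$-entry yields $\bigl(\sum\lambda_{rst}\alpha^{r}\beta^{s}\gamma^{t}\bigr)\,x_{12}y_{21}=0$. Since $\mathring{\Pi}$ is a domain the inner polynomial vanishes, and Proposition \ref{prop:alpha} forces each $\lambda_{rst}=0$.

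For the odd case I would first compute the off-diagonal entries of $[x,y,x]$ and $[x,y,y]$, obtaining $[x,y,x]_{12}=x_{12}\beta$, $[x,y,x]_{21}=-y_{21}\alpha$, $[x,y,y]_{12}=x_{12}\gamma$, and $[x,y,y]_{21}=-y_{21}\beta$. Writing a putative relation as $P[x,y,x]+Q[x,y,y]=0$ with $P,Q$ homogeneous of degree $(n-3)/2$ in $\alpha,\beta,\gamma$, the $(1,2)$- and $(2,1)$-entries (after cancelling $x_{12}$ and $y_{21}$ respectively, using again that $\mathring{\Pi}$ is a domain) yield the $2\times 2$ linear system $P\beta+Q\gamma=0$ and $P\alpha+Q\beta=0$. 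Eliminating $Q$ gives $P(\alpha\gamma-\beta^{2})=0$. A direct calculation produces $\alpha\gamma-\beta^{2}=x_{11}^{2}y_{11}^{2}-(x_{11}y_{11}+2x_{12}y_{21})^{2}=-4\,x_{12}y_{21}\,t(xy)$, which is nonzero in the domain $\mathring{\Pi}$, so $P=0$, and by symmetry $Q=0$; Proposition \ref{prop:alpha} then concludes.

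The only real obstacle is the bookkeeping of the odd-case matrix entries. The conceptual point is that the $2\times 2$ ``determinant'' $\alpha\gamma-\beta^{2}$ is a nonzero element of the domain $\mathring{\Pi}$; this uses the invertibility of the scalar $-4$ in $\mathbb{Q}_{p}$, and foreshadows the failure of the analogous integral statement over $\mathbb{Z}/2\mathbb{Z}$ that motivates the dichotomy discussion of this section.
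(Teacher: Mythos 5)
Your proof is correct, but it takes a route that is mechanically different from the paper's. The paper extracts scalar information through \emph{traces}: in the even case it computes $t(axy)=\varepsilon\cdot[x,y]^{2}$, and in the odd case it computes $t(ax)=-2\varepsilon_{2}\cdot[x,y]^{2}$ and $t(ay)=2\varepsilon_{1}\cdot[x,y]^{2}$, which hand back the coefficients $\varepsilon_{1},\varepsilon_{2}$ directly (up to the unit $\pm 2[x,y]^{2}$ in the domain $\mathbb{Q}_{p}\cdot T$), and then Proposition \ref{prop:alpha} finishes. You instead extract scalar information through \emph{matrix entries}: the $(1,1)$-entry in the even case, and the $(1,2)$- and $(2,1)$-entries in the odd case, leading to the $2\times 2$ system $P\beta+Q\gamma=0$, $P\alpha+Q\beta=0$ whose determinant $\alpha\gamma-\beta^{2}=-4\,x_{12}y_{21}\,t(xy)$ you verify is nonzero in the domain $\mathring{\Pi}$. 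Both approaches are nondegeneracy arguments that bottom out in Proposition \ref{prop:alpha}; the paper's choice of the functionals $a\mapsto t(ax)$, $a\mapsto t(ay)$ amounts to using the basis $\{x,y\}$ as a trace-dual to $\{[x,y,y],[x,y,x]\}$, so the linear system you have to solve is already diagonalized for it, which is why it needs no analogue of the determinant computation and sits more naturally inside the paper's trace-identity framework. Your closing observation that the invertibility of $-4$ (equivalently, of $2$) is the crux, and that this fails over $\mathbb{Z}/2\mathbb{Z}$, is exactly the point the paper makes visible in the denominators of Proposition \ref{prop:minimal} and in Lemma \ref{prop:technical}; in the paper's version it surfaces equally directly as the factor $\pm 2$ in $t(ax)$, $t(ay)$.
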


\begin{proof}
We start with showing that the sums in (\ref{eq:sums2}) are direct
when $n$ is even. By Proposition \ref{prop:alpha} it is enough to
show that given an element in $\mathring{L}^{(n)}$ of the form $a=\varepsilon\cdot[x,y]$
for $\varepsilon\in\mathbb{Q}_{p}\left\langle \alpha,\beta,\gamma\right\rangle $,
$\varepsilon$ is determined uniquely. Indeed, consider
\[
\mathbb{Q}_{p}\cdot T\ni t(axy)=t(\varepsilon\cdot[x,y]xy)=\varepsilon\cdot[x,y]^{2}.
\]
Hence, as $\mathbb{Q}_{p}\cdot T$ is a domain, given such $a$ we
can restore $\varepsilon$, as required.

For the odd case, assume that $a=\varepsilon_{1}\cdot[x,y,x]+\varepsilon_{2}\cdot[x,y,y]$
for $\varepsilon_{1},\varepsilon_{2}\in\mathbb{Q}_{p}\left\langle \alpha,\beta,\gamma\right\rangle $.
Observing that
\begin{align*}
\mathbb{Q}_{p}\cdot T\ni t(ax) & =t(\varepsilon_{1}\cdot[x,y,x]x+\varepsilon_{2}\cdot[x,y,y]x)=-2\varepsilon_{2}\cdot[x,y]^{2}\\
\mathbb{Q}_{p}\cdot T\ni t(ay) & =t(\varepsilon_{1}\cdot[x,y,x]y+\varepsilon_{2}\cdot[x,y,y]y)=2\varepsilon_{1}\cdot[x,y]^{2}.
\end{align*}
 we can restore $\varepsilon_{1},\varepsilon_{2}$. Hence, the sums
in (\ref{eq:sums2}) are direct, as required.
\end{proof}
\begin{cor}
\label{cor:formula}We have\foreignlanguage{american}{
\[
\textrm{rank}_{\mathbb{Z}_{p}}(L^{(n)})=\dim_{\mathbb{Q}_{p}}(\mathring{L}^{(n)})=\begin{cases}
n(n+2)/8 & n\,\,\,is\,\,\,even\\
(n-1)(n+1)/4 & n\,\,\,is\,\,\,odd.
\end{cases}
\]
}
\end{cor}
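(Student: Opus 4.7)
The plan is to reduce the dimension computation to counting monomials of a fixed total degree in three commuting variables, using the directness result from the preceding corollary. Since the sums in equation \eqref{eq:sums2} have just been shown to be direct over $\mathbb{Q}_{p}$, a $\mathbb{Q}_{p}$-basis of $\mathring{L}^{(n)}$ is given explicitly by the monomials $\alpha^{r}\beta^{s}\gamma^{t}[x,y]$ (for $n$ even, with $r+s+t=(n-2)/2$), or by the two families $\alpha^{r}\beta^{s}\gamma^{t}[x,y,x]$ and $\alpha^{r}\beta^{s}\gamma^{t}[x,y,y]$ (for $n$ odd, with $r+s+t=(n-3)/2$). So the $\mathbb{Q}_{p}$-dimension reduces to counting these monomials.

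Next I would carry out the count. The number of triples $(r,s,t)\in\mathbb{Z}_{\geq 0}^{3}$ with $r+s+t=k$ equals $\binom{k+2}{2}$. For $n$ even, taking $k=(n-2)/2$ gives
\[
\binom{n/2+1}{2}=\frac{(n/2+1)(n/2)}{2}=\frac{n(n+2)}{8}.
\]
For $n$ odd, taking $k=(n-3)/2$ gives $\binom{(n-1)/2+1}{2}=\frac{((n+1)/2)((n-1)/2)}{2}=\frac{n^{2}-1}{8}$, which must be doubled (one contribution from $[x,y,x]$, one from $[x,y,y]$) to yield $(n-1)(n+1)/4$.

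For the $\mathbb{Z}_{p}$-rank of $L^{(n)}$, I would note that $\alpha=t(x)^{2}$, $\beta=2t(xy)-t(x)t(y)$ and $\gamma=t(y)^{2}$ all lie in the $\mathbb{Z}_{p}$-subring $T\subseteq\Pi$, so the exact same monomials lie in $L^{(n)}$ and span it as a $\mathbb{Z}_{p}$-module by Zubkov's reduction identities $[x,y,x,x]=\alpha[x,y]$, $[x,y,x,y]=[x,y,y,x]=\beta[x,y]$, $[x,y,y,y]=\gamma[x,y]$, which cut down every iterated bracket to one of these canonical forms. Linear independence over $\mathbb{Z}_{p}$ is inherited from the linear independence over $\mathbb{Q}_{p}$ established by the preceding corollary, so $L^{(n)}$ is a free $\mathbb{Z}_{p}$-module of the same rank as the $\mathbb{Q}_{p}$-dimension of $\mathring{L}^{(n)}$.

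There is no substantive obstacle — all the real work was done by the previous corollary (directness of the sum) and by the reduction formulas transported from $\mathsection$\ref{sec:Zubkov}. The only thing to watch is the bookkeeping between the even and odd cases, in particular the factor of $2$ in the odd case that accounts for the two independent odd-degree basis elements $[x,y,x]$ and $[x,y,y]$.
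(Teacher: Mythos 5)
Your proof is correct and follows essentially the same route as the paper: the directness of the sums in (\ref{eq:sums2}) (established in the preceding corollary via Proposition \ref{prop:alpha}) reduces the computation to counting triples $(r,s,t)\in\mathbb{Z}_{\geq 0}^{3}$ with $r+s+t$ fixed, with a factor of $2$ in the odd case, and the $\mathbb{Z}_{p}$-rank agrees with the $\mathbb{Q}_{p}$-dimension because the same monomials (built from $\alpha,\beta,\gamma\in T$) span $L^{(n)}$ over $\mathbb{Z}_{p}$ and inherit $\mathbb{Z}_{p}$-linear independence from their $\mathbb{Q}_{p}$-linear independence inside $\mathring{L}^{(n)}$. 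The arithmetic checks out.
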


\selectlanguage{american}%
\selectlanguage{english}%
\begin{cor}
The natural map $\mathring{L}_{*}^{(n)}\overset{\sigma}{\to}\mathring{L}^{(n)}$
is an isomorphism of vector spaces for every $n$.
\end{cor}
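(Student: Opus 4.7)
The plan is essentially a dimension-counting argument, exploiting the fact that both sides have already been computed to have the same dimension.

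First, I would observe that surjectivity of $\sigma:\mathring{L}_{*}^{(n)}\to\mathring{L}^{(n)}$ is immediate from the very definition of $\mathring{L}^{(n)}$ given earlier in Subsection \ref{subsec:structure}, namely $\mathring{L}^{(n)}=\sigma(\mathring{L}_{*}^{(n)})$. So the only content is injectivity, which I would reduce to matching dimensions.

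Next, I would combine the two dimension formulas already in hand. On one side, Zubkov's computation recorded in Subsection \ref{subsec:Zubkov's proof} gives
\[
\dim_{\mathbb{Q}_{p}}(\mathring{L}_{*}^{(n)})=\begin{cases}n(n+2)/8 & n\text{ even}\\ (n-1)(n+1)/4 & n\text{ odd.}\end{cases}
\]
On the other side, Corollary \ref{cor:formula} (whose proof, via the direct-sum decomposition of $\mathring{L}^{(n)}$ under the images $\alpha,\beta,\gamma$ of the original trace polynomials and Proposition \ref{prop:alpha}) yields exactly the same numbers for $\dim_{\mathbb{Q}_{p}}(\mathring{L}^{(n)})$. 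Since $\sigma$ is $\mathbb{Q}_{p}$-linear, surjective, and maps between finite-dimensional $\mathbb{Q}_{p}$-vector spaces of equal dimension, it must be an isomorphism.

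There is no real obstacle here; the substance of the statement has already been absorbed into Corollary \ref{cor:formula} and the fact that the direct-sum decompositions of $\mathring{L}^{(n)}$ and $\mathring{L}_{*}^{(n)}$ are parametrized in the same way by monomials in $(\alpha,\beta,\gamma)$ (respectively $(\alpha_*,\beta_*,\gamma_*)$). The only mild point to verify, if one wanted a more ``intrinsic'' injectivity argument, is that $\sigma$ sends the spanning monomials $\alpha_{*}^{r}\beta_{*}^{s}\gamma_{*}^{t}\cdot[x_{*},y_{*}]$ (and similarly the odd-degree spanning set) to the corresponding spanning monomials $\alpha^{r}\beta^{s}\gamma^{t}\cdot[x,y]$; this is visible from the explicit formulas for $\alpha,\beta,\gamma$ and $[x,y]$ as images under $\sigma$. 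So the proof is essentially a one-line deduction from the earlier material.
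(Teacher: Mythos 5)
Your proof is correct and is exactly the argument the paper leaves implicit: the map $\sigma$ is surjective by the very definition $\mathring{L}^{(n)}=\sigma(\mathring{L}_{*}^{(n)})$, and the dimension formula from Zubkov's computation for $\dim_{\mathbb{Q}_{p}}\mathring{L}_{*}^{(n)}$ coincides with the formula for $\dim_{\mathbb{Q}_{p}}\mathring{L}^{(n)}$ in Corollary \ref{cor:formula}, so a surjective $\mathbb{Q}_{p}$-linear map between finite-dimensional spaces of equal dimension is an isomorphism. The paper states this as a corollary with no further proof precisely because this dimension-count is immediate from the surrounding material.
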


Considering Proposition \ref{prop:min-1} we obtain:
\begin{cor}
\label{cor:min-new}The ring homomorphism $\sigma$ induces a natural
isomorphism
\[
\hat{G}_{*}=\left\langle \left\langle 1+x_{*},1+y_{*}\right\rangle \right\rangle \cong\hat{G}=\left\langle \left\langle 1+x,1+y\right\rangle \right\rangle 
\]
and for every $g\in\hat{G}$ one has $\min(g)\in\mathring{L}^{(n)}\cap A$
for some $n$.
\end{cor}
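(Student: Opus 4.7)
The plan is to show that $\sigma$ induces both the surjection and the injection between $\hat{G}_*$ and $\hat{G}$, and to derive the statement about $\min(g)$ by transport of structure from Proposition \ref{prop:min-1}.

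First, I would observe that the substitution $x_{21},x_{22},y_{12},y_{22}\mapsto 0$ defines a continuous ring homomorphism $\sigma:\Pi_{*}\twoheadrightarrow\Pi$ which is compatible with degree and with the profinite topologies coming from $Q_{*n}$ and $Q_{n}$. Applied entrywise, $\sigma$ gives a continuous ring homomorphism $M_{2}(\Pi_{*})\to M_{2}(\Pi)$ sending $x_{*}\mapsto x$ and $y_{*}\mapsto y$, which in turn restricts to a continuous group homomorphism $1+M_{2}(\Pi_{*},Q_{*1})\to 1+M_{2}(\Pi,Q_{1})$ sending $1+x_{*}\mapsto 1+x$ and $1+y_{*}\mapsto 1+y$. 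Since $\sigma$ maps topological generators to topological generators, the restriction $\sigma:\hat{G}_{*}\twoheadrightarrow\hat{G}$ is a continuous surjective group homomorphism.

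Next, for injectivity, suppose $g_{*}\in\hat{G}_{*}$ satisfies $\sigma(g_{*})=1$, and assume toward a contradiction that $g_{*}\neq 1$. By Proposition \ref{prop:min-1}, for some $n$ we have
\[
\min(g_{*})\in\mathring{L}_{*}^{(n)}\cap A_{*},\qquad \min(g_{*})\neq 0.
\]
By the corollary immediately preceding the statement, the induced map $\sigma:\mathring{L}_{*}^{(n)}\to\mathring{L}^{(n)}$ is an isomorphism of $\mathbb{Q}_{p}$-vector spaces, so $\sigma(\min(g_{*}))\neq 0$. Writing $g_{*}=1+a_{n}+a_{n+1}+\cdots$ with $a_{n}=\min(g_{*})$, degree preservation of $\sigma$ gives
\[
\sigma(g_{*})=1+\sigma(a_{n})+\sigma(a_{n+1})+\cdots,
\]
where the term $\sigma(a_{n})$ is homogeneous of degree $n$ and is nonzero. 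Hence $\sigma(g_{*})\neq 1$, a contradiction. Therefore $\ker\sigma|_{\hat{G}_{*}}$ is trivial, and $\sigma:\hat{G}_{*}\overset{\sim}{\to}\hat{G}$ is an isomorphism.

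Finally, for the statement about minimal terms, given $g\in\hat{G}$, lift to $g_{*}\in\hat{G}_{*}$ with $\sigma(g_{*})=g$. By Proposition \ref{prop:min-1}, $\min(g_{*})\in\mathring{L}_{*}^{(n)}\cap A_{*}$ for some $n$. Since $\sigma(A_{*})\subseteq A$ and $\sigma(\mathring{L}_{*}^{(n)})=\mathring{L}^{(n)}$, and since $\sigma$ is degree-preserving (so $\min(g)=\sigma(\min(g_{*}))$), we conclude
\[
\min(g)=\sigma(\min(g_{*}))\in\mathring{L}^{(n)}\cap A,
\]
as claimed. The only nontrivial step is injectivity, and this hinges entirely on the previously established isomorphism $\mathring{L}_{*}^{(n)}\cong\mathring{L}^{(n)}$; everything else is routine verification of compatibility with degree and topology.
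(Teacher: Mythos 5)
Your proof is correct and follows essentially the same line of reasoning the paper leaves implicit: the Corollary is stated as an immediate consequence of Proposition \ref{prop:min-1} together with the fact that $\sigma\colon\mathring{L}_{*}^{(n)}\to\mathring{L}^{(n)}$ is an isomorphism, and you have spelled out that implicit argument — surjectivity from generators, injectivity from the nonvanishing of $\sigma(\min(g_{*}))$, and the transport of the minimal-component statement along $\sigma$ — with no gaps.
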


Following the outline of the proof of Proposition \ref{prop:min-1},
we can write
\begin{align*}
e^{x} & =1+x+\left(\begin{array}{cc}
x'_{11} & x'_{12}\\
0 & 0
\end{array}\right)\\
e^{y} & =1+y+\left(\begin{array}{cc}
y'_{11} & 0\\
y'_{21} & 0
\end{array}\right)
\end{align*}
where $x_{ij}'$ and $y_{ij}'$ are built up from terms of degree
$>1$. Hence, the ring homomrphism $\phi:\mathring{\Pi}\to\mathring{\Pi}$
defined by sending $x_{ij}\to x_{ij}+x_{ij}'$ and $y_{ij}\to y_{ij}+y_{ij}'$
gives rise to a ring homomorphism $M_{2}(\mathring{\Pi})\to M_{2}(\mathring{\Pi})$
that gives rise to a group homomorphism $\Psi:\hat{G}\to1+M_{2}(\mathring{\Pi},\mathring{Q}_{1})$
defined by $1+x\mapsto e^{x}$ and $1+y\mapsto e^{y}$. Therefore,
as in Subsection \ref{subsec:structure-1}, also here we have
\begin{equation}
\min(g)=\min(\Psi(g))\in\mathring{L}^{(n)}\label{eq:equal}
\end{equation}
for some $n$. We will use this property later (see the proof of Lemma
\ref{prop:technical}).

It is obvious that in general $L^{(n)}\subseteq\mathring{L}^{(n)}\cap A$.
As we stressed in\foreignlanguage{american}{ $\mathsection$\ref{sec:Zubkov}},
Zubkov showed that when $p\neq2$, we actually have $L^{(n)}=\mathring{L}^{(n)}\cap A$
(Proposition \ref{prop:equality}). However, this is not the case
when $p=2$. Here is the full description of $\mathring{L}^{(n)}\cap A$
when $p=2$: 
\begin{prop}
\label{prop:minimal}Denote 
\[
\delta=\frac{\beta^{2}-\alpha\gamma}{4}=t(xy)^{2}-t(xy)t(x)t(y)=[x,y]^{2}
\]
and $S=\mathbb{Z}_{2}\left\langle \alpha,\gamma,\delta\right\rangle $.
Then:
\begin{itemize}
\item If $n$ is even, then $\mathring{L}^{(n)}\cap A=S\cdot[x,y]+\beta\cdot S\cdot[x,y]$.
\item If $n$ is odd, then $\mathring{L}^{(n)}\cap A$ is equal to 
\begin{equation}
S\cdot[x,y,x]+S\cdot[x,y,y]+S\cdot\frac{\beta[x,y,x]+\alpha[x,y,y]}{2}+S\cdot\frac{\gamma[x,y,x]+\beta[x,y,y]}{2}.\label{eq:odd}
\end{equation}
\end{itemize}
\end{prop}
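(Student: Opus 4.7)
My plan is to expand the Lie-algebra generators $[x,y]$, $[x,y,x]$, $[x,y,y]$ of $\mathring{L}^{(n)}$ in the $T$-module basis $\{1,x,y,xy\}$ of $A$ (the analog of Proposition \ref{prop:finitely generated} for the pseudo-generic matrices over $\mathbb{Z}_p$), and then read off membership in $A$ coefficient by coefficient. Using Lemmas \ref{lem:a,b-1} and \ref{lem:x,y-1}, a direct computation gives
\[
[x,y] = -(t(xy)-t(x)t(y)) - t(y)\,x - t(x)\,y + 2\cdot xy,
\]
while
\[
[x,y,x] = -t(x)(t(xy)-t(x)t(y)) + \beta\cdot x - \alpha\cdot y
\]
and
\[
[x,y,y] = t(y)(t(xy)-t(x)t(y)) + \gamma\cdot x - \beta\cdot y
\]
both have vanishing $xy$-coefficient. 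One also needs that $T$ is a free $S$-module of rank $8$ with basis $\{t(x)^{e_1}t(y)^{e_2}t(xy)^{e_3} : e_i\in\{0,1\}\}$, using $t(x)^2=\alpha$, $t(y)^2=\gamma$, $t(xy)^2=t(x)t(y)t(xy)+\delta$; this lets me check integrality monomial by monomial.

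For the even case, write a general element $u=h\cdot[x,y]$ with $h\in\mathbb{Q}_2[\alpha,\beta,\gamma]$ homogeneous. Then $u\in A$ requires the four coefficients $-h(t(xy)-t(x)t(y))$, $-h\,t(y)$, $-h\,t(x)$, $2h$ to lie in $T$. From $2h\in T$ together with $h\,t(x)\in T$, and using that $t(x)$ is a nonzerodivisor in $T/2T$, one deduces $h\in T$; conversely $h\in T$ suffices. Writing $h=h_0+h_1\beta$ uniquely (via $\beta^2=\alpha\gamma+4\delta$) with $h_0,h_1\in\mathbb{Q}_2[\alpha,\gamma,\delta]$, and substituting $\beta=2t(xy)-t(x)t(y)$, an expansion in the $S$-basis of $T$ yields $h\in T$ iff $h_0,h_1\in S$. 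Hence $\mathring{L}^{(n)}\cap A=(S+\beta S)\cdot[x,y]$.

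For the odd case, a general $u=h_1[x,y,x]+h_2[x,y,y]$ has three nonzero coordinates; the condition $u\in A$ becomes $h_1\alpha+h_2\beta\in T$, $h_1\beta+h_2\gamma\in T$, and $(t(xy)-t(x)t(y))(h_2\,t(y)-h_1\,t(x))\in T$. Writing $h_i=p_i+q_i\beta$ with $p_i,q_i\in\mathbb{Q}_2[\alpha,\gamma,\delta]$ and expanding each condition in the $S$-basis of $T$, the third condition (rewritten via $\beta-t(x)t(y)=2(t(xy)-t(x)t(y))$) carries a hidden factor of $1/2$; combining it with the first two and subtracting yields $2q_1\delta,\,2q_2\delta\in S$, so that $q_1,q_2\in\tfrac12 S$. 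Back-substitution reduces the remaining constraints to $p_1+q_2\gamma\in S$ and $p_2+q_1\alpha\in S$, all other conditions becoming automatic. The resulting set of admissible $(h_1,h_2)$ is exactly the $S$-span of $(1,0)$, $(0,1)$, $(\beta/2,-\alpha/2)$, $(-\gamma/2,\beta/2)$, which coincides with the $S$-span of the four pairs in the proposition via $(\beta/2,\alpha/2)=\alpha(0,1)+(\beta/2,-\alpha/2)$ and $(\gamma/2,\beta/2)=\gamma(1,0)+(-\gamma/2,\beta/2)$. The main obstacle is this odd case: tracking how the hidden $1/2$ in the third integrality condition forces $q_1,q_2\in\tfrac12 S$, and then matching the resulting lattice of admissible $(h_1,h_2)$ precisely against the four proposition generators; the even case is comparatively straightforward once the $S$-basis of $T$ is established.
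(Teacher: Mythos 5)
Your proposal takes a genuinely different computational route from the paper's. Where the paper extracts a \emph{single} trace condition in each parity case — $t(p[x,y]\cdot xy)=p\,\delta\in T$ in the even case, and $t(x\cdot v\cdot[x,y]y)$ giving $p\,t(x)+q\,t(y)\in T$ in the odd case — and then runs a lexicographic reduction over the monomial $\mathbb{Z}_2$-basis of $T$ (with a separate termination argument), you instead expand $[x,y]$, $[x,y,x]$, $[x,y,y]$ in the $T$-module basis $\{1,x,y,xy\}$, record \emph{all} resulting coefficient conditions, and solve them by passing to the rank-$8$ free $S$-basis of $T$ and the $\beta$-decomposition $h=h_0+h_1\beta$ (resp. $h_i=p_i+q_i\beta$). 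I checked your expansions of $[x,y]$, $[x,y,x]$, $[x,y,y]$ (correct, including the vanishing of the $xy$-coefficient in the last two), and the odd-case linear algebra: subtracting the $t(x)$- and $t(y)$-coordinates of the third condition from the matching pieces of conditions one and two indeed isolates $2q_1\delta,\,2q_2\delta\in S$, the remaining constraints reduce to $p_1+q_2\gamma\in S$ and $p_2+q_1\alpha\in S$, and your span $(1,0),(0,1),(\beta/2,-\alpha/2),(-\gamma/2,\beta/2)$ does coincide with the proposition's. Your route is more systematic and avoids the paper's case-by-case reduction, at the cost of first establishing the $T$-module expansions and the free $S$-basis.

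One point needs care. Your framing ("read off membership in $A$ coefficient by coefficient", "conversely $h\in T$ suffices", "the set of admissible $(h_1,h_2)$ is exactly $\ldots$") treats the condition "all $T$-coefficients lie in $T$" as \emph{equivalent} to membership in $A$. It is only necessary: the ring $T\cdot 1+Tx+Ty+Txy$ strictly contains $A=\mathbb{Z}_p\left\langle\left\langle x,y\right\rangle\right\rangle$ (for instance $t(x)\cdot 1$ has all coefficients in $T$ but is not a noncommutative power series in $x,y$, whereas $\delta\cdot 1=[x,y]^2$ is). So your computation proves only $\mathring{L}^{(n)}\cap A\subseteq S[x,y]+\beta S[x,y]$ (resp. the odd-case span). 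The reverse inclusion must be checked directly — e.g. $\alpha[x,y]=[x,y,x,x]$, $\delta\cdot 1=[x,y]^2$, and $\frac{\beta[x,y,x]+\alpha[x,y,y]}{2}=\beta[x,y]x+\alpha[x,y]y-t(x)t(xy)[x,y]$, each a sum of products of $x,y$, plus closure under $S$-multiplication. The paper also only proves one inclusion and calls the other "easy to verify", so this is a shared omission rather than a flaw peculiar to your method; but you should not present the coefficient criterion as a characterization of $A$.
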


\begin{proof}
One direction of the inclusion, namely $\supseteq$ is easy to verify
(see Part 2 of Lemma \ref{lem:a,b-1} for (\ref{eq:odd})). We turn
to the opposite inclusion, starting with the even case. Let $p(\alpha,\beta,\gamma)\cdot[x,y]\in\mathring{L}^{(n)}\cap A$
when $p(\alpha,\beta,\gamma)$ is a homogeneous polynomial on $\alpha,\beta,\gamma$
over $\mathbb{Q}_{2}$, say of degree $m$. In particular $p(\alpha,\beta,\gamma)\in\mathbb{Q}_{2}\cdot T$.
Consider
\begin{align*}
t(p(\alpha,\beta,\gamma)\cdot[x,y]xy) & =p(\alpha,\beta,\gamma)\cdot t([x,y]xy)=p(\alpha,\beta,\gamma)\cdot[x,y]^{2}.
\end{align*}
On the other hand, $p(\alpha,\beta,\gamma)\cdot[x,y]xy\in A$, so
\[
p(\alpha,\beta,\gamma)\cdot[x,y]^{2}=p(\alpha,\beta,\gamma)\cdot(t(xy)^{2}-t(xy)t(x)t(y))\in T.
\]
One can see that it follows that $p(\alpha,\beta,\gamma)\in T$. So
write
\[
p(\alpha,\beta,\gamma)=\sum_{2i+j+k=2m}\varepsilon_{i,j,k}t(xy)^{i}t(x)^{j}t(y)^{k}
\]
for some $\varepsilon_{i,j,k}\in\mathbb{Z}_{2}$. We want to show
that $p(\alpha,\beta,\gamma)\in S+\beta\cdot S$. Order the triples
$(i,j,k)$ in the lexicographical order and let $(i_{0},j_{0},k_{0})$
be the maximal triple for which $i_{0}$ is even and $\varepsilon_{i_{0},j_{0},k_{0}}\neq0$.
We have two options:
\begin{itemize}
\item If $j_{0},k_{0}$ are even, reduce $\varepsilon_{i_{0},j_{0},k_{0}}\cdot\delta^{\frac{i_{0}}{2}}\cdot\alpha^{\frac{j_{0}}{2}}\cdot\gamma^{\frac{k_{0}}{2}}\in S$
from $p(\alpha,\beta,\gamma)$. One can see that by doing this, the
maximal triple $(i_{0},j_{0},k_{0})$ for which $i_{0}$ is even,
and $\varepsilon_{i_{0},j_{0},k_{0}}\neq0$ is reduced.
\item If $j_{0},k_{0}$ are odd, noticing that $(i_{0}+1)^{-1}\in\mathbb{Z}_{2}$,
consider 
\begin{align*}
 & (i_{0}+1)^{-1}\cdot\varepsilon_{i_{0},j_{0},k_{0}}\cdot\beta\cdot\delta^{\frac{i_{0}}{2}}\cdot\alpha^{\frac{j_{0}-1}{2}}\cdot\gamma^{\frac{k_{0}-1}{2}}\\
 & =(i_{0}+1)^{-1}\cdot2\cdot\varepsilon_{i_{0},j_{0},k_{0}}\cdot t(xy)^{i_{0}+1}\cdot\alpha^{\frac{j_{0}-1}{2}}\cdot\gamma^{\frac{k_{0}-1}{2}}\\
 & \,\,\,\,\,\,\,\,-\varepsilon_{i_{0},j_{0},k_{0}}t(xy)^{i_{0}}\cdot t(x)\cdot\alpha^{\frac{j_{0}-1}{2}}\cdot t(y)\gamma^{\frac{k_{0}-1}{2}}\\
 & \,\,\,\,\,\,\,\,+\textrm{terms\,\,with\,\,lower\,\,powers\,\,of}\,\,t(xy).
\end{align*}
Hence, by adding $(i_{0}+1)^{-1}\cdot\varepsilon_{i_{0},j_{0},k_{0}}\cdot\beta\cdot\delta^{\frac{i_{0}}{2}}\cdot\alpha^{\frac{j_{0}-1}{2}}\cdot\gamma^{\frac{k_{0}-1}{2}}\in\beta\cdot S$
to $p(\alpha,\beta,\gamma)$, the maximal triple $(i_{0},j_{0},k_{0})$
for which $i_{0}$ is even and $\varepsilon_{i_{0},j_{0},k_{0}}\neq0$
is reduced. 
\end{itemize}
We continue in this way until all the terms for which $i_{0}$ is
even are reduced. We claim that in this case $p(\alpha,\beta,\gamma)=0$.
Indeed, write
\[
p(\alpha,\beta,\gamma)=\sum_{i=0}^{r}\beta^{i}\cdot q_{i}(\alpha,\gamma).
\]
Clearly, the highest $i_{0}$ for which $q_{i_{0}}(\alpha,\gamma)\neq0$
is the highest degree of $t(xy)$. So by assumption, $i_{0}$ is odd.
Hence, the term for which the degree of $t(xy)$ is $i_{0}-1$ is
given by
\[
2^{i_{0}-1}\cdot t(xy)^{i_{0}-1}(-i_{0}\cdot t(x)t(y)q_{i_{0}}(\alpha,\gamma)+q_{i_{0}-1}(\alpha,\gamma)).
\]
As $q_{i_{0}}(\alpha,\gamma)\neq0$, the degrees of the terms of $t(x)t(y)q_{i_{0}}(\alpha,\gamma)$
in $t(x),t(y)$ are odd. Thus, as the degrees of the terms of $q_{i_{0}-1}(\alpha,\gamma)$
in $t(x),t(y)$ are even, one has
\[
-i_{0}\cdot t(x)t(y)q_{i_{0}}(\alpha,\gamma)+q_{i_{0}-1}(\alpha,\gamma)\neq0,
\]
and this is a contradiction to the assumption that $p(\alpha,\beta,\gamma)$
does not have terms with even degree in $t(xy)$.

For the odd case, write
\[
v=p(\alpha,\beta,\gamma)\cdot[x,y,x]+q(\alpha,\beta,\gamma)\cdot[x,y,y]\in\mathring{L}^{(n)}\cap A
\]
where $p(\alpha,\beta,\gamma),\,q(\alpha,\beta,\gamma)\in\mathbb{Q}_{2}\cdot T$
are homogeneous in $\alpha,\beta,\gamma$ of degree $m$. Using the
identities $[x,y,x]=-t(x)[x,y]+2[x,y]x$, $[x,y,y]=-t(y)[x,y]+2[x,y]y$,
we have
\[
v=(-p(\alpha,\beta,\gamma)t(x)-q(\alpha,\beta,\gamma)t(y))[x,y]+2p(\alpha,\beta,\gamma)[x,y]x+2q(\alpha,\beta,\gamma)[x,y]y.
\]
As $v\in A$ we have 
\[
-t(xy)[x,y]^{2}(p(\alpha,\beta,\gamma)t(x)+q(\alpha,\beta,\gamma)t(y))=t(x\cdot v\cdot[x,y]y)\in T.
\]
Also here, it follows that $p(\alpha,\beta,\gamma)t(x)+q(\alpha,\beta,\gamma)t(y)\in T$.
So write
\[
p(\alpha,\beta,\gamma)t(x)+q(\alpha,\beta,\gamma)t(y)=\sum_{2i+j+k=2m+1}\varepsilon_{i,j,k}t(xy)^{i}t(x)^{j}t(y)^{k}
\]
for some $\varepsilon_{i,j,k}\in\mathbb{Z}_{2}$. Order the triples
$(i,j,k)$ in the lexicographical order and let $(i_{0},j_{0},k_{0})$
be the maximal triple for which $\varepsilon_{i_{0},j_{0},k_{0}}\neq0$.
We have a few options:
\begin{itemize}
\item If $i_{0}$ is even, $j_{0}$ is odd, $k_{0}$ is even, subtract the
following from $v$:
\[
\varepsilon_{i_{0},j_{0},k_{0}}\cdot\delta^{\frac{i_{0}}{2}}\cdot\alpha^{\frac{j_{0}-1}{2}}\cdot\gamma^{\frac{k_{0}}{2}}[x,y,x]\in S\cdot[x,y,x].
\]
\item If $i_{0}$ is even, $j_{0}$ is even, $k_{0}$ is odd, subtract the
following from $v$:
\[
\varepsilon_{i_{0},j_{0},k_{0}}\cdot\delta^{\frac{i_{0}}{2}}\cdot\alpha^{\frac{j_{0}}{2}}\cdot\gamma^{\frac{k_{0}-1}{2}}[x,y,y]\in S\cdot[x,y,y].
\]
\item If $i_{0}$ is odd, $j_{0}$ is odd, $k_{0}$ is even, subtract the
following from $v$:
\[
\varepsilon_{i_{0},j_{0},k_{0}}\cdot\delta^{\frac{i_{0}-1}{2}}\cdot\alpha^{\frac{j_{0}-1}{2}}\cdot\gamma^{\frac{k_{0}}{2}}\cdot\left(\frac{\beta[x,y,x]+\alpha[x,y,y]}{2}\right)\in S\cdot\frac{\beta[x,y,x]+\alpha[x,y,y]}{2}.
\]
\item If $i_{0}$ is odd, $j_{0}$ is even, $k_{0}$ is odd, subtract the
following from $v$:
\[
\varepsilon_{i_{0},j_{0},k_{0}}\cdot\delta^{\frac{i_{0}-1}{2}}\cdot\alpha^{\frac{j_{0}}{2}}\cdot\gamma^{\frac{k_{0}-1}{2}}\left(\frac{\gamma[x,y,x]+\beta[x,y,y]}{2}\right)\in S\cdot\frac{\gamma[x,y,x]+\beta[x,y,y]}{2}.
\]
\end{itemize}
One can see that in the new element, the maximal $(i_{0},j_{0},k_{0})$
for which $\varepsilon_{i_{0},j_{0},k_{0}}\neq0$ is lower. We continue
the process until the expression 
\begin{equation}
p(\alpha,\beta,\gamma)t(x)+q(\alpha,\beta,\gamma)t(y)\label{eq:expression}
\end{equation}
vanishes. We claim that in this case, $p(\alpha,\beta,\gamma)=q(\alpha,\beta,\gamma)=0$,
i.e. $v=0$. Indeed, if we write $p(\alpha,\beta,\gamma)$ as an expression
in $t(x,y),t(x),t(y)$ (with coefficents in $\mathbb{Q}_{2})$ and
order the monomials with the above lexicographical order, the highest
monomial of $p(\alpha,\beta,\gamma)t(x)$ will have the form $\varepsilon_{1}\cdot t(xy)^{i}t(x)^{j}t(y)^{k}$
where $j$ is odd and $k$ is even, and the highest monomial of $q(\alpha,\beta,\gamma)t(y)$
will have the form $\varepsilon_{2}\cdot t(xy)^{i}t(x)^{j}t(y)^{k}$
where $j$ is even and $k$ is odd. So they cannot cancel each other,
and the only way for (\ref{eq:expression}) to be $0$ is that $p(\alpha,\beta,\gamma)=q(\alpha,\beta,\gamma)=0$,
as required.
\end{proof}

\subsection{Proving Proposition \ref{prop:dichotomy}}

We want now to prove Proposition \ref{prop:dichotomy}. Recalling
the ideals $Q_{n}\vartriangleleft\Pi$ we denote
\begin{align*}
\omega_{n}(\hat{G}) & =\ker(\hat{G}\to GL_{2}(\Pi/Q_{n}))\\
\Omega_{n}(\hat{G}) & =\omega_{n}(\hat{G})/\omega_{n+1}(\hat{G}).
\end{align*}

\selectlanguage{american}%
Just like in $\mathsection$\ref{sec:Zubkov}, by Corollary \ref{cor:min-new},
one can view $\Omega_{n}(\hat{G})$ as an abelian subgroup of $\mathring{L}^{(n)}\cap A$
such that
\begin{equation}
\Upsilon_{n}(\hat{G})\twoheadrightarrow L^{(n)}\leq\Omega_{n}(\hat{G})\leq\mathring{L}^{(n)}\cap A.\label{eq:inequality}
\end{equation}

\selectlanguage{english}%
Let's start with proving the first part of Proposition \ref{prop:dichotomy},
namely, that for every $p$ and every $n\leq5$ one has $\Upsilon_{n}(\hat{G})\cong\mathbb{Z}_{p}^{l_{2}(n)}$.
Actually:
\selectlanguage{american}%
\begin{prop}
\label{prop:part 1}For every $p$ we have:
\begin{itemize}
\item $\hat{G}_{n}=\omega_{n}(\hat{G})$ for every $n\leq6$.
\item $\Upsilon_{n}(\hat{F})\cong\Upsilon_{n}(\hat{G})\cong\Omega_{n}(\hat{G})\cong L^{(n)}\cong\mathbb{Z}_{p}^{l_{2}(n)}$
for every $n\leq5$.
\end{itemize}
\end{prop}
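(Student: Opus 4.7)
The plan is to proceed by induction on $n$, simultaneously establishing both bullets of the statement. More precisely, for $1 \leq n \leq 5$, I will show that if $\hat{G}_k = \omega_k(\hat{G})$ for all $k \leq n$, then $\hat{G}_{n+1} = \omega_{n+1}(\hat{G})$ and $\Upsilon_n(\hat{F}) \cong \Upsilon_n(\hat{G}) \cong \Omega_n(\hat{G}) \cong L^{(n)} \cong \mathbb{Z}_p^{l_2(n)}$. The base case $n = 1$ is immediate: since $\hat{G}$ is topologically generated by $1+x, 1+y \in 1 + M_2(\Pi, Q_1)$, one has $\hat{G}_1 = \hat{G} \subseteq \omega_1(\hat{G}) \subseteq \hat{G}$, forcing $\hat{G}_1 = \omega_1(\hat{G})$.

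For the inductive step, the central object is the chain of continuous homomorphisms of abelian pro-$p$ groups
$$\Upsilon_n(\hat{F}) \twoheadrightarrow \Upsilon_n(\hat{G}) \twoheadrightarrow \Omega_n(\hat{G}) \hookrightarrow \mathring{L}^{(n)} \cap A.$$
The first surjection is induced by the surjection $\hat{F} \twoheadrightarrow \hat{G}$ at the $n$-th step of the lower central series (using compactness of $\hat{F}_n$ to ensure $\pi(\hat{F}_n) = \hat{G}_n$). The second is induced by the inclusion $\hat{G}_n \hookrightarrow \omega_n(\hat{G})$; it is well defined because $\hat{G}_{n+1} \subseteq \omega_{n+1}(\hat{G})$, and it is surjective because the inductive hypothesis gives $\hat{G}_n = \omega_n(\hat{G})$. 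The injection is the $\min$-map of Corollary \ref{cor:min-new}. Moreover, the image of the whole composition is exactly $L^{(n)}$, since each iterated group commutator of $1+x$ and $1+y$ of length $n$ has minimal term equal to the corresponding iterated Lie commutator of $x$ and $y$, and such commutators generate $L^{(n)}$.

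The key numerical input is that $l_2(n) = \textrm{rank}_{\mathbb{Z}_p}(L^{(n)})$ for every $1 \leq n \leq 5$, which is a direct check from the Witt formula (\ref{eq:l2n}) and Corollary \ref{cor:formula}: both sides equal $2, 1, 2, 3, 6$ for $n = 1, 2, 3, 4, 5$ respectively. Since $\mathbb{Z}_p^{l_2(n)}$ is a Hopfian $\mathbb{Z}_p$-module, the surjection $\Upsilon_n(\hat{F}) \cong \mathbb{Z}_p^{l_2(n)} \twoheadrightarrow L^{(n)} \cong \mathbb{Z}_p^{l_2(n)}$ must be an isomorphism, which forces every arrow in the chain above to be an isomorphism onto its image. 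In particular $\Upsilon_n(\hat{G}) \cong \Omega_n(\hat{G}) \cong L^{(n)}$, and then the canonical surjection $\hat{G}_n/\hat{G}_{n+1} \twoheadrightarrow \hat{G}_n/\omega_{n+1}(\hat{G})$ being an isomorphism yields $\hat{G}_{n+1} = \omega_{n+1}(\hat{G})$, closing the induction and giving the first bullet for $k \leq 6$ and the second for $k \leq 5$.

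The main obstacle is conceptual rather than technical: the whole argument hinges on the numerical coincidence $l_2(n) = \textrm{rank}_{\mathbb{Z}_p}(L^{(n)})$, which genuinely fails at $n = 6$, where $l_2(6) = 9$ while $\textrm{rank}_{\mathbb{Z}_p}(L^{(6)}) = 6$. It is precisely this first gap that Zubkov exploits (for $p \neq 2$) to construct his pro-$p$ identity, and it also marks the exact level where the dichotomy between $p = 2$ and $p \neq 2$ articulated in Proposition \ref{prop:dichotomy} first manifests.
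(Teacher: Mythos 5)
Your proof is correct and takes essentially the same approach as the paper: induction on $n$ via the chain $\Upsilon_{n}(\hat{F})\twoheadrightarrow\Upsilon_{n}(\hat{G})\twoheadrightarrow\Omega_{n}(\hat{G})=L^{(n)}$, combined with the Hopfian property of $\mathbb{Z}_{p}^{l_{2}(n)}$ and the numerical coincidence $l_{2}(n)=\mathrm{rank}_{\mathbb{Z}_{p}}(L^{(n)})$ for $n\leq5$. Your write-up merely makes explicit a few details the paper leaves implicit (well-definedness of the maps, why the image is $L^{(n)}$, the list of common ranks $2,1,2,3,6$).
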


\begin{proof}
By definition, we have $\hat{G}=\hat{G}_{1}=\omega_{1}(\hat{G})$.
We want to show, by induction on $n$, that the same is valid for
every $n\leq6$. Let $n\leq5$, and assume that $\hat{G}_{n}=\omega_{n}(\hat{G})$.
Under this assumption, $\Upsilon_{n}(\hat{G})\twoheadrightarrow\Omega_{n}(\hat{G})$
and hence equation (\ref{eq:inequality}) gives
\[
\Upsilon_{n}(\hat{F})\twoheadrightarrow\Upsilon_{n}(\hat{G})\twoheadrightarrow L^{(n)}=\Omega_{n}(\hat{G}).
\]
Hence, as for every $n\leq5$ the formula in Corollary \ref{cor:formula}
coincides with the Witt formula, we have
\[
\Upsilon_{n}(\hat{F})\cong\mathbb{Z}^{l_{2}(n)},\,\,\,\,\,\,\Omega_{n}(\hat{G})\cong L^{(n)}\cong\mathbb{Z}^{l_{2}(n)}.
\]
As $\mathbb{Z}^{l_{2}(n)}$ is Hopfian (as a finitely generated profinite
group), we get that the composition map $\Upsilon_{n}(\hat{F})\twoheadrightarrow\Omega_{n}(\hat{G})$
is an isomorphism. It follows that 
\[
\Upsilon_{n}(\hat{F})\twoheadrightarrow\Upsilon_{n}(\hat{G})=\hat{G}_{n}/\hat{G}_{n+1}\twoheadrightarrow\Omega_{n}(\hat{G})=\omega_{n}(\hat{G})/\omega_{n+1}(\hat{G})
\]
is an isomorphism. In particular $\hat{G}_{n+1}=\omega_{n+1}(\hat{G})$
as required. Notice that in the course of the proof we also proved
the second statement.
\end{proof}
As mentioned in $\mathsection$\ref{sec:Zubkov}, when $p\neq2$ we
have $L^{(n)}=\mathring{L}^{(n)}\cap A$, and hence equation (\ref{eq:inequality})
actually gives
\[
\Upsilon_{n}(\hat{G})\twoheadrightarrow\Omega_{n}(\hat{G})\cong L^{(n)}.
\]
Thus, in this case, using the idea in $\mathsection$\ref{sec:Zubkov}
that was used in order to construct a pro-$p$ identity, one can easily
deduce the following proposition, yielding the second part of Proposition
\ref{prop:dichotomy} (see \cite{key-3}, Theorem 4.1):
\begin{prop}
\label{prop:deg=00003Dlower-central}When $p\neq2$ we have $\hat{G}_{n}=\omega_{n}(\hat{G})$
and hence $\Upsilon_{n}(\hat{G})=\Omega_{n}(\hat{G})\cong L^{(n)}$
for every $n$.
\end{prop}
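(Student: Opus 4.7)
The approach is to show $\omega_n(\hat{G}) \subseteq \hat{G}_n$ for every $n$ by a successive-approximation argument; the reverse inclusion $\hat{G}_n \subseteq \omega_n(\hat{G})$ is automatic from the definitions. The crucial input is Proposition \ref{prop:equality}: when $p \neq 2$, $L^{(n)} = \mathring{L}^{(n)} \cap A$, so equation (\ref{eq:inequality}) forces the embedding $\min : \Omega_n(\hat{G}) \hookrightarrow \mathring{L}^{(n)} \cap A$ to have image exactly $L^{(n)}$. Once $\omega_n(\hat{G}) = \hat{G}_n$ is proven for all $n$, the quotient statement follows at once: $\Upsilon_n(\hat{G}) = \hat{G}_n/\hat{G}_{n+1} = \omega_n(\hat{G})/\omega_{n+1}(\hat{G}) = \Omega_n(\hat{G}) \cong L^{(n)}$.

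First I would record the key lifting observation: by the Baker--Campbell--Hausdorff argument of Subsection \ref{subsec:structure-1} (recalled as equation (\ref{eq:equal})), for every $g \in \hat{F}_n$ the element $\pi(g) \in \hat{G}_n$ has $\min(\pi(g)) \in L^{(\deg \min \pi(g))}$, and conversely $L^{(n)}$ is generated as an abelian group by such mins. Consequently, for every homogeneous $\ell \in L^{(n)}$ there exists $k \in \pi(\hat{F}_n) \subseteq \hat{G}_n$ with $\min(k) = \ell$. Given now $h \in \omega_n(\hat{G})$, I set $r_0 = h$ and construct inductively a sequence $(k_j)_{j \geq 0}$ as follows: $\min(r_j) \in \mathring{L}^{(m_j)} \cap A = L^{(m_j)}$ for $m_j = \deg \min(r_j) \geq n$, so choose $k_j \in \hat{G}_{m_j}$ with $\min(k_j) = \min(r_j)$ and set $r_{j+1} = r_j k_j^{-1} \in \omega_{m_j+1}(\hat{G})$, which forces $m_{j+1} > m_j$.

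Since $\hat{G}_{m_j} \subseteq \hat{G}_n$ for all $j$, the partial products $P_N = k_0 k_1 \cdots k_N$ lie in $\hat{G}_n$, and $h \cdot P_N^{-1} = r_{N+1} \in \omega_{m_{N+1}+1}(\hat{G})$ tends to $1$ in the profinite topology of $\hat{G}$ as $m_{N+1} \to \infty$. Because $\hat{G}_n$ is closed by our standing convention, $h = \lim_N P_N \in \hat{G}_n$, establishing $\omega_n(\hat{G}) \subseteq \hat{G}_n$ and hence equality.

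There is no real obstacle here beyond the invocation of Proposition \ref{prop:equality}; the argument is simply Zubkov's algorithm from Subsection \ref{subsec:Zubkov's proof} run in reverse, producing a convergent factorization of a congruence element through the lower central series rather than a pro-$p$ identity. What makes the whole procedure go through is that \emph{every} element of $\mathring{L}^{(n)} \cap A$ is realised as $\min(k)$ for some $k \in \hat{G}_n$, which is precisely the content of $L^{(n)} = \mathring{L}^{(n)} \cap A$. When $p = 2$ this equality breaks (as quantified by Proposition \ref{prop:minimal}), so the lifting step fails and the same machinery cannot compute $\Upsilon_n(\hat{G})$.
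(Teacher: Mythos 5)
Your proof is correct and takes the same route as the paper: the paper sketches this proposition only as "using the idea in $\mathsection$\ref{sec:Zubkov}" and cites Zubkov's Theorem 4.1, and the idea in $\mathsection$\ref{subsec:Zubkov's proof} is exactly the successive-approximation algorithm, which you have run in the downhill direction. The only slip is cosmetic: with $r_{j+1}=r_j k_j^{-1}$ you get $r_{N+1}=h k_0^{-1}k_1^{-1}\cdots k_N^{-1}$, which is $h Q_N^{-1}$ for $Q_N=k_N\cdots k_1 k_0$, not for $P_N=k_0 k_1\cdots k_N$; since $\hat{G}$ is not abelian these differ, but $Q_N\in\hat{G}_n$ just as well and the Cauchy/limit argument goes through unchanged with $Q_N$ in place of $P_N$, so the conclusion $h=\lim_N Q_N\in\hat{G}_n$ is unaffected.
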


\selectlanguage{english}%
The meaning of Proposition \ref{prop:deg=00003Dlower-central} is
that when $p\neq2$, given an element $g\in\hat{G}$, there is a correspondence
between the degree of $\min(g)$ and the location of $g$ in lower
central series of $\hat{G}$. When $p=2$, this correspondence is
broken, as we demonstrate below. 

Assume now that $p=2$. Recall that \foreignlanguage{american}{$\hat{G}_{7}\subseteq\omega_{7}(\hat{G})$
and} by Proposition \ref{prop:part 1} we have \foreignlanguage{american}{$\hat{G}_{6}=\omega_{6}(\hat{G})$.
It follows that we have the exact sequence
\begin{align*}
1\to(\omega_{7}(\hat{G})\cap\hat{G}_{6})/\hat{G}_{7} & \hookrightarrow\Upsilon_{6}(\hat{G})=\hat{G}_{6}/\hat{G}_{7}\twoheadrightarrow\Omega_{6}(\hat{G})=\omega_{6}(\hat{G})/\omega_{7}(\hat{G})\to1.
\end{align*}
Notice that by equation (\ref{eq:inequality}), the surjective map
$\Upsilon_{6}(\hat{G})\twoheadrightarrow\Omega_{6}(\hat{G})$ yields
that $\Omega_{6}(\hat{G})\cong L^{(6)}\cong\mathbb{Z}_{2}^{6}$.}

\selectlanguage{american}%
Given a pro-$p$ group $\hat{H}$, we denote the minimum number of
(topological) generators for $\hat{H}$ by $d(\hat{H})$. We have
the following lemma:
\begin{lem}
Let $1\to\hat{H}_{1}\to\hat{H}_{2}\to\hat{H}_{3}\to1$ be an exact
squence of finitely generated abelian pro-$p$ groups, such that $\hat{H}_{3}$
is free (as an abelian pro-$p$ group). Then: $d(\hat{H}_{2})=d(\hat{H}_{1})+d(\hat{H}_{3})$.
\end{lem}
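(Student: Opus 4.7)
The plan is to reduce the statement to a dimension count of Frattini quotients. For a finitely generated abelian pro-$p$ group $\hat{H}$, the Burnside basis theorem gives
\[
d(\hat{H}) \;=\; \dim_{\mathbb{F}_{p}}\!\bigl(\hat{H}/p\hat{H}\bigr),
\]
since a finite subset of $\hat{H}$ topologically generates $\hat{H}$ if and only if its image spans $\hat{H}/p\hat{H}$ as an $\mathbb{F}_{p}$-vector space. So the task becomes proving
\[
\dim_{\mathbb{F}_{p}}\!\bigl(\hat{H}_{2}/p\hat{H}_{2}\bigr) \;=\; \dim_{\mathbb{F}_{p}}\!\bigl(\hat{H}_{1}/p\hat{H}_{1}\bigr) + \dim_{\mathbb{F}_{p}}\!\bigl(\hat{H}_{3}/p\hat{H}_{3}\bigr).
\]

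Next I would use the hypothesis that $\hat{H}_{3}$ is free, so $\hat{H}_{3}\cong\mathbb{Z}_{p}^{d(\hat{H}_{3})}$. Finitely generated free $\mathbb{Z}_{p}$-modules are projective in the category of finitely generated abelian pro-$p$ groups, so the short exact sequence splits continuously, giving $\hat{H}_{2}\cong\hat{H}_{1}\oplus\hat{H}_{3}$. Reducing modulo $p$ we then have $\hat{H}_{2}/p\hat{H}_{2}\cong\hat{H}_{1}/p\hat{H}_{1}\oplus\hat{H}_{3}/p\hat{H}_{3}$, and additivity of $\mathbb{F}_{p}$-dimension under direct sums produces the required identity; combining with the Burnside reduction gives $d(\hat{H}_{2})=d(\hat{H}_{1})+d(\hat{H}_{3})$.

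There is no real obstacle here; the only point to verify carefully is the splitting, which could alternatively be bypassed by observing that applying $-\otimes_{\mathbb{Z}_{p}}\mathbb{F}_{p}$ to the exact sequence keeps it short exact on the left as well, because the obstruction $\mathrm{Tor}_{1}^{\mathbb{Z}_{p}}(\hat{H}_{3},\mathbb{F}_{p})$ vanishes (since $\hat{H}_{3}$ is $\mathbb{Z}_{p}$-flat), and then one reads off the dimension equality directly.
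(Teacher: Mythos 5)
Your proof is correct and takes essentially the same route as the paper: both split the sequence using the freeness of $\hat{H}_{3}$ and then count via Frattini quotients. The paper phrases the count as two inequalities ($\leq$ from the splitting, $\geq$ from the surjection onto $(\hat{H}_{1}/\Phi(\hat{H}_{1}))\times(\hat{H}_{3}/\Phi(\hat{H}_{3}))$), whereas you obtain it in one step from the identity $d(\hat{H})=\dim_{\mathbb{F}_{p}}(\hat{H}/p\hat{H})$, with the $\mathrm{Tor}$ observation as a valid alternative that bypasses the splitting entirely.
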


\begin{proof}
As $\hat{H}_{3}$ is free, we have a section map $\hat{H}_{3}\to\hat{H}_{2}$
such that the composition map $\hat{H}_{3}\to\hat{H}_{2}\to\hat{H}_{3}$
is a natural isomorphism. It follows that: $\hat{H}_{2}\cong\hat{H}_{1}\rtimes\hat{H}_{3}=\hat{H}_{1}\times\hat{H}_{3}$.
Hence $d(\hat{H}_{2})\leq d(\hat{H}_{1})+d(\hat{H}_{3})$. On the
other hand, $\hat{H}_{2}$ has the vector space
\[
(\hat{H}_{1}/\Phi(\hat{H}_{1}))\times(\hat{H}_{3}/\Phi(\hat{H}_{3}))\cong(\mathbb{Z}/p\mathbb{Z})^{d(\hat{H}_{1})}\times(\mathbb{Z}/p\mathbb{Z})^{d(\hat{H}_{3})}\cong(\mathbb{Z}/p\mathbb{Z})^{d(\hat{H}_{1})+d(\hat{H}_{3})}
\]
as a homomorphic image, where $\Phi(\hat{H}_{1}),\Phi(\hat{H}_{3})$
are the Frattini subgroups of $\hat{H}_{1},\hat{H}_{3}$. Hence, $d(\hat{H}_{2})\geq d(\hat{H}_{1})+d(\hat{H}_{3})$,
as required.
\end{proof}
Recall that $\Omega_{6}(\hat{G})\cong\mathbb{Z}_{2}^{6}$. By the
lemma, it follows that in order to prove the last part of Proposition
\ref{prop:dichotomy}, it is enough to show that for 
\[
\hat{H}=(\omega_{7}(\hat{G})\cap\hat{G}_{6})/\hat{G}_{7}
\]
we have $d(\hat{H})\geq3$. By Corollary \ref{cor:min-new} we have
a natural map $\chi:\omega_{7}(\hat{G})\to\mathring{L}^{(7)}$ defined
by sending each $g\in\omega_{7}(\hat{G})$ to its corresponding minimal
term\footnote{Elements $g\in\omega_{7}(\hat{G})$ with $\deg(\min(g))>7$ are mapped
to $0\in\mathring{L}^{(7)}$.}. Clearly, the image of $\hat{G}_{7}$ under $\chi$ is $L^{(7)}$.
Hence 
\[
\overline{H}=\chi(\omega_{7}(\hat{G})\cap\hat{G}_{6})/\chi(\hat{G}_{7})=\chi(\omega_{7}(\hat{G})\cap\hat{G}_{6})/L^{(7)}
\]
is a homomorphic image of $\hat{H}$. We are going to show that the
abelian group $\overline{H}$ contains a copy of $(\mathbb{Z}/2\mathbb{Z})^{3}$.
It will follow that $d(\overline{H})\geq3$, and thus $d(\hat{H})\geq3$
as well, as required. We prove the following technical proposition:
\begin{lem}
\label{prop:technical}The following elements belong to $\omega_{7}(\hat{G})\cap\hat{G}_{6}$:
\begin{align*}
g_{1} & =[1+x,1+y,1+x,1+x,[1+x,1+y]_{\hat{G}}]_{\hat{G}}\\
g_{2} & =[1+x,1+y,1+x,1+y,[1+x,1+y]_{\hat{G}}]_{\hat{G}}\\
g_{3} & =[1+x,1+y,1+y,1+y,[1+x,1+y]_{\hat{G}}]_{\hat{G}}.
\end{align*}
In addition, the image of $g_{i}$ under $\chi$ is
\begin{align*}
\chi(g_{1})=\min(g_{1})= & \frac{\beta^{2}-\alpha\gamma}{2}[x,y,x]=2\delta\cdot[x,y,x]\,\,\,\,\textrm{mod }L^{(7)}\\
\chi(g_{2})=\min(g_{2})= & \beta\cdot(\frac{\beta[x,y,x]+\alpha[x,y,y]}{2})\\
 & +\beta\cdot(\frac{\beta[x,y,y]+\gamma[x,y,x]}{2})\,\,\,\,\textrm{mod }L^{(7)}\\
\chi(g_{3})=\min(g_{3})= & \frac{\beta^{2}-\alpha\gamma}{2}[x,y,y]=2\delta\cdot[x,y,y]\,\,\,\,\textrm{mod }L^{(7)}
\end{align*}
\end{lem}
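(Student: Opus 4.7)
The statement has two parts: the membership $g_i \in \omega_7(\hat{G}) \cap \hat{G}_6$ and the explicit computation of the minimal term modulo $L^{(7)}$. For the membership, setting $h = [1+x, 1+y]_{\hat{G}} \in \hat{G}_2$ and inspecting the nesting of brackets defining each $g_i$, one sees that $g_i \in [\hat{G}_5, \hat{G}_2] \subseteq \hat{G}_7 \subseteq \hat{G}_6$; combined with the standard inclusion $\hat{G}_7 \subseteq \omega_7(\hat{G})$, this gives $g_i \in \omega_7(\hat{G}) \cap \hat{G}_6$ as required.

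For the computation of $\chi(g_i) = \min(g_i)$, I would invoke equation (\ref{eq:equal}), namely $\min(g_i) = \min(\Psi(g_i))$, where $\Psi$ is the homomorphism of Subsection \ref{subsec:structure} sending $1+x \mapsto e^x$ and $1+y \mapsto e^y$. Writing $\Psi(g_i) = \exp(Z_i)$ with $Z_i$ in the Lie subring of $M_2(\mathring{\Pi})$ generated by $x, y$ over $\mathbb{Q}_2$, one has $\min(g_i)$ equal to the minimal-degree homogeneous component of $Z_i$. I would compute $Z_i$ by iteratively applying the commutator BCH formula
\[
\log\bigl([e^a, e^b]_{\hat{G}}\bigr) = [a, b] + \tfrac{1}{2}\bigl([a, [a, b]] + [b, [a, b]]\bigr) + O(\deg 4),
\]
together with its higher-degree corrections. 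Starting from $H = \log \Psi(h) \equiv [x, y] - \tfrac{1}{2}([x, y, x] + [x, y, y]) + O(\deg 4)$ and iterating three times with the group commutator against $1+x$ or $1+y$ (as dictated by which $g_i$ is in play), one obtains intermediate log-quantities $H_1, H_2, H_3$, and finally $Z_i = \log\bigl([e^{H_3}, e^{H}]_{\hat{G}}\bigr)$. At every step the intermediate expressions are reduced using Zubkov's identities $[x, y, x, x] = \alpha[x, y]$, $[x, y, x, y] = [x, y, y, x] = \beta[x, y]$, $[x, y, y, y] = \gamma[x, y]$ and derived identities such as $[[x, y, x], [x, y]] = \alpha[x, y, y] - \beta[x, y, x]$, which convert free-Lie expressions into concrete elements of $\mathring{L}^{(\star)}$.

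The ``naive'' leading Lie contribution to each $Z_i$ lies in $L^{(7)}$ and therefore vanishes modulo $L^{(7)}$; the claimed residues arise from BCH correction terms carrying $\tfrac{1}{2}$-factors. Using $\beta^2 - \alpha\gamma = 4\delta$, the surviving fractional contribution for $g_1$ matches $\tfrac{\beta^2 - \alpha\gamma}{2}[x, y, x] = 2\delta[x, y, x]$; for $g_3$ the symmetry $x \leftrightarrow y$, which exchanges $\alpha$ and $\gamma$ while fixing $\beta$ and $\delta$, reduces the computation to that of $g_1$; and for $g_2$ the mixed computation produces the sum of the two half-integral expressions $\tfrac{\beta[x,y,x] + \alpha[x,y,y]}{2}$ and $\tfrac{\beta[x,y,y] + \gamma[x,y,x]}{2}$, each multiplied by $\beta$. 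The resulting classes lie in $\mathring{L}^{(7)} \cap A$ and are nontrivial modulo $L^{(7)}$ by Proposition \ref{prop:minimal}.

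The principal technical difficulty is the BCH bookkeeping: every correction of degree at most $7$ propagating through the five-fold nested commutator must be tracked, reduced using the Zubkov identities, and then separated into integer-Lie contributions (which land in $L^{(7)}$ and are invisible modulo $L^{(7)}$) and half-integral contributions (which produce the nontrivial residues). Higher-order BCH coefficients such as $\tfrac{1}{12}$ or $\tfrac{1}{24}$ also enter in principle, but once the Zubkov identities are applied, their contributions to degree $7$ collapse into integer-Lie combinations and therefore do not affect the final residue modulo $L^{(7)}$.
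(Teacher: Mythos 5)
Your membership argument contains a counting error that is, moreover, incompatible with the rest of your own computation. Unwinding the paper's left-normed notation $[g,h,k]_{\hat G}=[[g,h]_{\hat G},k]_{\hat G}$, the element $g_1$ is
\[
g_1=\bigl[\,[1+x,1+y,1+x,1+x]_{\hat G}\,,\,[1+x,1+y]_{\hat G}\,\bigr]_{\hat G},
\]
whose left argument $[1+x,1+y,1+x,1+x]_{\hat G}$ is a $4$-fold left-normed commutator lying in $\hat G_4$ (not $\hat G_5$), so $g_i\in[\hat G_4,\hat G_2]\subseteq\hat G_6$ rather than $\hat G_7$. This is not a cosmetic slip: if $g_i$ did lie in $\hat G_7$, then $\chi(g_i)\in\chi(\hat G_7)=L^{(7)}$, directly contradicting the half-integral residues $\chi(g_i)\ne 0\ \mathrm{mod}\ L^{(7)}$ that you compute two paragraphs later and that the paper then uses to exhibit a copy of $(\mathbb Z/2\mathbb Z)^3$ in $\overline H$. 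The nesting alone only gives $g_i\in\hat G_6\subseteq\omega_6(\hat G)$, i.e.\ $\deg(\min(g_i))\ge 6$; the fact that $g_i\in\omega_7(\hat G)$, i.e.\ that the degree-$6$ term of $\min(\Psi(g_i))$ vanishes, has to come out of the BCH computation itself (for $g_1$ it is the observation that the degree-$6$ leading contribution $[\alpha[x,y],[x,y]]$ is zero), not from membership in a deeper term of the lower central series.

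The remainder of your outline does follow the paper's route: invoking $\min(g_i)=\min(\Psi(g_i))$ from equation (\ref{eq:equal}), applying the commutator-BCH expansion $\log([e^a,e^b]_{\hat G})=[a,b]-\tfrac12[a,b,a]-\tfrac12[a,b,b]+\cdots$ iteratively, reducing with Zubkov's relations $[x,y,x,x]=\alpha[x,y]$ etc., and exploiting the $x\leftrightarrow y$ symmetry (which swaps $\alpha,\gamma$ and fixes $\beta,\delta$) to pass from $g_1$ to $g_3$. Your remark that the higher BCH coefficients $\tfrac1{12},\tfrac1{24}$ collapse into integer-Lie terms after Zubkov reduction is the right heuristic for why only the half-integral pieces survive modulo $L^{(7)}$, though one still needs to carry the degree-$\le 7$ bookkeeping through to confirm the exact residues the paper records. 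Repair the membership step by replacing the lower-central-series estimate with the degree count supplied by the BCH computation, and the argument lines up with the paper's.
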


Before we prove this lemma, we want to show that it yields that $\overline{H}$
contains a copy of $(\mathbb{Z}/2\mathbb{Z})^{3}$. Indeed, we saw
that the elements
\begin{align*}
\alpha^{r}\beta^{s}\gamma^{t}[x,y,x] & ,\,\,\,\,r+s+t=2\\
\alpha^{r}\beta^{s}\gamma^{t}[x,y,y] & ,\,\,\,\,r+s+t=2
\end{align*}
generate the $\mathbb{Z}_{2}$-module $L^{(7)}$. In addition, as
these elements form a basis of the vector space $\mathring{L}^{(7)}$,
they generate $L^{(7)}$ \uline{freely} as a $\mathbb{Z}_{2}$-module.
It follows that as $\frac{1}{2}\notin\mathbb{Z}_{2}$, the elements
$\chi(g_{i})$ are not in $L^{(7)}$. Moreover, obviously the order
of $\chi(g_{i})$ mod $L^{(7)}$ is $2$. Eventually, one can easily
see that the $\chi(g_{i})$ are different mod $L^{(7)}$, and none
of these elements can be expressed in terms of the others. It follows
that the subgroup of $\overline{H}$ generated by $\chi(g_{i})$ mod
$L^{(7)}$ is isomorphic to $(\mathbb{Z}/2\mathbb{Z})^{3}\leq\overline{H}$,
as required. So it remains to prove Lemma \ref{prop:technical}. 
\begin{proof}
(of Lemma \ref{prop:technical}) By the formula given in equation
(\ref{eq:equal}), in order to evaluate $\min(g_{1})$ it is enough
to evaluate 
\[
\min([e^{x},e^{y},e^{x},e^{x},[e^{x},e^{y}]_{\Psi(\hat{G})}]_{\Psi(\hat{G})})
\]
\foreignlanguage{english}{So in general, if $z,w\in M_{2}(\mathring{\Pi}/\mathring{Q}_{1})$
then a direct computation using the Baker-Campbell-Hausdorff formula
gives
\begin{align*}
\ln([e^{z},e^{w}]_{\Psi(\hat{G})})= & [z,w]-\frac{1}{2}[z,w,z]-\frac{1}{2}[z,w,w]+\textrm{higher\,\,commutators\,\,on}\,\,z,w.
\end{align*}
Using this formula three times one has
\begin{align*}
\ln([e^{x},e^{y},e^{x},e^{x}]_{\Psi(\hat{G})})= & \alpha[x,y]-\frac{3\alpha}{2}[x,y,x]-\frac{\beta}{2}[x,y,x]+\textrm{terms\,\,of\,\,degree}\geq6.
\end{align*}
Therefore, using the identities
\begin{align*}
[x,y,[x,y,x]]= & [[x,y,x],y,x]-[[x,y,x],x,y]=\beta[x,y,x]-\alpha[x,y,y]\\{}
[x,y,[x,y,y]]= & [[x,y,y],y,x]-[[x,y,y],x,y]=\gamma[x,y,x]-\beta[x,y,y]
\end{align*}
one has
\begin{align*}
 & \ln([e^{x},e^{y},e^{x},e^{x},[e^{x},e^{y}]_{\Psi(\hat{G})}]_{\Psi(\hat{G})})\\
 & =[\alpha[x,y]-\frac{3\alpha}{2}[x,y,x]-\frac{\beta}{2}[x,y,x],[x,y]-\frac{1}{2}[x,y,x]-\frac{1}{2}[x,y,y]]\\
 & \,\,\,\,\,\,\,\,+\textrm{terms\,\,of\,\,degree}\geq8\\
 & =\frac{\beta^{2}-\alpha\gamma}{2}\cdot[x,y,x]+\alpha\beta[x,y,x]-\alpha^{2}[x,y,y]+\textrm{terms\,\,of\,\,degree}\geq8.
\end{align*}
It follows that 
\[
\min(g_{1})=\min([e^{x},e^{y},e^{x},e^{x},[e^{x},e^{y}]_{\Psi(\hat{G})}]_{\Psi(\hat{G})})=\frac{\beta^{2}-\alpha\gamma}{2}\cdot[x,y,x]\,\,\,\,\textrm{mod }L^{(7)}.
\]
A similar computation shows that 
\[
\min(g_{3})=\min([e^{x},e^{y},e^{y},e^{y},[e^{x},e^{y}]_{\Psi(\hat{G})}]_{\Psi(\hat{G})})=\frac{\beta^{2}-\alpha\gamma}{2}\cdot[x,y,y]\,\,\,\,\textrm{mod }L^{(7)}.
\]
Regarding $g_{2}$ we have
\begin{align*}
\ln([e^{x},e^{y},e^{x},e^{y}]_{\Psi(G)})= & \beta[x,y]-\alpha[x,y,y]-\beta[x,y,y]+\textrm{terms\,\,of\,\,degree}\geq6.
\end{align*}
Therefore
\begin{align*}
 & \ln([e^{x},e^{y},e^{x},e^{y},[e^{x},e^{y}]_{\Psi(\hat{G})}]_{\Psi(\hat{G})})\\
 & =[\beta[x,y]-\alpha[x,y,y]-\beta[x,y,y],[x,y]-\frac{1}{2}[x,y,x]-\frac{1}{2}[x,y,y]]\\
 & \,\,\,\,\,\,\,\,+\textrm{terms\,\,of\,\,degree}\geq8\\
 & =-\frac{\beta^{2}}{2}[x,y,x]-\frac{\alpha\beta}{2}[x,y,y]+\frac{\beta\gamma}{2}[x,y,x]-\frac{\beta^{2}}{2}[x,y,y]+\alpha\gamma[x,y,x]\\
 & \,\,\,\,\,\,\,\,+\textrm{terms\,\,of\,\,degree}\geq8.
\end{align*}
Hence
\begin{align*}
\min(g_{2}) & =\min([e^{x},e^{y},e^{x},e^{y},[e^{x},e^{y}]_{\Psi(\hat{G})}]_{\Psi(\hat{G})})\\
 & =\beta\cdot(\frac{\beta[x,y,x]+\alpha[x,y,y]}{2})+\beta\cdot(\frac{\beta[x,y,y]+\gamma[x,y,x]}{2})\,\,\,\,\textrm{mod }L^{(7)}
\end{align*}
as required.}
\end{proof}
\selectlanguage{english}%

Ben-Ezra, David El-Chai, Department of Mathematics, University of
California in San-Diego, La Jolla, CA 92093, USA (dbenezra@mail.huji.ac.il)\\
\\
Zelmanov, Efim, Department of Mathematics, University of California
in San-Diego, La Jolla, CA 92093, USA (ezelmanov@ucsd.edu)
\end{document}